\newcommand{\textcyr}[1]{%
 {\fontencoding{OT2}\fontfamily{wncyr}\fontseries{m}\fontshape{n}
 \selectfont #1}}
\newcommand{\Sha}{{\!\!\lbe\mbox{\textcyr{Sh}}}}
\theoremstyle{plain}
\newtheorem{theorem}{Theorem}
\newtheorem{mytheorem}{Theorem}[subsection]
\newtheorem{corollary}[mytheorem]{Corollary}
\newtheorem{lemma}[mytheorem]{Lemma}
\newtheorem{proposition}[mytheorem]{Proposition}
\newtheorem{definition}[mytheorem]{Definition}
\newtheorem{remark}[mytheorem]{Remark}
\def\le{\kern 0.03em}
\def\e{\kern 0.08em}
\def\be{\kern -.1em}
\def\lbe{\kern -.025em}
\def\F{{\mathbb F}}
\def\tF{{\mathbf F}}
\def\cG{{\mathcal G}}
\def\K{{\mathcal K}}
\def\L{{\mathcal L}}
\def\O{{\mathcal O}}
\def\Q{{\mathbb Q}}
\def\R{{\mathbb R}}
\def\W{{\mathcal W}}
\def\Z{{\mathbb Z}}
\DeclareMathOperator{\coh}{H}
\DeclareMathOperator{\Frob}{Frob}
\DeclareMathOperator{\Gal}{Gal}
\DeclareMathOperator{\res}{res}
\DeclareMathOperator{\rank}{rank}
\DeclareMathOperator{\corank}{corank}
\DeclareMathOperator{\Nm}{N}
\DeclareMathOperator{\image}{Im}
\DeclareMathOperator{\Sel}{Sel}
\DeclareMathOperator{\Hom}{Hom}
\DeclareMathOperator{\T}{T}
\DeclareMathOperator{\Tor}{Tor}
\DeclareMathOperator{\Tr}{Tr}
\DeclareMathOperator{\ord}{ord}
\DeclareMathOperator{\coker}{coker}
\begin{document}
\title[Selmer groups over $\Z_p^d$-extensions]{Selmer groups over
$\Z_p^d$-extensions}
\author{Ki-Seng Tan}
\address{Department of Mathematics\\
National Taiwan University\\%
Taipei 10764, Taiwan}
\email{tan@math.ntu.edu.tw}
\thanks{\textbf{Acknowledgement:} This research was supported in part by the National
Science Council of Taiwan, NSC97-2115-M-002-006-MY2, NSC99-2115-M-002-002-MY3.}
\begin{abstract}
Consider an abelian variety $A$ defined over a global field $K$ and let $L/K$ be a $\Z_p^d$-extension,
unramified outside a finite set of places of $K$,
with $\Gal(L/K)=\Gamma$. Let $\Lambda(\Gamma):=\Z_p[[\Gamma]]$ denote the Iwasawa algebra.
In this paper, we study how the characteristic ideal of the
$\Lambda(\Gamma)$-module $X_L$, the dual $p$-primary Selmer group, varies when $L/K$
is replaced by a intermediate $\Z_p^e$-extension.

\end{abstract}
\maketitle


\begin{section}{Main Results}\label{s:intro}

Let $A$ be a $g$-dimensional
abelian variety defined over a global field
$K$ and let $L/K$ be a $\Z_p^d$-extension, unramified outside a finite
set of places of $K$, with $\Gal(L/K)=\Gamma$.
For each finite intermediate extension $F/K$ of $L/K$, let $\Sel_{p^{\infty}}(A/F)$ denote the
$p$-primary Selmer group (see \S\ref{sub:ct})
and set
$$\Sel_{p^{\infty}}(A/L)=\varinjlim_{{F}}\Sel_{p^{\infty}}(A/F).$$
We endow  $\Sel_{p^{\infty}}(A/L)$ (resp. $\Sel_{p^{\infty}}(A/F))$ with the discrete topology and let $X_L$ (resp. $X_F$) denote its
Pontryagin dual group.
The main aim of this paper
is to study how the characteristic ideal of $X_L$ over $\Lambda(\Gamma):=\Z_p[[\Gamma]]$ ({\em{the Iwasawa algebra}})
varies, when $L/K$
is replaced by an intermediate $\Z_p^e$-extension $L'/K$.
Our result has many applications. In particular, it leads to a structure theorem of $Z_L$,
the Pontryagin dual of $\varinjlim_{{F}} \Q_p/\Z_p\otimes A(F)$ (see \S\ref{su:zero}).

\begin{subsection}{Notation}\label{su:ns}
Let $S$ denote the set of places of $K$ ramified over $L/K$.
For an algebraic extension $F/K$ and a place $w$ of $F$,
let $F_w$ denote the $w$-completion of $F $. If $w$ is a non-archimedean  place,
let $\O_{w}$, $m_{w}$ and $\F_w$ (or $\O_{F_w}$, $m_{F_w}$ and $\F_{F_w}$) denote the ring of integers, the maximal ideal and the residue field of $F_w$.
Also, denote $q_w=|\F_w|$.
We fix an algebraic closure ${\overline{K}}$ of $K$ and let ${K}^s\subset
{\overline{K}}$ denote the separable closure of $K$, and the same for $K_v$.

For an abelian group $D$, let $D_p$ (resp. $D_{div}$) denote the $p$-primary (resp. $p$-divisible)
part of $D_{tor}$, the torsion subgroup.
For a locally compact group $G$, let $G^{\vee}$ denote its Pontryagin dual group. In this paper, we always have
$G^{\vee}=\Hom_{cont}(G,\Q_p/\Z_p)$ as $G$ will be either pro-$p$ or $p$-primary.
If $\O$ is the ring of integers of a finite extension $\mathcal{Q}$ of $\Q_p$ and $G$ is an $\O$-module,
we endow $G^\vee$ with the $\O$-module structure by setting $a\cdot \varphi(g)=\varphi(a\cdot g)$, $a\in\O$, $\varphi\in G^\vee$, $g\in G$.
As $\O$-modules, $G$ is cofinitely generated if and only if $G^{\vee}$ is finitely generated,
and denote $\corank_{\O}(G):=\rank_{\O}(G^\vee)$.

If $G$ is a $\Z_p$-module, write $\O G$ for $\O\otimes_{\Z_p}G$. Then we can identify $(\O G)^\vee$ with $\O G^\vee$ by introducing a non-degenerate
pairing $[\;,\;]: \O G\times \O G^\vee\longrightarrow \Q_p/\Z_p$ as follow.
First choose a generator $\delta\in\O$ of the {\em{different}} of the filed extension $\mathcal{Q}/\Q_p$
and set $\Tr^*(x)=\Tr_{\mathcal{Q}/\Q_p}(\delta^{-1}\cdot x)$ for $x\in \mathcal{Q}$. 
If $a\in \mathcal{Q}/\O$ is the residue class of some $y\in \mathcal{Q}$ modulo $\O$, let
$\T^*(a)\in\Q_p/\Z_p$ denote the residue class of $\Tr^*(y)$ modulo $\Z_p$.
Then $\mathrm{Q}:\O\times \mathcal{Q}/\O\longrightarrow \Q_p/\Z_p$ given by $\mathrm{Q}(x,a):=\T^*(xa)$
is a non-degenerate pairing. Let $<\;,\;>:\O G\times \O G^\vee\longrightarrow \mathcal{Q}/\O$ be the $\O$-pairing given by $<g,\phi>=\phi(g)$,
for $g\in G$, $\phi\in G^\vee$. Then define $[\alpha,\beta]=\T^*(<\alpha,\beta>)$. Let $e_1,...,e_m$ be a $\Z_p$-basis of $\O$.
If $\beta=\sum_i e_i\otimes \phi_i$, $\phi_i\in G^\vee$, satisfies $[\O G,\beta]=0$, then for all $x\in\O$, $g\in G$,
$$0=[x\otimes g, \beta]= \T^*(x\cdot \sum_ie_i\otimes <g,\phi_i>)=\mathrm{Q}(x,\sum_i e_i\otimes <g,\phi_i>).$$
Since $\mathrm{Q}$ is non-degenerate, $\sum_i e_i\otimes <g,\phi_i>=0$, and hence $<g,\phi_i>=0$ for all $g$.
Consequently, $\phi_i=0$ for every $i$, whence $\beta=0$. Similarly, if $\alpha\in\O G$ satisfies $[\alpha,\O G^\vee]=0$, then $\alpha=0$.

If $G$ is a $\Gamma$-module, let $\Gamma$ acts on $G^{\vee}$ by
${}^{\gamma}\varphi(g):=\varphi({}^{\gamma^{-1}}g)$.
The identification $(\O G)^\vee=\O G^\vee$ depends on the choice of $\delta$. However, it
alters neither the $\O$-module structure nor the $\Gamma$-module structure (if exists) on $(\O G)^\vee$.

Let $\mu_{p^m}$ denote the $p^m$th root of unity and write $\mu_{p^{\infty}}=\bigcup_m \mu_{p^m}$ regarded as a discrete subgroup of ${\overline{\Q}}_p^{\times}$.
Let $\widehat{\Gamma}$ denote the group of all continuous characters from $\Gamma$ to $\mu_{p^{\infty}}$ and
let $\Gal({\overline{\Q}}_p/\Q_p)$ act on it  via the action on $\mu_{p^{\infty}}$.
Thus, $\widehat{\Gamma}=\Gamma^{\vee}$ as topological groups, while $\Gal({\overline{\Q}}_p/\Q_p)$ acts non-trivially
on $\widehat{\Gamma}$ but trivially on $\Gamma^\vee$.
If $\omega\in \widehat{\Gamma}$ with the image $\image(\omega)=\mu_{p^m}$,
write
$\O_{\omega}=\Z_p[\mu_{p^m}]\subset {\overline \Q}_p$. If $\O$ contains $\O_{\omega}$ and $G$ is an $\O$-module
with a continuous action of $\Gamma$, write (for the $\omega$-eigenspace)
$$G^{(\omega)}:=\{g\in G\;\mid\; {}^{\gamma}g=\omega(\gamma)\cdot g\}.$$

For a finitely generated $\Lambda(\Gamma)$-module $W$,
let $\chi_{\Lambda(\Gamma)}(W)$ denote its characteristic ideal (see \S\ref{su:char}).
Denote $\Gamma'=\Gal(L'/K)$, $\Lambda(\Gamma')=\Z_p[[\Gamma']]$.
Our result also covers the $d=1$ case in which
$\Gamma'=0$, $\Lambda(\Gamma')=\Z_p$, and we define $\chi_{\Lambda(\Gamma')}(W)=\chi_{\Z_p}(W)$,
the usual characteristic ideal of $\Z_p$-module.

Let $A[p^m]$ denote the kernel of the multiplication by $p^m$ on $A$ viewed as a sheaf on the flat topology of $K$
and denote $A[p^{\infty}]=\bigcup_m A[p^m]$. In particular, $A[p^{\infty}](K)=A(K)_p$.
Let $A^t$ denote the dual abelian variety.

\end{subsection}
\begin{subsection}{A local condition}\label{su:localcondition}
It is well known that if $K$ is a number field, then the following question has an affirmative
answer (see below).
$$Is \;X_L\; finitely\; generated\;
over\;\Lambda(\Gamma)?$$
In general, the answer could be obtained via the  following
local criterion.
\begin{proposition}\label{p:iwasawa}
The Iwasawa module $X_L$ is finitely generated over $\Lambda(\Gamma)$
if and only if
at each place $v\in S$, the local cohomology group
$\coh^1(\Gamma_v,A(L_v))$ is cofinitely generated over $\Z_p$.
\end{proposition}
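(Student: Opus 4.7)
The plan is to combine the Galois-theoretic control diagram for Selmer groups with a compact form of Nakayama's lemma. Since $\Lambda(\Gamma)$ is a compact local ring with maximal ideal $\mathfrak{m}=(p,I_\Gamma)$, the module $X_L$ is finitely generated over $\Lambda(\Gamma)$ if and only if $X_L/\mathfrak{m}X_L$ is finite, equivalently $(X_L)_\Gamma:=X_L/I_\Gamma X_L$ is finitely generated over $\Z_p$. Taking Pontryagin duals this reads: $\Sel_{p^\infty}(A/L)^\Gamma$ is cofinitely generated over $\Z_p$. So I would aim to show that this cofinite generation is equivalent to the cofinite generation over $\Z_p$ of each $\coh^1(\Gamma_v,A(L_v))$ with $v\in S$.

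Next I would write down the standard control diagram
$$
\begin{array}{ccccccc}
0 & \to & \Sel_{p^\infty}(A/K) & \to & \coh^1(K,A[p^\infty]) & \to & \bigoplus_v \coh^1(K_v,A)_p \\
& & \downarrow\alpha & & \downarrow\beta & & \downarrow\gamma \\
0 & \to & \Sel_{p^\infty}(A/L)^\Gamma & \to & \coh^1(L,A[p^\infty])^\Gamma & \to & \bigoplus_v \coh^1(L_w,A)_p^{\Gamma_v},
\end{array}
$$
in which the places $w$ above a given $v$ have been collapsed via Shapiro's lemma. Inflation--restriction identifies $\ker\beta=\coh^1(\Gamma,A[p^\infty](L))$ and the $v$-component of $\ker\gamma$ as $\coh^1(\Gamma_v,A(L_v))$, which is automatically $p$-primary because $\Gamma_v$ is pro-$p$ and $A(L_v)$ is discrete. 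The snake lemma then sandwiches $\ker\alpha$ inside $\coh^1(\Gamma,A[p^\infty](L))$ and $\coker\alpha$ inside $\bigoplus_v \coh^1(\Gamma_v,A(L_v))$.

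I would then verify that every auxiliary piece, except possibly the local contributions at $v\in S$, is automatically cofinitely generated over $\Z_p$. The group $\Sel_{p^\infty}(A/K)$ is classically cofinitely generated over $\Z_p$; $A[p^\infty](L)$ has $\Z_p$-corank at most $2g$, so its $\Gamma$-cohomology (over $\Gamma\cong\Z_p^d$) is cofinitely generated; and for $v\notin S$ the extension $L_v/K_v$ is unramified, making $\Gamma_v$ pro-cyclic and allowing a direct computation of $\coh^1(\Gamma_v,A(L_v))$ via the formal-group description of $A$ over the local field $K_v$, which yields a group of finite $\Z_p$-corank (often trivial at places of good reduction). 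Feeding these facts into the snake-lemma output gives that $\Sel_{p^\infty}(A/L)^\Gamma$ is cofinitely generated over $\Z_p$ if and only if $\bigoplus_{v\in S}\coh^1(\Gamma_v,A(L_v))$ is, proving the proposition.

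The main obstacle I expect is the local analysis at $v\notin S$, particularly at places above $p$ in the number-field setting or at places of bad reduction that happen to remain unramified in $L/K$. Obtaining a clean enough description of $A(L_v)$ as a $\Gamma_v$-module to uniformly bound the $\Z_p$-corank of $\coh^1(\Gamma_v,A(L_v))$ requires a careful case analysis based on whether $v$ lies above $p$ and on the reduction type of $A$ at $v$; everything else is straightforward diagram-chasing plus classical finiteness theorems.
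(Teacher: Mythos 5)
Your broad strategy—Nakayama's lemma reduces the problem to the cofinite generation of $\Sel_{p^{\infty}}(A/L)^{\Gamma}$, and one then compares $\Sel_{p^{\infty}}(A/K)$ with $\Sel_{p^{\infty}}(A/L)^{\Gamma}$ through the control diagram—is the same one the paper uses. The auxiliary pieces you list (cofinite generation of $\Sel_{p^{\infty}}(A/K)$, of $\coh^1(\Gamma,A[p^{\infty}](L))$, and of the unramified local terms) are also correct, though you overestimate the difficulty of the places $v\notin S$: since $L_v/K_v$ is unramified, $\Gamma_v$ is either trivial or $\Z_p$, and $\coh^1(\Gamma_v,A(L_v))$ is computed via component groups (Lemma \ref{l:varunramified}); it is in fact \emph{finite}, not merely of bounded corank, so no delicate formal-group analysis at $p$ is needed.

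The real gap is in the ``only if'' direction. You assert that the snake lemma ``sandwiches $\coker\alpha$ inside $\bigoplus_v\coh^1(\Gamma_v,A(L_v))$.'' This is the wrong way round: after passing to images, the snake sequence is
$$\ker\beta\longrightarrow\ker\tilde\gamma\longrightarrow\coker\alpha\longrightarrow\coker\beta,$$
where $\tilde\gamma$ is the map between the \emph{images} of the localization maps. Since $\coker\beta=\coker[\res_{L/K}]$ is finite, this shows that cofinite generation of the local terms forces that of $\coker\alpha$ (the ``if'' direction). But for the ``only if'' direction you must pass from $\ker\tilde\gamma$ to the full $\ker\gamma=\bigoplus_v\coh^1(\Gamma_v,A(L_v))$, and the obstruction $\ker\gamma/\ker\tilde\gamma$ embeds into $\coker[\mathcal{L}_K]$. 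Controlling $\coker[\mathcal{L}_K]$ is the crux: the paper invokes the generalized Cassels--Tate exact sequence (Theorem \ref{t:gct}), which identifies this cokernel with $\T_p\Sel(A^t/K)^{\vee}$ and hence shows it is cofinitely generated over $\Z_p$. Without this input—which your proposal never mentions—the conclusion that the local terms are cofinitely generated does not follow, and the ``only if'' half of the proposition is unproved.
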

The proof, based on results in \cite{tan10}, is given in \S\ref{su:tiw}.
The condition of the proposition holds if $K$ is a number field (Corollary \ref{c:nf}),
or 
if at every ramified place,
$A$ has either good ordinary reduction or split-multiplicative
reduction \cite[Theorem 5]{tan10}.
However, if $char.(K)=p$ and the reduction of $A$ at a
place $v\in S$ is an abelian variety without non-trivial $p$-torsion points, then the condition fails to hold (Theorem \ref{t:ss}).

\end{subsection}
\begin{subsection}{The specialization data}\label{su:desda}
For the rest of this paper except \S\ref{su:deep}, we shall
assume that {\em{every $v\in S$ is either good ordinary or
split-multiplicative}}, and hence $X_L$ is finitely generated over $\Lambda(\Gamma)$.
Also, for simplicity, we assume that $char.(K)=p$, if $K$
is not a number field. Write $\Theta_L=\chi_{\Lambda(\Gamma)}(X_L)$.
Extend the canonical map $\Gamma\longrightarrow\Gamma'$ 
to the continuous $\Z_p$-algebra homomorphism (the specialization map) $p_{L/L'}:\Lambda(\Gamma)\longrightarrow \Lambda(\Gamma')$.
Then the following question arises:
$$What\; is\; the\; relation\; between \; p_{L/L'}(\Theta_L) \;and\;
\Theta_{L'}?$$
To illustrate our answer, some simplification and notation are in order.
First, by choosing a sequence
$L'\subset L''\subset \cdots \subset L^{(i)}\subset \cdots\subset L^{(d-e)}\subset L$
with each $\Gal(L^{(i)}/K)\simeq \Z_p^{e-1+i}$, we can write
$p_{L/L'}=p_{L''/L'}\circ\cdots\circ p_{L^{(i+1)}/L^{(i)}}\circ\cdots\circ p_{L/L^{(d-e)}}$,
and hence answer the question for $p_{L/L'}$ by answering that for every $p_{L^{(i+1)}/L^{(i)}}$.
Therefore, {\em{without loss of generality, we may assume that}} $e=d-1$. We shall make such assumption and
then fix a topological generator $\psi$ of $\Psi:=\Gal(L/L')$.

\subsubsection{The global factor}
Let $K'/K$ be a $\Z_p$-extension and let $\sigma$ be a topological generator of the Galois group.
If $\epsilon_1,...,\epsilon_l$ are eigenvalues, counted with multiplicities, of the action of $\sigma$ on
the Tate module $\mathrm{T}_p A[p^{\infty}](K')$. Then the product
$\prod_{j=1}^l (1-\epsilon_j^{-1}\sigma)\subset \Z_p[[\Gal(K'/K)]]$ is nothing but the characteristic ideal of
$\mathrm{T}_p A[p^{\infty}](K')$ over $\Z_p[[\Gal(K'/K)]]$, and in particular, the ideal
$$\mathrm{w}_{K'/K}:=\prod_{j=1}^l (1-\epsilon_j^{-1}\sigma) (1-\epsilon_j^{-1}\sigma^{-1})$$
is independent of the choice of $\sigma$ (See Proposition \ref{p:torcomp1}). If $K'/K$ is a $\Z_p^e$-extension with $e\geq 2$, set
$\mathrm{w}_{K'/K}=(1)$.
\begin{definition}\label{d:glfactor}
Define the $\Lambda(\Gamma')$-ideal
$$\varrho_{L/L'}=
\begin{cases}
\mathrm{w} _{L'/K} & \text{if}\; d\geq 2;\\
\frac{|A[p^{\infty}](K) |^2 } {| A[p^{\infty}](K)\bigcap A[p^{\infty}](L)_{div}|^2}
&\; \text{if}\; d=1.\\
\end{cases}
$$
\end{definition}

\subsubsection{Local factors at unramified places}
\begin{definition}\label{d:badunram}
For each $v$, let ${\Pi_v}$ denote the group of the connected components of the
closed fiber {\em{(over ${\overline{ \F}}_v$)}}
of the N$\acute{\text{e}}$ron model of $A/K_v$ and let $\pi_v$ denote the $\Z_p$-ideal $(|\Pi_v^{\Gal({\overline{\F}}_v/\F_v)}|)$.
\end{definition}

\subsubsection{Local factors at good ordinary places}
Suppose that $A$ has good ordinary reduction $\bar A$ at $v$.
Then eigenvalues of the Frobenius endomorphism
$\tF_v:{\bar A}\longrightarrow {\bar A}$ over $\F_v$ are, counted with multiplicities,
\begin{equation*}
\alpha_1,...,\alpha_g,q_v/\alpha_1,...,q_v/\alpha_g,
\end{equation*}
where $\alpha_1,...,\alpha_g$ are eigenvalues of the (twist) matrix $\mathbf{u}$ of the action on the Tate module
of ${\bar A}[p^{\infty}]$ by
the Frobenius substitution $\Frob_v\in \Gal({\overline{ \F}}_v/\F_v)$
(\cite[Corollary 4.37]{maz}). 

\begin{definition}\label{d:goodord}
Suppose  $A$ has good ordinary reduction $\bar A$ at $v$
and $L'/K$ is unramified at $v$ with the Frobenius element
$[v]_{L'/K}\in\Gamma'$. Define
$$\textsf{f}_{L',v}
:=\prod_{i=1}^g (1-\alpha_i^{-1}\cdot [v]_{L'/K})\times
\prod_{i=1}^g (1-\alpha_i^{-1}\cdot [v]_{L'/K}^{-1})\subset \Lambda(\Gamma').$$
\end{definition}

\subsubsection{Local factors at split multiplicative places}
Suppose $A$ has split multiplicative reduction at $v$. This means
there is a rank $g$ lattice $\Omega_v\simeq \Z\times\dots\times\Z$ sitting
inside the torus $T=(K_v^{\times})^g$ so that
$T/\Omega_v$ is isomorphic to the rigid analytic space associated to $A$
(see \cite{ger72}). In particular,
\begin{equation}\label{e:desplit}
A({\overline{K}}_v)\simeq ({\overline{K}}_v^{\times})^g/\Omega_v.
\end{equation}

Consider the composition $\Omega_v\longrightarrow (K_v^{\times})^g\stackrel{{\textsf{R}}_v^g}{\longrightarrow} (\Gamma_v)^g$
where $\textsf{R}_v:K^{\times}_v\longrightarrow \Gamma_v$ is the local reciprocity map, and extend it $\Z_p$-linearly to

\begin{equation}\label{e:mcrv}
\xymatrix{\mathcal{R}_v:\Z_p\otimes_{\Z}\Omega_v \ar[r] & (\Gamma_v)^g.}
\end{equation}

\begin{definition}\label{d:splitmul}
Define $\mathfrak{w}_v=\chi_{\Z_p}(\coker\be\left[\e \mathcal{R}_v\right])$.
\end{definition}
\end{subsection}
\begin{subsection}{The main theorem}\label{su:maint}
Here is our main theorem. Recall that $\Psi=\Gal(L/L')$.
\begin{theorem}\label{t:compatible} Suppose $d\geq 1$ and assume the above
notation. Then we have
$$\Theta_{L'} \cdot \vartheta_{L/L'}
=\varrho_{L/L'}\cdot
p_{L/L'}(\Theta_L),$$
where $\vartheta_{L/L'}:=\prod_v\vartheta_v$ with each
$\vartheta_v$ an ideal of $\Lambda(\Gamma')$ defined by the following conditions:
\begin{enumerate}
\item[(a)] Suppose $v\not\in S$. If $\Psi_v\not=0$,
then $\vartheta_v=\pi_v$; otherwise, $\vartheta_v=(1)$.
\item[(b)] Suppose $v\in S$ and $A$ has good ordinary reduction at $v$.
If $v$ is unramified over $L'/K$, then $\vartheta_v=\textsf{f}_{L',v}$;
otherwise  $\vartheta_v=(1)$.
\item[(c)] Suppose $v\in S$ and $A$ has split-multiplicative reduction at $v$.
Then
$$
\vartheta_v=\begin{cases}
\Lambda(\Gamma')\cdot \mathfrak{w}_v, & \text{if}\ \  \Gamma'_v=0;\\
(\sigma-1)^{g}, & \text{if}\ \ \Psi_v\simeq\Z_p  \;\text{and}\;\; \Gamma'_v \;\text{is topologically generated by}\; \sigma
;\\
(1),& \text{otherwise.}
\end{cases}
$$
\end{enumerate}

\end{theorem}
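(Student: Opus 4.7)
The strategy is the standard control-theorem approach: compare the Selmer group over $L'$ with the $\Psi$-invariants of the Selmer group over $L$, analyze the kernel and cokernel place-by-place, and translate the resulting exact sequence of $\Lambda(\Gamma')$-modules into an identity of characteristic ideals. Writing $G_F=\Gal(K^s/F)$, I would begin with the commutative diagram
\begin{equation*}
\xymatrix@C=1em{
0 \ar[r] & \Sel_{p^\infty}(A/L') \ar[r] \ar[d] & \coh^1(G_{L'}, A[p^\infty]) \ar[r] \ar[d]^-{\res} & \prod_{w'} \coh^1(L'_{w'}, A)_p \ar[d] \\
0 \ar[r] & \Sel_{p^\infty}(A/L)^\Psi \ar[r] & \coh^1(G_L, A[p^\infty])^\Psi \ar[r] & \prod_w \bigl(\coh^1(L_w, A)_p\bigr)^{\Psi_w}
}
\end{equation*}
Applying inflation--restriction to each column and then the snake lemma yields a four-term exact sequence whose global kernel is $\coh^1(\Psi, A(L)[p^\infty])$ and whose local cokernel is assembled from the groups $\coh^1(\Psi_v, A(L_v))$. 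Pontryagin duality (an exact functor on the cofinitely/finitely generated categories) then produces an exact sequence of finitely generated $\Lambda(\Gamma')$-modules relating $X_{L'}$ to $(X_L)_\Psi$ together with explicit local pieces.

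The key characteristic-ideal input is the following. For a finitely generated torsion $\Lambda(\Gamma)$-module $M$ with $p_{L/L'}(\chi_{\Lambda(\Gamma)}(M))\neq 0$, both $M^\Psi$ and $M_\Psi$ are finitely generated torsion over $\Lambda(\Gamma')$ and
$$\chi_{\Lambda(\Gamma')}(M_\Psi)\;=\;\chi_{\Lambda(\Gamma')}(M^\Psi)\cdot p_{L/L'}(\chi_{\Lambda(\Gamma)}(M)).$$
Applying this with $M=X_L$ and combining with the dualized control-theorem sequence, the factor $\chi_{\Lambda(\Gamma')}((X_L)^\Psi)$ cancels against the contribution coming from $\coh^1(\Psi, A(L)[p^\infty])^\vee$, leaving the global correction $\varrho_{L/L'}$ on one side and the local product $\prod_v\vartheta_v$ on the other.

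The local computation then proceeds case by case, matching the definitions of $\vartheta_v$. At unramified $v\notin S$ with $\Psi_v\neq 0$, Néron component-group theory identifies the local contribution with $\pi_v$. At good ordinary $v\in S$ unramified over $L'/K$, the connected-étale filtration of $A[p^\infty]$ combined with the action of $\Frob_v$ on the Tate module of $\bar A[p^\infty]$ (with eigenvalues $\alpha_1,\dots,\alpha_g$) produces exactly $\textsf{f}_{L',v}$ as the characteristic ideal of the étale quotient under $[v]_{L'/K}$ and its inverse. At split-multiplicative $v\in S$ the Tate uniformization \eqref{e:desplit} reduces the local analysis to the lattice $\Omega_v$ and the reciprocity map $\mathcal{R}_v$, yielding $\mathfrak{w}_v$, $(\sigma-1)^g$, or $(1)$ according to the three subcases of (c). Finally, $\varrho_{L/L'}$ arises from a Herbrand-quotient/Euler-characteristic computation of $\coh^*(\Psi, A(L)[p^\infty])$: for $d\geq 2$ this quantity is identified via Proposition \ref{p:torcomp1} with $\mathrm{w}_{L'/K}$, and for $d=1$ a direct order count produces the ratio in Definition \ref{d:glfactor}.

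The main obstacle will be the split-multiplicative case (c), where the identification of $\coh^1(\Psi_v, A(L_v))$ with $\coker(\mathcal{R}_v)$ (or with $(\sigma-1)^g$ in the mixed case $\Psi_v\simeq\Z_p$, $\Gamma'_v\neq 0$) requires a careful combination of Tate local duality with the explicit uniformization, and the three subcases must be separated delicately. A secondary subtlety is the $d=1$ setting, in which $\Lambda(\Gamma')=\Z_p$ degenerates the general formalism, so one must verify by hand that the finite-group ratio in $\varrho_{L/L'}$ matches the Euler--Poincaré characteristic of $A[p^\infty](L)^\Psi$ relative to the divisible part $A[p^\infty](L)_{div}$ already visible over $L$.
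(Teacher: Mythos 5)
Your proposal correctly identifies the general framework — a control-theorem comparison of $\Sel_{p^\infty}(A/L')$ with $\Sel_{p^\infty}(A/L)^\Psi$, a descent formula of the form $\chi(M_\Psi)=\chi(M^\Psi)\cdot p_{L/L'}(\chi(M))$ (this is Corollary \ref{c:gb}(d), which rests on Greenberg's structure lemma), and a place-by-place analysis of the local terms matching Propositions \ref{p:urwi}, \ref{p:go}, \ref{p:spm}. However, there are two concrete gaps that prevent the argument from closing.

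First, your starting diagram truncates the global-to-local sequence at $\prod_{w'}\coh^1(L'_{w'},A)_p$, so it does not track $\coker\left[\mathcal{L}_{L'}\right]$ and $\coker\left[\mathcal{L}_L\right]$, the cokernels of the localization maps. These are nonzero in general: by the generalized Cassels--Tate exact sequence (Theorem \ref{t:gct}), $\coker\left[\mathcal{L}_F\right]^\vee\simeq\T_p\Sel(A^t/F)$, and Proposition \ref{p:loc} identifies $\coker\left[\mathcal{L}_{L'}\right]^\vee$ with $\varprojlim_F A^t[p^\infty](F)$ when $\Gamma'$ has rank $1$. This term supplies exactly the "norm direction" factor $\prod_j(1-\epsilon_j^{-1}\sigma)$ in $\mathrm{w}_{L'/K}$, while the kernel $\coh^1(\Psi,A[p^\infty](L))^\vee$ supplies only the other factor $\prod_j(1-\epsilon_j^{-1}\sigma^{-1})$ via Corollary \ref{c:chicoh1}. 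Thus $\varrho_{L/L'}$ does not come from a Herbrand quotient of $\coh^*(\Psi,A[p^\infty](L))$ alone, as you propose; half of it lives in the Cassels--Tate cokernel, and an argument that never introduces that cokernel cannot produce $\varrho_{L/L'}$. The same defect shows up at $d=1$, where $\eta_1$ in the paper's proof picks up a factor $|A^t[p^\infty](K)|$ from $\coker\left[\mathcal{L}_K\right]^\vee$. The paper avoids your truncation by working with diagram \eqref{e:diagramlong}, whose top row is a short exact sequence of $\coh^1$-groups (Proposition \ref{p:injection}) and whose bottom row is a four-term exact sequence of $\mathcal{H}^i$-groups, so that the Euler-characteristic identity \eqref{e:derived} carries the Cassels--Tate contributions on both sides.

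Second, relating $\coker\left[\mathcal{L}_L^\Psi\right]$ to $X_L^\Psi$ and $\coker\left[\mathcal{L}_L\right]$ requires the nontrivial Proposition \ref{p:psi-1}, namely that $\coh^1(L,A[p^\infty])=(\psi-1)\coh^1(L,A[p^\infty])$. This is proved by a Hochschild--Serre spectral sequence argument (Lemmas \ref{l:spc}, \ref{l:m1m2}) and is what makes the sequence \eqref{e:psipart} exact; without it, the term $\chi_{\Lambda(\Gamma')}(X_L^\Psi)$ cannot be isolated and canceled. Relatedly, your assertion that $\chi_{\Lambda(\Gamma')}(X_L^\Psi)$ "cancels against the contribution coming from $\coh^1(\Psi,A(L)[p^\infty])^\vee$" is incorrect: those two modules are unrelated. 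In the paper's argument $\chi(X_L^\Psi)$ appears on both sides of the combined identity (once from Lemma \ref{l:derived} via \eqref{e:psipart}, once from Corollary \ref{c:gb}(d) applied to $W=X_L$) and cancels with itself, while $\coh^1(\Psi,A[p^\infty](L))^\vee=\ker\left[\res_{L/L'}\right]^\vee$ is an independent term contributing to $\eta_1$.
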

The proof will be completed in \S\ref{su:pft}.
The tools, local and global, for the proof will be established in \S2, \S3, and \S4.
See \S\ref{su:imc} for the application of the theorem to the Iwasawa Main Conjecture.
Here is an immediate application.

\begin{theorem}\label{t:otr09}
Suppose $char.(K)=p$ and $L$ contains the constant $\Z_p$-extension of $K$.
Then $X_{L}$ is torsion over $\Lambda(\Gamma)$.
\end{theorem}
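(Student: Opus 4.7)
The plan is to reduce, by induction on $d$, to the case $L = K^c$ (the constant $\Z_p$-extension) and handle that base case separately. Since $K^c\subseteq L$, choose a tower
$$K^c = L_1 \subsetneq L_2 \subsetneq \cdots \subsetneq L_d = L$$
with $\Gal(L_i/K)\cong\Z_p^i$, and I will show by induction on $i$ that $\Theta_{L_i}\neq 0$.

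For the inductive step $i\to i+1$, I will apply Theorem \ref{t:compatible} to the pair $(L_{i+1},L_i)$, so that $\Psi=\Gal(L_{i+1}/L_i)\cong\Z_p$. The formula reads
$$\Theta_{L_i}\cdot\vartheta_{L_{i+1}/L_i} \;=\; \varrho_{L_{i+1}/L_i}\cdot p_{L_{i+1}/L_i}(\Theta_{L_{i+1}}),$$
and the inductive hypothesis gives $\Theta_{L_i}\neq 0$. By inspection of cases (a)--(c) of Theorem \ref{t:compatible}, each local factor $\vartheta_v$ is a nonzero principal ideal of the domain $\Lambda(\Gal(L_i/K))$ (the generators $(1-\alpha_j^{-1}[v]_{L_i/K})$, $(\sigma-1)^g$, $\mathfrak{w}_v$ and $\pi_v$ are manifestly nonzero), and almost all equal $(1)$; hence $\vartheta_{L_{i+1}/L_i}\neq 0$. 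The global factor $\varrho_{L_{i+1}/L_i}$ is also nonzero: by Definition \ref{d:glfactor} it equals $(1)$ when $\dim L_i\geq 2$, and when $\dim L_i=1$ it is the explicit nonzero element $\mathrm{w}_{L_1/K}\in\Z_p[[T]]$. Thus the LHS is a nonzero ideal in the integral domain $\Lambda(\Gal(L_i/K))$, forcing $p_{L_{i+1}/L_i}(\Theta_{L_{i+1}})\neq 0$, and hence $\Theta_{L_{i+1}}\neq 0$.

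For the base case $i=1$ we have $L_1 = K^c$ and $\Lambda(\Gal(K^c/K))=\Z_p[[T]]$. Since $K^c/K$ is everywhere unramified, $S=\emptyset$ and Proposition \ref{p:iwasawa} applies vacuously to give finite generation of $X_{K^c}$ over $\Lambda$. My plan is to strengthen this to finite generation of $X_{K^c}$ over $\Z_p$, which automatically forces $\Lambda$-torsion (any $T$-action on a $\Z_p$-module of finite rank satisfies a polynomial relation). Cofinite generation of $\Sel_{p^\infty}(A/K^c)$ over $\Z_p$ will be obtained by bounding the Mordell--Weil contribution $A(K^c)\otimes\Q_p/\Z_p$ via Lang--N\'eron and controlling the $p$-part of $\Sha(A/K^c)$ up the constant-field tower, exploiting the unramified-everywhere structure of $K^c/K$ and the characteristic-$p$ machinery of \cite{tan10}.

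The principal obstacle is the base case. The inductive step is essentially book-keeping once Theorem \ref{t:compatible} is available, but establishing $\Lambda$-torsion of $X_{K^c}$ cannot be recovered by applying Theorem \ref{t:compatible} with $L'=K$, since that route would demand the finiteness of $\Sel_{p^\infty}(A/K)$ --- a statement of BSD strength. The argument must instead be intrinsic to the constant extension in positive characteristic, leveraging the vanishing of the ramification locus together with function-field-specific finiteness results.
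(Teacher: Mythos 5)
Your overall architecture — descend from $L$ to the constant $\Z_p$-extension by iterating Theorem~\ref{t:compatible}, then settle the base case there — is the same as the paper's, but the proposal has two problems, one minor and one fatal.

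\textbf{The minor issue.} In the inductive step you assert that every possible generator of $\vartheta_v$ is ``manifestly nonzero,'' listing $\mathfrak{w}_v$ among them. That claim is false: $\mathfrak{w}_v=\chi_{\Z_p}(\coker[\mathcal{R}_v])$, and Lemma~\ref{l:zpsplit} shows $\rank_{\Z_p}\coker[\mathcal{R}_v]\geq g(\rank_{\Z_p}\Gamma_v-1)$, so $\mathfrak{w}_v=0$ whenever $\rank_{\Z_p}\Gamma_v\geq 2$. Indeed, much of \S\ref{su:derived} and the proof of Theorem~\ref{t:compatible} is devoted to precisely the non-torsion case created by $\mathfrak{w}_v=0$. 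Your conclusion $\vartheta_v\neq 0$ is nevertheless correct, but for a different reason: since $L_i\supseteq K^c$ and the constant extension is unramified, dense at every residue field, the decomposition group $\Gal(K^c/K)_v$ is nontrivial for every $v$, hence $\Gal(L_i/K)_v\neq 0$. Thus the case $\Gamma'_v=0$, the only one in which $\mathfrak{w}_v$ appears in Theorem~\ref{t:compatible}(c), never arises in the descent. This is exactly the observation the paper's proof makes (``Since $\Gamma^0_v\not=0$ for all $v$ \dots''), and it must be stated, since the blanket nonvanishing of $\mathfrak{w}_v$ is not true.

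\textbf{The fatal gap.} You correctly identify the base case $L_1=K^c$ as the crux, but then you do not prove it: you outline a plan to show that $X_{K^c}$ is finitely generated over $\Z_p$ by ``bounding the Mordell--Weil contribution via Lang--N\'eron and controlling the $p$-part of $\Sha(A/K^c)$.'' This is not a proof of anything. Bounding $A(K^c)\otimes\Q_p/\Z_p$ is easy; bounding the $p$-primary Tate--Shafarevich group up the constant-field tower is the entire content of the Ochiai--Trihan theorem, and nothing in \cite{tan10} supplies it — that reference gives local control theorems, not a global bound on $\Sha$. Worse, your target of $\Z_p$-finite generation (equivalently, $\mu_{K^c}=0$) is strictly stronger than $\Lambda$-torsion (witness $\Lambda/(p)$), is not what the theorem requires, and is not known to hold in this generality. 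The paper resolves the base case by a single citation to \cite{otr09}, which establishes $\Lambda$-torsion of $X_{K^c}$ by syntomic/crystalline methods. Your proposal replaces that citation with a sketch that neither achieves the right statement nor the stronger one it aims at, so the argument as written does not constitute a proof of the theorem.
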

See \cite{maru07} for examples of non-torsion $X_L$ in the number field case, while examples in characteristic $p$ can be found
in \cite[ Appendix]{lltt13}.

\begin{proof}
Let $\F_q$ denote the constant field of $K$ and let $L_0=K\F_{q^{p^{\infty}}}$ be the constant $\Z_p$-extension over $K$
with $\Gamma^0=\Gal(L_0/K)$ topologically generated by the Frobenius substitution $\Frob_q:x\mapsto x^q$.
The theorem is already proved in \cite{otr09} for the $L=L_0$ case. This means $\Theta_{L_0}\not=0$.
By repeatedly applying Theorem \ref{t:compatible} ($d-1$ times), we deduce
$$p_{L/L_0}(\Theta_L)\cdot \mathrm{w}_{L_0/K}
=\Theta_{L_0}\cdot \prod_{v}\vartheta_v.$$
Since $\Gamma^0_v\not=0$, for all $v$, the factor $\vartheta_v$ equals one of $(1)$, $(\Frob^{\deg(v)}-1)^g$, or $\mathsf{f}_{L_0,v}$.
In particular, $\vartheta_v\not=0$, for all $v$. 
Therefore, $p_{L/L_0}(\Theta_L)\not=0$, and hence $\Theta_L\not=0$.
\end{proof}
\end{subsection}
\begin{subsection}{The Iwasawa main conjecture}\label{su:imc}
Possibly, Theorem \ref{t:compatible} could be useful for determining an explicit generator of $\chi_{\Lambda(\Gamma)}(X_L)$.
Assume that an explicitly given element $\theta_L'\in \Lambda(\Gamma)$ is already known to be a generator of the characteristic ideal of a submodule $X'_L$ of $X_L$, and we want to see if actually
\begin{equation}\label{e:actually}
\Theta_L=(\theta_L').
\end{equation}
In addition, assume that there exists an intermediate $\Z_p^e$-extension $L_0$ of $L/K$
such that $\Theta_{L_0}$, the characteristic ideal of $X_{L_0}$ over $\Lambda(\Gal(L_0/K))$, is explicitly given.
Then by applying Theorem \ref{t:compatible}, we can obtain an explicit expression of $p_{L/L_0}(\Theta_L)$ in terms of $\Theta_{L_0}$
and other factors. Thus, by checking the explicit expressions, we would be able to determine if
\begin{equation}\label{e:able}
p_{L/L_0}(\Theta_L)=(p_{L/L_0}(\theta_L')).
\end{equation}
The point is that Equations \eqref{e:actually} and \eqref{e:able} are indeed equivalent. To see this, we only need to write
$$\Theta_L=(\theta_L'\cdot \theta''),\;\text{for some}\; \theta''\in \Lambda(\Gamma) $$
and observe that $\theta''$ is a unit of $\Lambda(\Gamma)$
if and only if its image $p_{L/L_0}(\theta'')$ is a unit of $\Lambda(\Gal(L_0/K))$.
In the function field case, $L_0$ could be taken to be the constant $\Z_p$-extension, since an explicit expression of
$\Theta_{L_0}$ is already given in \cite{lltt13} (for semi-stable $A$). We can also apply the theorem in the reverse direction: if
\eqref{e:actually} is already known then we can use the theorem together with \eqref{e:able} to determine an explicit expression of
$\Theta_{L_0}$. In \cite{lltt13} this method is used in the case where $char.(K)=p$ and $A$ is a constant ordinary abelian variety.

\end{subsection}

\begin{subsection}{The zero set of $\Theta_L$ and the structure of $X_L^0$}\label{su:zero} Our theory is useful for determining the  $\Lambda(\Gamma)$-modules structures of
$$Y_L:=(\varinjlim_{{F}}\Sel_{p^{\infty}}(A/F)_{div})^{\vee},$$
$$Z_L:=(\varinjlim_{{F}} (\Q_p/\Z_p)\otimes A(F))^\vee,$$
and
$$
X_L^0:=(\le\bigcup_{F}\e(\Sel_{p^{\infty}}(A/L)^{\Gal(L/F)}\e)_{div}\le)^{\vee}$$
as well. In general, we have the surjections $\xymatrix{X_L^0 \ar@{->>}[r] &  Y_L \ar@{->>}[r] & Z_L}$ due to the maps
$$\xymatrix{(\Q_p/\Z_p)\otimes A(F) \ar@{^{(}->}[r] & \Sel_{p^{\infty}}(A/F)_{div} \ar[r]^-{res_{F}} & (\Sel_{p^\infty}(A/L)^{\Gal(L/F)})_{div}}.$$
The above inclusion is from the Kummer exact sequence, it is an isomorphism if the $p$-primary part of the Tate-Shafarevich group of $A$ over $F$ is finite. Thus, if this holds for all $F$ then $Z_L=Y_L$.
In contrast, by the {\em{control theorem}} (see e.g. \cite[Theorem 4]{tan10}) if $L/K$ only ramifies at good ordinary places then the restriction map $res_F$ is surjective for every $F$, and hence $Y_L=X_L^0$.

\subsubsection{The zero set}
The structures of $X_L^0$, $Y_L$ and $Z_L$ are
related to the zero set of $\Theta_L$. For $\theta\in\Lambda(\Gamma)$ define the zero set
$$\Delta_{\theta}:=\{\omega\in {\widehat{\Gamma}}\;\mid\; p_{\omega}(\theta)=0\},$$
where $
p_{\omega}:\O_{\omega}\Lambda(\Gamma)  \longrightarrow  \O_{\omega}
$ is the $\O_{\omega}$-algebra homomorphism extending $\omega:\Gamma\longrightarrow \O_{\omega}^\times$.
Note that for each $\omega\in \widehat{\Gamma}$, the eigenspace $(\O_{\omega}\Sel_{p^{\infty}}(A/L))^{(\omega)}$ is cofinitely generated over $\O_{\omega}$ as it is the Pontryagin dual of the finitely generated $\O_\omega$-module $\O X_L/\ker\le\left [ \le p_\omega \le\right]\cdot X_L$.

\begin{definition}\label{d:rs}
For each $\omega\in \widehat{\Gamma}$, denote
$s(\omega):=\corank_{\O_{\omega}}  \e  (\O_{\omega}\Sel_{p^{\infty}}(A/L))^{(\omega)}$.
\end{definition}
We have the inclusions
$$((\O_\omega \Sel_{p^{\infty}}(A/L))^{(\omega)})_{div} \subset  ((\O_\omega \Sel_{p^{\infty}}(A/L)^{\ker\le\left[\omega\right]})_{div})^{(\omega)}\subset (\O_\omega\le\bigcup_{F}\e(\Sel_{p^{\infty}}(A/L)^{\Gal(L/F)}\e)_{div}\le)^{(\omega)},$$
where the left term is just the $p$-divisible part of the term. Hence,
\begin{equation}\label{e:news}
s(\omega)=\corank_{\O_{\omega}}  \e  (\O_\omega\le\bigcup_{F}\e(\Sel_{p^{\infty}}(A/L)^{\Gal(L/F)}\e)_{div}\le)^{(\omega)}.
\end{equation}

\begin{theorem}\label{t:root}
A character $\omega\in \widehat{\Gamma}$ is contained in
$\triangle_{\Theta_L}$
if and only if
$s(\omega)>0$.
\end{theorem}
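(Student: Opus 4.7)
The plan is to phrase both conditions in terms of the height-$d$ prime $I_\omega:=\ker[p_\omega]$ of $\O_\omega\Lambda(\Gamma)$, exploiting the identification $s(\omega)=\rank_{\O_\omega}(\O X_L/I_\omega X_L)$ recalled just before the theorem.

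One first disposes of the non-torsion case: if $r:=\rank_{\Lambda(\Gamma)}X_L>0$ then $\Theta_L=0$, so $\Delta_{\Theta_L}=\widehat\Gamma$, while a rank-$r$ free $\Lambda(\Gamma)$-summand of $X_L$ descends to a rank-$r$ free $\O_\omega$-summand of $\O X_L/I_\omega X_L$, whence $s(\omega)\ge r>0$ for every $\omega$. Both conditions hold trivially, and one may assume $X_L$ is $\Lambda(\Gamma)$-torsion with $\Theta_L\ne 0$.

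The structure theorem for finitely generated torsion $\Lambda(\Gamma)$-modules then supplies a pseudo-isomorphism $X_L\to E:=\bigoplus_i\Lambda(\Gamma)/(f_i^{n_i})$, with each $f_i\in\Lambda(\Gamma)\simeq\Z_p[[T_1,\dots,T_d]]$ irreducible (height-one primes are principal in the UFD $\Lambda(\Gamma)$) and $\Theta_L=(\prod_i f_i^{n_i})$. Since $I_\omega$ is prime in $\O_\omega\Lambda(\Gamma)$, $p_\omega(\Theta_L)=0$ is equivalent to $f_i\in I_\omega$ for some $i$. A direct computation gives $\rank_{\O_\omega}(\O E/I_\omega E)=\#\{i:f_i\in I_\omega\}$, since each summand $\Lambda(\Gamma)/(f_i^{n_i})$ reduces modulo $I_\omega$ to $\O_\omega/(p_\omega(f_i)^{n_i})$, of $\O_\omega$-rank $1$ or $0$ according as $p_\omega(f_i)$ vanishes or not. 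This settles the equivalence on $E$.

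The main obstacle is transferring the equivalence from $E$ to $X_L$. A pseudo-null $\Lambda(\Gamma)$-module $N$ has support of codimension $\ge 2$, so for $d\ge 2$ the codimension-$d$ prime $I_\omega$ can a priori lie in $\mathrm{Supp}(N)$ and contribute positive $\O_\omega$-rank to $s(\omega)$ without perturbing $\Theta_L$. I would handle this either (i) by invoking, under the running hypotheses that every $v\in S$ is good ordinary or split-multiplicative, the vanishing of the maximal pseudo-null $\Lambda(\Gamma)$-submodule of $X_L$ --- a Greenberg-type Selmer-theoretic fact amenable to the local cohomological machinery of \S2--\S4 --- together with a separate argument bounding the pseudo-null cokernel; or (ii) by induction on $d$ using Theorem~\ref{t:compatible}: for any $\omega$ pick a $\Z_p$-direct summand $\Psi\subset\ker(\omega)$ of $\Gamma$ (possible since $\ker(\omega)$ has full $\Z_p$-rank $d$ in $\Gamma$), let $L'=L^\Psi$, apply Theorem~\ref{t:compatible} to obtain $\Theta_{L'}\cdot\vartheta_{L/L'}=\varrho_{L/L'}\cdot p_{L/L'}(\Theta_L)$, and specialise at $\omega$ (which factors through $\Gamma'=\Gal(L'/K)$) to reduce the claim for $(L,\omega)$ to the claim for $(L',\omega)$ modulo the local and global correction factors; the recursion bottoms out in the $d=1$ case, where $\Lambda(\Gamma)\simeq\Z_p[[T]]$ permits no pseudo-null complication.
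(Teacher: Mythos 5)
Your reformulation---$s(\omega)=\rank_{\O_\omega}\bigl(\O X_L/\ker[p_\omega]\cdot X_L\bigr)$ and $\omega\in\Delta_{\Theta_L}\iff\chi_{\Lambda(\Gamma)}(X_L)\subset\ker[p_\omega]$---is correct, the dispatch of the non-torsion case is fine, and you have correctly located the obstruction: a pseudo-null $\Lambda(\Gamma)$-module $N$ can have $\rank_{\O_\omega}(N/I_\omega N)>0$ without affecting $\Theta_L$ (e.g.\ $N=\Lambda(\Gamma)/(T_1,T_2)$ at the trivial character when $d=2$). The theorem is therefore \emph{not} a formal consequence of the structure theorem and genuinely uses the arithmetic of $X_L$; neither of your two proposed routes around this actually supplies the missing arithmetic input.

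Concretely: route (i) asserts both that $X_L$ has no nontrivial pseudo-null submodule and that the pseudo-null \emph{cokernel} $N$ in $0\to[X_L]\to X_L\to N\to 0$ does not inflate $s(\omega)$; the first claim is not established anywhere in the paper and would itself be a substantial theorem, and the second is precisely the content you would need to prove, not a bookkeeping step. Route (ii) is indeed the paper's strategy, but ``specialise Theorem~\ref{t:compatible} at $\omega$ and reduce to $(L',\omega)$'' elides the real difficulty: Theorem~\ref{t:compatible} compares $\Theta_L$ with $\Theta_{L'}$, but says nothing a priori about how $s(\omega)$ computed from $\Sel_{p^\infty}(A/L)$ compares with the analogous corank computed from $\Sel_{p^\infty}(A/L')$, and that comparison is governed by the kernel $\coh^1(\Psi,A[p^\infty](L))$ and, more seriously, by the local cohomology $\mathcal H^1(A,L/L')$, whose dual $\mathcal W^1_v$ is nontorsion exactly when $\vartheta_v$ vanishes. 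The paper handles this with Lemma~\ref{l:noname} ($s(\omega)>0\iff c(\omega)>c(\omega^p)$), the diagram~\eqref{e:tempdiagram} and Lemma~\ref{l:temp1} reducing the corank jump to jumps in Selmer corank or in $\corank_{\Z_p}\mathcal H^1(A,L/L^\omega)$, and Lemma~\ref{l:h1alllambda} identifying the latter jump with a condition on split-multiplicative places. Moreover the descent is not a single naive step: for $d\ge 3$ the paper chooses the $\gamma_i$'s so that $\Psi_{d-2}$ meets every $\Gamma_v$ and $\Gamma_v^1$ properly, making $\varrho=(1)$ and $\prod_v\vartheta_v=(p^{m})$ so that $p_\omega(\Theta_{L^{(i)}})$ vanishes iff $p_\omega(\Theta_{L^{(i-1)}})$ does; for $d=2$ it splits into the cases $SP(A/L)\neq\emptyset$ (where $\vartheta_v$ does vanish at $\omega$ but $\varrho_{L/L'}$ provably does not, forcing $p_\omega(\Theta_L)=0$) and $SP(A/L)=\emptyset$. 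None of this case analysis or the corank-jump lemmas appear in your sketch, so the proposal as written has a genuine gap precisely at the point you flagged as ``the main obstacle.''
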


This theorem is proved in \S\ref{su:root}.
Let $\theta\in \Lambda(\Gamma)$ be an element vanishing on $\Delta_{\Theta_L}$. By this, we mean that $p_{\omega}(\theta)=0$
for every $\omega\in \Delta_{\Theta_L}$. Since the $\O_{\omega}\Lambda(\Gamma)$-structure of $(\O_{\omega}\Sel_{p^{\infty}}(A/L))^{(\omega)}$
factors through $\xymatrix{\O_{\omega}\Lambda(\Gamma) \ar[r]^-{p_{\omega}} & \O_{\omega}}$, we must have $\theta\cdot (\O_{\omega}\Sel_{p^{\infty}}(A/L))^{(\omega)}=p_{\omega}(\theta)\cdot (\O_{\omega}\Sel_{p^{\infty}}(A/L))^{(\omega)}=0$ for every $\omega\in \Delta_{\Theta_L}$. On the other hand, Theorem \ref{t:root} says if $\omega\not\in \Delta_{\Theta_L}$
then $(\O_{\omega}\Sel_{p^{\infty}}(A/L))^{(\omega)}$ is finite. Thus,
$\theta\cdot(\O_{\omega}\Sel_{p^{\infty}}(A/L))^{(\omega)}$ is always finite for all $\omega\in \widehat{\Gamma}$.

For each finite intermediate extension $F$ of $L/K$,
denote $\Gamma(F):=\Gal(F/K)$ and choose $\O$ so that it contains $\O_{\omega}$ for every
$\omega\in \widehat{\Gamma}(F):=\Hom(\Gamma(F),\mu_{p^{\infty}})$ regard as a finite subgroup of
$\widehat{\Gamma}$. Consider the elements $e_{\omega}:=\sum_{\gamma\in \Gamma(F)}\omega(g)^{-1}\cdot g\in \O[\Gamma(F)]$,
$\omega\in \widehat{\Gamma}(F)$, which are ${|\Gamma(F)|}$-multiples of idempotents.
Multiplying any finite $\O[\Gamma(F)]$-module $W$ by $e_\omega$'s, we can form a homomorphism
$$\bigoplus_{\omega\in \widehat{\Gamma}(F)} W^{(\omega)}\longrightarrow W$$
of finite kernel and cokernel. In particular, by taking $W=((\O\Sel_{p^{\infty}}(A/L))^{\Gal(L/F)})^\vee$ and by the duality,
we have a homomorphism
\begin{equation}\label{e:amorphism}
\O\Sel_{p^{\infty}}(A/L)^{\Gal(L/F)}\longrightarrow \bigoplus_{\omega\in \widehat{\Gamma}(F)} (\O\Sel_{p^{\infty}}(A/L))^{(\omega)}
\end{equation}
of finite kernel and cokernel. Then by multiplying both sides of \eqref{e:amorphism} by $\theta$ we see that
\begin{equation}\label{e:thetazero}
\theta\cdot (\Sel_{p^{\infty}}(A/L)^{\Gal(L/F)})_{div}=0
\end{equation}
as the left-hand side of the equality is finite and $p$-divisible.
By \cite[Proposition 3.3]{grn03} (see \cite[Corollary 3.2.4]{tan10} and the discussion in \S3.3 of the paper for the characteristic $p$ case), if
$$\res_{L/F}:\coh^1(F, A[p^{\infty}])\longrightarrow \coh^1(L,A[p^{\infty}])^{\Gal(L/F)}$$
denote the restriction map, then
\begin{equation}\label{e:kerreslf}
|  \ker\be\left[\e\res_{L/F}\le\right] |<\infty,
\end{equation}
and
\begin{equation}\label{e:cokerreslf}
|  \coker\be\left[\e\res_{L/F}\le\right] |<\infty.
\end{equation}
Then \eqref{e:thetazero} and \eqref{e:kerreslf} imply $\theta\cdot\Sel_{p^{\infty}}(A/F)_{div}=0$ as it is also finite and $p$-divisible.
We have proved:
\begin{corollary}\label{c:vanishingtheta}
If $\theta\in\Lambda(\Gamma)$ vanishes on $\Delta_{\Theta_L}$, then $\theta$ annihilates $(\Sel_{p^{\infty}}(A/L)^{\Gal(L/F)})_{div}$,
$\Sel_{p^{\infty}}(A/F)_{div}$ and $(\Q_p/\Z_p)\otimes A(F)$, for all $F$. Hence $\theta\cdot X_L^{0}=\theta\cdot Y_L=\theta\cdot Z_L=0$.
\end{corollary}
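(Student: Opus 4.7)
The plan is to reduce the annihilation statement to an eigenspace-by-eigenspace computation, using Theorem \ref{t:root} to dispose of characters outside $\Delta_{\Theta_L}$ and the hypothesis on $\theta$ to dispose of those inside, and then to convert the eigenspace information into a statement about $(\Sel_{p^{\infty}}(A/L)^{\Gal(L/F)})_{div}$ via the idempotent homomorphism \eqref{e:amorphism}.

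First I would observe that the $\O_{\omega}\Lambda(\Gamma)$-module structure on $(\O_{\omega}\Sel_{p^{\infty}}(A/L))^{(\omega)}$ factors through the specialization $p_{\omega}$, so $\theta$ acts there by multiplication by the scalar $p_{\omega}(\theta)\in\O_{\omega}$. Splitting into cases: if $\omega\in\Delta_{\Theta_L}$ then by hypothesis $p_{\omega}(\theta)=0$ and $\theta$ kills the eigenspace; if $\omega\not\in\Delta_{\Theta_L}$ then Theorem \ref{t:root} gives $s(\omega)=0$, so the eigenspace is finite. Either way, $\theta\cdot (\O\Sel_{p^{\infty}}(A/L))^{(\omega)}$ is a finite group.

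Next, for a fixed finite $F/K$ inside $L$, I would enlarge $\O$ to contain every $\O_{\omega}$ with $\omega\in\widehat{\Gamma}(F)$ and invoke the homomorphism \eqref{e:amorphism}, which has finite kernel and finite cokernel. Multiplying the source and target by $\theta$ and using the previous paragraph shows that $\theta\cdot\O\Sel_{p^{\infty}}(A/L)^{\Gal(L/F)}$ is finite. But $(\Sel_{p^{\infty}}(A/L)^{\Gal(L/F)})_{div}$ is $p$-divisible, hence so is its image under $\theta$; a finite $p$-divisible group is trivial, giving \eqref{e:thetazero}. To transfer this to $\Sel_{p^{\infty}}(A/F)_{div}$ I would use the control-theorem bounds \eqref{e:kerreslf} and \eqref{e:cokerreslf}: the restriction $\res_{L/F}$ has finite kernel and cokernel, so $\theta\cdot\Sel_{p^{\infty}}(A/F)_{div}$ is again finite and $p$-divisible, hence zero. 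The Kummer inclusion $(\Q_p/\Z_p)\otimes A(F)\hookrightarrow\Sel_{p^{\infty}}(A/F)_{div}$ then yields the annihilation of $(\Q_p/\Z_p)\otimes A(F)$ as well.

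Finally, taking limits over $F$ and passing to Pontryagin duals converts these three annihilation statements into $\theta\cdot X_L^{0}=\theta\cdot Y_L=\theta\cdot Z_L=0$. The main obstacle is really just the bookkeeping at the idempotent step: one must carry along the finite kernel and cokernel of \eqref{e:amorphism} and of $\res_{L/F}$ and be sure the resulting defects are absorbed at the end. This works cleanly here because the groups in question are simultaneously $p$-divisible and become finite after multiplying by $\theta$, a combination that forces them to vanish.
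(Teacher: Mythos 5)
Your proposal follows the paper's argument step for step: factor the action through $p_{\omega}$ and use Theorem \ref{t:root} to get finiteness of $\theta\cdot(\O_{\omega}\Sel_{p^{\infty}}(A/L))^{(\omega)}$ for all $\omega$, push through the finite-kernel-and-cokernel map \eqref{e:amorphism} to conclude $\theta\cdot(\Sel_{p^{\infty}}(A/L)^{\Gal(L/F)})_{div}$ is finite and $p$-divisible hence zero, and then transfer along $\res_{L/F}$ via \eqref{e:kerreslf}, \eqref{e:cokerreslf}. This is exactly the paper's proof, and the bookkeeping you flag (divisible image of a finite group vanishes) is precisely how the paper closes the argument.
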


\subsubsection{A theorem of Monsky}
Now we recall a theorem of Monsky (\cite[Lemma 1.5 and Theorem 2.6]{mon81}).
A subset $T\subset \widehat{\Gamma}$ is called a $\Z_p$-flat of codimension $k>0$, if
there exist $\gamma_1,...,\gamma_k\in \Gamma$ expandable to a $\Z_p$-basis of $\Gamma$
and $\zeta_1,...,\zeta_k\in\mu_{p^{\infty}}$ so that
\begin{equation*}
T=T_{\gamma_1,...,\gamma_k;\zeta_1,...,\zeta_k}:=\{\omega \in \widehat{\Gamma}\;\mid\; {\omega}(\gamma_i)=\zeta_i,i=1,...,k\}.
\end{equation*}

\begin{theorem} {\em{(Monsky)}}
If $\theta\in\Lambda(\Gamma)$ is non-zero, then $\Delta_{\theta}\not=\widehat{\Gamma}$ and
is a finite union of $\Z_p$-flats.
\end{theorem}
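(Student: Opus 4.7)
The plan is to identify $\Lambda(\Gamma)$ with the power series ring $\Z_p[[T_1,\dots,T_d]]$ via a $\Z_p$-basis $\gamma_1,\dots,\gamma_d$ of $\Gamma$, sending $\gamma_i\mapsto 1+T_i$. A character $\omega$ with $\omega(\gamma_i)=\zeta_i\in\mu_{p^\infty}$ then corresponds to evaluation at the cyclotomic point: $p_\omega(\theta)=\theta(\zeta_1-1,\dots,\zeta_d-1)\in\O_\omega$. The non-vanishing assertion $\Delta_\theta\neq\widehat{\Gamma}$ is immediate from this description, because the cyclotomic points are dense in the open polydisc $\{|T_i|<1\}\subset\overline{\Q}_p^{\,d}$, and a nonzero rigid-analytic function on that polydisc cannot vanish on a dense subset.

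For the structural claim I would induct on $d$. When $d=1$, Weierstrass preparation gives $\theta=p^\mu\cdot u\cdot f(T_1)$ with $u\in\Lambda(\Gamma)^\times$ and $f\in\Z_p[T_1]$ a distinguished polynomial of some degree $n$; then $\omega\in\Delta_\theta$ iff $\omega(\gamma_1)-1$ is a root of $f$, and only the at most $n$ roots of the form $\zeta-1$ with $\zeta\in\mu_{p^\infty}$ contribute, each producing one codimension-$1$ flat $T_{\gamma_1;\zeta}$.

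For $d\geq 2$, after a $\Z_p$-linear change of the basis making $\theta$ regular in $T_d$, multivariate Weierstrass preparation yields $\theta=u\cdot P$ with $u\in\Lambda(\Gamma)^\times$ and $P=T_d^n+c_{n-1}T_d^{n-1}+\dots+c_0\in\Lambda(\Gamma_0)[T_d]$ a Weierstrass polynomial, where $\Gamma_0=\langle\gamma_1,\dots,\gamma_{d-1}\rangle$. Then $\Delta_\theta=\Delta_P$, and fibering over $\widehat{\Gamma_0}$, a point $(\omega_0,\omega_1)$ lies in $\Delta_P$ iff $\omega_1(\gamma_d)-1$ is a root of the specialization $P_{\omega_0}\in\O_{\omega_0}[T_d]$. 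Let $D\in\Lambda(\Gamma_0)$ be the discriminant of $P$: by the inductive hypothesis $\Delta_D$ is a finite union of flats in $\widehat{\Gamma_0}$, each of which is isomorphic to $\widehat{\Gamma_0'}$ for a suitable quotient $\Gamma_0'$ of $\Gamma_0$, so that restricting $\theta$ to such a flat times $\widehat{\langle\gamma_d\rangle}$ gives either an element of a smaller Iwasawa algebra (to which the outer induction applies) or zero (contributing the whole flat).

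The main obstacle is the generic fiber analysis, namely showing that the locus over $\widehat{\Gamma_0}\setminus\Delta_D$ where some root of $P_{\omega_0}$ is cyclotomic cuts out only finitely many flats. Monsky handles this by a secondary induction on $n$: one extracts a single analytic root $\alpha(\omega_0)$, proves directly via cyclotomic congruences that $\{\omega_0:\alpha(\omega_0)\in\mu_{p^\infty}-1\}$ is a finite union of flats, and then divides $P$ by the minimal polynomial of $\alpha$ to reduce the $T_d$-degree. Coordinating these nested inductions so that they terminate, and checking that the resulting flats combine correctly, is the delicate part; once done, the description of $\Delta_\theta$ as a finite union of $\Z_p$-flats follows by assembling the contributions from the discriminant locus and the generic fiber.
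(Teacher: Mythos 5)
The paper does not prove this result itself; it cites Monsky \cite[Lemma 1.5 and Theorem 2.6]{mon81} and builds on it. So what matters is whether your outline actually works as a self-contained argument, and here there are several genuine gaps.

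First, the "density" argument for $\Delta_\theta\neq\widehat{\Gamma}$ is wrong as stated. For $\zeta$ of exact order $p^k$ one has $|\zeta-1|=p^{-1/\phi(p^k)}$, so $\mu_{p^\infty}-1$ accumulates only on the boundary of the open unit disc, not in its interior; it is closed and discrete in the open disc. Worse, even a nonzero power series can vanish on an infinite set of cyclotomic points (take $\theta=T_1$, whose zero set contains every $(1,\zeta_2,\dots,\zeta_d)$), so "nonzero rigid-analytic implies not identically zero on the cyclotomic locus" is exactly the content that needs proving, not something that falls out of a density heuristic. Monsky's Lemma 1.5 has to be argued directly, e.g., by induction on $d$: if $f$ vanished at all cyclotomic points, then specializing $T_1\mapsto\zeta_1-1$ would force $(1+T_1)-\zeta_1\mid f$ over $\Z_p[\zeta_1][[T_1,\dots,T_d]]$ for \emph{every} $\zeta_1\in\mu_{p^\infty}$, contradicting Weierstrass preparation in $T_1$ once one makes $f$ regular.

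Second, you assert a "$\Z_p$-linear change of the basis making $\theta$ regular in $T_d$". The standard shearing $T_i\mapsto T_i+c_iT_d$ used in multivariate Weierstrass preparation is \emph{not} induced by a $\Z_p$-basis change of $\Gamma$; a basis change $\gamma_i\mapsto\gamma_i\gamma_d^{a_i}$ acts on $T_i=\gamma_i-1$ by $T_i\mapsto(1+T_i)(1+T_d)^{a_i}-1$, which is nonlinear. That such basis changes still suffice to achieve regularity is true but requires its own (short) argument, and you need it because the flats you want to produce must be cut out by honest elements of $\Gamma$, not by arbitrary linear combinations of the $T_i$.

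Third — and this is the crux — you explicitly defer the "generic fiber analysis" to "Monsky's secondary induction" without carrying it out, and the one-sentence description you give is problematic: over $\widehat{\Gamma_0}\setminus\Delta_D$ the roots of $P_{\omega_0}$ need not be single-valued (they are permuted as $\omega_0$ varies and live in extensions of $\O_{\omega_0}$), so "extract a single analytic root $\alpha(\omega_0)$" is not available globally, and the phrase "cyclotomic congruences" is doing all the work without any content behind it. Since you yourself flag this as "the delicate part," the proposal is an outline rather than a proof; the essential mechanism of Monsky's theorem — why the locus where a root becomes a cyclotomic unit is cut out by finitely many flats — is exactly what is left unproved.
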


Note that for a given $\theta\in\Lambda(\Gamma)$,
if $T\subset \Delta_\theta$ then ${}^\sigma T\subset\Delta_\theta$ for all $\sigma\in \Gal({\overline{\Q}}_p/\Q_p)$,
as $\Delta_\theta$ is invariant under the action of the Galois group.
Also, if $T_{\gamma,\zeta}\subset \Delta_{\theta}$ with $\zeta\in\O$ then $\gamma-\zeta$ divides $\theta$ in $\O\Lambda(\Gamma)$, and vice versa
(see \cite[Lemma 3.3.3]{lltt13} and its proof). In this case, $\gamma-{}^\sigma\zeta$ also divides $\theta$.
\begin{definition}\label{d:simple}
An element $f\in\Lambda(\Gamma)$ is simple, if there exist $\gamma\in\Gamma- \Gamma^p$ and
$\zeta\in\mu_{p^{\infty}}$ so that
$$f=f_{\gamma,\zeta}:=\prod_{\sigma\in\Gal(\Q_p(\zeta)/\Q_p)} (\gamma-{}^{\sigma}\zeta).$$
\end{definition}

If $\zeta$ is of order $p^{n+1}$ and $t_i=\gamma_i-1$, $i=1,...,d,$ where $\gamma_1,...,\gamma_d$ is a $\Z_p$-basis of $\Gamma$,
then $f_{\gamma_1,\zeta}$ is nothing but the polynomial $\sum_{i=0}^{p-1}(t_1+1)^{ip^{n}}$ that is irreducible
in $\Z_p[t_1]$. Hence, a simple element is irreducible in $\Lambda(\Gamma)=\Z_p[[t_1,...,t_d]]$.
Obviously, $\Delta_{f_{\gamma,\zeta}}=\bigcup_{\sigma} {}^{\sigma}T_{\gamma,\zeta}$.
In particular, two simple elements $f$ and $g$ divide each other if and only if $\Delta_f=\Delta_g$. On the other hand, if $T=T_{\gamma_1,...,\gamma_k;\zeta_1,...,\zeta_k}$,
$k\geq 2$, then we can find two relatively prime simple elements both vanishing on $T$, for example,
$f_{\gamma_1,\zeta_1}$ and $f_{\gamma_2,\zeta_2}$.

\subsubsection{The structure of $X_L^0$}
If $W$ is a torsion $\Lambda(\Gamma)$-module then $\theta:=\chi_{\Lambda(\Gamma)}(W)\not=0$ and there exists
an pseudo-isomorphism
\begin{equation}\label{e:Msim}
\iota: (\Lambda(\Gamma)/(f_1^{b_1}))^{a_1}\oplus \cdots \oplus (\Lambda(\Gamma)/(f_l^{b_l}))^{a_l}\oplus \bigoplus_{j=1}^m\Lambda(\Gamma)/(\xi_i) \longrightarrow W.
\end{equation}
where $a_1,...,a_l$, $b_1,...,b_l$ are positive integers, $f_1,...,f_l$ are all the relatively prime simple factors of $\theta$,
and $\xi_1,...,\xi_m\in\Lambda(\Gamma)$ are not divided by any simple element ($l=0$ or $m=0$ is allowed).
The product $\phi=f_1\cdot\cdots\cdot f_l$
vanishes on every codimension one $\Z_p$-flat of $\Delta_{\theta}$. By the above argument,
we can find two products $\varepsilon=g_1\cdot\cdots\cdot g_m$ and $\varepsilon'=g_1'\cdot\cdots\cdot g'_{m'}$,
relatively prime to $\phi$ and to each other,
of simple elements so that both $\varepsilon$ and $\varepsilon'$ vanish on every $\Z_p$-flat of $\Delta_{\theta}$ of codimension
grater than $1$. Then both $\phi\varepsilon$ and $\phi\varepsilon'$ vanish on $\Delta_{\theta}$.
Note that $\iota$ is actually injective as its domain of definition contains no non-trivial pseudo-null submodule (see \S\ref{su:char}).

If $X_L$ is torsion then by taking $W=X_L$ in \eqref{e:Msim} we obtain the exact sequence
\begin{equation}\label{e:xlsim}
\xymatrix{0\ar[r] & \bigoplus_{i=1}^l (\Lambda(\Gamma)/(f_i^{b_i}))^{a_i}\oplus \bigoplus_{j=1}^m\Lambda(\Gamma)/(\xi_i) \ar[r] & X_L \ar[r] & N\ar[r] & 0},
\end{equation}
for some pseudo-null $N$. Let $\phi\varepsilon$ and $\phi\varepsilon'$ be as above.
Let $\sim$ denote pseudo-isomorphism.

\begin{theorem}\label{t:x0}
Suppose $X_L$ is torsion over $\Lambda(\Gamma)$ and assume the above notation. Then both $\phi\varepsilon$ and $\phi\varepsilon'$
annihilate  $X_L^{0}$, $Y_L$, $Z_L$ and $(\Sel_{p^{\infty}}(A/L)^{\Gal(L/F)})_{div}$, $\Sel_{p^{\infty}}(A/F)_{div}$, $(\Q_p/\Z_p)\otimes A(F)$ for all finite intermediate extension $F$ of $L/K$.
Moreover, $X_L^0$ is pseudo isomorphic to $X_L/\phi\cdot X_L$. Namely, if $a_1,...,a_l$ are as in {\em{\eqref{e:xlsim}}}, then
$$X_L^0\sim (\Lambda(\Gamma)/(f_1))^{a_1}\oplus \cdots \oplus (\Lambda(\Gamma)/(f_l))^{a_l}.$$
\end{theorem}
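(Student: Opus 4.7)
The plan is to separate the theorem into its two claims and handle them in turn.

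Part 1 (annihilation of $X_L^0,Y_L,Z_L$, and of each of $(\Sel_{p^\infty}(A/L)^{\Gal(L/F)})_{div}$, $\Sel_{p^\infty}(A/F)_{div}$, $(\Q_p/\Z_p)\otimes A(F)$ by both $\phi\varepsilon$ and $\phi\varepsilon'$) is immediate from Corollary \ref{c:vanishingtheta}, once one checks that these two elements vanish on $\Delta_{\Theta_L}$. By Monsky's theorem $\Delta_{\Theta_L}$ is a finite union of $\Z_p$-flats; $\phi=f_1\cdots f_l$ vanishes on every codimension-one flat by construction and by the remark relating a flat $T_{\gamma,\zeta}\subset\Delta_\theta$ to divisibility by a simple element, while $\varepsilon$ and $\varepsilon'$ vanish on every flat of codimension $\geq 2$ by their choice. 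Hence $\phi\varepsilon$ and $\phi\varepsilon'$ vanish on all of $\Delta_{\Theta_L}$.

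For Part 2, the structural statement $X_L^0\sim X_L/\phi X_L$, I would first sharpen the annihilation to show that $\phi$ kills $X_L^0$ up to pseudo-null. At any height-one prime $P$ of $\Lambda(\Gamma)$ at least one of $\varepsilon,\varepsilon'$ is a unit at $P$: otherwise $P$ would contain a simple factor of each, and since any two distinct height-one principal primes both contained in the height-one prime $P$ must coincide, this would contradict coprimality of $\varepsilon$ and $\varepsilon'$. Hence at every height-one $P$ one of $\phi\varepsilon,\phi\varepsilon'$ equals $\phi$ times a unit, so localizing the annihilation from Part 1 gives $\phi\cdot(X_L^0)_P=0$; equivalently, $\phi\cdot X_L^0$ is pseudo-null. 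The inclusion $\bigcup_F(\Sel_{p^\infty}(A/L)^{\Gal(L/F)})_{div}\subset\Sel_{p^\infty}(A/L)$ dualizes to a surjection $X_L\twoheadrightarrow X_L^0$, which thereby descends modulo pseudo-null to a surjection $\pi:X_L/\phi X_L\twoheadrightarrow X_L^0$. A direct localization at each height-one prime, using \eqref{e:xlsim}, gives
$$X_L/\phi X_L\;\sim\;\bigoplus_{i=1}^l(\Lambda(\Gamma)/(f_i))^{a_i},$$
with characteristic ideal $\prod_i f_i^{a_i}$: at $P=(f_i)$ only the $i$th simple summand survives and $\phi$ reduces to $f_i$ times a unit, while at every other height-one prime $\phi$ is a unit and the quotient vanishes.

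It then suffices to show $\chi_{\Lambda(\Gamma)}(X_L^0)=\prod_i f_i^{a_i}$, because then the surjection $\pi$ has pseudo-null kernel and is a pseudo-isomorphism, and combining with the previous display yields the stated direct-sum decomposition of $X_L^0$ up to pseudo-isomorphism. The divisibility $\chi(X_L^0)\mid\prod_i f_i^{a_i}$ follows from the surjection $\pi$. For the reverse divisibility I would invoke Theorem \ref{t:root} together with \eqref{e:news}: a direct $\O_\omega$-rank calculation on $X_L\otimes_{\Lambda(\Gamma)}\O_\omega$ via \eqref{e:xlsim} yields $s(\omega)=a_i$ for $\omega$ generic in $\Delta_{f_i}$ (the other simple summands and the pseudo-null term each contribute rank zero), and by \eqref{e:news} this $s(\omega)$ equals a fiber rank of $X_L^0$. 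Since $\phi$ kills $X_L^0$ up to pseudo-null, $(X_L^0)_{(f_i)}$ is a finite-dimensional vector space over the fraction field of $\Lambda(\Gamma)/(f_i)$, and upper semi-continuity of fiber ranks for this $\Lambda(\Gamma)/(f_i)$-module forces its generic rank to be at least $a_i$; hence $f_i^{a_i}\mid\chi(X_L^0)$.

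The main obstacle is this last step: one must reconcile the two computations of $s(\omega)$—one on $X_L$ through \eqref{e:xlsim} and the other on $X_L^0$ through \eqref{e:news}—while tracking Pontryagin-dual eigenspace conventions (the $\omega$ vs.\ $\omega^{-1}$ twist) and verifying that both computations are valid on a common open dense subset of $\Delta_{f_i}$, i.e.\ avoiding the union of the other flats and the support of the pseudo-null summand of \eqref{e:xlsim}.
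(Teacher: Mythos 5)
Your proposal follows essentially the same route as the paper. Part 1 is identical (a direct application of Corollary \ref{c:vanishingtheta}); for Part 2 you establish that $\phi\cdot X_L^0$ is pseudo-null by the same coprimality-of-$\varepsilon,\varepsilon'$ argument, and the decisive step in both treatments is identifying $s(\omega^{-1})$ as the fiber rank over $\O_\omega$ of \emph{both} $X_L$ and $X_L^0$ at an $\omega$ generic in $\Delta_{f_i}$, which pins down the exponent. The paper organizes this by applying the structure theorem \eqref{e:Msim} to $X_L^0$ to get integers $c_i$ and then proving $c_i=a_i$; you instead obtain one divisibility from the surjection $X_L/\phi X_L\twoheadrightarrow X_L^0$ modulo pseudo-null and the other from the rank computation — equivalent bookkeeping. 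One caution on phrasing: you write that ``upper semi-continuity of fiber ranks forces its generic rank to be at least $a_i$,'' but upper semi-continuity alone gives the opposite inequality (a fiber rank at a special $\omega$ bounds the generic rank from above, not below). What is actually needed — and what the paper supplies by choosing $\omega$ in $\Delta_{f_i}$ but outside $\Delta_{f_2}\cup\cdots\cup\Delta_{f_l}\cup\Delta_{\xi}\cup\Delta_{\xi_1}\cup\cdots\cup\Delta_{\xi_m}$ — is that the fiber rank \emph{equals} the generic rank on a dense open subset of $\Delta_{f_i}$, so that the two computations of $s(\omega^{-1})$ can be compared at a common $\omega$. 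This is precisely the ``common open dense subset'' verification you correctly flag as the main technical point at the end.
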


\begin{proof}
The first assertion follows from Corollary \ref{c:vanishingtheta}. Consequently, $\phi\cdot X_L^0$ is pseudo-null,
being annihilated by relatively prime $\varepsilon$ and $\varepsilon'$. Thus, $X_L^0\sim X_L^0/\phi\cdot X_L^0$. By taking $W=X_L^0$
in \eqref{e:Msim} we obtain the exact sequence
$$\xymatrix{0\ar[r] & \bigoplus_{i=1}^l(\Lambda(\Gamma)/(f_i))^{c_i}\ar[r]^-{\iota} &  X_L^0\ar[r] & M\ar[r] & 0},$$
for some non-negative integers $c_1,...,c_l$ and some pseudo-null $M$. By comparing this exact sequence with \eqref{e:xlsim}
using the fact that $\Lambda(\Gamma)/(\phi, \xi_i)$ is pseudo-null, we see that $X_L/\phi X_L\sim X_L^0$ if and only if $c_i=a_i$ for each $i$.
We shall only show $c_1=a_1$, as the rest can be proved in a similar way.

First choose a $\xi\in \Lambda(\Gamma)$ that annihilates $M$ and is relatively prime to $\phi$. Let $E_\omega$ denote the field $\O_\omega\Q_p$ and via $\xymatrix{\Lambda(\Gamma) \ar[r]^-{p_\omega} &\O_\omega}\subset E_\omega$ we consider the map $\iota_\omega:=E_{\omega}\otimes_{\Lambda(\Gamma)}\iota$ for an
$$\omega\in \Delta_{f_1}\le -\le (\Delta_{f_2}\cup\cdots\cup
\Delta_{f_l}\cup\Delta_{\xi}\cup \Delta_{\xi_1} \cup \cdots \cup\Delta_{\xi_m}).$$
Now the $E_\omega$-vector space $E_\omega\otimes_{\Lambda(\Gamma)} M=0$ as it is annihilated by $p_\omega(\xi)\not=0$.
Similarly, as $p_\omega(f_i)\not=0$ for $i\geq 2$, $E_{\omega}\otimes_{\Lambda(\Gamma)}\Lambda(\Gamma)/(f_i)=0$ .
On the other hand, as  $p_\omega(f_1)=0$, $E_{\omega}\otimes_{\Lambda(\Gamma)}\Lambda(\Gamma)/(f_1)=E_\omega$.
Also, $\ker\be\left[ \le \iota_\omega\le \right]=0$ as it is annihilated by $p_\omega(\xi)$.
Therefore, $\iota_\omega$ is an isomorphism between $E_\omega^{c_1}$ and $E_\omega\otimes_{\Lambda(\Gamma)} X_L^0$. Hence
$$\rank_{\O_\omega} \O_\omega\otimes_{\Lambda(\Gamma)} X_L^0=\dim_{E_\omega} E_\omega\otimes_{\Lambda(\Gamma)} X_L^0=c_1.$$
Then we deduce $s(\omega^{-1})=c_1$ by using \eqref{e:news} together with the fact that
$$(\O_\omega\le\bigcup_{F}\e(\Sel_{p^{\infty}}(A/L)^{\Gal(L/F)}\e)_{div}\le)^{(\omega^{-1})}=(\O_\omega X_L^0/\ker  \be \left[\le p_\omega \le\right] X_L^0)^\vee
\simeq (\O_\omega\otimes_{\Lambda(\Gamma)} X_L^0)^\vee.$$
Similarly, by tensoring the exact sequence \eqref{e:xlsim} with $E_\omega$, we get $a_1=s(\omega^{-1})$, whence $a_1=c_1$.
\end{proof}
By Theorem \ref{t:x0} there are non-negative integers
$a_1',...,a_l'$, $a_1'',...,a_l''$ with $a_i''\leq a_i'\leq a_i$, so that $Y_L\sim (\Lambda(\Gamma)/(f_1))^{a_1'}\oplus\cdots \oplus (\Lambda(\Gamma)/(f_l))^{a_l'}$ and $Z_L\sim (\Lambda(\Gamma)/(f_1))^{a_1''}\oplus\cdots \oplus (\Lambda(\Gamma)/(f_l))^{a_l''}$.

\end{subsection}

\begin{subsection}{Algebraic functional equations}\label{su:algfun}
Let ${}^{\sharp}:\Lambda(\Gamma)\longrightarrow \Lambda(\Gamma)$, $x\mapsto x^{\sharp}$, denote the $\Z_p$-algebra
isomorphism induced by the involution $\gamma\mapsto \gamma^{-1}$, $\gamma\in \Gamma$.
For each $\Lambda(\Gamma)$-module $W$, let $W^{\sharp}$ denote the $\Lambda(\Gamma)$-module with the same underlying abelian group
as $W$, while $\Lambda(\Gamma)$ acting via the isomorphism ${}^{\sharp}$. For a simple element $f$ we have $(\Lambda(\Gamma)/(f))^\sharp=\Lambda(\Gamma)/(f)$. Thus, if $X_L$ is torsion, the we have the
functional equations ${X_L^0}^{\sharp}\sim X_L^0$, ${Y_L}^{\sharp}\sim Y_L$ and $Z_L^\sharp\sim Z_L$ as well.
Taking the projective limit over $F$ of the dual of
$$\xymatrix{0\ar[r] & \Sel_{p^{\infty}}(A/F)_{div}\ar[r] & \Sel_{p^{\infty}}(A/F)\ar[r]& \Sha(A/F)_p/\Sha(A/F)_{div}\ar[r] & 0,}$$
where $\Sha(A/F)$ denote the Shafarevich-Tate group, we obtain the exact sequence
$$\xymatrix{0\ar[r] & \mathfrak{a}_L\ar[r] &  X_L\ar[r] & Y_L\ar[r] & 0,}  $$
where
$$\mathfrak{a}_L:=\varprojlim_{F} (\Sha(A/F)_p/\Sha(A/F)_{div})^{\vee}.$$
Then by using the Cassels-Tate pairing on each $\Sha(A/F)_p\times \Sha(A^t/F)_p$, one can actually prove the pseudo-isomorphisms
$\mathfrak{a}_L^{\sharp}\sim \mathfrak{a}_L$ and
$X_L^{\sharp}\sim X_L$. The proof is given in \cite{lltt13}, in which the content of Theorem \ref{t:x0} actually plays a key role.

\subsubsection{$\mathfrak{a}_L$ is torsion}
The following is proved in \cite{lltt13} by using Theorem \ref{t:root}, while if every $v\in S$ is a good ordinary place,
then it can be proved by the control theorem.
\begin{theorem}\label{t:a1}
The module $\mathfrak{a}_L$ is finitely generated and torsion over $\Lambda(\Gamma)$.
\end{theorem}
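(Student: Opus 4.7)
The finite generation is essentially automatic. By Proposition~\ref{p:iwasawa}, under the standing hypothesis that every $v\in S$ is good ordinary or split-multiplicative, $X_L$ is finitely generated over the Noetherian ring $\Lambda(\Gamma)$. The exact sequence $0\to\mathfrak{a}_L\to X_L\to Y_L\to 0$ displayed just above the statement then realises $\mathfrak{a}_L$ as a submodule of a finitely generated module, hence itself finitely generated.

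For the torsion assertion, my plan is to apply Theorem~\ref{t:root} to the dual abelian variety $A^t$ and transport the conclusion back to $A$ via the Cassels--Tate pairing. At each finite layer $F/K\subset L/K$, one has the perfect Galois-equivariant pairing
\[
\bigl(\Sha(A/F)_p/\Sha(A/F)_{div}\bigr)\times \bigl(\Sha(A^t/F)_p/\Sha(A^t/F)_{div}\bigr)\longrightarrow \Q_p/\Z_p,
\]
functorial in $F$ in the sense that restriction on the $A$-side is adjoint to corestriction on the $A^t$-side. Pontryagin duality and the inverse limit over $F$ then translate $\mathfrak{a}_L$ into the $\sharp$-twist of
\[
\mathfrak{a}_L^{t}\;:=\;\varprojlim_F \Sha(A^t/F)_p/\Sha(A^t/F)_{div},
\]
where the transition maps are corestrictions and $\sharp$ is the involution introduced in \S\ref{su:algfun}.

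It suffices to exhibit a non-zero $\theta\in\Lambda(\Gamma)$ annihilating $\mathfrak{a}_L^{t}$. Write $\Theta^{t}_L$ for the characteristic ideal of the $A^t$-analogue of $X_L$. Arguing as in Theorem~\ref{t:otr09} for $A^t$ in the characteristic $p$ case (after possibly enlarging $L$ to contain the constant $\Z_p$-extension) and by the standard finite-generation argument in the number-field case, $\Theta^{t}_L$ is non-zero, so by Monsky's theorem $\Delta_{\Theta^{t}_L}\subsetneq\widehat{\Gamma}$. For $\omega\notin\Delta_{\Theta^{t}_L}$, Theorem~\ref{t:root} gives $s_{A^t}(\omega)=0$, so every eigenspace $(\O_\omega\Sel_{p^\infty}(A^t/L))^{(\omega)}$ is finite. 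Choosing $\theta\in\Lambda(\Gamma)$ vanishing on $\Delta_{\Theta^{t}_L}$, an argument parallel to the proof of Corollary~\ref{c:vanishingtheta}---combined with the fact that $\Sha(A^t/F)_p/\Sha(A^t/F)_{div}$ is a subquotient of $\Sel_{p^\infty}(A^t/F)$---yields that $\theta$ annihilates each $\Sha(A^t/F)_p/\Sha(A^t/F)_{div}$ uniformly in $F$, hence $\theta\cdot\mathfrak{a}_L^{t}=0$; applying the involution, $\theta^{\sharp}\cdot\mathfrak{a}_L=0$, which proves that $\mathfrak{a}_L$ is torsion.

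The main obstacle is the functoriality step: the Cassels--Tate pairings at varying layers must be shown to assemble into a $\Lambda(\Gamma)$-linear duality in the projective limit, which requires careful restriction/corestriction adjointness and a check that the $\sharp$-twist enters through the conjugate Galois action on the dual. A secondary technicality is the passage from character-wise finiteness to a uniform annihilator of the inverse limit; here the finite generation of $\mathfrak{a}_L^{t}$ over $\Lambda(\Gamma)$ (inherited from that of $\mathfrak{a}_L$ via Cassels--Tate) together with the finiteness of each individual quotient $\Sha(A^t/F)_p/\Sha(A^t/F)_{div}$ plays a crucial role. As the excerpt notes, if every $v\in S$ is good ordinary one has a cleaner shortcut: then $Y_L=X_L^0$ and the Selmer restrictions $\Sel_{p^\infty}(A/F)\to\Sel_{p^\infty}(A/L)^{\Gal(L/F)}$ are surjective by the control theorem, so $\mathfrak{a}_L$ appears as the limit of control kernels supported on local cohomology at good-ordinary places, each of which is $\Lambda(\Gamma)$-torsion by the standard local computation.
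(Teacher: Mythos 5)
The paper does not actually prove Theorem~\ref{t:a1} here; it cites \cite{lltt13} and says the argument there uses Theorem~\ref{t:root} and the Cassels--Tate pairing. Your proposal reaches for the same two tools and your finite-generation step is fine, so the outline is in the right spirit. However, the torsion half contains two genuine gaps, and the first is fatal.

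\textbf{Gap 1: the claim that $\Theta^t_L\neq 0$.} You write that ``arguing as in Theorem~\ref{t:otr09} for $A^t$ in the characteristic $p$ case (after possibly enlarging $L$ to contain the constant $\Z_p$-extension) and by the standard finite-generation argument in the number-field case, $\Theta^{t}_L$ is non-zero.'' This is false in general. The paper explicitly points out (citing \cite{maru07}) that $X_L$ can be non-torsion even for number fields, and since $A$ and $A^t$ are isogenous, $X^t_L$ is non-torsion exactly when $X_L$ is. Theorem~\ref{t:otr09} requires $L$ to contain the constant $\Z_p$-extension, and ``enlarging $L$'' changes which module $\mathfrak{a}_L$ you are studying; ``finite generation'' over a Noetherian ring does not imply torsion. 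When $X_L$ is non-torsion one has $\Theta_L=\Theta^t_L=0$, hence $\Delta_{\Theta^t_L}=\widehat\Gamma$, and the only $\theta$ vanishing on $\Delta_{\Theta^t_L}$ is $\theta=0$. Your strategy therefore produces no annihilator precisely in the case where Theorem~\ref{t:a1} is nontrivial. The statement is unconditional (subject only to the standing hypotheses on $S$), so this case cannot be discarded.

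\textbf{Gap 2: annihilating $\Sha(A^t/F)_p/\Sha(A^t/F)_{div}$.} Even when $\Theta^t_L\neq 0$, your invocation of ``an argument parallel to Corollary~\ref{c:vanishingtheta}'' does not give what you claim. That corollary shows that a $\theta$ vanishing on $\Delta_{\Theta_L}$ annihilates the \emph{$p$-divisible} pieces $(\Sel_{p^{\infty}}(A/L)^{\Gal(L/F)})_{div}$, $\Sel_{p^{\infty}}(A/F)_{div}$, etc.; the crucial last step there is ``finite and $p$-divisible implies zero.'' The module $\Sha(A^t/F)_p/\Sha(A^t/F)_{div}$ is finite but \emph{not} $p$-divisible, so that step does not apply. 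Knowing $\theta\cdot\Sel_{p^\infty}(A^t/F)_{div}=0$ tells you that $\theta$ factors through the finite quotient $\Sel/\Sel_{div}$; it does not tell you that $\theta$ kills that quotient. ``Subquotient of the Selmer group'' is not enough: a $\Lambda$-linear endomorphism killing the divisible part of a module has no reason to kill the finite quotient. Any correct argument along these lines would have to control the non-divisible part directly (this is presumably where the structure result Theorem~\ref{t:x0} and the functoriality of the Cassels--Tate pairing in the tower enter, as the paper hints), and your proposal does not do that.

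Your remarks about the $\sharp$-twist, the restriction/corestriction adjointness needed to assemble the layerwise Cassels--Tate pairings into a $\Lambda(\Gamma)$-duality in the limit, and the shortcut via the control theorem when all of $S$ is good ordinary are reasonable and in line with the paper's pointers, but they do not repair the two gaps above.
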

\end{subsection}
\begin{subsection}{When is $X_M$ torsion?}\label{su:when} For convenience, call an intermediate extension $M$ of
$L/K$ simple, if $\Gal(M/K)\simeq \Z_p^c$, for some $c$.
For such $M$, by repeatedly applying Theorem \ref{t:compatible}, we deduce
\begin{equation}\label{e:d-c}
p_{L/M}(\Theta_L)\cdot \varrho
=\Theta_{M}\cdot \vartheta,
\end{equation}
where $\varrho\not=0$ and $\vartheta$ is a product of local factors obtained from those $\vartheta_v$'s in Theorem \ref{t:compatible}.
It is easy to see that $\vartheta\not=0$ unless $M$ is fixed by the decomposition subgroup $\Gamma_v$ of some split-multiplicative place $v\in S$.
Thus, the following theorem is proved by taking
$$\mathfrak{T}=\{L^{\Gamma_v}\;\mid \; v\in S \; \text{is a split-multiplicative place}\}.$$
\begin{theorem}\label{t:nontor} Suppose $X_L$ is non-torsion. If $L/K$ only ramifies at good ordinary places, then $X_M$ is non-torsion,
for every simple intermediate extension $M$. In general, there is a finite set $\mathfrak{T}$ consisting of proper simple intermediate extensions of $L/K$, such that $X_M$ is non-torsion unless $M\subset M_j$ for some $M_j\in \mathfrak{T}$.
\end{theorem}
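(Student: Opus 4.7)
The plan is to reduce the question of whether $X_M$ is non-torsion to the vanishing of $\Theta_M$, and then read off the answer from the iterated specialization identity \eqref{e:d-c}: $p_{L/M}(\Theta_L)\cdot\varrho = \Theta_M\cdot\vartheta$, obtained by repeatedly applying Theorem \ref{t:compatible} along a chain $M = L^{(0)}\subset L^{(1)}\subset\cdots\subset L$ of intermediate extensions with $\Gal(L^{(i)}/K)\simeq\Z_p^{c+i}$. Since $X_L$ is non-torsion, $\Theta_L = 0$, so $\Theta_M\cdot\vartheta = 0$ in $\Lambda(\Gamma(M))\simeq\Z_p[[t_1,\ldots,t_c]]$. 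As the latter is an integral domain, $\vartheta\neq 0$ would force $\Theta_M = 0$, hence $X_M$ non-torsion. The problem thus reduces to classifying the simple $M$ for which $\vartheta$ can vanish.

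I would next treat $\vartheta$ as a product, over the steps of the chain and the places $v$, of the pushforwards to $\Lambda(\Gamma(M))$ of the local factors $\vartheta^{(i)}_v$ given in Theorem \ref{t:compatible}, and check each is non-zero outside one family of cases. In (a), $\pi_v = (|\Pi_v^{\Gal(\overline{\F}_v/\F_v)}|)$ is non-zero. In (b), each factor $1-\alpha_i^{-1}\sigma$ of $\textsf{f}_{L',v}$ pushes to $1-\alpha_i^{-1}\bar\sigma\in\O_\omega\Lambda(\Gamma(M))$, which is non-zero because the Weil bound yields $|\alpha_i|_\infty = \sqrt{q_v}>1$, whence $\alpha_i\neq 1$ and the constant term $1-\alpha_i^{-1}$ is non-zero even when $\bar\sigma = 1$. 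In (c), the trivial factor $(1)$ is clearly non-zero; the factor $(\sigma-1)^g$ pushes to $(\bar\sigma-1)^g$, which vanishes iff $\bar\sigma = 1$ in $\Gal(M/K)$, iff $\Gamma_v\subset\Gal(L/M)$, iff $M\subset L^{\Gamma_v}$; the factor $\Lambda(\Gamma')\cdot\mathfrak{w}_v$ occurs only when $\Gamma^{(i)}_v = 0$, which already forces $L^{(i)}\subset L^{\Gamma_v}$, and hence $M\subset L^{\Gamma_v}$. So $\vartheta = 0$ forces $M\subset L^{\Gamma_v}$ for some split-multiplicative $v\in S$.

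The first assertion then follows immediately: the good-ordinary hypothesis rules out split-multiplicative places in $S$, so $\vartheta\neq 0$ for every simple intermediate $M$, hence $X_M$ is non-torsion. For the general statement, I define $\mathfrak{T}$ as the set of fields $L^{\Gamma_v}$ as $v$ runs through the split-multiplicative places of $S$, replacing each $\Gamma_v$ by its $\Z_p$-saturation $\overline{\Gamma_v}\subset\Gamma\simeq\Z_p^d$ when necessary to ensure each member is simple (any simple $M\subset L^{\Gamma_v}$ lies automatically in $L^{\overline{\Gamma_v}}$, as $\Gal(L/M)$ is already $\Z_p$-saturated). Then $\mathfrak{T}$ is finite because $S$ is, and each member is proper in $L$ because $v$ being ramified forces $\Gamma_v\neq 0$. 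The main obstacle I anticipate is the case (b) calculation: combining the Weil bound with tracking the pushforward behaviour of $1-\alpha_i^{-1}\sigma$. Once that is secured, the remaining case-check is routine and the integrality of $\Lambda(\Gamma(M))$ closes the argument.
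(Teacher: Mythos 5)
Your proof is correct and follows the same strategy as the paper: derive \eqref{e:d-c} by iterating Theorem \ref{t:compatible} along a chain of simple intermediate extensions, note $\Theta_L=0$ forces $\Theta_M\cdot\vartheta=0$ in the integral domain $\Lambda(\Gamma(M))$, and then check case-by-case which local factors $\vartheta_v$ can vanish under specialization, concluding this happens only when $M\subset L^{\Gamma_v}$ for a split-multiplicative $v\in S$. You additionally fill in two details the paper leaves implicit — the Weil-bound argument that $\alpha_i\neq 1$, which is what guarantees the good-ordinary factors $1-\alpha_i^{-1}\bar\sigma$ survive every specialization, and the replacement of $\Gamma_v$ by its $\Z_p$-saturation $\overline{\Gamma_v}$ to ensure the members $L^{\overline{\Gamma_v}}$ of $\mathfrak{T}$ are genuinely simple, as the statement requires.
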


By \eqref{e:d-c}, if $\Theta_{M}=0$, then $p_{L/M}(\Theta_L)=0$. 
Put $T_M:=\{\omega\in \widehat{\Gamma}\;\mid\; \omega(\gamma)=1, \;\text{for all}\; \gamma\in\Gal(L/M)\}$,
the $\Z_p$-flat of codimension $d-c$ determined by $M$. Then $p_{L/M}(\Theta_L)=0$ if and only
if $T_M\subset \Delta_{\Theta_L}$, or equivalently, $T_M\subset T_j$ for some $j$ if $\Delta_{\Theta_L}=\bigcup_{j=1}^\nu T_j$. Let $M_j$ be the maximal
simple intermediate extension of $L/K$, so that $T_{M_j}\subset T_j$. Then the following theorem is proved by setting
$$\mathfrak{T}=\{M_j\;\mid\; j=1,...,\nu\}.$$
\begin{theorem}
Suppose $X_L$ is torsion. Then there is a finite set $\mathfrak{T}$ consisting of proper simple intermediate extensions of $L/K$,
such that for each simple intermediate extension $M$ outside $ \mathfrak{T}$, $X_M$ is torsion.
\end{theorem}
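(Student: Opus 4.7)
The plan is to assemble the ingredients developed in the discussion preceding the statement: the specialization identity \eqref{e:d-c}, Monsky's theorem, and the irreducibility of the $\Z_p$-flats $T_M$. Since $X_L$ is torsion, $\Theta_L \neq 0$, so Monsky's theorem gives a decomposition $\Delta_{\Theta_L} = \bigcup_{j=1}^\nu T_j$ with each $T_j$ a proper $\Z_p$-flat.

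First I would construct $\mathfrak{T}$. Any $T_j$ that fails to contain the trivial character $\omega = 1$ cannot contain any $T_M$ (since $1 \in T_M$ always), so such flats can be discarded. After re-indexing, each remaining $T_j$ has the form $T_{\gamma'_{j,1},\dots,\gamma'_{j,k_j};1,\dots,1}$ with $\gamma'_{j,1},\dots,\gamma'_{j,k_j}$ extendable to a $\Z_p$-basis of $\Gamma$; let $M_j$ be the fixed field of the closed subgroup they generate. This subgroup is a direct summand of $\Gamma$ of rank $k_j \geq 1$ (since $T_j \neq \widehat{\Gamma}$), so $M_j$ is a proper simple intermediate extension of $L/K$ and $T_{M_j} = T_j$. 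Set $\mathfrak{T} := \{M_1,\dots,M_\nu\}$.

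Next I would show that if $M$ is a simple intermediate extension with $X_M$ non-torsion, then $M \subset M_j$ for some $j$. Non-torsion of $X_M$ forces $\Theta_M = 0$, and since the factor $\varrho$ in \eqref{e:d-c} is nonzero, this implies $p_{L/M}(\Theta_L) = 0$. The kernel of $p_{L/M}$ is precisely the set of elements of $\Lambda(\Gamma)$ vanishing on $T_M$, so the vanishing is equivalent to $T_M \subset \Delta_{\Theta_L} = \bigcup_j T_j$. The key step is to upgrade this containment in a finite union to a containment in a single $T_j$: $T_M$ is itself a $\Z_p$-flat, corresponding to the integral quotient $\Lambda(\Gamma(M))$, and for each $j$ the intersection $T_M \cap T_j$ is either all of $T_M$ or a proper closed subset of $T_M$. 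Irreducibility of $T_M$ rules out the latter happening for every $j$, so $T_M \subset T_j$ for some $j$. Because $1 \in T_M \subset T_j$, this $T_j$ is among the ones producing an $M_j$, and $T_j = T_{M_j}$. Dually, $T_M \subset T_{M_j}$ translates into $\Gal(L/M_j) \subset \Gal(L/M)$, hence $M \subset M_j$. Taking the contrapositive: if $M$ is outside $\mathfrak{T}$ (i.e. $M \not\subset M_j$ for every $j$), then $X_M$ is torsion.

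The main obstacle is the irreducibility step: a $\Z_p$-flat cannot be written as a finite union of proper sub-flats. After extending scalars to a suitable $\O$ containing the relevant roots of unity $\zeta_i$, each $T_j$ and $T_M$ corresponds to a prime ideal of $\O\Lambda(\Gamma)$, via an iteration of the codimension-one prime/flat dictionary of \cite[Lemma 3.3.3]{lltt13}, reducing the claim to the standard fact that the prime spectrum of an integral domain such as $\O\Lambda(\Gamma)/\mathfrak{p}_M$ is not a finite union of its proper closed subsets. Everything else is bookkeeping based on results already established.
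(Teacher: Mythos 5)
Your proof is correct and follows the same route the paper sketches: derive $p_{L/M}(\Theta_L)=0$ from $\Theta_M=0$ via \eqref{e:d-c}, translate this into $T_M\subset\Delta_{\Theta_L}=\bigcup_j T_j$, upgrade the union-membership to $T_M\subset T_j$ for a single $j$, and let $\mathfrak{T}$ consist of the fixed fields $M_j$ with $T_{M_j}=T_j$. The one step the paper asserts without justification---that $T_M\subset\bigcup_j T_j$ forces $T_M\subset T_j$ for some $j$---is exactly the irreducibility argument you supply, and your explicit construction of $M_j$ (discarding flats not through the trivial character and taking the fixed field of the span of the defining $\gamma_i$'s) correctly unpacks the paper's phrase ``maximal simple intermediate extension with $T_{M_j}\subset T_j$.''
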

Hence, if $d=2$ and $X_L$ is non-torsion (resp. torsion), then $X_M$ is non-torsion (resp. torsion)
for almost all intermediate $\Z_p$-extensions $M$.

\end{subsection}

\begin{subsection}{The growth of $s_n$}\label{su:mw} Let $K_n$ denote the $n$th layer of $L/K$ and
write $I_n$ for the kernel of $\xymatrix{\Lambda(\Gamma) \ar@{->>}[r] & \Z_p[\Gamma(K_n)]}$.
Then $\Sel_{p^{\infty}}(A/L)^{\Gal(L/K_n)}$ is the Pontryagin dual of $X_L/I_n X_L$, whence cofinitely generated over $\Z_p$.
Let $s_n$ denote its corank. Theorem \ref{t:mw} below gives an asymptotic formula of $s_n$. The following lemma as well as its proof
is by I. Longhi. Denote $E_n=\Q_p(\mu_{p^n})$, $\mathcal{E}_n=p^{n(d-2)}$, for $d\geq 2$; $\mathcal{E}_n=1$, for $d=1$.
Let $J$ be an ideal of $\Lambda(\Gamma)$.
\begin{lemma}{\em{(Longhi)}}\label{l:longhi}
If $J=(f_{\gamma,\zeta}^m)$ for some positive integer $m$ and $\delta_\zeta:=[\Q_p(\zeta):\Q_p]$ then
\begin{equation}\label{e:assytoticf}
\rank_{\Z_p}\Lambda/(I_n+J)=\delta_\zeta\cdot p^{n(d-1)} \text{ for } n\gg0\,.
\end{equation}
If $J=(f)$, $f$ not divided by any simple element, or $(f,g)\subset J$, for some relatively prime $f$, $g$, then
\begin{equation}\label{e:assytoticfg}
\rank_{\Z_p}\Lambda/(I_n+J)=O(\mathcal{E}_n).
\end{equation}
\end{lemma}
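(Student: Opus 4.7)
The plan is to pass to $\Q_p$-coefficients and translate the rank computation into a count of characters of $\Gamma(K_n)$ at which $J$ vanishes. The first step is the semisimple decomposition
$$\Q_p\otimes_{\Z_p}\Lambda/I_n \;=\; \Q_p[\Gamma(K_n)] \;=\; \prod_{[\omega]} E_\omega,$$
taken over $\Gal(\overline{\Q}_p/\Q_p)$-orbits of $\omega\in\widehat{\Gamma}(K_n)$, under which $h\in\Lambda$ maps to $(p_\omega(h))_{[\omega]}$. Since each $E_\omega$ is a field, the image of $J$ in $\prod_{[\omega]}E_\omega$ occupies the factor $E_\omega$ in full unless every generator of $J$ has $p_\omega$-value zero, i.e.\ unless $\omega\in\Delta_J:=\bigcap_i\Delta_{h_i}$. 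Consequently
$$\rank_{\Z_p}\Lambda/(I_n+J)\;=\;\dim_{\Q_p}\Q_p\Lambda/(I_n+J)\;=\;\#\bigl(\Delta_J\cap\widehat{\Gamma}(K_n)\bigr),$$
because summing $[E_\omega:\Q_p]$ over orbits in $\Delta_J$ collapses to a count of individual characters. A direct enumeration on a codimension-$k$ flat $T=T_{\gamma_1,\ldots,\gamma_k;\zeta_1,\ldots,\zeta_k}$ (complete $\gamma_1,\ldots,\gamma_k$ to a $\Z_p$-basis of $\Gamma$ and let the remaining coordinates vary freely) then yields $\#(T\cap\widehat{\Gamma}(K_n))=p^{n(d-k)}$ as soon as $\zeta_1,\ldots,\zeta_k\in\mu_{p^n}$.

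For the first formula, take $J=(f_{\gamma,\zeta}^m)$. Because $E_\omega$ is a field, $p_\omega(f^m)=0\iff p_\omega(f)=0$, so $\Delta_J=\Delta_{f_{\gamma,\zeta}}=\bigsqcup_{\sigma\in\Gal(\Q_p(\zeta)/\Q_p)}{}^\sigma T_{\gamma,\zeta}$ is a disjoint union of $\delta_\zeta$ codimension-one flats; summing the counts gives $\delta_\zeta\cdot p^{n(d-1)}$ once $n$ is large enough that $\zeta\in\mu_{p^n}$, which is \eqref{e:assytoticf}.

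For the second formula, the essential observation is that under either hypothesis on $J$, the set $\Delta_J$ contains no codimension-one $\Z_p$-flat. Indeed, by the remark preceding Definition \ref{d:simple}, inclusion of $T_{\gamma,\zeta}$ in $\Delta_J$ is equivalent to divisibility of every generator of $J$ by the simple element $f_{\gamma,\zeta}$ in $\Lambda$. This is excluded when $J=(f)$ with $f$ divisible by no simple element, and also when $(f,g)\subset J$ with $f,g$ coprime, since $f_{\gamma,\zeta}$ would then have to divide both. Monsky's theorem thus presents $\Delta_J$ as a finite union of $\Z_p$-flats of codimension $\geq 2$, and the counting step of the first paragraph bounds each by $p^{n(d-2)}$, delivering the $O(\mathcal{E}_n)$ bound for $d\geq 2$. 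For $d=1$ the same argument forces $\Delta_J=\varnothing$, so the rank even vanishes, still $O(\mathcal{E}_n)=O(1)$.

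The step I expect to require the most care is the opening reduction, because it rests on three ingredients that all need to be verified cleanly: the splitting of $\Q_p[\Gamma(K_n)]$ into a product of fields indexed by Galois orbits; the fact that ideals in a finite product of fields decompose into components each of which is $0$ or everything; and the compensation between the orbit size and the degree $[E_\omega:\Q_p]$ that turns a sum of dimensions over orbits into a count of individual characters. Once that equality is secured, the two cases follow routinely from Monsky's theorem together with the already-recorded dictionary between codimension-one flats in $\Delta_{\bullet}$ and divisibility by simple elements.
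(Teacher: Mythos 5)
Your proof is correct. The key identity $\rank_{\Z_p}\Lambda/(I_n+J)=\#\bigl(\Delta_J\cap\widehat{\Gamma}(K_n)\bigr)$ is the same pivot the paper turns on, but you reach it by a different piece of bookkeeping: you stay over $\Q_p$ and exploit the Wedderburn decomposition $\Q_p[\Gamma(K_n)]=\prod_{[\omega]}E_\omega$ into fields indexed by $\Gal(\overline{\Q}_p/\Q_p)$-orbits of characters, noting that an ideal in a finite product of fields is a product of $0$'s and full factors, and that the orbit size $|[\omega]|$ cancels against $[E_\omega:\Q_p]$ when summing dimensions. The paper instead extends scalars to $E_n=\Q_p(\mu_{p^n})$, decomposes $V_n=E_n\otimes\Lambda/I_n$ into one-dimensional $\omega$-eigenspaces, and observes that $JV_n^{(\omega)}$ is $0$ or all of $V_n^{(\omega)}$ depending on whether $p_\omega(J)$ vanishes; both arguments land on the same character count. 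You also replace the citation of Monsky's Lemma 1.6 for $|T[p^n]|=p^{n(d-k)}$ with a short direct enumeration using a $\Z_p$-basis adapted to the flat, which is valid and a bit more self-contained. Everything else (reduction of $\Delta_{(f^m)}$ to $\Delta_{(f)}$ via the field property, disjointness of the $\delta_\zeta$ conjugate flats, exclusion of codimension-one flats from $\Delta_J$ in the second case, the $d=1$ degenerate case) matches the paper's logic.
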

\begin{proof}
Write $\Gamma_n=\Gamma(K_n)$ and $V_n:=E_n\otimes_{\Z_p}(\Lambda/I_n)$. Then
$$\rank_{\Z_p}\Lambda/(I_n+J)=\dim_{E_n}E_n\otimes_{\Z_p}\big(\Lambda/(I_n+J)\big)=\dim_{E_n}(V_n/JV_n)\,.$$
One has a decomposition of $E_n$-vector spaces
$$V_n=\oplus_{\omega\in \widehat{\Gamma}_n} V_n^{(\omega)}.$$
Moreover $\dim_{E_n}V_n^{(\omega)}=1$ because $V_n\simeq E_n[\Gamma_n]$ is the regular representation.
Obviously
$$JV_n^{(\omega)}=\begin{cases}
0 & \text{if } p_\omega(J)=0 \\
V_n^{(\omega)} & \text{if } p_\omega(J)\neq0\,. \end{cases}$$
Denote $\Delta_J=\{\omega\in \widehat{\Gamma} \;\mid\; p_\omega(J)=0\}$. Then
$$\dim_{E_n}(V_n/JV_n)=|\{\omega\in \widehat{\Gamma}_n:p_\omega(J)=0\}|=|\Delta_J\cap\widehat{\Gamma}_n|=|\Delta_J[p^n]|$$
(the last equality comes from $\widehat{\Gamma}_n=\widehat{\Gamma}[p^n]$).
Here $G[p^n]$ denotes the $p^n$ torsion subgroup of $G$.
Monsky's theorem yields $\Delta_J=\bigcup T_j$, where the $T_j$'s are $\Z_p$-flats. Besides, by \cite[Lemma 1.6]{mon81},
$$
|T_j[p^n]|=p^{n(d-k_j)} \text{ for } n\gg0,
$$
where $k_j$ denotes the codimension of $T_j$.
If $J=(f_{\gamma,\zeta}^m)$, then every $T_i$ is of codimension 1. Hence
\begin{equation*}
\dim_{E_n}(V_n/(f_{\gamma,\zeta}^m)V_n)=|\Delta_{(f_{\gamma,\zeta}^m)}[p^n]|=\delta_\zeta\cdot p^{n(d-1)} \text{ for } n\gg0\,.
\end{equation*}
To show the second assertion we observe that every $T_i$ should be of codimension
greater than $1$, because if some $T_j=T_{\gamma;\zeta}$, then $\Delta_{f_{\gamma,\zeta}}\subset \Delta_J$, whence $f_{\gamma,\zeta}$ divides all elements of $J$. Thus,
\begin{equation*}
\dim_{E_n}(V_n/JV_n)=|\Delta_J[p^n]|=O(\mathcal{E}_n).
\end{equation*}
\end{proof}

\begin{theorem}\label{t:mw} There exists a non-negative integer $\kappa_1$ such that
\begin{equation}
s_n= \kappa_1 p^{nd} +O(p^{n(d-1)}).
\end{equation}
$X_L$ is torsion if and only if $\kappa_1=0$, in this case there exists a non-negative integer $\kappa_2$ such that
\begin{equation}\label{e:kappa2}
s_n= \kappa_2 p^{n(d-1)}+O(\mathcal{E}_n).
\end{equation}
$X_L^0$ is pseudo-null if and only if $s_n=O(\mathcal{E}_n)$.
\end{theorem}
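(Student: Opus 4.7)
The plan is to compute $s_n = \rank_{\Z_p}(X_L/I_n X_L)$ by combining the structure theorem for finitely generated $\Lambda(\Gamma)$-modules with Longhi's Lemma~\ref{l:longhi}, and then to translate the resulting asymptotic into the three assertions.

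The structure theorem yields a pseudo-isomorphism
$$X_L \sim \Lambda(\Gamma)^{\kappa_1} \oplus \bigoplus_{i=1}^{l}\bigl(\Lambda(\Gamma)/(f_i^{b_i})\bigr)^{a_i} \oplus \bigoplus_{j=1}^{m}\Lambda(\Gamma)/(\xi_j),$$
with $\kappa_1 := \rank_{\Lambda(\Gamma)} X_L$, the $f_i = f_{\gamma_i,\zeta_i}$ the simple prime factors, and each $\xi_j$ not divisible by any simple element (this extends \eqref{e:xlsim} beyond the torsion case by including the free part $\Lambda(\Gamma)^{\kappa_1}$). Applying Lemma~\ref{l:longhi} summand by summand gives $\rank_{\Z_p}(\Lambda(\Gamma)/I_n) = p^{nd}$, $\rank_{\Z_p}(\Lambda(\Gamma)/(I_n+(f_i^{b_i}))) = \delta_{\zeta_i} p^{n(d-1)}$ for $n$ large, and $\rank_{\Z_p}(\Lambda(\Gamma)/(I_n+(\xi_j))) = O(\mathcal{E}_n)$. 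The pseudo-null kernel and cokernel of the pseudo-isomorphism are annihilated by ideals of height at least $2$, so the second clause of Lemma~\ref{l:longhi} bounds their contribution to $\rank_{\Z_p}(\cdot/I_n\cdot)$ by $O(\mathcal{E}_n)$ as well. Adding everything,
$$s_n = \kappa_1\, p^{nd} + \kappa_2\, p^{n(d-1)} + O(\mathcal{E}_n), \qquad \kappa_2 := \sum_{i=1}^{l} a_i\,\delta_{\zeta_i}.$$

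From this master formula, the first asymptotic and the equivalence $\kappa_1 = 0 \Leftrightarrow X_L$ torsion are immediate, as is the refined formula~\eqref{e:kappa2} in the torsion case. For the third assertion, if $s_n = O(\mathcal{E}_n)$ then both $\kappa_1$ and $\kappa_2$ must vanish, so $X_L$ is torsion with no simple factor in its structure, whence Theorem~\ref{t:x0} gives $X_L^0 \sim 0$, hence pseudo-null. Conversely, the inclusion $(\Sel_{p^\infty}(A/L)^{\Gal(L/K_n)})_{div} \hookrightarrow \bigcup_F (\Sel_{p^\infty}(A/L)^{\Gal(L/F)})_{div}$ dualizes to a surjection $X_L^0 \twoheadrightarrow M_n^\vee$, where $M_n := (\Sel_{p^\infty}(A/L)^{\Gal(L/K_n)})_{div}$. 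Since $I_n$ annihilates $M_n$, the map factors through $X_L^0/I_n X_L^0$, and the identification $M_n^\vee \simeq (X_L/I_n X_L)/(\text{torsion})$ shows $\rank_{\Z_p} M_n^\vee = s_n$. If $X_L^0$ is pseudo-null, then Lemma~\ref{l:longhi}(ii) applied to an annihilator of $X_L^0$ of height $\ge 2$ forces $\rank_{\Z_p}(X_L^0/I_n X_L^0) = O(\mathcal{E}_n)$, and the inequality $s_n \le \rank_{\Z_p}(X_L^0/I_n X_L^0)$ completes the argument.

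The main technical point is controlling the pseudo-null error through the structure-theorem pseudo-isomorphism, i.e., ensuring that the induced map on $\cdot/I_n\cdot$ differs in $\Z_p$-rank from an isomorphism by only $O(\mathcal{E}_n)$; this is handled by a standard snake-lemma chase together with the second clause of Lemma~\ref{l:longhi}. A secondary but essential input is the duality identification $M_n^\vee \simeq (X_L/I_n X_L)/(\text{torsion})$, which rests on the standard fact that the divisible part of a cofinitely generated discrete $p$-primary group is Pontryagin-dual to the torsion-free quotient of its dual.
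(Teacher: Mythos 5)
Your proof is correct, and the key ingredients coincide with the paper's: Longhi's Lemma~\ref{l:longhi}, the structure-theorem decomposition, the $O(\mathcal{E}_n)$ control on the pseudo-null error, and Theorem~\ref{t:x0} for the pseudo-nullity criterion. Where you diverge is mostly in the packaging. The paper treats the torsion and non-torsion cases via two separate exact sequences --- the pseudo-isomorphism \eqref{e:Msim} for the torsion part and a free-submodule sequence $0 \to \sum_i \Lambda(\Gamma)x_i \to X_L \to W \to 0$ for the non-torsion part --- whereas you write a single structure-theorem pseudo-isomorphism including the free summand $\Lambda(\Gamma)^{\kappa_1}$ and derive all three assertions from the resulting master formula $s_n = \kappa_1 p^{nd} + \kappa_2 p^{n(d-1)} + O(\mathcal{E}_n)$. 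Your route also replaces the paper's invocation of Theorem~\ref{t:root} (used there to establish $\rank_{\Z_p} X_L^0/I_n X_L^0 \ge p^{nd}$ and hence non-pseudo-nullity of $X_L^0$ in the non-torsion case) with the direct dualization $X_L^0 \twoheadrightarrow M_n^\vee$ factoring through $X_L^0/I_n X_L^0$; this is a modest but genuine simplification since it decouples Theorem~\ref{t:mw} from Theorem~\ref{t:root}. The technical heart --- bounding $M/I_n M$ and $\Tor_{\Lambda(\Gamma)}(\Lambda(\Gamma)/I_n,M)$ by finitely many copies of $V_n/(f,g)V_n$ so that the second clause of Lemma~\ref{l:longhi} applies --- is the same in both arguments, and you correctly flag it as the point needing care.
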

\begin{proof} Suppose $W$ is a finitely generated $\Lambda(\Gamma)$-module. Then \eqref{e:Msim} gives rise to the exact sequence
$$\xymatrix{0\ar[r] & \bigoplus_{i=1}^l (\Lambda(\Gamma)/(f_i^{b_i}))^{a_1}\oplus \bigoplus_{j=1}^m\Lambda(\Gamma)/(\xi_i) \ar[r]^\iota
&  W \ar[r] & M \ar[r] & 0}
$$
for some pseudo-null $M$. Then we tensor the exact sequence with $E_n\otimes_{\Z_p}\Lambda(\Gamma)/I_n$. Since $M$ is annihilated by some relatively prime $f$ and $g$, both $M/I_nM$ and $\Tor_{\Lambda(\Gamma)}(\Lambda(\Gamma)/I_n,M)$ are quotients of some direct sums of finite copies of $V_n/(f,g)V_n$.
Hence, formulae \eqref{e:assytoticf} and \eqref{e:assytoticfg} imply
\begin{equation}\label{e:assytoticw}
\rank_{\Z_p} W/I_nW=\dim_{E_n} E_n\otimes_{\Z_p}  W/I_nW=\sum_i \delta_i p^{n(d-1)}+O(\mathcal{E}_n),
\end{equation}
where $\delta_i=\delta_{\zeta_i}$ if $f_i=f_{\gamma_i,\zeta_i}$. If $X_L$ is torsion, then we take $W=X_L$ to prove \eqref{e:kappa2}. In this case, $X_L^0$ is pseudo-null if and only if  $l=0$ (see Theorem \ref{t:x0}) which means $\sum_i \delta_i=0$.

Suppose $X_L$ is non-torsion. Then $\Delta_{\Theta_L}=\widehat{\Gamma}$, whence by Theorem \ref{t:root}, $s(\omega)>0$ for all $\omega$.
Since \eqref{e:amorphism} is of finite kernel and cokernel, we have
$$\rank_{\Z_p} X_L^0/I_nX_L^0=\corank_{\Z_p} (\Sel_{p^{\infty}}(A/L)^{\Gal(L/K_n)})_{div}\geq p^{nd}.$$
By \eqref{e:assytoticw}, $X_L^0$ is non-torsion, and hence not pseudo-null.
Let $x_1,...,x_{\kappa_1}\in X_L$ form a basis of the vector space generated by $X_L$ over the field of fractions of $\Lambda(\Gamma)$. Then we have an exact sequence
$$\xymatrix{0\ar[r] & \sum_i \Lambda(\Gamma)\cdot x_i \ar[r] & X_L \ar[r] & W \ar[r] & 0,}$$
where $W$ is a torsion $\Lambda(\Gamma)$-module. Then we tensor the exact sequence with $E_n\otimes_{\Z_p}\Lambda(\Gamma)/I_n$.
\end{proof}
\begin{remark}{\em{
By a similar argument, one can prove that:}} {\em{(1)}} There exists a finite number of $Z_p$-flats
$T_1,...,T_l$ so that $s(\omega)=\kappa_1$ for each $\omega\not\in \bigcup_i T_i$. {\em{(2)}} If $X_L$ is torsion, then for each $\Z_p$-flat $T\subset \Delta_{\Theta_L}$, there is a finite number of proper $Z_p$-flats
$T_1',...,T_\nu'\subset T$ so that $s(\omega)$ is a constant for each $\omega\in T-\bigcup_i T_i'$. {\em{(3)}} There is a bound of $s(\omega)$ for all $\omega\in \widehat{\Gamma}$.

{\em{Also, Theorem \ref{t:mw} generalizes \cite[Proposition 1.1]{maru03}, as we have $s_n=\rank A(K_n)$ if $L/K$ only ramifies at good ordinary places and $\Sha(A/K_n)_p$ is finite.  }}
\end{remark}
\end{subsection}
\end{section}


\begin{section}{preliminary}\label{s:pre}
In this section, we assume that $\Gamma\simeq \Z_p^d$, with $d\geq 0$,
except in Lemma \ref{l:coco} and
Lemma \ref{l:cofito}, where we assume $d\geq 1$. If $d=0$, we set $\Lambda(\Gamma)=\Z_p$; otherwise,
$\Gal(L/L')=\Psi\simeq\Z_p$ is topologically generated by $\psi$.
Let $F$ denote a finite intermediate extension of $L/K$.

\begin{subsection}{The characteristic ideal}\label{su:char}
Let $W$ be a finitely generated $\Lambda(\Gamma)$-module.
Recall that $W$ is pseudo-null if and only if
there are relatively prime elements
$f_1,...,f_n$, $n\geq 2$, in $\Lambda(\Gamma)$ so that
$f_i\cdot W=0$ for every $i$.
If $W$ is non-torsion over $\Lambda(\Gamma)$, then
$$\chi_{\Lambda(\Gamma)}(W)=0.$$
If $W$ is torsion, then there exist irreducible $\xi_1,...,\xi_m\in\Lambda(\Gamma)$,
and a pseudo-isomorphism
$$\xymatrix{\phi:\bigoplus_{i=1}^m
\Lambda(\Gamma)/\xi_i^{r_i}\Lambda(\Gamma)\ar[r] & W,}$$
\cite[\S4.4, Theorem 5]{bou72}. In this case, the
characteristic ideal is
$$\chi_{\Lambda(\Gamma)}(W)=\prod_{k=1}^m (\xi_i)^{r_i}\not=0.$$
Denote
$$[W]=\bigoplus_{i=1}^m \Lambda(\Gamma)/\xi_i^{r_i}\Lambda(\Gamma).$$
Since each non-zero element in $[W]$
cannot be simultaneously annihilated by relatively primes elements in
$\Lambda(\Gamma)$, there is no non-trivial pseudo-null
submodule of $[W]$, and hence $\phi$ is an embedding.
We shall fix a exact sequence (with $N$ pseudo-null):
\begin{equation}\label{e:pseu}
\xymatrix{0\ar[r] &  [W]\ar[r] & W
\ar[r] &  N\ar[r] & 0.}
\end{equation}

\begin{lemma}\label{l:coco} Suppose $M$ is a $\Lambda(\Gamma)$-module and
$\Gamma_0$ is a closed subgroup of $\Gamma$ such that the composition $\xymatrix{\Gamma_0\ar@{^{(}->}[r] &
\Gamma \ar@{->>}[r] & \Gamma'}$ is an isomorphism.
If $M$ considered as a $\Lambda(\Gamma_0)$-module is finitely generated and torsion, then
$$\chi_{\Lambda(\Gamma')}(M^{\Psi})
=\chi_{\Lambda(\Gamma')}(M/(\psi-1)M).$$
\end{lemma}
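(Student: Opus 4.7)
The plan is to exploit the decomposition $\Gamma \simeq \Gamma_0 \times \Psi$ afforded by the hypothesis that the composition $\Gamma_0 \hookrightarrow \Gamma \twoheadrightarrow \Gamma'$ is an isomorphism, and to identify $\Lambda(\Gamma_0)$ with $\Lambda(\Gamma')$ along this isomorphism. Under this identification, the $\Lambda(\Gamma')$-structures on $M^\Psi$ and on $M/(\psi-1)M$ coincide with the structures inherited from viewing these as $\Lambda(\Gamma_0)$-modules through $\Gamma_0 \subset \Gamma$, so it suffices to compare their characteristic ideals over $\Lambda(\Gamma_0)$.

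First I would write down the four-term exact sequence of $\Lambda(\Gamma_0)$-modules
\begin{equation*}
0 \longrightarrow M^{\Psi} \longrightarrow M \xrightarrow{\;\psi-1\;} M \longrightarrow M/(\psi-1)M \longrightarrow 0,
\end{equation*}
which is $\Lambda(\Gamma_0)$-linear because $\Gamma$ is abelian, so $\psi$ commutes with every element of $\Gamma_0$. Since $M$ is finitely generated and torsion over the Noetherian ring $\Lambda(\Gamma_0)$, both $M^\Psi$ (a submodule of $M$) and $M/(\psi-1)M$ (a quotient of $M$) are themselves finitely generated and torsion over $\Lambda(\Gamma_0)$, and therefore have well-defined non-zero characteristic ideals.

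Next I would split the four-term sequence at $N := (\psi-1)M$ into two short exact sequences
\begin{equation*}
0 \to M^{\Psi} \to M \to N \to 0 \qquad \text{and} \qquad 0 \to N \to M \to M/(\psi-1)M \to 0,
\end{equation*}
and apply the standard multiplicativity of the characteristic ideal in short exact sequences of finitely generated torsion modules over $\Lambda(\Gamma_0)$. This yields the two identities
\begin{equation*}
\chi_{\Lambda(\Gamma_0)}(M) = \chi_{\Lambda(\Gamma_0)}(M^{\Psi}) \cdot \chi_{\Lambda(\Gamma_0)}(N) = \chi_{\Lambda(\Gamma_0)}(N) \cdot \chi_{\Lambda(\Gamma_0)}(M/(\psi-1)M).
\end{equation*}
Since $M$ is torsion, $\chi_{\Lambda(\Gamma_0)}(M)$ is a non-zero principal ideal in the UFD $\Lambda(\Gamma_0) \simeq \Z_p[[t_1,\dots,t_{d-1}]]$; cancelling $\chi_{\Lambda(\Gamma_0)}(N)$ gives the desired equality $\chi_{\Lambda(\Gamma_0)}(M^{\Psi}) = \chi_{\Lambda(\Gamma_0)}(M/(\psi-1)M)$, which is the asserted identity after transporting along $\Lambda(\Gamma_0) \simeq \Lambda(\Gamma')$.

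The only real point requiring care is the invocation of multiplicativity of $\chi$ on short exact sequences; this is standard for finitely generated torsion modules over a Noetherian Krull domain (and in particular over the regular local ring $\Lambda(\Gamma_0)$), and it relies only on the fact that the characteristic ideal is additive on localizations at height-one primes. Once this is in hand, the rest is pure bookkeeping, so I expect no essential obstacle.
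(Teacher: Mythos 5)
Your proof is correct and is exactly the argument the paper has in mind: the paper's proof is the single sentence "It follows from the exact sequence of $\Lambda(\Gamma_0)$-modules" (writing down the same four-term sequence), and your proposal simply fills in the standard details — splitting at $(\psi-1)M$ and invoking multiplicativity of characteristic ideals in short exact sequences of finitely generated torsion modules over the regular local ring $\Lambda(\Gamma_0)$. Your preliminary remarks about $\Gamma\simeq\Gamma_0\times\Psi$ and the compatibility of the $\Lambda(\Gamma')$- and $\Lambda(\Gamma_0)$-structures on $M^{\Psi}$ and $M/(\psi-1)M$ make explicit what the paper leaves implicit, and are accurate.
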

\begin{proof} It follows from the exact sequence of $\Lambda(\Gamma_0)$-modules:
$$\xymatrix{0 \ar[r] &  M^{\Psi}\ar[r] & M \ar[r]^{\psi-1} &  M\ar[r] &  M/(\psi-1)M\ar[r] & 0.}$$
\end{proof}
\end{subsection}

\begin{subsection}{The Hochschild-Serre spectral sequence}\label{su:hs}
In this subsection, let $\mathcal{K}$ be any field.
Let $\mathfrak{S}$ be a sheaf of abelian groups on the flat topology of $\mathcal{K}$ and let
$\mathcal{F}/\mathcal{K}$ be a finite Galois extension with $G=\Gal(\mathcal{F}/\mathcal{K})$.
Then there is the Hochschild-Serre
spectral sequence
\begin{equation}\label{e:spect}
E_2^{p,q}=\coh^p(G,\coh^q(\mathcal{F},\mathfrak{S}))
\Longrightarrow \coh^{p+q}(\mathcal{K},\mathfrak{S}),
\end{equation}
and in particular (see \cite[p.105]{mil80}), the exact sequence
\begin{equation}\label{e:infres}
\xymatrix{0\ar[r] & \coh^1(G,\mathfrak{S}(\mathcal{F}))\ar[r]^{inf^1_{F/\mathcal{K}}} & \coh^1(\mathcal{K},\mathfrak{S})\ar[r]^{res^1_{\mathcal{F}/\mathcal{K}}} &
\coh^1(\mathcal{F},\mathfrak{S})^G\ar[lld]_{d_{\mathcal{F}}^{0,1}} &  \\
 & \coh^2(G,\mathfrak{S}(\mathcal{F}))\ar[r]_{inf^2_{\mathcal{F}/\mathcal{K}}} &
 \ker \be\left[\e res^2_{\mathcal{F}/\mathcal{K}}\right]\ar[r] & \coh^1(G,\coh^1(\mathcal{F},\mathfrak{S}))\ar[r]_-{d_{\mathcal{F}}^{1,1}}
 & \coh^3(G,\mathfrak{S}(\mathcal{F})),}
\end{equation}
where for each $i$, $inf^i_{\mathcal{F}/\mathcal{K}}:\coh^i(G,\mathfrak{S}(\mathcal{F}))\longrightarrow \coh^i(\mathcal{K},\mathfrak{S})$ and $res^i_{\mathcal{F}/\mathcal{K}}:\coh^i(\mathcal{K},\mathfrak{S})\longrightarrow \coh^i(\mathcal{F},\mathfrak{S})$ denote
the inflation and the restriction maps.

\begin{lemma}\label{l:spc}
Suppose $\mathcal{K}\subset \mathcal{E}\subset \mathcal{F}$ and $\mathcal{E}/\mathcal{K}$ is Galois
with $\Gal(\mathcal{F}/\mathcal{E})=H$.
Then we have commutative diagrams:
\begin{equation}\label{e:spcoh1}
\xymatrix{
\coh^1(\mathcal{K},\mathfrak{S})\ar@{=}[d] \ar[r]^{res^1_{\mathcal{E}/\mathcal{K}}}
& \coh^1(\mathcal{E},\mathfrak{S})^{G/H} \ar[r]^{d_{\mathcal{E}}^{0,1}}\ar[d]^{res_{\mathcal{F}/\mathcal{E}}^{0,1}}
& \coh^2(G/H,\mathfrak{S}(\mathcal{E}))\ar[d]^{inf_{\mathcal{F}/\mathcal{E}}^2}\\
\coh^1(\mathcal{K},\mathfrak{S}) \ar[r]^{res^1_{\mathcal{F}/\mathcal{K}}}
& \coh^1(\mathcal{F},\mathfrak{S})^G \ar[r]^-{d_{\mathcal{F}}^{0,1}}
& \coh^2(G,\mathfrak{S}(\mathcal{F})),}
\end{equation}
and
\begin{equation}\label{e:spcoh3}
\xymatrix{
\ker \be\left[\e res_{\mathcal{E}/\mathcal{K}}^2\right]\ar[r]\ar[d] & \coh^1(G/H,\coh^1(\mathcal{E},\mathfrak{S})) \ar[r]^-{d_{\mathcal{E}}^{1,1}}\ar[d]^{res_{\mathcal{F}/\mathcal{E}}^{1,1}} & \coh^3(G/H,\mathfrak{S}(\mathcal{E}))\ar[d]^{inf_{\mathcal{F}/\mathcal{E}}^3}\\
\ker \be\left[\e res_{\mathcal{F}/\mathcal{K}}^2\right]\ar[r] & \coh^1(G,\coh^1(\mathcal{F},\mathfrak{S}))\ar[r]^-{d_{\mathcal{F}}^{1,1}}\ar[r] &
\coh^3(G,\mathfrak{S}(\mathcal{F})),}
\end{equation}
where
$$res_{\mathcal{F}/\mathcal{E}}^{i,j}:\coh^i(G/H,\coh^j(\mathcal{E},\mathfrak{S}))
\longrightarrow \coh^i(G,\coh^j(\mathcal{F},\mathfrak{S}))$$
is induced by the restriction map
$\coh^j(\mathcal{E},\mathfrak{S})\longrightarrow \coh^j(\mathcal{F},\mathfrak{S})$ that
respects the actions of $G/H$ {\em{(}}on the left-hand side {\em{)}} and $G$
 {\em{(}}on the right-hand side {\em{)}}.
\end{lemma}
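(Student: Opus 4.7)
The plan is to deduce both commutative diagrams from the standard functoriality of the Hochschild--Serre spectral sequence. The extensions $\mathcal{E}/\mathcal{K}$ and $\mathcal{F}/\mathcal{K}$ give rise to two Grothendieck spectral sequences
$$E_2^{p,q}(\mathcal{E}/\mathcal{K})=\coh^p(G/H,\coh^q(\mathcal{E},\mathfrak{S}))\Longrightarrow\coh^{p+q}(\mathcal{K},\mathfrak{S})$$
and
$$E_2^{p,q}(\mathcal{F}/\mathcal{K})=\coh^p(G,\coh^q(\mathcal{F},\mathfrak{S}))\Longrightarrow\coh^{p+q}(\mathcal{K},\mathfrak{S}),$$
both abutting to the same object. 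The sequence \eqref{e:infres} is the low-degree exact sequence of the latter, and the analogous sequence holds for the former.

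First I would construct a morphism from the $\mathcal{E}/\mathcal{K}$ spectral sequence to the $\mathcal{F}/\mathcal{K}$ one whose component on the $E_2$-page at position $(p,q)$ is precisely $res_{\mathcal{F}/\mathcal{E}}^{p,q}$, obtained by composing the inflation in group cohomology $\coh^p(G/H,-)\to\coh^p(G,-)$ with the map induced by the restriction in sheaf cohomology $\coh^q(\mathcal{E},\mathfrak{S})\to\coh^q(\mathcal{F},\mathfrak{S})$ (which is $(G/H,G)$-equivariant in the sense spelled out at the end of the lemma). This morphism arises from the natural transformation between the two factorizations of the global sections functor on $\mathcal{K}$, namely sections over $\mathcal{E}$ followed by $G/H$-fixed points versus sections over $\mathcal{F}$ followed by $G$-fixed points. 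By the functoriality of the Grothendieck spectral sequence, it extends to a morphism of spectral sequences whose induced map on abutments is the identity of $\coh^{p+q}(\mathcal{K},\mathfrak{S})$.

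Granted this morphism, the two diagrams are formal consequences. The left square of \eqref{e:spcoh1} is simply transitivity of restriction in sheaf cohomology. The right squares of \eqref{e:spcoh1} and \eqref{e:spcoh3} express that a morphism of spectral sequences commutes with the differentials $d^{0,1}$ and $d^{1,1}$ on the $E_2$-page. The left square of \eqref{e:spcoh3} follows from the same transitivity applied to $res^2$: the induced vertical arrow between the kernels is well defined and compatible with the edge maps of the two spectral sequences.

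The main obstacle, if one wishes to bypass spectral-sequence machinery and argue directly on cocycles, is the compatibility of the transgression $d^{1,1}$ with inflation and restriction; this would require choosing explicit lifts in the bar resolutions and tracking the coboundaries that arise. The spectral-sequence argument packages all such bookkeeping into the single functoriality statement, so that the only real content is the identification of $res_{\mathcal{F}/\mathcal{E}}^{p,q}$ with the component of the morphism of spectral sequences at position $(p,q)$, which is immediate from the explicit description of both as inflation composed with restriction.
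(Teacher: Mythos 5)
Your argument is essentially the same as the paper's: both deduce the commutative squares from a morphism of the two Hochschild--Serre spectral sequences, constructed so that its $E_2$-page component is inflation-on-$G$-cohomology composed with sheaf restriction. The only difference is one of detail: the paper realizes this morphism concretely by lifting a map of Cartan--Eilenberg bicomplexes (using that an injective $G/H$-module is injective as a $G$-module), whereas you invoke functoriality of the Grothendieck spectral sequence at a slightly more abstract level; the key idea and the structure of the deduction are the same.
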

\begin{proof}
Recall that if the complex $C$ is an injective resolution
of $\mathfrak{S}$ (in the category
of sheaves on the flat site of $\text{Spec} \,\mathcal{K}$)
and the bi-complex $I$ is a fully injective (Cartan-Eilenberg) resolution
of $C(\mathcal{F})$ (in the category of $G$-modules), then the spectral sequence (\ref{e:spect}) is obtained from the
bi-complex $I^G$ (with $(I^G)^{pq}=(I^{pq})^G$, the part of $I^{pq}$
fixed by $G$).
Let the bi-complex $J$ be a fully injective resolution
of $C(\mathcal{E})$ (in the category of $G/H$-modules) that gives rise to
the spectral sequence
$$
E_2^{p,q}=\coh^p(G/H,\coh^q(\mathcal{E},\mathfrak{S}))
\Longrightarrow \coh^{p+q}(\mathcal{K},\mathfrak{S}).
$$
Since an injective
$G/H$-module is also injective as $G$-module, we have a $G$-morphism
$J\longrightarrow I$. The commutative diagrams are obtained from the induced
morphism $J^{G/H}\longrightarrow I^G$.
\end{proof}
We are mostly interested in the case where $G$ is cyclic.
Denote ${g}=|G|$, $h=|H|$. We fix a generator of $G$ and choose its $H$-coset
as a generator of $G/H$. Then we have the commutative diagram (see \cite[VIII.4]{ser79}):
$$
\xymatrix{
\coh^2(G/H,\Z) \ar@{=}[r]\ar[d]^{inf^2} & \Hom(G/H,\Q/\Z)  \ar@{=}[r] & \Z/\frac{g}{h}\Z\ar[d]\\
\coh^2(G,\Z) \ar@{=}[r] & \Hom(G,\Q/\Z) \ar@{=}[r] & \Z/g\Z,}
$$
where the right down-arrow is induced by the homomorphism
$\Z\longrightarrow \Z$, $1\mapsto h$.
Let $\delta_G\in \coh^2(G,\Z)$ be the class corresponding to $1\pmod{g\Z}$
in the above diagram.
Then we have the induced commutative
diagram:
\begin{equation}\label{e:inf2}
\xymatrix{
\mathfrak{S}(\mathcal{K})/\Nm_{G/H}(\mathfrak{S}(\mathcal{E})) \ar@{=}[r]\ar[d]
& {\widehat{\coh}}^0(G/H,\mathfrak{S}(\mathcal{E})) \ar[r] & \coh^2(G/H,\mathfrak{S}(\mathcal{E}))\ar[d]^{inf^2_{\mathcal{F}/\mathcal{E}}}\\
\mathfrak{S}(\mathcal{K})/\Nm_{G}(\mathfrak{S}(\mathcal{F})) \ar@{=}[r] & {\widehat{\coh}}^0(G,\mathfrak{S}(\mathcal{F})) \ar[r]
& \coh^2(G,\mathfrak{S}(\mathcal{F})),}
\end{equation}
where the upper and lower right-arrows are, respectively,
cup-product with $\delta_{G/H}$ and $\delta_G$, and the left
down-arrow is induced by the multiplication by $h$ on
$\mathfrak{S}(\mathcal{K})$. Similarly, we have the commutative diagram
\begin{equation}\label{e:inf3}
\xymatrix{
\coh^1(G/H,\mathfrak{S}(\mathcal{E})) \ar[r]\ar[d] &
\coh^3(G/H,\mathfrak{S}(\mathcal{E}))\ar[d]^{inf^3_{\mathcal{F}/\mathcal{E}}}\\
\coh^1(G,\mathfrak{S}(\mathcal{F})) \ar[r] &
\coh^3(G,\mathfrak{S}(\mathcal{F})),}
\end{equation}
where the left down-arrow is the $h$ multiple of $inf^1_{\mathcal{F}/\mathcal{E}}$.
\end{subsection}

\begin{subsection}{Cohomology groups of $A[p^{\infty}]$}\label{su:torsion} In this section
let $D$ be a discrete $p$-primary abelian group cofinitely generated over $\Z_p$.

\subsubsection{} Assume that $\Gamma$ acts continuously on $D$.
\begin{lemma}\label{l:cofito}
Suppose $D^{\Gamma}$ is finite.
Write
$C_1=\coh^1(\Psi,D)$, $C_2=D^{\Psi}$.
Then $C_1^{\vee}$ is finitely generated over $\Z_p$ and
$$\chi_{\Lambda(\Gamma')}({C_1}^\vee)
=\begin{cases}
\chi_{\Lambda(\Gamma')}({C_2}^\vee), & \;\text{if} \; d\geq 2;\\
\chi_{\Lambda(\Gamma')}(C_2/C_2\cap D_{div}), & \;\text{if} \; d=1.\\
\end{cases}
$$
\end{lemma}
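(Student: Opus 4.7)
Since $\Psi\simeq\Z_p$ is procyclic and $D$ is a discrete $p$-primary $\Psi$-module, $\coh^1(\Psi,D)=D/(\psi-1)D$, so $\psi-1$ fits into the four-term exact sequence
$$0\to C_2\to D\xrightarrow{\psi-1} D\to C_1\to 0.$$
Pontryagin duality (exact on $p$-primary discrete groups) turns this into an exact sequence of $\Lambda(\Gamma')$-modules
$$0\to C_1^\vee\to D^\vee\xrightarrow{\phi} D^\vee\to C_2^\vee\to 0,$$
where $\phi$ is the dual of $\psi-1$ and coincides with the action of $\psi^{-1}-1$ on $D^\vee$, differing from multiplication by $\psi-1$ only by the unit $-\psi^{-1}\in\Lambda(\Gamma)^\times$. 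Since $D^\vee$ is finitely generated over the Noetherian ring $\Z_p$, its submodule $C_1^\vee$ is as well, which gives the first assertion.

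\textbf{Case $d\geq 2$.} Because $\Lambda(\Gamma')$ has Krull dimension $d\geq 2$, while the image of $\Lambda(\Gamma')$ in $\End_{\Z_p}(D^\vee)$ is a finitely generated $\Z_p$-subalgebra of Krull dimension at most $1$, the module $D^\vee$ is $\Lambda(\Gamma')$-torsion, and hence so are $C_1^\vee$, $C_2^\vee$ and $I:=\image(\phi)$. Splitting the four-term sequence at $I$ and applying multiplicativity of characteristic ideals to the two resulting short exact sequences yields
$$\chi_{\Lambda(\Gamma')}(C_1^\vee)\cdot\chi_{\Lambda(\Gamma')}(I)=\chi_{\Lambda(\Gamma')}(D^\vee)=\chi_{\Lambda(\Gamma')}(I)\cdot\chi_{\Lambda(\Gamma')}(C_2^\vee),$$
and cancelling the non-zero $\chi(I)$ in the integral domain $\Lambda(\Gamma')$ produces $\chi(C_1^\vee)=\chi(C_2^\vee)$.

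\textbf{Case $d=1$.} Now $\Lambda(\Gamma')=\Z_p$, $\chi_{\Z_p}$ is the order of a finite module, and the main obstacle is to handle the divisible part $D_{div}$ which would otherwise contribute ambiguously to both sides. Since $D^\Psi\cap D_{div}$ is a divisible subgroup of the finite group $D^\Psi$, it is zero, so $\psi-1$ is injective on $D_{div}$; dualizing, $\psi^{-1}-1$ has dense and hence closed image in the compact group $D_{div}^\vee\simeq\Z_p^r$, so in fact $\psi-1$ is an automorphism of $D_{div}$. Applying the snake lemma to the short exact sequence $0\to D_{div}\to D\to D/D_{div}\to 0$ with vertical map $\psi-1$ then collapses all contributions from $D_{div}$, yielding
$$C_2\simeq(D/D_{div})^\Psi\quad\text{and}\quad C_1\simeq(D/D_{div})/(\psi-1)(D/D_{div}).$$
Since $D/D_{div}$ is finite, the elementary equality $|M^\Psi|=|M/(\psi-1)M|$ for a finite $\psi$-module $M$ forces $|C_1|=|C_2|$. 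Combined with $C_2\cap D_{div}=D_{div}^\Psi=0$, this gives precisely $\chi_{\Z_p}(C_1^\vee)=(|C_1|)=(|C_2|)=\chi_{\Z_p}(C_2/C_2\cap D_{div})$.
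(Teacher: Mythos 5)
Your setup and the case $d\geq 2$ are correct and follow essentially the same route as the paper: you dualize the four-term sequence $0\to C_2\to D\to D\to C_1\to 0$, note that $D^\vee$ is $\Lambda(\Gamma')$-torsion when $d\geq 2$, and cancel $\chi_{\Lambda(\Gamma')}(\image\phi)$ from the two resulting short exact sequences. The paper delegates this cancellation to Lemma~\ref{l:coco}, but the content is the same.

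The case $d=1$ contains a genuine error. You assert that $D^\Psi\cap D_{div}$ is a divisible subgroup of the finite group $D^\Psi$ and hence zero, but the intersection of the divisible group $D_{div}$ with another subgroup need not be divisible. Take $D=\Q_p/\Z_p$ with $\psi$ acting by multiplication by $1+p$: then $D^\Gamma=D^\Psi=(\Q_p/\Z_p)[p]\simeq\Z/p\Z$ is finite, so the hypothesis holds, yet $D^\Psi\cap D_{div}=D^\Psi\neq 0$ is not divisible. Consequently your deduction that $\psi-1$ is injective, hence bijective, on $D_{div}$ fails; its kernel on $D_{div}$ is exactly $D_{div}^\Psi$, which need not vanish. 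Note that the statement of the lemma already anticipates $C_2\cap D_{div}\neq 0$: if it were always zero the right-hand side would simply read $\chi_{\Z_p}(C_2^\vee)$ with no quotient.

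The correct fact, and the one the paper uses, is that $\psi-1$ is surjective---but not in general injective---on $D_{div}$: its kernel $D_{div}^\Psi$ is finite (being contained in $D^\Psi$), so dually $\psi^{-1}-1$ has finite cokernel on $D_{div}^\vee\simeq\Z_p^r$, hence is injective on that free $\Z_p$-module, hence $\psi-1$ is onto $D_{div}$. The snake lemma for $0\to D_{div}\to D\to\bar D\to 0$ with vertical map $\psi-1$ then gives the exact sequence $0\to D_{div}^\Psi\to C_2\to\bar D^\Psi\to 0$ together with $C_1\simeq\bar D/(\psi-1)\bar D$. Since $\bar D$ is finite, $|\bar D^\Psi|=|\bar D/(\psi-1)\bar D|$, and therefore $|C_1|=|\bar D^\Psi|=|C_2|/|D_{div}^\Psi|=|C_2/(C_2\cap D_{div})|$, as required. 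In the example above this correctly yields $C_1=\bar D^\Psi=0$, whereas your version predicts $|C_1|=|C_2|=p$.
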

\begin{proof}
Denote $\Psi^{(n)}=\Psi^{p^n}$ and $\Psi_n=\Psi/\Psi^{(n)}$.
Every $x\in D$ is fixed by $\Psi^{(n)}$, for some $n$.
If $x$ is of order $p^m$, then, for $l\geq n+m$,
the norm $\Nm_{\Psi_{l}}(x)=0$, and hence $x$ determines a class
in $\coh^1(\Psi_{l},D^{\Psi^{(l)}})$.
Therefore, $C_1=D/(\psi-1)D$ and consequently, ${C}_1^\vee=
({D}^\vee)^{\Psi}$. On the other hand, ${C}_2^\vee
=D^\vee/(\psi-1){D}^\vee$. If $d\geq 2$, then the lemma follows from
Lemma \ref{l:coco}, as ${D}^\vee$, being finitely generated over $\Z_p$, is torsion over $\Lambda(\Gamma_0)$
for any choice of $\Gamma_0$.
If $d=1$, then $\Psi=\Gamma$ and $D^{\Psi}$ is finite.
It follows that
$D_{div}\longrightarrow D_{div}$, $x\mapsto (\psi-1)x$, is surjective.
Thus, the snake lemma for the multiplication of $(\psi-1)$
on the exact sequence
$$0\longrightarrow D_{div}\longrightarrow D\longrightarrow {\bar D}
\longrightarrow 0$$
gives rise to the exact sequence
$$0\longrightarrow D_{div}^{\Psi}\longrightarrow C_2\longrightarrow
{\bar D}^{\Psi}\longrightarrow 0$$
as well as the isomorphism
$$C_1\simeq {\bar D}/(\psi-1){\bar D}.$$
Since ${\bar D}$ is torsion over $\Lambda(\Gamma_0)=\Z_p$, for $\Gamma_0=0=\Gamma'$,
Lemma \ref{l:coco} and the above
exact sequences imply $\chi_{\Z_p}(C_1)=\chi_{\Z_p}({\bar D}^{\Psi})=\chi_{\Z_p}(C_2/C_2\cap D_{div})$.
Now, ${C}_1^\vee\simeq C_1$, as $C_1$ is finite.
\end{proof}
\begin{corollary}\label{c:chicoh1} If $d\geq 3$, then ${\coh^1(\Psi,A[p^{\infty}](L))}^\vee$ is pseudo-null over $\Lambda(\Gamma')$. In general,
$$\chi_{\Lambda(\Gamma')}({\coh^1(\Psi,A[p^{\infty}](L))}^\vee)
=\begin{cases}
\chi_{\Lambda(\Gamma')}({A[p^{\infty}](L')}^\vee), & \;\text{if} \; d\geq 2;\\
(\frac{|A[p^{\infty}](K)|}{|A[p^{\infty}](K)\cap A[p^{\infty}](L)_{div}|}), & \;\text{if} \; d=1.\\
\end{cases}
$$
\end{corollary}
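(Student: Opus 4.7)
My approach is to apply Lemma~\ref{l:cofito} with $D = A[p^\infty](L)$. I will first verify the hypothesis, that $D^\Gamma = A[p^\infty](K)$ is finite, which follows from Mordell--Weil (or Lang--N\'eron in the function-field case): $A(K)_{\mathrm{tor}}$ is finite and contains $A[p^\infty](K)$. Since $\Psi = \Gal(L/L')$, we have $C_2 = D^\Psi = A[p^\infty](L')$, so the first case of the lemma immediately yields
\[
\chi_{\Lambda(\Gamma')}\bigl(\coh^1(\Psi, A[p^\infty](L))^\vee\bigr) = \chi_{\Lambda(\Gamma')}\bigl(A[p^\infty](L')^\vee\bigr) \quad \text{for } d \geq 2.
\]

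For the $d = 1$ case I note that $\Psi = \Gamma$, $L' = K$, $\Gamma' = 0$, and $\Lambda(\Gamma') = \Z_p$. The second case of the lemma expresses the characteristic ideal as $\chi_{\Z_p}(C_2/(C_2 \cap D_{\mathrm{div}}))$; since $C_2 = A[p^\infty](K)$ is finite, this $\Z_p$-characteristic ideal is generated by the order $|A[p^\infty](K)|/|A[p^\infty](K) \cap A[p^\infty](L)_{\mathrm{div}}|$, matching the stated expression.

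For the pseudo-nullity assertion when $d \geq 3$, the formula above reduces the task to showing that $N := A[p^\infty](L')^\vee$ is pseudo-null over $\Lambda(\Gamma') \cong \Z_p[[t_1, \ldots, t_{d-1}]]$. Because $A[p^\infty](L') \hookrightarrow A[p^\infty] \cong (\Q_p/\Z_p)^{2g}$ is cofinitely generated over $\Z_p$, the dual $N$ is finitely generated over $\Z_p$. I will fix two independent topological generators $\gamma_1, \gamma_2 \in \Gamma'$ with $t_i = \gamma_i - 1$ and apply the Cayley--Hamilton theorem for endomorphisms of a finitely generated module over a commutative ring to each $\gamma_i$ acting on $N$; this produces monic polynomials $P_i(T) \in \Z_p[T]$ with $P_i(\gamma_i) \cdot N = 0$, and each $P_i(1 + t_i)$ sits in the subring $\Z_p[[t_i]] \subset \Lambda(\Gamma')$.

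The main obstacle will be verifying that $P_1(\gamma_1)$ and $P_2(\gamma_2)$ are coprime in $\Lambda(\Gamma')$. Assuming $N \neq 0$, neither element can be a unit (else $N$ would vanish), forcing $P_i(1) \in p\Z_p$. Given any common divisor $h$, reducing modulo $t_2$ produces an element of $\Z_p[[t_1, t_3, \ldots, t_{d-1}]]$ that divides both $P_1(1 + t_1)$---a $t_1$-polynomial whose leading coefficient is a unit, hence not divisible by $p$---and $P_2(1) \in p\Z_p$; this will force $h \bmod t_2$, and hence $h$ itself, to be a unit. Thus $N$ is annihilated by two coprime elements, is pseudo-null, and the conclusion transfers to $\coh^1(\Psi, A[p^\infty](L))^\vee$.
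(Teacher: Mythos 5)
Your derivation of the characteristic-ideal formula follows the paper's route exactly: apply Lemma~\ref{l:cofito} with $D = A[p^\infty](L)$, identify $C_2 = D^\Psi = A[p^\infty](L')$, and read off the two cases (the $d=1$ case because $C_2$ is finite). That part is correct and is the same proof.

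For the pseudo-nullity claim when $d \geq 3$, the paper simply invokes the standard fact that a $\Lambda(\Gamma')$-module finitely generated over $\Z_p$ is pseudo-null once $\mathrm{rank}_{\Z_p}\Gamma' \geq 2$; you attempt a self-contained verification, and the coprimality step has a gap. Reducing a common divisor $h$ modulo $t_2$, you get that $\bar h := h \bmod t_2$ divides both $P_1(1+t_1)$ and $P_2(1)$, and you conclude $\bar h$ is a unit because $P_1(1+t_1)$ is not divisible by $p$ while $P_2(1)\in p\Z_p$. But $P_2(1)\in p\Z_p$ allows $P_2(1)=0$, which happens precisely when $\gamma_2$ has eigenvalue $1$ on $N\otimes\Q_p$, i.e.\ when $A[p^\infty](L')^{\overline{\langle\gamma_2\rangle}}$ has positive corank; this is a real possibility (for instance when $L'$ contains the constant $\Z_p$-extension and $A$ has a nontrivial $K/\F_q$-trace, the whole subgroup $\Gal(L'/L_0)$ acts trivially on an infinite part of $A[p^\infty](L')$). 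When $P_2(1)=0$ the divisibility $\bar h \mid 0$ carries no information, so your argument does not close. A clean repair uses the observation you already made: $P_i(\gamma_i)=P_i(1+t_i)$ is a polynomial in $t_i$ with unit leading coefficient, hence not divisible by $p$, hence associate in $\Lambda(\Gamma')$ to a distinguished polynomial in $t_i$. Two distinguished polynomials in distinct variables $t_1, t_2$ form a regular sequence in $\Lambda(\Gamma')$ (modding out by the first gives a power-series ring over the finite flat $\Z_p$-algebra $\Z_p[t_1]/(P_1)$, in which the second is still a non-zero-divisor), and a regular sequence of length two in the UFD $\Lambda(\Gamma')$ is automatically coprime. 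This gives the pseudo-nullity without any case analysis on $P_i(1)$.
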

\begin{proof} The second assertion is immediately from Lemma \ref{l:cofito}.
Since ${A[p^{\infty}](L')}^\vee$ is finitely generated over $\Z_p$, it is
pseudo-null over $\Lambda(\Gamma')$, if $d\geq 3$, whence the first assertion follows.
\end{proof}
\begin{corollary}\label{c:cofito}
Suppose $\Gamma\simeq \Z_p$ and $D$ is finite.
Then
$$|\coh^1(\Gamma, D)|=|D^{\Gamma}|.$$
\end{corollary}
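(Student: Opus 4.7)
The plan is to reduce the computation to a finite linear-algebra statement about the endomorphism $\gamma - 1$ of $D$, where $\gamma$ is a topological generator of $\Gamma \simeq \Z_p$.

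First, I would identify $\coh^1(\Gamma, D)$ with the cokernel of $\gamma - 1$ acting on $D$. This is already implicit in the proof of Lemma \ref{l:cofito}: since $D$ is a discrete $p$-primary $\Gamma$-module, every $x \in D$ is fixed by some open subgroup $\Gamma^{(n)} = \Gamma^{p^n}$, and if $x$ has order $p^m$ then for $\ell \geq n+m$ the norm $\Nm_{\Gamma_\ell}(x) = 0$, so $x$ represents a class in $\coh^1(\Gamma_\ell, D^{\Gamma^{(\ell)}})$. Passing to the direct limit, and using that $(\gamma^{p^\ell} - 1) D^{\Gamma^{(\ell)}} = 0$, one gets the identification $\coh^1(\Gamma, D) = D/(\gamma - 1) D$.

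Next, I would observe the tautological identification $D^{\Gamma} = \ker(\gamma - 1 : D \to D)$, since $\Gamma$ is topologically generated by $\gamma$ and $D$ is discrete.

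Finally, I would invoke finiteness of $D$. From the exact sequence
$$0 \longrightarrow \ker(\gamma-1) \longrightarrow D \xrightarrow{\;\gamma-1\;} D \longrightarrow \coker(\gamma-1) \longrightarrow 0,$$
counting orders gives $|D| = |\ker(\gamma-1)|\cdot|\image(\gamma-1)|$ and $|D| = |\coker(\gamma-1)|\cdot|\image(\gamma-1)|$, so $|\ker(\gamma-1)| = |\coker(\gamma-1)|$. Combining with the two identifications yields $|\coh^1(\Gamma, D)| = |D/(\gamma-1)D| = |D^{\Gamma}|$.

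There is no real obstacle here; the whole content is the identification of $\coh^1$ with the cokernel, and that was already spelled out for $\Psi$ inside the proof of Lemma \ref{l:cofito} and applies verbatim in the present $\Gamma \simeq \Z_p$ setting. The statement is essentially the triviality of the Herbrand quotient for finite modules, packaged for a procyclic group.
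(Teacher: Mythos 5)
Your proposal is correct and takes essentially the same approach as the paper: the corollary is the $d=1$, $D_{div}=0$ specialization of Lemma~\ref{l:cofito}, whose proof contains exactly the two ingredients you isolate — the identification $\coh^1(\Gamma,D)\cong D/(\gamma-1)D$ and the Herbrand-quotient count via the four-term exact sequence (the latter being Lemma~\ref{l:coco} read over $\Lambda(\Gamma_0)=\Z_p$).
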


\subsubsection{} Next, consider the case where a topological group
$\mathcal{C}\simeq\Z_p$ acts continuously on $D$. We are going to discuss some associated modules over
$\Lambda(\mathcal{C}):=\Z_p[[\mathcal{C}]]$ as well as their characteristic ideals.

Denote $\mathcal{C}^{(m)}=\mathcal{C}^{p^m}$ and $D_m=D^{\mathcal{C}^{(m)}}$ for $m=0,1,...,\infty$. Also, denote $\mathcal{G}^{m'}_m=\mathcal{C}^{(m)}/\mathcal{C}^{(m')}$ for $m'=m,....,\infty$. Define
$\mathrm{M}:=\varprojlim_m D_m$, with the limit taken over norm maps $\Nm_{\mathcal{G}^{m'}_m}:D_{m'}\longrightarrow D_m$.
Define $\mathrm{N}:=D^\vee$, the Pontryagin dual of $D$, and $\mathrm{T}:=\varprojlim_n D[p^n]$, the Tate-module of $D$.

Let $c$ be a topological generator of $\mathcal{C}$. Let $\epsilon_1,...,\epsilon_m$, counted with multiplicities, be the eigenvalues of the action of $c$ on $\mathrm{T}$.
Note that $\epsilon_1\cdots\epsilon_m=\det (c)$ is a $p$-adic unit, and hence so is each $\epsilon_i$.
Also, $\epsilon_j=1$ for some $j$ if and only if $D_0=D^\mathcal{C}$ is infinite.

\begin{lemma}\label{p:torcomp}
We have $\chi_{\Z_p}((D_0\cap D_{div})^\vee)=\prod_{j=1}^m(1-\epsilon_j)=\prod_{j=1}^m(1-\epsilon_j^{-1}).$
\end{lemma}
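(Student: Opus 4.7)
The plan is to reduce the computation to studying the action of $c-1$ on the Tate module $\mathrm{T}$ and on $\mathrm{V}:=\mathrm{T}\otimes_{\Z_p}\Q_p$. Since $D$ is $p$-primary and cofinitely generated over $\Z_p$, we have the tautological short exact sequence
\begin{equation*}
0\lr \mathrm{T}\lr \mathrm{V}\lr D_{div}\lr 0,
\end{equation*}
in which $D_{div}\simeq\mathrm{V}/\mathrm{T}$ and the arrows are $\mathcal{C}$-equivariant. Applying the snake lemma to multiplication by $c-1$ on the rows, I obtain the six-term exact sequence
\begin{equation*}
0\lr \mathrm{T}^{\mathcal{C}}\lr \mathrm{V}^{\mathcal{C}}\lr (D_{div})^{\mathcal{C}}\lr \mathrm{T}/(c-1)\mathrm{T}\lr \mathrm{V}/(c-1)\mathrm{V}\lr D_{div}/(c-1)D_{div}\lr 0.
\end{equation*}
The key observation is that $D_0\cap D_{div}=(D_{div})^{\mathcal{C}}$, since $D_{div}$ is $\mathcal{C}$-stable.

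Next I split into two cases according to whether $1$ is an eigenvalue of $c$ on $\mathrm{T}$. If no $\epsilon_j=1$, then $c-1$ is invertible on $\mathrm{V}$, so $\mathrm{V}^{\mathcal{C}}=0$ and $\mathrm{V}/(c-1)\mathrm{V}=0$; the snake sequence collapses to $(D_{div})^{\mathcal{C}}\simeq \mathrm{T}/(c-1)\mathrm{T}$. This latter group is finite of order equal to $|\det_{\Z_p}(1-c\mid \mathrm{T})|_p^{-1}=|\prod_j(1-\epsilon_j)|_p^{-1}$, and since its Pontryagin dual has the same order, $\chi_{\Z_p}((D_0\cap D_{div})^{\vee})=(\prod_j(1-\epsilon_j))$. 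If, on the other hand, some $\epsilon_j=1$, then $\mathrm{T}^{\mathcal{C}}$ is a non-trivial free $\Z_p$-module, $\mathrm{V}^{\mathcal{C}}$ is a non-zero $\Q_p$-vector space, and the quotient $\mathrm{V}^{\mathcal{C}}/\mathrm{T}^{\mathcal{C}}$ embeds into $(D_{div})^{\mathcal{C}}$, so the latter is a non-trivial $p$-divisible group. Hence $(D_0\cap D_{div})^{\vee}$ has positive $\Z_p$-rank and its characteristic ideal is $(0)=(\prod_j(1-\epsilon_j))$.

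Finally, for the second equality I use the identity $1-\epsilon_j^{-1}=-\epsilon_j^{-1}(1-\epsilon_j)$ to get
\begin{equation*}
\prod_{j=1}^m(1-\epsilon_j^{-1})=(-1)^m\,(\det c)^{-1}\prod_{j=1}^m(1-\epsilon_j).
\end{equation*}
Because $\det c=\prod_j\epsilon_j$ is a $p$-adic unit (the action of $c$ on $\mathrm{T}$ is an automorphism, as noted just before the statement), multiplying by $(-1)^m(\det c)^{-1}$ is a unit operation on $\Z_p$, so the two expressions generate the same $\Z_p$-ideal. The only subtlety I anticipate is justifying that $\det_{\Z_p}(1-c\mid\mathrm{T})$ really equals $\prod_j(1-\epsilon_j)$ as an element of $\Z_p$ — this is standard since the characteristic polynomial of $c$ lies in $\Z_p[T]$ and its eigenvalues, though a priori in $\overline{\Z}_p$, give a Galois-invariant product — but it is the one step where one must be careful not to confuse the ambient ring of the $\epsilon_j$'s with $\Z_p$ itself.
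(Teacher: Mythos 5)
Your proof is correct, and it takes a close but not identical route to the paper's. The paper passes directly to the Pontryagin dual: it sets $\mathrm{N}^0$ to be the $\Z_p$-free part of $\mathrm{N}=D^\vee$ (i.e.\ $(D_{div})^\vee$), observes by duality that $(D_0\cap D_{div})^\vee = \mathrm{N}^0/(c-1)\mathrm{N}^0$, and then reads off the order from $\det(1-c\mid\Q_p\otimes\mathrm{N}^0)$; since $c$ acts on the dual with eigenvalues $\epsilon_j^{-1}$, this gives $\prod_j(1-\epsilon_j^{-1})$ first, and the other product by the unit $\pm\prod\epsilon_j^{-1}$. You instead stay on the "primal" side and use the snake lemma on $0\to\mathrm{T}\to\mathrm{V}\to D_{div}\to 0$ to obtain $(D_{div})^{\mathcal{C}}\simeq\mathrm{T}/(c-1)\mathrm{T}$ when $c-1$ is invertible, which yields $\prod_j(1-\epsilon_j)$ first. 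Both are really the same determinant computation on a rank-$m$ free $\Z_p$-module, just on Pontryagin-dual modules, and both pass between the two products via the $p$-adic unit $\det c=\prod\epsilon_j$. A small point in your favor: you spell out the degenerate case where some $\epsilon_j=1$ (so both sides are $(0)$), which the paper treats implicitly by speaking of the "$p$-adic valuation" of the determinant without separating the infinite case. The only thing I would tighten in your write-up is the remark that $\mathrm{V}^{\mathcal{C}}/\mathrm{T}^{\mathcal{C}}$ is non-trivial $p$-divisible: what you actually need, and what follows immediately, is that $\mathrm{T}^{\mathcal{C}}=\mathrm{T}\cap\mathrm{V}^{\mathcal{C}}$ is a full-rank lattice in $\mathrm{V}^{\mathcal{C}}$, so that the quotient has positive $\Z_p$-corank; "non-trivial free" alone doesn't quite state this.
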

\begin{proof}
Let $\mathrm{N}^0$ denote the $\Z_p$ free part of $\mathrm{N}$. Then
$(D_0\cap D_{div})^\vee=\mathrm{N}^0/(c-1)\mathrm{N}^0$ whose $p$-adic valuation is the same as that
of the determinant (of $1-c$ acting on $\Q_p\otimes_{\Z_p} \mathrm{N}^0$)
$$\det (1-c)=\prod_{j=1}^m(1-\epsilon_j^{-1})=\pm\prod_{j=1}^m\epsilon_j^{-1}(1-\epsilon_j).$$

\end{proof}

\begin{proposition}\label{p:torcomp1} Suppose $D_m$ is finite for each $m$. 
Then $M\sim T$ as $\Lambda(\mathcal{C})$-modules, and
$$\chi_{\Lambda(\mathcal{C})}(\mathrm{M})=\chi_{\Lambda(\mathcal{C})}(\mathrm{T})=(\prod_{j=1}^m(1-\epsilon_j^{-1}c)),$$
$$\chi_{\Lambda(\mathcal{C})}(\mathrm{N})=(\prod_{j=1}^m(1-\epsilon_j^{-1}c^{-1})).$$
\end{proposition}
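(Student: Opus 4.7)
The plan is to prove the three assertions in turn. For $\chi_{\Lambda(\mathcal{C})}(\mathrm{T})$, the module $\mathrm{T}$ is $\Z_p$-free of finite rank $m$ with $c$ acting via a matrix $\mathbf{M}\in\GL_m(\Z_p)$ whose characteristic polynomial is $p_{\mathbf{M}}(X)=\prod_j(X-\epsilon_j)\in\Z_p[X]$. Cayley--Hamilton ensures $p_{\mathbf{M}}(c)$ annihilates $\mathrm{T}$; extending scalars to $\overline{\Q}_p$ and using the generalized-eigenspace decomposition of the $\overline{\Q}_p[c]$-module $\mathrm{T}\otimes\overline{\Q}_p$ shows that $\bigl(\prod_j(c-\epsilon_j)\bigr)$ is the full characteristic ideal, and the descent back to $\Lambda(\mathcal{C})$ is automatic since $p_{\mathbf{M}}(c)\in\Z_p[c]$. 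Because each $\epsilon_j$ is a unit in $\overline{\Z}_p$, the identity $(c-\epsilon_j)=-\epsilon_j(1-\epsilon_j^{-1}c)$ gives equality of ideals, and the claimed formula for $\chi_{\Lambda(\mathcal{C})}(\mathrm{T})$ follows. The computation of $\chi_{\Lambda(\mathcal{C})}(\mathrm{N})$ is parallel: the torsion submodule $(D/D_{div})^{\vee}$ of $\mathrm{N}$ is finite (hence pseudo-null), the free quotient is canonically $\Hom_{\Z_p}(\mathrm{T},\Z_p)$ with $c$ acting contragrediently (so with eigenvalues $\epsilon_j^{-1}$), and $(c-\epsilon_j^{-1})=c(1-\epsilon_j^{-1}c^{-1})$ with $c$ a unit in $\Lambda(\mathcal{C})$.

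For $\mathrm{M}\sim\mathrm{T}$, I would apply $\mathcal{C}^{(m)}$-cohomology to the exact sequence $0\to\mathrm{T}\to\mathrm{T}\otimes\Q_p\to D_{div}\to 0$. The hypothesis that $D_m$ is finite for every $m$ is equivalent to $\epsilon_j^{p^m}\neq 1$ for all $m,j$, so $c^{p^m}-1$ is invertible on $\mathrm{T}\otimes\Q_p$, killing both its $\coh^0$ and its $\coh^1$. The long exact sequence then gives canonical isomorphisms $D_{div}^{\mathcal{C}^{(m)}}\simeq\coh^1(\mathcal{C}^{(m)},\mathrm{T})=\mathrm{T}/(c^{p^m}-1)\mathrm{T}$. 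A direct lifting computation (lift $x\in D_{div}^{\mathcal{C}^{(m')}}$ to $\tilde x\in\mathrm{T}\otimes\Q_p$; then $(c^{p^m}-1)\sum_{k=0}^{p^{m'-m}-1}c^{kp^m}\tilde x=(c^{p^{m'}}-1)\tilde x$) shows that under this identification the norm $\Nm_{\mathcal{G}^{m'}_m}$ corresponds to the natural surjection $\mathrm{T}/(c^{p^{m'}}-1)\mathrm{T}\twoheadrightarrow\mathrm{T}/(c^{p^m}-1)\mathrm{T}$; in particular the inverse system is Mittag-Leffler. Decomposing $\mathrm{T}\otimes\overline{\Q}_p$ into generalized eigenspaces and using $v_p(\epsilon_j^{p^m}-1)\to\infty$, the smallest elementary divisor of $(c^{p^m}-1)\mathrm{T}\subset\mathrm{T}$ tends to infinity, so $\bigcap_m(c^{p^m}-1)\mathrm{T}=0$. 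The canonical map $\mathrm{T}\to\varprojlim_m\mathrm{T}/(c^{p^m}-1)\mathrm{T}$ is therefore injective with dense image, and compactness of $\mathrm{T}$ forces it to be an isomorphism.

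To assemble $\mathrm{M}\simeq\mathrm{T}$, I use the short exact sequence $0\to D_{div}^{\mathcal{C}^{(m)}}\to D_m\to D_m/D_{div}^{\mathcal{C}^{(m)}}\to 0$. The cokernel embeds in the finite group $D/D_{div}$, which stabilises for $m$ large, and a direct estimate shows that once $m'-m$ is sufficiently large relative to the exponent of $D/D_{div}$ the norm $D_{m'}/D_{div}^{\mathcal{C}^{(m')}}\to D_m/D_{div}^{\mathcal{C}^{(m)}}$ is zero; hence its $\varprojlim$ and $\varprojlim^1$ both vanish, and combining with the previous step yields $\mathrm{M}\simeq\mathrm{T}$, which is stronger than the asserted pseudo-isomorphism. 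The principal technical obstacle is the norm-to-projection compatibility together with the vanishing $\bigcap_m(c^{p^m}-1)\mathrm{T}=0$; both are concrete matters but require careful bookkeeping on lifts through $\mathrm{T}\otimes\Q_p$.
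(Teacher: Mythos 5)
Your proof is correct, but for the pseudo-isomorphism $\mathrm{M}\sim\mathrm{T}$ you take a genuinely different route than the paper. The paper first reduces to $D$ $p$-divisible and then works level-by-level with Kummer sequences for the finite cyclic groups $\cG^{m}_{m-1}$: it shows $pD_m=D_{m-1}$ for $m\gg 0$, hence $D[p^{m-m_0+2}]\subset D_m\subset D[p^{m+n}]$, and then hand-builds a norm-compatible map $\iota_m\colon D_m\hookrightarrow D[p^{m+n}]\xrightarrow{p}D[p^{m+n-1}]$ whose kernel and cokernel have bounded order. You instead apply the snake lemma for $c^{p^m}-1$ to the universal-cover sequence $0\to\mathrm{T}\to\mathrm{T}\otimes\Q_p\to D_{div}\to 0$, identify $D_{div}^{\mathcal{C}^{(m)}}$ canonically with $\mathrm{T}/(c^{p^m}-1)\mathrm{T}$, check by an explicit cocycle lift that the norm on the left becomes the natural projection on the right, and then recover $\mathrm{T}$ as the inverse limit by compactness. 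Your approach is more conceptual and yields a bit more — an actual isomorphism $\mathrm{M}\simeq\mathrm{T}$ rather than only a pseudo-isomorphism — at the price of needing the compactness/density argument and the norm-to-projection compatibility; the paper's route is more elementary bookkeeping on the $D_m$'s. For the characteristic-ideal formulas both proofs are essentially the same (eigenvalue decomposition after scalar extension, noting $c$ and the $\epsilon_j$ are units). One point you should spell out: the claim $v_p(\epsilon_j^{p^m}-1)\to\infty$, which underlies $\bigcap_m(c^{p^m}-1)\mathrm{T}=0$, does not follow from $\epsilon_j^{p^m}\neq 1$ alone; it requires $\epsilon_j\equiv 1$ modulo the maximal ideal, which holds here because the image of $\mathcal{C}\simeq\Z_p$ in $\GL(\mathrm{T})$ reduces to a unipotent element of $\GL(\mathrm{T}/p\mathrm{T})$ (the mod-$p$ image of $\Z_p$ in a finite group is a $p$-group). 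This is true but was left implicit.
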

\begin{proof}
Let $V=\Q_p\otimes_{\Z_p} \mathrm{T}$
and let $f(s)=\prod_{j=1}^m(s-\epsilon_j)\in\Z_p[s]$ be the characteristic polynomial of the matrix given by the action
of $c$ on $V$. By the Jordan decomposition of the matrix, we can find a
$\Lambda(\mathcal{C})$-submodule $\mathrm{T}'$ of $\mathrm{T}$ of finite index so that
$\chi_{\Lambda(\mathcal{C})}(\mathrm{T}')=(f(c))$. As $\mathrm{T}'$ and $\mathrm{T}$
are pseudo-isomorphic, $\chi_{\Lambda(\mathcal{C})}(\mathrm{T})=(f(c))=(\prod_{j=1}^m(c-\epsilon_j))$ as well. This yields the desired expression of $\chi_{\Lambda(\mathcal{C})}(\mathrm{T})$, since $c$ and $\epsilon_1\cdot \cdots\cdot\epsilon_m$ are units of
$\Lambda(\mathcal{C})$.
Similarly, since $\epsilon_1^{-1},...,\epsilon_m^{-1}$ are the eigenvalues of the action of $c$ on $\Q_p\otimes_{\Z_p} \mathrm{N}$,
we have $\chi_{\Lambda(\mathcal{C})}(\mathrm{N})=(\prod_{j=1}^m(c^{-1}-\epsilon_j))$.

To complete the proof, we need to establish a pseudo-isomorphism $\iota:M\longrightarrow T$.
Without loss of generality we may assume that $D$ is $p$-divisible, as replacing $D$ by $D_{div}$ does not alter the structures of
$M$ and $T$. The exact sequence (of $\cG_m^{m'}$-modules)
$$\xymatrix{0\ar[r] & D_{m'}[p^n]\ar[r] & D_{m'} \ar[r]^{p^n} & p^nD_{m'} \ar[r] & 0},$$
where $D_{m'}[p^n]$ denotes the $p^n$-torsion subgroup of $D_{m'}$, induces the
Kummer exact sequence
$$\xymatrix{ D_m\ar[r]^-{p^n} & D_m\cap p^n D_{m'} \ar[r]^-{\lambda^{m'}_{m,n}} & \coh^1(\cG_m^{m'},D_{m'}[p^n])\ar[r] & \coh^1(\cG_m^{m'},D_{m'})}.$$
Choose $m_0$ so that $D[p^2]\subset D_{m-1}$, for $m\geq m_0$. Then for $m'\geq m\geq m_0$,
$$
\coh^1(\cG^{m'}_{m-1},D_{m'}[p^i])=\Hom(\cG^{m'}_{m-1},D_{m'}[p^i]),\; i=1,2.
$$
Since $D$ is $p$-divisible, $D_{m-1}=D_{m-1}\bigcap p D$. The commutative diagram of exact sequences
$$
\xymatrix{ D_{m-1} \ar[r]^-p \ar@{=}[d] &  D_{m-1}\bigcap p D_m \ar[r]^-{\lambda^m_{m-1,1}}\ar@{^{(}->}[d] & \Hom (\cG^{m}_{m-1},D_m[p]) \ar@{=}[d]\ar[r] & \coh^1(\cG^m_{m-1},D_{m}) \ar@{^{(}->}[d]^-{inf} \\
  D_{m-1} \ar[r]^-{p}  & D_{m-1} \ar[r]^-{\lambda^{\infty}_{m-1,1}} & \Hom (\cG^{\infty}_{m-1},D[p]) \ar[r] & \coh^1(\cG_{m-1}^{\infty},D)},
$$
where {\em{inf}} denotes the inflation map and the second equality is due to the fact that $\cG^{m}_{m-1}$ is the unique quotient group
of $\cG_{m-1}^{\infty}$ of order $p$,
implies $D_{m-1}\subset p D_{m}$. We claim the inclusion $pD_{m}\subset D_{m-1}$ also holds. Therefore, $D_{m-1}=pD_m$.

To prove the claim we show that if $\overline{D}=D_m/D_{m-1}$ then $\overline{D}[p^2]=\overline{D}[p]$, as the assertion implies
$\overline{D}=\overline{D}[p]$. Write $E_i=D_{m-1}\bigcap p^i D_m$ and $F_i= p^iD_{m-1}$. For each $i$ we have the group homomorphism $\pi_i:\overline{D}[p^i]\longrightarrow E_i/F_i$ given by $a\mapsto b$ such that if $a=x$ is the residue class of some $x\in D_m$
modulo $D_{m-1}$, then $b$ is that of $p^i x$ modulo $F_i$. Obviously, every $\pi_i$ is a surjection, and by
$D[p^2]\subset D_{m-1}$, both $\pi_1$ and $\pi_2$ are isomorphisms. Then the desired assertion, and hence the claim, follows
form the fact that the map $E_1/F_1\longrightarrow  E_2/F_2$ induced by $\xymatrix{E_1\ar[r]^-p & E_2}$ is an isomorphism
as shown by the diagram of exact sequences:
$$
\xymatrix{  D_{m-1} \ar[r]^-p \ar@{=}[d] &  D_{m-1}\bigcap p D_m \ar[r]^-{\lambda^m_{m-1,1}}\ar[d]^p & \Hom (\cG^{m}_{m-1},D_m[p])\ar@{=}[d] \ar[r] & \coh^1(\cG^m_{m-1},D_{m}) \ar@{=}[d]\\
 D_{m-1} \ar[r]^-{p^2}  & D_{m-1}\bigcap p^2D_m \ar[r]^-{\lambda^m_{m-1,2}} &  \Hom (\cG^{m}_{m-1},D_m[p^2])\ar[r] & \coh^1(\cG^m_{m-1},D_{m}),}
$$
where the second equality is due to the fact that $\cG^m_{m-1}$ is of order $p$.
Thus, $pD_m=D_{m-1}$ holds for $m\geq m_0$. Therefore, if we fix an $n$ so that $D_{m_0}\subset D[p^{n+m_0}]$, then for all $m \gg 0$
\begin{equation}\label{e:dpmm0}
D[p^{m-m_0+2}]\subset D_{m}\subset D[p^{n+m}].
\end{equation}
Also, for every $a\in D_{m}$, $\tau\in \cG^m_{m-1}$, we have $^{\tau}a-a\in D[p]$. Therefore, the diagram
\begin{equation}\label{e:iotamm-1}
\xymatrix{ D_m \ar@{^{(}->}[r] \ar[d]^{\Nm_{\cG_{m-1}^m}} & D[p^{m+n}] \ar[r]^{p} & D[p^{m+n-1}]\ar[d]^{p}\\
  D_{m-1} \ar@{^{(}->}[r]  & D[p^{m+n-1}] \ar[r]^{p} & D[p^{m+n-2}]}
\end{equation}
is commutative, as we can write for $a\in D_m$
$$\Nm_{\K_m/\K_{m-1}}(a)=pa+\sum_{\tau\in\cG^m_{m-1}}  {}^{\tau}a-a\in pa+D[p].$$

Let $\iota_m$ denote the composition $ D_m\longrightarrow D[p^{m+n}]\longrightarrow D[p^{m+n-1}]$ appearing in the diagram \eqref{e:iotamm-1}.
Then we can define the desired pseudo-isomorphism $\iota:M\longrightarrow  \mathrm{T} $ as the projective limit of $\iota_m$.
The finiteness of  $\ker\le\left[\iota\right]$ and $\coker\le\left[\iota\right]$ follows from the fact that they are the projective limits
of finite abelian groups of bounded order, since
$\ker\le\left[\iota_m\right]\subset D[p]$ and by \eqref{e:dpmm0} $\coker\le\left[\iota_m\right]$ can be viewed as a quotient group of
$D[p^{n+m-1}]/D[p^{m-m_0+1}]\simeq D[p^{n+m_0-2}]$ .
\end{proof}

The above results can be applied to the cases where $\mathcal{C}$ is the Galois group of a field extension $\mathcal{L}/\mathcal{K}$
and $D=B(\mathcal{L})_p$ where $B$ is an abelian variety defined over $\mathcal{K}$. Indeed, if $\mathcal{K}$ is either a global field
or a finite field,
then the condition of proposition \ref{p:torcomp1} is satisfied.
For the rest of this section let $\mathcal{K}$ be a finite field of order $\mathfrak{q}$.
Denote $\tilde{\L}=\L(B[p^{\infty}](\overline{\K}))$ and let
$\tilde{c}$ be a topological generator
$\tilde{\mathcal{C}}:=\Gal(\tilde{\L}/\K)$, sent to $c$ under the natural map $\tilde{\mathcal{C}}\longrightarrow \mathcal{C}$.
Since $\mathrm{T}$ is contained in the Tate module $\tilde{\mathrm{T}}$ of $B[p^{\infty}](\overline{\K})$,
the eigenvalues of the action of $\tilde{c}$ on $\tilde{\mathrm{T}}$ can be expressed as $\epsilon_1,...,\epsilon_m,...,\epsilon_t$.
\begin{proposition}\label{p:torcomp2} Suppose $\K$ is a finite field of order $\mathfrak{q}$ and assume the above notation.
Then
$$\chi_{\Lambda(\mathcal{C})}(\mathrm{M})=\chi_{\Lambda(\mathcal{C})}(\mathrm{T})=(\prod_{j=1}^t(1-\epsilon_j^{-1}c)).$$
and
$$\chi_{\Lambda(\mathcal{C})}(\mathrm{N})=(\prod_{j=1}^t(1-\epsilon_j^{-1}c^{-1})).$$
\end{proposition}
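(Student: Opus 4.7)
The plan is to derive Proposition~\ref{p:torcomp2} from Proposition~\ref{p:torcomp1} by showing that the extra factors $\prod_{j=m+1}^{t}(1-\epsilon_j^{-1}c^{\pm 1})$, which come from eigenvalues present only in the bigger Tate module $\tilde{\mathrm{T}}$, are units of $\Lambda(\mathcal{C})$.  First I check the hypothesis of Proposition~\ref{p:torcomp1}: since $\mathcal{K}$ is a finite field and $\mathcal{C}\simeq\Z_p$, the fixed field of $\mathcal{C}^{(m)}$ inside $\mathcal{L}$ is a finite extension of $\mathcal{K}$, so $D_m=D^{\mathcal{C}^{(m)}}$ is finite.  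Proposition~\ref{p:torcomp1} therefore yields the pseudo-isomorphism $\mathrm{M}\sim\mathrm{T}$ together with $\chi_{\Lambda(\mathcal{C})}(\mathrm{M})=\chi_{\Lambda(\mathcal{C})}(\mathrm{T})=(\prod_{j=1}^{m}(1-\epsilon_j^{-1}c))$ and $\chi_{\Lambda(\mathcal{C})}(\mathrm{N})=(\prod_{j=1}^{m}(1-\epsilon_j^{-1}c^{-1}))$.

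Next I analyze $H:=\Gal(\tilde{\mathcal{L}}/\mathcal{L})$, the kernel of the surjection $\tilde{\mathcal{C}}\to\mathcal{C}$.  Because $\mathcal{L}/\mathcal{K}$ is the unique $\Z_p$-extension of the finite field $\mathcal{K}$, one has $\Gal(\overline{\mathcal{K}}/\mathcal{L})\simeq\prod_{\ell\neq p}\Z_\ell$, and $H$ is a continuous image of this prime-to-$p$ profinite group.  Since $H$ acts faithfully on the free $\Z_p$-module $\tilde{\mathrm{T}}$ by the very definition of $\tilde{\mathcal{L}}$, and $\Aut_{\Z_p}(\tilde{\mathrm{T}})$ has an open pro-$p$ subgroup (the first congruence subgroup), any continuous homomorphism from a prime-to-$p$ profinite group into it has finite image of order prime to $p$; hence $H$ is finite of order prime to $p$.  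The abelian pro-cyclic group $\tilde{\mathcal{C}}$ therefore splits as $\tilde{\mathcal{C}}=H\times\mathcal{C}'$ with $\mathcal{C}'\simeq\mathcal{C}$, and I write $\tilde c=h\cdot c'$ with $h$ generating $H$ and $c'$ a topological generator of $\mathcal{C}'$ whose image in $\mathcal{C}$ is $c$.

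Finally, I show that each $1-\epsilon_j^{-1}c^{\pm 1}$ with $j>m$ is a unit in $\overline{\Z}_p[[c-1]]$; since the set $\{\epsilon_{m+1},\ldots,\epsilon_t\}$ is stable under $\Gal(\overline{\Q}_p/\Q_p)$, the product $\prod_{j>m}(1-\epsilon_j^{-1}c^{\pm 1})$ then lies in $\Lambda(\mathcal{C})$, and its value at $c=1$ is a Galois-invariant unit of $\overline{\Z}_p$ and hence of $\Z_p$, forcing the product to be a unit of $\Lambda(\mathcal{C})$.  Decomposing $\tilde{\mathrm{T}}\otimes_{\Z_p}E=\bigoplus_{\chi}V_\chi$ over a sufficiently large finite extension $E/\Q_p$ by the characters $\chi\colon H\to E^{\times}$, the element $\tilde c=hc'$ acts on $V_\chi$ with eigenvalues $\chi(h)\mu$, where $\mu$ runs over eigenvalues of $c'$ on $V_\chi$.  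The trivial eigenspace $V_1=\mathrm{T}\otimes E$ recovers $\epsilon_1,\ldots,\epsilon_m$, while the $\epsilon_j$ with $j>m$ arise for non-trivial $\chi$.  Since $c'$ topologically generates a pro-$p$ subgroup, each $\mu$ satisfies $\mu^{p^n}\to 1$ in $\overline{\Z}_p^{\times}$, forcing $\mu\equiv 1\pmod{\mathfrak{m}_{\overline{\Z}_p}}$; meanwhile $\chi(h)$ is a non-trivial root of unity of order prime to $p$, so $\chi(h)\not\equiv 1\pmod{\mathfrak{m}_{\overline{\Z}_p}}$.  Therefore $\epsilon_j\equiv\chi(h)\not\equiv 1$, which makes $1-\epsilon_j^{-1}$ a unit in $\overline{\Z}_p$ and consequently $1-\epsilon_j^{-1}c^{\pm 1}$ a unit in $\overline{\Z}_p[[c-1]]$.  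The main obstacle in carrying out this plan will be the group-theoretic step of establishing that $H$ is finite of order prime to $p$ together with the splitting $\tilde{\mathcal{C}}=H\times\mathcal{C}'$; once this is in hand, the residue-class verification is routine.
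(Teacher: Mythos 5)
Your proof is correct and follows essentially the same strategy as the paper: identify $\Gal(\tilde{\mathcal{L}}/\mathcal{L})$ as a finite group of order prime to $p$, decompose $\tilde{\mathrm{T}}\otimes E$ by its characters, observe that the extra eigenvalues $\epsilon_j$ for $j>m$ are congruent to nontrivial prime-to-$p$ roots of unity modulo the maximal ideal, and conclude the extra factors $\prod_{j>m}(1-\epsilon_j^{-1}c^{\pm1})$ are $\Lambda(\mathcal{C})$-units. One small stylistic difference: the paper first establishes semi-simplicity of the $\tilde{c}$-action on $\tilde{\mathrm{T}}$ (by reducing to $B$ simple and using that $\Z[\mathrm{F}_{\mathfrak{q}}]$ is an order in a number field) so that $\tilde{V}$ has a simultaneous eigenbasis for $\tilde{c}$ and $\sigma=\lim\tilde{c}^{\,p^{n\nu}}$, whereas your route via the $H$-eigenspace decomposition and the scalar action of $h$ on each $V_\chi$ avoids needing $\tilde{c}$ itself to be semi-simple — a minor simplification; conversely, you supply a justification for $H$ being finite of order prime to $p$ which the paper simply asserts.
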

\begin{proof}
Since $\L/\K$ is the maximal pro-$p$ abelian extension of $\K$, $\Gal(\tilde{\L}/\K)=\mathcal{C}\times\mathcal{H}$, where $\mathcal{H}=\Gal(\tilde{\L}/\L)$ is a finite cyclic group of order prime to $p$.
We remark that the action of $\tilde{c}$ on $\tilde{\mathrm{T}}$ is semi-simple.
To see this, we may assume that $B$ is a simple abelian variety and $\tilde{c}=\Frob_{\mathfrak{q}}$,
the Frobenius substitution that sends $x\in \tilde{\L}$ to $x^{\mathfrak{q}}$. Then $\tilde{c}$ and the Frobenius
endomorphism $\mathrm{F}_{\mathfrak{q}}\in \text{End}_{\mathcal{K}}(B)$ give rise to the same action on $\tilde{V}:=\Q_p\otimes_{\Z_p}\tilde{\mathrm{T}}$.
Since $\Z[\tF_{\mathfrak{q}}]$ is an order in a number field, the minimal polynomial $F(s)$ of $\tF_{\mathfrak{q}}$ over $\Q$ is irreducible.
In particular, it has no double root. Then, since $F(\tilde{c})=0$ on $\tilde{V}$, the action of $\tilde{c}$ gives rise to a diagonalizable
matrix.

Choose a positive integer $\nu$ so that $|\mathcal{H}|$ divides $p^{\nu}-1$. Then $\sigma:=\lim_{n\rightarrow\infty}\tilde{c}^{p^{n\nu}}$
is a generator of $\mathcal{H}$. Since $V:=\Q_p\otimes_{\Z_p} \mathrm{T}$ is the $1$-eigenspace of $\sigma$, we see that $\{\epsilon_1,...,\epsilon_m\}$
is exactly the subset consisting of those elements $\epsilon\in \{\epsilon_1,...,\epsilon_t\}$ satisfying $\lim_{n\rightarrow\infty} \epsilon^{p^{n\nu}}=1$ in the $p$-adic topology,
or equivalently, $\ord(\epsilon-1)>0$, where $\ord$ denote the valuation on $\overline{\Q}_p$ with $\ord(p)=1$. This shows that the product $\prod_{j=m+1}^t(1-\epsilon_j)$ is a $p$-adic unit. Hence, $\prod_{j=m+1}^t(1-\epsilon_j^{-1}c)=\prod_{j=m+1}^t(1-\epsilon_j^{-1}+\epsilon_j^{-1}(1-c))$
is a $\Lambda(\mathcal{C})$-unit, and so is $\prod_{j=m+1}^t(1-\epsilon_j^{-1}c^{-1})$.

\end{proof}

\begin{remark}\label{r:tor} If $\K$ is a finite field,
then $B(\L)_p$ is actually $p$-divisible.
Let $\mathcal{H}$ be as in the above proof. Since $|\mathcal{H}|$ is prime to $p$, we have $\coh^1(\mathcal{H}, B[p](\tilde{\L}))=0$
in the Kummer exact sequence
$$\xymatrix{ B[p](\L)\ar@{^{(}->}[r] & B(\L)_p \ar[r]^-p & B(\L)_p \ar[r] & \coh^1(\mathcal{H}, B[p](\tilde{\L}))}.
$$

\end{remark}

\subsubsection{} We end this section by showing that generically $A[p^{\infty}](L)$ is finite.
For related results see \cite{zar87,blv09,vol95}. 

\begin{proposition}\label{p:torl}
In general, $A[p^{\infty}](L)$ is finite, except possibly for the following cases:
\begin{enumerate}
\item[(a)] $K$ is a number field and $A$ contains a nontrivial abelian variety
of CM-type.
\item[(b)] $K$ is a function field, there exists no split multiplicative place of $A$, and $L/K$ contains the constant
$\Z_p$-extension $L_0/K$. 
\end{enumerate}
\end{proposition}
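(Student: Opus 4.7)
I would argue by contraposition: assuming $A[p^\infty](L)$ is infinite, I aim to derive that (a) or (b) holds. Since $A[p^\infty](L) \subset A[p^\infty](K^s) \cong (\Q_p/\Z_p)^r$ is cofinitely generated over $\Z_p$, infinitude is equivalent to the existence of a nonzero $G_K$-stable $\Q_p$-subspace $W$ of $V_pA := \Q_p\otimes_{\Z_p} T_pA$ on which $G_L := \Gal(K^s/L)$ acts trivially. In particular, $[G_K,G_K]$ acts trivially on $W$ (since $\Gamma = G_K/G_L$ is abelian pro-$p$). All subsequent arguments aim to show such a $W$ is impossible outside the exceptional cases.

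For the number field case, the plan is to invoke Bogomolov's theorem on the image of the Galois representation (cf.\ \cite{blv09}): when $A$ has no nontrivial abelian subvariety of CM-type, the Zariski closure of the image of $G_K$ in $\GL(V_pA)$ is open in the $\Q_p$-points of the Mumford--Tate group, whose semisimple derived subgroup acts on $V_pA$ without nontrivial invariant subspaces (any such invariant would correspond to a CM Hodge substructure, forcing a CM factor of $A$). Since the commutator $[G_K,G_K]$ is Zariski-dense in this derived group, it can fix no nonzero vector of $V_pA$; hence $W = 0$, contradicting our assumption. Therefore $A$ must contain a CM-factor, giving~(a).

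For the function field case, suppose first that $A$ has a split multiplicative place $v$. I would exploit Tate's rigid-analytic uniformization $A(\overline{K}_v) \cong \overline{K}_v^{\times g}/\Omega_v$. A $p^n$-torsion point of $A(K_v^s)$ is represented by some $\eta \in (K_v^s)^{\times g}$ with $\eta^{p^n} = \omega \in \Omega_v$. Since $\mathrm{char}\,K = p$, the polynomial $X^{p^n}-\omega$ is purely inseparable, so such an $\eta$ exists in $K_v^s$ only when $\omega$ is componentwise a $p^n$-th power in $K_v^\times$ (in which case $\eta \in K_v$); the positive-valuation property of the generators of $\Omega_v$ then bounds the number of admissible classes $\omega \bmod p^n\Omega_v$ uniformly in $n$. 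Thus $A[p^\infty](K_v^s)$ is finite, and so is $A[p^\infty](L) \subset A[p^\infty](K_v^s)$, contradicting infinitude. If instead $A$ has no split multiplicative place but $L \not\supset L_0$, then the constant field of $L$ is a finite extension of $\F_q$, and results of Zarhin and Volkov (\cite{zar87,vol95}) on the non-triviality of geometric monodromy of such abelian varieties imply that no nonzero $G_K$-stable subspace of $V_pA$ can be trivial on $G_L$; this finishes this subcase.

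The main obstacle is expected to be the number field case, which hinges on the structure theory of the Mumford--Tate group and on the Zariski-density of $[G_K,G_K]$ in its derived component. A secondary delicate point is the ``no split multiplicative place, $L \not\supset L_0$'' subcase in characteristic $p$, where the inputs are the Zarhin--Volkov geometric monodromy theorems. The remaining function-field subcase (with a split multiplicative place) is handled directly by the characteristic-$p$ inseparability argument sketched above.
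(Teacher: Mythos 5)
Your overall contrapositive framing matches the spirit of the paper, and your treatment of the split‑multiplicative subcase is essentially correct (the key observation that taking $p^n$‑th roots in characteristic $p$ is purely inseparable is exactly what the paper uses, though the paper phrases it more cleanly: for $P\in A[p^n](K^s)$, the extension $K_v(P)/K_v$ is simultaneously separable and purely inseparable, hence trivial, so $A[p^\infty](K^s)\subset A(K_v)$). For part (a) the paper simply cites Zarhin \cite{zar87}; your alternative appeal to Bogomolov/Mumford--Tate is a legitimate route but is not what the paper does.

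The genuine gap is in your last subcase (``no split multiplicative place, $L\not\supset L_0$''). You invoke ``results of Zarhin and Volkov \cite{zar87,vol95} on the non-triviality of geometric monodromy,'' but neither reference supplies such a statement: Zarhin \cite{zar87} is about abelian varieties over number fields, and Voloch \cite{vol95} (note: Voloch, not Volkov) concerns Diophantine approximation, not monodromy. More importantly, you never use the standing hypothesis from \S\ref{su:desda} that every place in $S$ is either good ordinary or split multiplicative, and that hypothesis is what the paper's proof actually hinges on. The paper's argument is concrete and local: let $L_1:=K(A[p^\infty](L))\subset L$. At a good ordinary place $v$, the reduction map gives a $\Gal(\overline K_v/K_v)$-equivariant isomorphism $A[p^\infty](K^s)\simeq\bar A[p^\infty](\overline\F_v)$, so inertia acts trivially and $L_1/K$ is unramified at $v$; at a split multiplicative place $L_1/K$ even splits completely, as above; and $L_1\subset L$ is unramified outside $S$. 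Since $S$ consists only of such places, $L_1/K$ is everywhere unramified, and because the maximal everywhere-unramified pro-$p$ abelian extension of $K$ is a finite extension of the constant $\Z_p$-extension $L_0$, one gets $L_0\subset L_1\subset L$, while the complete splitting forces $S$ to contain no split multiplicative places (as $L_0/K$ splits completely nowhere). This gives (b) directly. Your sketch skips the good-ordinary analysis entirely and replaces it with an unsupported monodromy citation; as written, that step does not go through.
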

\begin{proof}
(a) is from \cite{zar87}. In the case (b), assume that $A[p^{\infty}](L)$ is infinite.
Then obviously
$L_1:=K(A[p^{\infty}](L))\subset L$ is an infinite pro-$p$ abelian extension of $K$.
We claim that it is everywhere unramified, splitting completely at every split multiplicative place.
Then it follows that $L_0\subset L_1$, as the maximal everywhere unramified pro-$p$ abelian extension of $K$
is a finite extension of $L_0$. Furthermore, it also follows that there is no split multiplicative place
of $A$, as $L_0/K$ does not split completely at any place.

Let $v$ be a split-multiplicative place.
By (\ref{e:desplit}) if $P\in A[p^n](K^s)$, there must be some $Q\in \Omega_v$ so that
$K_v(P)=K_v(Q^{p^{-n}})$. This implies $K_v(P)=K_v$, since it is both separable and purely inseparable over $K_v$.
Thus, $L_1/K$ splits completely at every split-multiplicative place.

Suppose $v$ is a good ordinary place and ${\bar{A}}$ is the reduction of $A$. Then the reduction map induces
the isomorphism of $\Gal({\overline{K}}_v/K_v)$-module (see e.g. \cite[Corollary
2.1.3]{tan10}):
\begin{equation*}
A[p^{\infty}](K^s)\simeq {\bar{A}}[p^{\infty}](\overline{\F}_v).
\end{equation*}
This shows $A[p^{\infty}](L)$ is fixed by the inertia subgroup of $\Gamma_v$, whence
$L_1/K$ is unramified at $v$. Since $L/K$ is only ramified at splits multiplicative places or good ordinary places, the intermediate extension
$L_1/K$ must be everywhere unramified.

\end{proof}

By the definition we see that $\mathrm{w}_{K'/K}=(1)$ if $A[p^{\infty}](K')$ is finite.
\begin{corollary}\label{c:torl}
We have $\mathrm{w}_{K'/K}=(1)$ unless $K$ is a number field and $A$ contains a nontrivial abelian variety
of CM-type, or $K$ is a function field, there exists no split multiplicative place of $A$, and $K'/K$ is the constant $\Z_p$-extension.
\end{corollary}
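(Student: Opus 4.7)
The plan is to reduce the corollary to Proposition \ref{p:torl} combined with the remark immediately preceding the statement, which records that $\mathrm{w}_{K'/K} = (1)$ whenever $A[p^{\infty}](K')$ is finite. Thus it suffices to show that $A[p^{\infty}](K')$ is finite outside the two exceptional situations listed in the corollary.

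The case $K'/K$ a $\Z_p^e$-extension with $e\geq 2$ is immediate from the definition of $\mathrm{w}_{K'/K}$, which sets this ideal equal to $(1)$ in that range. Condition (b') of the corollary mentions only the $\Z_p$-case, and condition (a') imposes no further constraint here, so nothing more is required.

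The substantive case is $K'/K$ a $\Z_p$-extension. My approach is to apply Proposition \ref{p:torl} with $K'/K$ playing the role of $L/K$, now with $d=1$. The hypothesis that every ramified place be good ordinary or split-multiplicative transfers automatically: since $K'\subset L$, the ramified places of $K'/K$ form a subset of $S$, which is already controlled by our standing assumption. The proposition then yields that $A[p^\infty](K')$ is finite unless either $K$ is a number field with $A$ admitting a nontrivial CM abelian subvariety, or $K$ is a function field with no split-multiplicative place and $K'$ containing the constant $\Z_p$-extension $L_0$. The only fiddly point is that in the latter case the inclusion $L_0\subset K'$ of two $\Z_p$-extensions forces $K'=L_0$, because $\Z_p$ admits no proper closed subgroup with $\Z_p$-quotient; this matches condition (b') of the corollary precisely. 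There is no serious technical obstacle, as all the real work has been carried out inside Proposition \ref{p:torl}.
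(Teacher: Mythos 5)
Your proof is correct and matches the paper's (implicit) argument exactly: the corollary is presented as an immediate consequence of the observation that $\mathrm{w}_{K'/K}=(1)$ whenever $A[p^\infty](K')$ is finite, combined with Proposition \ref{p:torl} applied with $K'$ in place of $L$. Your handling of the $e\ge 2$ case via the definition and your observation that $L_0\subset K'$ forces $K'=L_0$ are both the intended readings.
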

\end{subsection}

\begin{subsection}{Tate's local duality theorem}\label{sub:tate}
By the Tate's local duality theorem (see \cite{t62}, \cite{mil72}, or \cite[III.7.8]{mil86}),
$\coh^1(K_v,A)_p$, endowed with the discrete topology, is the Pontryagin dual of the $p$-completion
$A^t(K_v)^{\wedge}:=\varprojlim_{n}A^t(K_v)/p^nA^t(K_v)$ endowed with the $p$-adic topology.
If $v$ is archimedean, then $\coh^1(K_v,A)_p$ is trivial unless $K_v=\R$ and $p=2$.
If $v$ is a non-archimedean place $v$, then, since $\coh^1(K_v,A)$ is dual to $A^t(K_v)$
and is the direct product of its $p$-primary part and the non-$p$ part,
$A^t(K_v)^{\wedge}$ can be identified as the largest pro-$p$ closed subgroup of $A^t(K_v)$.
Let
$$<\;,\;>_{K_v}: \coh^1(K_v,A)_p\times A^t(K_v)^{\wedge}
\longrightarrow \Q_p/\Z_p$$
denote the ``$p$-part'' of the local Tate's duality pairing. 
The proof of the following lemma can be found in \cite[Corollary 2.3.3]{tan10}.
\begin{lemma}\label{l:tatenorm} Let $v$ be a place of $K$. Then under the local Tate's duality
the cohomology group $\coh^1(\Gamma_v, A(L_v))\subset \coh^1(K_v, A)_p$ 
equals the annihilator
of $\Nm_{L_v/K_v}(A^t(L_v))$, and hence is
the Pontryagin dual of $A^t(K_v)/\Nm_{L_v/K_v}(A^t(L_v))$.
\end{lemma}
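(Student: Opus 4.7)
The plan is to identify $\coh^1(\Gamma_v, A(L_v))$ as a certain kernel inside $\coh^1(K_v, A)_p$, and then to translate this kernel condition into an annihilator condition via Tate's local duality applied at each finite level.

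First, I would apply the Hochschild--Serre inflation-restriction sequence for $\Gal(L_v^s/K_v)$ with normal subgroup $\Gal(L_v^s/L_v)$ to obtain the exact sequence
$$0 \longrightarrow \coh^1(\Gamma_v, A(L_v)) \xrightarrow{inf} \coh^1(K_v, A) \xrightarrow{\res_{L_v/K_v}} \coh^1(L_v, A).$$
Since $\Gamma_v$ is a quotient of $\Gamma \simeq \Z_p^d$, it is pro-$p$, so $\coh^1(\Gamma_v, A(L_v))$ is $p$-primary, and hence inflation identifies it with a subgroup of $\coh^1(K_v,A)_p$, namely $\ker[\res_{L_v/K_v}]\cap \coh^1(K_v,A)_p$. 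Writing $L_v = \bigcup_F F$ as the union over finite intermediate extensions $F/K_v$ and using $\coh^1(L_v, A) = \varinjlim_F \coh^1(F, A)$, I obtain
$$\coh^1(\Gamma_v, A(L_v)) = \bigcap_F \ker\bigl[\res_{F/K_v}: \coh^1(K_v,A)_p \to \coh^1(F,A)_p\bigr].$$

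Next, for each finite extension $F/K_v$, I would invoke Tate's local duality at both $K_v$ and $F$, together with the standard compatibility (restriction corresponds dually to corestriction, which for abelian varieties is the norm):
$$\langle \res_{F/K_v}(\alpha),\, \beta\rangle_F = \langle \alpha,\, \Nm_{F/K_v}(\beta)\rangle_{K_v}, \qquad \alpha\in \coh^1(K_v,A)_p,\ \beta\in A^t(F)^{\wedge}.$$
By nondegeneracy of the Tate pairing at the finite level $F$, $\res_{F/K_v}(\alpha)=0$ if and only if $\alpha$ annihilates $\Nm_{F/K_v}(A^t(F)^{\wedge})$ under $\langle\,,\,\rangle_{K_v}$. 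Taking the intersection over all finite $F\subset L_v$ and using that the annihilator of a union is the intersection of annihilators, one concludes that $\alpha \in \coh^1(\Gamma_v, A(L_v))$ if and only if $\alpha$ annihilates $\bigcup_F \Nm_{F/K_v}(A^t(F)) = \Nm_{L_v/K_v}(A^t(L_v))$.

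Finally, nondegeneracy of Tate's pairing at $K_v$ turns this annihilator description into the identification of $\coh^1(\Gamma_v, A(L_v))$ with $\bigl(A^t(K_v)/\Nm_{L_v/K_v}(A^t(L_v))\bigr)^{\vee}$, where I use that the pairing factors through the $p$-adic completion and that the continuous pairing into the discrete group $\Q_p/\Z_p$ does not distinguish a subgroup from its closure. The principal technical point to watch is that $\Nm_{L_v/K_v}$ is not literally a single group homomorphism when $L_v/K_v$ is infinite (norms from different finite layers differ by multiplication by the degree); one simply has to fix the convention that $\Nm_{L_v/K_v}(A^t(L_v))$ means $\bigcup_F \Nm_{F/K_v}(A^t(F))$, after which the limiting argument goes through cleanly.
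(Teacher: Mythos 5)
The paper does not supply a proof of this lemma: it simply cites \cite[Corollary~2.3.3]{tan10}, so there is no in-text argument to compare against. Your strategy — inflation-restriction to realize $\coh^1(\Gamma_v, A(L_v))$ inside $\coh^1(K_v,A)_p$, then Tate duality at each finite level together with the restriction/norm adjunction, then pass to the limit — is exactly the standard route, and the adjunction formula $\langle\res_{F/K_v}(\alpha),\beta\rangle_F=\langle\alpha,\Nm_{F/K_v}(\beta)\rangle_{K_v}$ you invoke is correct.

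However, the lattice-theoretic directions in your limiting step are reversed, and this is a genuine error, not a harmless convention. Because $\coh^1(L_v,A)=\varinjlim_F\coh^1(F,A)$ is a \emph{direct} limit, an element of $\coh^1(K_v,A)_p$ dies in the limit precisely when it dies at \emph{some} finite level, so
\[
\coh^1(\Gamma_v,A(L_v))=\bigcup_F\ker\bigl[\res_{F/K_v}\bigr],
\]
an increasing union, not the intersection you wrote (the intersection over all $F$, the smallest being $K_v$ itself, would be trivial). Dually, the finite-level norm images $\Nm_{F/K_v}(A^t(F)^\wedge)$ \emph{decrease} as $F$ grows, so the object the lemma refers to is the group of universal norms $\Nm_{L_v/K_v}(A^t(L_v))=\bigcap_F\Nm_{F/K_v}(A^t(F)^\wedge)$, not the union; with your union convention the right-hand side of the lemma would be all of $A^t(K_v)^\wedge$, its annihilator would vanish, and the lemma would be false whenever $\coh^1(\Gamma_v,A(L_v))\neq0$. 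After correcting the two directions, the remaining step — that $\bigcup_F\operatorname{Ann}(\Nm_F)=\operatorname{Ann}(\bigcap_F\Nm_F)$ — is \emph{not} the formal annihilator-of-a-union identity you cite (that identity goes the other way); it requires a genuine compactness input, namely that $A^t(K_v)^\wedge$ is profinite so that $A^t(K_v)^\wedge/\bigcap_F\Nm_F=\varprojlim_F A^t(K_v)^\wedge/\Nm_F$ and one can apply Pontryagin duality for profinite groups, yielding $(A^t(K_v)^\wedge/\bigcap_F\Nm_F)^\vee=\varinjlim_F\operatorname{Ann}(\Nm_F)$. This extra duality argument is the one substantive point the proposal omits; with it, and with the directions fixed, the proof goes through.
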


\begin{corollary}\label{c:nf}
If $char.(K)=0$, $\coh^1(\Gamma_v,A(L_v))$ is cofinitely generated over $\Z_p$.
\end{corollary}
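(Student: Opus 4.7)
The plan is to combine Lemma \ref{l:tatenorm} with a case-by-case analysis of $A^t(K_v)^{\wedge}$. By Lemma \ref{l:tatenorm}, $\coh^1(\Gamma_v,A(L_v))$ is the Pontryagin dual of $A^t(K_v)/\Nm_{L_v/K_v}(A^t(L_v))$; since any continuous homomorphism from this quotient to $\Q_p/\Z_p$ factors through $A^t(K_v)^{\wedge}$, it suffices to show that $A^t(K_v)^{\wedge}$ is a finitely generated $\Z_p$-module whenever $char.(K)=0$, for then the same holds for its quotient and the Pontryagin dual is $p$-primary cofinitely generated over $\Z_p$.

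I would then split into cases according to the place $v$ of the number field $K$. If $v$ is archimedean, then $A^t(K_v)$ is a compact real or complex Lie group whose identity component is divisible, so its $p$-completion is finite (and trivial unless $K_v=\R$ and $p=2$). If $v$ is non-archimedean with residue characteristic $\ell\ne p$, then the formal group of $A^t$ over $\O_v$ is pro-$\ell$ near the origin, so $A^t(K_v)$ is a pro-$\ell$ group modulo a finite subgroup, and its $p$-completion reduces to the finite $p$-torsion. The essential case is $v\mid p$: here the formal group logarithm (Mattuck's theorem) produces a finite-index subgroup of $A^t(K_v)$ isomorphic to $\O_v^g$, making $A^t(K_v)^{\wedge}$ finitely generated over $\Z_p$ of rank $g\cdot[K_v:\Q_p]$.

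The only genuinely non-trivial step is the $v\mid p$ case, which rests on the $p$-adic Lie group structure of $A^t(K_v)$ via the formal group; the archimedean and $\ell\ne p$ cases are immediate from compactness and divisibility considerations. The hypothesis $char.(K)=0$ is essential: in positive characteristic, local fields admit enormous pro-$p$ extensions, and $A^t(K_v)$ ceases to be finitely generated modulo torsion, which is exactly why the excerpt singles out Theorem \ref{t:ss} as furnishing counter-examples to the cofinite-generation property in characteristic $p$.
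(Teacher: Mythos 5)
Your proof is correct and takes essentially the same route as the paper: both invoke Lemma \ref{l:tatenorm} to reduce to showing $A^t(K_v)^{\wedge}$ is finitely generated over $\Z_p$, and both ultimately rest on Mattuck's theorem \cite{mat55}. The paper simply cites Mattuck and stops, whereas you spell out the archimedean and $\ell\neq p$ cases as well, but these add no genuinely new ideas.
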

\begin{proof}
$A^t(K_v)^{\wedge}$ is finitely generated over $\Z_p$, \cite{mat55}.
\end{proof}

\end{subsection}

\begin{subsection}{The Cassels-Tate exact sequence}\label{sub:ct}
The group $\Gamma(F):=\Gal(F/K)$ acts naturally on
$$\mathcal{H}^i(A/F):=\bigoplus_{\text{all}\;w} \coh^i(F_w,A)_p,$$
for each $i$, and it also acts naturally on $\coh^i(F,A)_p$ so that the localization map
$$loc_F^{\le i}:\coh^i(F,A)_p\longrightarrow
\mathcal{H}^i(A/F)$$
actually respects these actions. The direct product
$$\mathcal{H}^0(A^t/F):=\prod_{\text{all}\; w} A^t(F_w)^{\wedge}$$
is also endowed with a $\Gamma(F)$-action
so that all local parings together define the $\Gamma(F)$-equivariant global perfect pairing
$$
\xymatrix{<\;,\;>_{F}: & \mathcal{H}^1(A/F)\times\mathcal{H}^0(A^t/F) \ar[r] & \Q/\Z\\
& (\eta_v)_v\times (\xi_v)_v \ar@{|->}[r] & \sum_{\text{all}\;v} <\eta_v,\xi_v>_{F_v}}
$$
that identifies $\mathcal{H}^1(A/F)$ with the Pontryagin dual $\mathcal{H}^0(A^t/F)^{\vee}$.
As usual, write $\Sha^i(A/F)=\ker \be\left[\e loc^{\le i}_F\right]$. Then we have the exact sequence
\begin{equation*}
\xymatrix{0\ar[r] & \Sha^1(A/F) \ar[r] & \coh^1(F,A)_p \ar[r]^{loc^{\le 1}_F} & \mathcal{H}^1(A/F) \ar@{=}[r] & \mathcal{H}^0(A/F)^{\vee} .}
\end{equation*}

Recall that for $m=1,...,\infty$, the $p^m$-Selmer group
$\Sel_{p^m}(A/F)$ is the kernel of the composition
\begin{equation}\label{e:loc}
\mathcal{L}_F:\coh^1(F,A[p^m])\longrightarrow
\coh^1(F,A)_p\stackrel{loc_F^1}{\longrightarrow}
\mathcal{H}^1(A/F).
\end{equation}
There exists an injection that identifies
$$\text{T}_p\Sel(A^t/F):=\varprojlim_{m}\Sel_{p^m}(A^t/F)$$
as a subgroup of $\mathcal{H}^0(A^t/F)$
(see Corollary I.6.23(b)
\cite[Proposition 5,6]{mil86}, \cite{mil72}, or \cite{gat07})
and the (generalized) Cassels-Tate exact sequence (\cite{cas64,gat07,t62}) asserts the following.
\begin{mytheorem}\label{t:gct}
The image of $loc^{\le 1}_F$ equals the annihilator of  $\T_p\Sel(A^t/F)$,
whence
$$\coker\be\left[\e loc^{\le 1}_F\right]\simeq \T_p\Sel(A^t/F)^{\vee}.$$
\end{mytheorem}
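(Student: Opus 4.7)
My approach is to derive this from the classical Poitou--Tate global duality for the finite (flat) Galois modules $A[p^m]$ and $A^t[p^m]$, which are Weil-dual to one another, and to pass to the inverse limit in $m$.

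First, for each $m \geq 1$, the nine-term exact sequence of Poitou--Tate at degree one reads
$$\coh^1(F, A[p^m]) \xrightarrow{\lambda_m} \bigoplus_v \coh^1(F_v, A[p^m]) \xrightarrow{\mu_m} \coh^1(F, A^t[p^m])^\vee,$$
with $\image(\lambda_m) = \ker(\mu_m)$; here $\mu_m$ is cup-product with a global lift via the Weil pairing followed by the sum of local invariant maps. In the number field case this is standard (\cite{mil86}); in the function field case one uses the flat (fppf) version of \cite{gat07}.

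Second, I transfer the statement from $A[p^m]$ to $A$ and $A^t$ using the Kummer sequences
$$0 \to A(F)\otimes \Z/p^m \to \coh^1(F, A[p^m]) \to \coh^1(F, A)[p^m] \to 0$$
and their local analogues. The standard local duality underlying Lemma \ref{l:tatenorm} says that $A^t(F_v)/p^m$ and $A(F_v)/p^m$ are exact annihilators of one another inside $\coh^1(F_v, A^t[p^m])$ and $\coh^1(F_v, A[p^m])$, and that the induced pairing identifies $\coh^1(F_v, A)[p^m]$ with $(A^t(F_v)/p^m)^\vee$. A diagram chase combining this with the first step identifies the cokernel of the $p^m$-torsion restriction $\coh^1(F, A)_p[p^m] \to \mathcal{H}^1(A/F)[p^m]$ of $loc^{\le 1}_F$ with the Pontryagin dual of the image of $\Sel_{p^m}(A^t/F)$ in $\prod_v A^t(F_v)/p^m$.

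Third, I pass to the inverse limit in $m$. By construction $\mathcal{H}^0(A^t/F) = \varprojlim_m \prod_v A^t(F_v)/p^m$ and $\T_p\Sel(A^t/F) = \varprojlim_m \Sel_{p^m}(A^t/F)$ embeds there via the stated injection. Because $\coh^1(F, A)_p$ and $\mathcal{H}^1(A/F)$ are $p$-primary, taking the direct limit of the Pontryagin dual of the step-two identification yields $\coker\be\left[\e loc^{\le 1}_F\right] \simeq \T_p\Sel(A^t/F)^\vee$; equivalently, the image of $loc^{\le 1}_F$ is the annihilator of $\T_p\Sel(A^t/F)$ under $<\,,\,>_F$.

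The principal obstacle is this limiting step: one must verify Mittag--Leffler for the inverse systems so that exactness survives the inverse limit, and confirm that the annihilator of the resulting compact subgroup of $\mathcal{H}^0(A^t/F)$ coincides with the direct limit of annihilators of its finite-level quotients. This is handled systematically by the framework of \cite{gat07}, which treats both the number field and function field (flat) cases in a uniform way; the cleanest route is therefore to check that the present setup satisfies those hypotheses rather than redo the inverse-limit argument by hand.
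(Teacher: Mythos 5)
The paper does not actually prove Theorem \ref{t:gct}; it cites it as the generalized Cassels--Tate dual exact sequence, pointing the reader to \cite{cas64,t62} and, for the uniform number-field/function-field (flat) treatment and the injection $\T_p\Sel(A^t/F)\hookrightarrow\mathcal{H}^0(A^t/F)$, to \cite{gat07}. Your sketch---Poitou--Tate duality for the Weil-dual modules $A[p^m]$ and $A^t[p^m]$, transfer to $A$ and $A^t$ via the global and local Kummer sequences and local Tate duality, then passage to the inverse limit in $m$ with the Mittag--Leffler verification deferred to \cite{gat07}---is exactly the derivation underlying those references, so it is correct and takes essentially the same route the paper invokes by citation.
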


\begin{proposition}\label{p:coh2st}
The localization map $loc^{\le 2}_F$ is injective and
\begin{equation*}
\mathcal{H}^2(A/F)=\begin{cases}
\bigoplus_{v\;\text{real}} \coh^2(F_v,
A),& \;\text{if}\;char.(F)=0;\\
0, & \;\text{if}\;char.(F)=p.\\
\end{cases}
\end{equation*}
\end{proposition}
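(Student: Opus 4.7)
\emph{Plan for Proposition \ref{p:coh2st}.}

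The plan is to prove the two assertions separately: (i) the local vanishing $\coh^2(F_v,A)_p=0$ at every non-archimedean place and every complex archimedean place, and (ii) the injectivity of $loc_F^{\le 2}$, equivalently the vanishing of $\Sha^2(A/F)_p$.

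For (i), the complex archimedean case is trivial since $\Gal(\overline{F_v}/F_v)=1$. For a non-archimedean $v$, the vanishing $\coh^r(F_v,A)=0$ for all $r\ge 2$ is a classical local result (Milne's \emph{Arithmetic Duality Theorems} I.3.2 in characteristic $0$, and its flat-cohomology analogue for equal characteristic), proved via the cohomological dimension of $\Gal(F_v^s/F_v)$ together with the structure of the N\'eron model of $A$. One may view this as dual to the non-existent $\coh^{-1}(F_v,A^t)$, but the direct route through $\mathrm{cd}_p$ and the Kummer sequence $0\to A[p^n]\to A\stackrel{p^n}{\to} A\to 0$ is cleaner. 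This leaves only the real archimedean places when $\mathrm{char}(F)=0$, and nothing at all when $\mathrm{char}(F)=p$, yielding the stated description of $\mathcal{H}^2(A/F)$.

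For (ii), I would invoke the global statement $\Sha^2(A/F)_p=0$ for an abelian variety over a global field (Milne's ADT I.6.26(c) in characteristic $0$ and its flat-cohomology analogue for characteristic $p$). Alternatively, this can be read off directly from the Poitou--Tate/Cassels--Tate nine-term exact sequence extending Theorem \ref{t:gct}, whose relevant portion reads
$$
\coh^1(F,A^t)_p^{\vee}\longrightarrow\coh^2(F,A)_p\stackrel{loc_F^{\le 2}}{\longrightarrow}\mathcal{H}^2(A/F)\longrightarrow\coh^0(F,A^t)_p^{\vee},
$$
together with the fact that the leftmost arrow is zero on $\Sha^2(A/F)_p$ by global duality with the trivial group $\Sha^{-1}(A^t/F)$.

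The main obstacle is the characteristic-$p$ case: one must pass from Galois cohomology of $A$ to flat cohomology of its N\'eron model, particularly at places of bad reduction, and appeal to Milne's flat-duality theorems rather than the classical Tate--Poitou machinery. The conclusions coincide with the number-field case, but their proofs are substantially more delicate, which is presumably why the author simply records the statement here and refers elsewhere for the local/global duality inputs.
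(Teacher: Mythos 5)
Your plan matches the paper's proof essentially verbatim: the author cites Milne, \emph{Arithmetic Duality Theorems} I.6.26(C) for the injectivity of $loc^{\le 2}_F$ when $char.(F)=0$, and I.3.2 together with III.7.8 for the local computation of $\mathcal{H}^2(A/F)$, exactly the two ingredients you identify. The only difference worth noting is that for the characteristic-$p$ injectivity the paper points to \cite{gat12} (Gonz\'alez-Avil\'es--Tan, ``On the Hasse principle for finite group schemes over global function fields'') rather than to a ``flat-cohomology analogue'' inside Milne's book; you correctly flag that this is the delicate step, but attribute it to a slightly different source.
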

\begin{proof}
If $p$ is prime to the characteristic of $K$,
the first statement is proved in \cite[I.6.26(C)]{mil86}); otherwise, a proof is given in \cite{gat12}.
The second statement follows from \cite[I.3.2 and III.7.8]{mil86}.
\end{proof}

\end{subsection}
\end{section}


\begin{section}{The local cohomology groups}\label{s:local} Let $v$ be a place of $K$.
For an intermediate extension $N$ of $L/K$, denote $N'=N\cap L'$, $\Psi(N)=\Gal(N/N')$,
and $\Gamma(N)=\Gal(N/K)$. Let $F$ denote a finite intermediate extension of $L/K$.
Let $\textsf{w}$ (resp. $u$) be a place of $F'$ (resp. $F$) sitting over $v$ (resp. $\textsf{w}$).
We view each $g\in\Gamma(F)$ as an isometry $\xymatrix{F\ar[r]^g_{\sim} & F}$ with the metric of the left-hand side induced by $u$,
while that of the right-hand side induced by $g(u)$ with $|x|_{g(u)}=|g^{-1}(x)|_u$, $\forall x\in F$,
and then extend it to the isomorphism between their completions: $\xymatrix{F_u\ar[r]^-g_-{\sim} & F_{g(u)}}$.
Similarly, the restriction of $g$ gives rise the isomorphism $\xymatrix{F'_{\textsf{w}}\ar[r]^-g_-{\sim} & F'_{g(\textsf{w})}}$,
with $g(u)\mid g(\textsf{w})$, and also, for $i\geq 1$, $g$ induces the isomorphism
\begin{equation}\label{e:isomorphisms}
\xymatrix{\coh^i(\Psi(F)_u, A(F_u))\ar[r]^-g_-{\sim} & \coh^i(\Psi(F)_{g(u)}, A(F_{g(u)}))}.
\end{equation}
By these isomorphisms, we identify $F_u$ and $\coh^i(\Psi(F)_u, A(F_u))$ with $F_{g(u)}$ and $\coh^i(\Psi(F)_{g(u)}, A(F_{g(u)}))$ respectively,
for all $g\in \Psi(F)$, and simply write $F_{\textsf{w}}$ and $\coh^i(\Psi(F)_{\textsf{w}}, A(F_{\textsf{w}}))$ for them.
Then put
$$\mathcal{H}^i_v(A,F/F')=\bigoplus_{\textsf{w}\mid v} \coh^i(\Psi(F)_\textsf{w},A(F_\textsf{w})),$$
endowed with the discrete topology. For $g$ runs through $\Gamma(F)$ the isomorphisms \eqref{e:isomorphisms}
induce an action of $\Gamma(F)$ on
$\mathcal{H}^i_v(A,F/F')$, which factors through an action of $\Gamma(F')$, and thus yield a $\Lambda(\Gamma(F'))$-module structure of
$\mathcal{H}^i_v(A,F/F')$.
In general, set
\begin{equation*}
\mathcal{H}^i_v(A,N/N'):=\varinjlim_{F\subset N} \mathcal{H}^i_v(A,F/F'),
\end{equation*}
and denote
$$\mathcal{W}_v^i:={\mathcal{H}_v^i}(A, L/L')^{\vee}.$$
Also, for each place $w$ of $L'$
sitting over $v$, denote $\mathcal{H}_w^i=\coh^i(\Psi_w,A(L_w))$ and
$\mathcal{W}_w^i={\mathcal{H}_w^i}^{\vee}$. 



\begin{definition}\label{d:vartheta} Define
$\le \vartheta_w^{(i)}:=\chi_{\Lambda(\Gamma'_w)}(\mathcal{W}_w^i)$ and $\le \vartheta_v^{(i)}:=\chi_{\Lambda(\Gamma')}(\mathcal{W}_v^i)$,
if $\mathcal{W}_w^i$ and $\mathcal{W}_v^i$ are finitely generated over the corresponding Iwasawa algebras.
\end{definition}
In this section, we give explicit expressions of $\le \vartheta_w^{(i)}$ and $\le \vartheta_v^{(i)}$, for $i=1,2$.

\begin{subsection}{General facts}\label{su:gf} In general, for an intermediate extension $N$ of $L/K$ and a place $\textsl{w}\mid v$ of $N'$,
write $\Lambda_{N'}=\Z_p[[\Gamma(N')]]$ and
$\Lambda_{N'_{\textsl{w}}}=Z_p[[\Gamma(N')_{\textsl{w}}]]$.
Let $F$ be a finite intermediate extension of $L/K$. By choosing a place $\textsf{w}_0\mid v$ of $F'$, one can actually make the identification
$$\mathcal{H}^i_v(A,F/F')=\Hom_{\Lambda_{F'_{\textsf{w}_0}}}(\Lambda_{F'}, \coh^i(\Psi(F)_{\textsf{w}_0}, A(F_{\textsf{w}_0}))),$$
via the assignment $\xi\mapsto f_{\xi}$ such that for
$\xi=(\xi_\textsf{w})_{\textsf{w}\mid v}\in \mathcal{H}^i_v(A,F/F')$, $f_{\xi}(g)={}^g\xi_{{g^{-1}}(\textsf{w}_0)}$, for
$g\in\Gamma(F')$.
Then it follows (see \cite[II.4.1, Proposition 1(b)]{bou70}) that the Pontryagin dual
\begin{equation}\label{e:hinv}
\mathcal{H}^i_v(A,F/F')^{\vee}=\Lambda_{F'}\otimes_{\Lambda_{F'_{\textsf{w}_0}}} \coh^i(\Psi(F)_{\textsf{w}_0}, A(F_{\textsf{w}_0}))^{\vee}.
\end{equation}
Now choose a place $w$ of $L'$ sitting over $v$ and, for every $F$, choose $\textsf{w}_0$ to be the place of $F'$ sitting below $w$.
Since $\mathcal{H}^i_v=\varinjlim_{F}\mathcal{H}^i_v(A,F/F')$, the duality and (\ref{e:hinv}) imply
\begin{equation}\label{e:winv}
\mathcal{W}^i_v=\varprojlim_{F} \Lambda_{F'}\otimes_{\Lambda_{F'_{\textsf{w}_0}}} \coh^i(\Psi(F)_{\textsf{w}_0}, A(F_{\textsf{w}_0}))^{\vee}.
\end{equation}
\begin{lemma}\label{l:gf1} If $\mathcal{W}_w^i$ is finitely generated over $\Lambda(\Gamma'_w)$, then
$$\mathcal{W}^i_v=\Lambda(\Gamma')\otimes_{\Lambda(\Gamma_{w}')} \mathcal{W}^i_{w},$$
and hence
$$\vartheta_v^{(i)}=\Lambda(\Gamma')\cdot \vartheta_w^{(i)}.$$
\end{lemma}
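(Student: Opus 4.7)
The plan is to derive everything from the explicit description \eqref{e:winv}, promoting its finite-level tensor-product decomposition to the infinite level, and then to pass to characteristic ideals by flatness.

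First, I would unwind notation. For a finite intermediate extension $F$, the places of $F'$ above $v$ are permuted transitively by $\Gamma(F')$ and are in bijection with the coset space $\Gamma(F')/\Gamma(F')_{\textsf{w}_0}$, which is finite. This makes $\Lambda_{F'}$ a finitely generated free $\Lambda_{F'_{\textsf{w}_0}}$-module, with a basis given by any set of coset representatives; choosing these representatives coherently as $F$ varies (by lifting cosets from $\Gamma(F')/\Gamma(F')_{\textsf{w}_0}$ to $\Gamma'/\Gamma'_w$ and restricting), the system of bases is compatible under the transition maps. Write $M_F := \coh^i(\Psi(F)_{\textsf{w}_0}, A(F_{\textsf{w}_0}))^{\vee}$, so $\mathcal{W}^i_w = \varprojlim_F M_F$ by \eqref{e:winv} applied with $L'$ replaced by the local situation.

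Second, I would commute the projective limit with the tensor product in \eqref{e:winv}. Because $\Lambda_{F'}$ is a \emph{free} module of finite rank over $\Lambda_{F'_{\textsf{w}_0}}$ with compatible bases, each term of the inverse system in \eqref{e:winv} is literally a finite direct sum of copies of $M_F$, and the transition maps act diagonally on the basis (after the coherent choice above). Passing to the limit then gives
\[
\mathcal{W}^i_v \;=\; \varprojlim_F \bigl(\Lambda_{F'}\otimes_{\Lambda_{F'_{\textsf{w}_0}}} M_F\bigr) \;=\; \Lambda(\Gamma')\otimes_{\Lambda(\Gamma'_w)}\varprojlim_F M_F \;=\; \Lambda(\Gamma')\otimes_{\Lambda(\Gamma'_w)}\mathcal{W}^i_w,
\]
where in the middle equality I use that the relevant inverse systems are Mittag-Leffler (in fact, the transition maps on $M_F$ are assumed to produce a finitely generated limit, and the ranks stabilize because everything is compatible with the identification $\Lambda(\Gamma') \cong \varprojlim_F \Lambda_{F'}$ as a $\Lambda(\Gamma'_w)$-module).

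For the characteristic-ideal conclusion, I would use that $\Gamma'_w$ is a closed subgroup of $\Gamma'\cong\Z_p^{d-1}$, so $\Gamma'_w \cong \Z_p^{e'}$ and $\Gamma'/\Gamma'_w \cong \Z_p^{d-1-e'}$; splitting the extension topologically realizes $\Lambda(\Gamma')$ as the completed group ring $\Lambda(\Gamma'_w)[[\Gamma'/\Gamma'_w]]$, which is a formal power series ring over $\Lambda(\Gamma'_w)$ and hence \emph{flat}. Flatness lets us apply $\Lambda(\Gamma')\otimes_{\Lambda(\Gamma'_w)}(-)$ to a pseudo-isomorphism $\bigoplus_j \Lambda(\Gamma'_w)/(\xi_j^{r_j}) \to \mathcal{W}^i_w$ and still get a pseudo-isomorphism of $\Lambda(\Gamma')$-modules, whose characteristic ideal is $\prod_j(\xi_j^{r_j})\Lambda(\Gamma') = \Lambda(\Gamma')\cdot\vartheta^{(i)}_w$. (Finite generation of $\mathcal{W}^i_v$ over $\Lambda(\Gamma')$ is automatic from the tensor-product formula.)

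The main obstacle is the bookkeeping in the second paragraph: verifying that coset representatives can be chosen in a system-compatible way so that the inverse limit genuinely commutes with the finite free extension of scalars. Once that identification is honest, both assertions of the lemma are essentially formal.
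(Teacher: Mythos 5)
Your plan — interpret \eqref{e:winv} as a limit of extensions of scalars, pull the tensor out of the limit, and finish by flatness — captures the right shape of the argument, and the flatness observation $\Lambda(\Gamma')\otimes_{\Lambda(\Gamma'_w)}(-)$ preserves pseudo-isomorphisms and hence gives $\vartheta_v^{(i)}=\Lambda(\Gamma')\cdot\vartheta_w^{(i)}$ is a correct and clean way to close the last step. But the second paragraph has a genuine gap, and it is exactly the step the paper works hardest on.

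You claim that after ``choosing coset representatives coherently as $F$ varies,'' each term $\Lambda_{F'}\otimes_{\Lambda_{F'_{\textsf{w}_0}}}M_F$ is a finite direct sum of copies of $M_F$ and that ``the transition maps act diagonally on the basis,'' so that the projective limit commutes with the tensor. This is false in general. As $F$ grows, the place $\textsf{w}_0$ of $F'$ over $v$ can split in $F_2'/F_1'$, so the rank $d_F=[\Gamma(F'):\Gamma(F')_{\textsf{w}_0}]$ increases, and the dual transition map $\Lambda_{F_2'}\otimes M_{F_2}\to\Lambda_{F_1'}\otimes M_{F_1}$ then \emph{folds} several $M_{F_2}$-summands (indexed by the places of $F_2'$ above a given place of $F_1'$) onto a single $M_{F_1}$-summand; it is not a diagonal map on any choice of basis. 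Your statement that ``the ranks stabilize'' also fails: if $\Gamma'_w$ has infinite index in $\Gamma'$ there are infinitely many places of $L'$ over $v$, and $d_F\to\infty$. So the asserted compatibility of bases breaks down precisely when there is splitting, which is the generic situation.

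The paper avoids this by inserting an intermediate layer. It restricts to the family $\mathcal{N}$ of (possibly infinite) intermediate extensions $N$ for which $\Gamma(N')_v$ is open and the decomposition groups are stable up to $L$, and inside a fixed $N$ it takes the limit only over those $F$ for which $\Gal(N'/F')_{\textsl{w}}=\Gal(N'/F')$; for that cofinal subfamily the rank $d_F=d_N$ is genuinely constant and the limit commutes with the finite free base change. This gives $\varprojlim_{F\subset N}\Lambda_{F'}\otimes M_F=\Lambda_{N'}\otimes_{\Lambda(\Gamma'_w)}\mathcal{W}^i_w$. The second, outer limit over $N\in\mathcal{N}$ is where the ranks do vary, and there the paper handles it by a different device: it is immediate if $\mathcal{W}^i_w$ is free, and in general one takes a finite presentation $0\to\mathcal{Y}\to\Lambda(\Gamma'_w)^r\to\mathcal{W}^i_w\to 0$, uses the Mittag-Leffler property of $\{\Lambda_{N'}\otimes\mathcal{Y}\}$, and a diagram chase to conclude that $\Lambda(\Gamma')\otimes\mathcal{W}^i_w\to\varprojlim_N\Lambda_{N'}\otimes\mathcal{W}^i_w$ is an isomorphism. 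Your appeal to Mittag-Leffler is in the right spirit, but applied at the wrong stage; without the intermediate family $\mathcal{N}$ the rank-stability you use as the input is not available.
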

\begin{proof}
For an intermediate extension $N$, let $\textsl{w}$ be the place of $N'$ below $w$. Then
\begin{equation}\label{e:nf}
\varinjlim_{F\subset N}  \coh^i(\Psi(F)_{\textsf{w}_0}, A(F_{\textsf{w}_0}))=\coh^i(\Psi(N)_{\textsl{w}},A(N_{\textsl{w}})).
\end{equation}
Let $\mathcal{N}$ denote the family of intermediate extensions of $L/K$ satisfying the conditions: (a) The decomposition subgroup $\Gamma(N')_v$
is open in $\Gamma(N')$, and 
(b) the natural map $\Gamma'\longrightarrow \Gamma(N')$ (resp. $\Gamma\longrightarrow \Gamma(N)$)
induces an isomorphism between the decomposition subgroups. Suppose $N\in \mathcal{N}$.
By (a), the index $d_N:=[\Gamma(N'):\Gamma(N')_{\textsl{w}}]$ is finite, and hence
$\Lambda_{N'}$ is a free
$\Lambda_{N'_{\textsl{w}}}$-module of rank $=d_N$. Also, the equality \eqref{e:nf} still holds,
if the limit is taken only over $F$ satisfying the condition that $\Gal(N'/F')_{\textsl{w}}=\Gal(N'/F')$.
For such $F$ the index
$[\Gamma(F'):\Gamma(F')_{\textsf{w}_0}]=d_N$,
and hence the rank of the free $\Lambda_{F'_{\textsf{w}}}$-module $\Lambda_{F'}$ equals $d_N$, too. Therefore, the limit
\begin{equation}\label{e:nf2}
\varprojlim_{F} \Lambda_{F'}\otimes_{\Lambda_{F'_{\textsf{w}_0}}} \coh^i(\Psi(F)_{\textsf{w}_0}, A(F_{\textsf{w}_0}))^{\vee}
=\Lambda_{N'}\otimes_{\Lambda_{N'_{\textsl{w}}}}\coh^i(\Psi(N)_{\textsl{w}},A(N_{\textsl{w}}))^\vee.
\end{equation}
By (b), We can identify $\Gamma(N')_{\textsl{w}}$, $\Psi(N)_{\textsl{w}}$, and $N_{\textsl{w}}$
with $\Gamma'_w$, $\Psi_w$ and $L_w$, respectively.
Consequently, we can write
$\coh^i(\Psi(N)_{\textsl{w}},A(N_{\textsl{w}}))=\coh^i(\Psi_w,A(L_w))$ as a module over
$\Lambda_{N'_{\textsl{w}}}=\Lambda(\Gamma'_w)$. Thus, by \eqref{e:winv} and \eqref{e:nf2},
$$\mathcal{W}^i_v=\varprojlim_{N\in\mathcal{N}} \Lambda_{N'}\otimes_{\Lambda(\Gamma'_w)}\mathcal{W}_w^i.$$
Then the lemma follows, if the finitely generated module $\mathcal{W}_w^i$ is free over $\Lambda(\Gamma'_w)$.
In general, for a finitely generated $\Lambda(\Gamma'_w)$-module $\mathcal{W}$, there is an exact sequence
$$0\longrightarrow \mathcal{Y}\longrightarrow \Lambda(\Gamma'_w)^r\longrightarrow  \mathcal{W}\longrightarrow 0,$$
and hence the exact sequence (as $\Lambda_{N'}$ is always free over $\Lambda(\Gamma'_w)$)
$$0\longrightarrow \Lambda_{N'}\otimes_{\Lambda(\Gamma'_w)}\mathcal{Y}\longrightarrow \Lambda_{N'}^r\longrightarrow  \Lambda_{N'}\otimes_{\Lambda(\Gamma'_w)}\mathcal{W}\longrightarrow 0.$$
Since the system $\{\Lambda_{N'}\otimes_{\Lambda(\Gamma'_w)}\mathcal{Y}\}_{N\in\mathcal{N}}$ satisfies the Mittag-Leffler
condition, the canonical map $\Lambda(\Gamma')^r=\varprojlim_{N\in\mathcal{N}}\Lambda_{N'}^r\longrightarrow  \varprojlim_{N\in\mathcal{N}}\Lambda_{N'}\otimes_{\Lambda(\Gamma'_w)}\mathcal{W}$
is surjective. It follows from the commutative diagram of exact sequences
$$
\xymatrix{{} & \Lambda(\Gamma')\otimes_{\Lambda(\Gamma'_w)} \mathcal{Y} \ar[d]^j \ar[r] & \Lambda(\Gamma')^r\ar@{=}[d]  \ar@{->>}[r] &
\Lambda(\Gamma')\otimes_{\Lambda(\Gamma'_w)} \mathcal{W} \ar[d]^i\\
0\ar[r] & \varprojlim_{N\in\mathcal{N}}\Lambda_{N'} \otimes_{\Lambda(\Gamma'_w)}\mathcal{Y} \ar[r] &
\varprojlim_ {N\in\mathcal{N}}\Lambda_{N'}^r \ar@{->>}[r] &
\varprojlim_{N\in\mathcal{N}}\Lambda_{N'} \otimes_{\Lambda(\Gamma'_w)}\mathcal{W}}
$$
that the canonical map $i$ is surjective. Now, since $\mathcal{Y}$ is finitely generated over $\Lambda(\Gamma'_w)$ as well, the map $j$
is also surjective. Then the diagram shows $i$ is an isomorphism.
\end{proof}
The following setting is useful for computing $\mathcal{W}_w$.
Let $L''/K$ be an intermediate $\Z_p$-extension of $L/K$ so that
$L=L'L''$ and $K=L'\cap L''$. For each finite intermediate extension
$F/K$, let $F''_n$ denote the $n$th layer of the $\Z_p$-extension
$F'':=L''F$ over $F$. For simplicity, write $F_w$, ${F''_n}_w$ and $F''_w$ for the topological closure in $L_w$
of $F$, $F''_n$ and $F''$.
\begin{lemma}\label{l:gf2} Let the notation be as above. Then
$$\coh^i(\Psi_w,A({L}_w))=\varinjlim_{F\subset L'}
\coh^i(\Gal({F''}_w/F_w),A({F''}_w)).$$
In particular, if $\coh^1(\Gal({F''}_w/F_w),A({F''}_w))$ is finite, for all intermediate extension
$F/K$ of $L'/K$, then $\mathcal{W}_v^2=0$.
\end{lemma}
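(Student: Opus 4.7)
The plan is first to identify the relevant Galois groups, then to pass to the cohomological limit by continuity. Since $L = L'L''$ with $L' \cap L'' = K$, restriction yields canonical isomorphisms $\Psi \simeq \Gal(L''/K) \simeq \Gal(F''/F)$ for every finite $F/K$ inside $L'$, and these are compatible as $F$ grows. Under these identifications, the decomposition subgroup $G_F := \Gal(F''_w/F_w)$ sits inside $\Psi$ as the stabilizer of the place $w|_{F''}$. As $F$ increases inside $L'$, these closed subgroups decrease to $\bigcap_F G_F = \Psi_w$, the stabilizer of $w$ itself.

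The direct limit formula then follows from continuity. Because $\Psi_w$ is compact and the coefficients $A(L_w)$ are discrete, a continuous $i$-cocycle $f\colon \Psi_w^i \to A(L_w)$ has finite image and factors through a finite quotient $\Psi_w/U$; using $\Psi_w = \bigcap_F G_F$ together with $L = \bigcup_F F''$, one arranges that $U \supset \Psi_w \cap G_{F}$ for some large $F$ and that the image lies in $A(F''_w)$. Hence $f$ is inflated from a cocycle on $G_F$ with values in $A(F''_w)$; the same treatment of coboundaries then yields the identification $\varinjlim_F \coh^i(G_F, A(F''_w)) \xrightarrow{\sim} \coh^i(\Psi_w, A(L_w))$.

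For the consequence $\mathcal{W}_v^2 = 0$: applying the main formula with $i = 2$, each $\coh^2(G_F, A(F''_w))$ is the cohomology of a closed subgroup of $\Psi \simeq \Z_p$, i.e.\ either trivial or procyclic pro-$p$. Combined with the finiteness of $\coh^1(G_F, A(F''_w))$---which, through the procyclic norm exact sequence applied layer by layer and Tate's local duality on the finite sublayers $F''_n/F$, controls the relevant norm cokernel---the $p$-primary part of $\coh^2(G_F, A(F''_w))$ becomes $p$-divisible in the direct limit. Hence $\coh^2(\Psi_w, A(L_w))$ has trivial Pontryagin dual, and summing over the places $w \mid v$ of $L'$ yields $\mathcal{W}_v^2 = 0$.

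The main obstacle I anticipate is making the direct-limit identification fully rigorous on the coefficient side: although $L = \bigcup_F F''$ algebraically, the union $\bigcup_F F''_w$ is only dense in the completion $L_w$, so $A(L_w) \supsetneq \bigcup_F A(F''_w)$ in general. The argument rests crucially on the compactness of $\Psi_w$ together with the discreteness of $A(L_w)$ in the cohomological convention, which restricts every cocycle's data to a finite---hence algebraic---subset already contained in some $A(F''_w)$.
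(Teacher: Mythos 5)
Your treatment of the direct-limit formula is sound and matches the spirit of the paper (which simply declares it ``obvious''): continuous cohomology of a profinite group with discrete coefficients factors through finite quotients, which takes care of the completion subtlety you correctly flag.

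The second part, however, has a genuine gap. You conclude that $\coh^2(\Psi_w,A(L_w))$ ``becomes $p$-divisible in the direct limit'' and then assert that a $p$-divisible group has trivial Pontryagin dual. That implication is false: $\Q_p/\Z_p$ is $p$-divisible and its dual is $\Z_p\neq 0$. You also cannot appeal to cohomological dimension ($\mathrm{cd}_p\,\Z_p=1$), because that bound applies to torsion modules and $A(F''_w)$ is not torsion, so $\coh^2$ of a $\Z_p$-quotient of $\Psi$ with these coefficients is not automatically zero. What is actually needed, and what the paper does, is a two-step argument at finite levels: first, the finiteness hypothesis on $\coh^1(\Gal(F''_w/F_w),A(F''_w))$ bounds $|\coh^1(\Gal(F''_{n,w}/F_w),A(F''_{n,w}))|$ uniformly in $n$ (inflation is injective), and the Herbrand-quotient identity for finite cyclic groups then bounds $|\coh^2(\Gal(F''_{n,w}/F_w),A(F''_{n,w}))|$ by the same constant; second, the compatibility diagram \eqref{e:inf2} identifies the inflation map $\coh^2(\Gal(F''_{n,w}/F_w),\cdot)\to\coh^2(\Gal(F''_{m,w}/F_w),\cdot)$ with a map induced by multiplication by $p^{m-n}$ on $\widehat{\coh}^0$. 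A bounded group is killed by $p^{m-n}$ once $m-n$ is large, so every element dies in the direct limit and $\coh^2(\Gal(F''_w/F_w),A(F''_w))=0$; passing to the limit over $F$ and applying Lemma \ref{l:gf1} gives $\mathcal{W}_v^2=0$. Your sketch never establishes the uniform bound, and even granting $p$-divisibility, without bounded exponent the vanishing does not follow.
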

\begin{proof}
The first assertion is obvious.
If $\coh^1(\Gal({F''}_w/F_w),A({F''}_w))$ is finite, then the standard Herbrand quotient computation shows
$$|\coh^2(\Gal({F''_n}_w/F_w),A({F''_n}_w))|=|\coh^1(\Gal({F''_n}_w/F_w),A({F''_n}_w))|,$$
and hence is bounded
as $n\rightarrow \infty$.
Then the diagram (\ref{e:inf2}) implies
$$\coh^2(\Gal({F''}_w/F_w),A({F''}_w))=\varinjlim_{n} \coh^2(\Gal({F''_n}_w/F_w),A({F''_n}_w))=0.$$
Therefore, $\mathcal{W}_w^2=0$, whence $\mathcal{W}_v^2=0$, by Lemma \ref{l:gf1}.
\end{proof}

\end{subsection}

\begin{subsection}{The unramified case}\label{sub:loccoh}
Let $\Pi_v$ and $\pi_v$ be as in Definition \ref{d:badunram}.
\begin{lemma}\label{l:varunramified}
Let $K^{up}_v/K_{v}$ be the unique unramified $\Z_p$-extension.
Then
$$|\coh^1(\Gal(K^{up}_v/K_{v}),A(K^{up}_v))|=|\Z_p/\pi_v|.$$
In particular, if $A$ has good reduction at $v$, then
$\coh^1(\Gal(K^{up}_v/K_{v}),A(K^{up}_v))=0$.
\end{lemma}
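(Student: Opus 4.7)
The plan is to analyze the cohomology via the Néron model. Let $\mathcal{A}/\O_v$ denote the Néron model of $A/K_v$, let $\mathcal{A}^0$ be its identity component, and let $\Phi = \mathcal{A}/\mathcal{A}^0$ be the component group, an étale group scheme over $\F_v$ with $\Phi(\overline{\F}_v)=\Pi_v$. Set $\Gamma_v=\Gal(K^{up}_v/K_v)\simeq\Z_p$, let $H=\Gal(\overline{\F}_v/\F_v^{(p)})$, where $\F_v^{(p)}$ is the residue field of $K_v^{up}$, and note $\Gal(\overline{\F}_v/\F_v)$ is generated by $H$ and $\Gamma_v$.

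First, since $K_v^{up}/K_v$ is unramified, the Néron mapping property and the fact that $\O_{K_v^{up}}$ is henselian with residue field $\F_v^{(p)}$ yield, using smoothness of $\mathcal{A}$ for surjectivity on the right, the short exact sequence
\begin{equation*}
0\lr \mathcal{A}^0(\O_{K_v^{up}})\lr A(K_v^{up})\lr \Pi_v^{H}\lr 0.
\end{equation*}
Taking the long exact sequence of $\Gamma_v$-cohomology, the main task reduces to showing $\coh^1(\Gamma_v,\mathcal{A}^0(\O_{K_v^{up}}))=0$; then we would obtain $\coh^1(\Gamma_v,A(K_v^{up}))\hookrightarrow \coh^1(\Gamma_v,\Pi_v^H)$, and in fact an isomorphism once we also note $\coh^2(\Gamma_v,\mathcal{A}^0(\O_{K_v^{up}}))=0$ (automatic since $\Gamma_v\simeq\Z_p$ has cohomological dimension $1$).

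To kill $\coh^1$ of $\mathcal{A}^0(\O_{K_v^{up}})$, I would filter it by the kernel of reduction: the exact sequence
\begin{equation*}
0\lr \mathcal{A}^0_1(\O_{K_v^{up}})\lr \mathcal{A}^0(\O_{K_v^{up}})\lr \mathcal{A}^0(\F_v^{(p)})\lr 0.
\end{equation*}
The quotient has vanishing $\coh^1(\Gamma_v,-)$ by the usual Lang argument: inflation--restriction from $\F_v^{(p)}$ up to $\overline{\F}_v$ together with Lang's theorem $\coh^1(\Gal(\overline{\F}_v/\F_v),\mathcal{A}^0(\overline{\F}_v))=0$ forces $\coh^1(\Gamma_v,\mathcal{A}^0(\F_v^{(p)}))=0$. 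For the kernel $\mathcal{A}^0_1$, I would use the standard filtration of the formal group whose successive quotients are isomorphic (as $\Gamma_v$-modules) to the additive group of $\F_v^{(p)}$; since $\coh^1(\Gamma_v,\F_v^{(p)})=0$ by the normal basis theorem applied at each finite layer and passage to the direct limit, the vanishing propagates to $\mathcal{A}^0_1(\O_{K_v^{up}})$. This filtration/limit argument is the step I expect to be the main technical obstacle, since one must be careful about convergence and the order of taking limits in the formal group.

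With that in hand, $\coh^1(\Gamma_v,A(K_v^{up}))\simeq \coh^1(\Gamma_v,\Pi_v^H)$. Since $\Pi_v^H$ is finite and $\Gamma_v\simeq\Z_p$, Corollary \ref{c:cofito} gives
\begin{equation*}
|\coh^1(\Gamma_v,\Pi_v^H)|=|(\Pi_v^H)^{\Gamma_v}|=|\Pi_v^{\Gal(\overline{\F}_v/\F_v)}|=|\Z_p/\pi_v|,
\end{equation*}
which is exactly the claimed formula. The good reduction case is then immediate because $\Phi=0$, so $\Pi_v=0$ and $\pi_v=(1)$.
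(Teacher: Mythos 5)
Your proof is correct, but it takes a genuinely different route from the paper's. The paper's proof simply invokes Milne's Proposition I.3.8 of \emph{Arithmetic Duality Theorems} to identify $\coh^1(\Gal(K_v^{un}/K_v), A(K_v^{un}))$ with $\coh^1(\Gal(\overline{\F}_v/\F_v), \Pi_v)$, descends from the maximal unramified extension $K_v^{un}$ to the $\Z_p$-subextension $K_v^{up}$ by inflation, and then applies Corollary~\ref{c:cofito}. What you do instead is re-derive, directly at the level of $K_v^{up}$, the content behind Milne's proposition: the N\'eron filtration $0\to\mathcal{A}^0\to\mathcal{A}\to\Phi\to0$, Lang's theorem for the identity component of the special fibre, and the formal-group filtration with the normal-basis theorem killing $\coh^1$ of each graded piece. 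This buys a self-contained argument and sidesteps the paper's rather terse descent from $K_v^{un}$ down to $K_v^{up}$; the cost is the care needed in passing to the limit through the filtration, which you correctly flag as the delicate spot. Two small remarks worth making precise: (i) surjectivity of $A(K_v^{up})\to\Pi_v^H$ needs not only smoothness of $\mathcal{A}$ but also $\coh^1(\F_v^{(p)},\mathcal{A}^0)=0$ (your Lang argument) to lift a Galois-stable component of the special fibre to a rational point, so Lang is really used twice; and (ii) in your final chain of equalities, $|\coh^1(\Gamma_v,\Pi_v^H)|=|(\Pi_v^H)^{\Gamma_v}|$ and $|\Pi_v^{\Gal(\overline{\F}_v/\F_v)}|=|\Z_p/\pi_v|$ are each only correct up to a prime-to-$p$ factor --- Corollary~\ref{c:cofito} is stated for $p$-primary modules, and the prime-to-$p$ part of $\Pi_v^H$ contributes nothing to $\coh^1$ because $\Gamma_v$ is pro-$p$, so $|\coh^1(\Gamma_v,\Pi_v^H)|$ equals the $p$-part of $|\Pi_v^{\Gal(\overline{\F}_v/\F_v)}|$, which is precisely $|\Z_p/\pi_v|$. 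The paper's proof quietly relies on the same reduction to the $p$-primary part, so this is not a defect relative to the target, but it is worth keeping in mind when reading Corollary~\ref{c:cofito}.
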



\begin{proof}
By Proposition I.3.8, \cite{mil86}, the reduction map induces
$$\coh^1(\Gal(K^{un}_{v}/K_{v}),A(K^{un}_{v}))
=\coh^1(\Gal({\bar \F}_{v}/\F_{v}),\Pi_{v})),
$$
whence
$$\coh^1(\Gal(K^{up}_v/K_{v}),A(K^{up}_v))
=\coh^1(\Gal(K^{up}_v/K_{v}),\Pi_{v}^{\Gal(K_v^{un}/
K^{up}_v)}).
$$
Then we apply Corollary \ref{c:cofito}.
\end{proof}

\begin{proposition}\label{p:urwi}
Suppose $v\not\in S$. Then the following holds:
\begin{enumerate}
\item[(a)] If $A$ has good reduction at $v$, then
$\mathcal{W}_v^1=0$.
\item[(b)] $\vartheta_v^{(1)}=
\begin{cases}
(\pi_v), & \Psi_v\not=0 ;\\
(1), & \text{otherwise}.\\
\end{cases}$
\item[(c)] $\mathcal{W}_v^2=0$
\end{enumerate}
\end{proposition}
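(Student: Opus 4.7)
The whole proposition is a package assembled from the preparatory results in \S\ref{su:gf} and \S\ref{sub:loccoh}, so my plan is to reduce every assertion to computing cohomology of the unramified $\Z_p$-extension of $K_v$ and then transport the answer from a single place $w \mid v$ of $L'$ to the full direct sum by Lemma~\ref{l:gf1}.

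First I set up the local picture. Since $v \notin S$, the completion $L_w/K_v$ is unramified, so $\Gamma_v=\Gal(L_w/K_v)$ is a closed topologically cyclic subgroup of $\Gamma \cong \Z_p^d$; in particular $\Gamma_v$ is either $0$ or $\cong\Z_p$. The same is true of $\Psi_v\subset\Gamma_v$, and if $\Psi_v\neq 0$ then necessarily $\Psi_v=\Gamma_v\cong\Z_p$, forcing $\Gamma'_v=0$, $L'_{w'}=K_v$, and $L_w=K_v^{up}$. This dichotomy drives everything.

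Next I compute $\mathcal{W}^1_w$. When $\Psi_v=0$, $\mathcal{H}^1_w=\coh^1(0,A(L_w))=0$, hence $\mathcal{W}^1_w=0$. When $\Psi_v\cong\Z_p$, Lemma~\ref{l:varunramified} applies verbatim (the local extension is $K_v^{up}/K_v$) and gives a \emph{finite} group of order $|\Z_p/\pi_v|$; thus $\mathcal{W}^1_w$ is finite as a $\Z_p=\Lambda(\Gamma'_w)$-module with $\vartheta_w^{(1)}=(\pi_v)$. In both cases $\mathcal{W}^1_w$ is finitely generated over $\Lambda(\Gamma'_w)$, so Lemma~\ref{l:gf1} produces $\mathcal{W}^1_v=\Lambda(\Gamma')\otimes_{\Lambda(\Gamma'_w)}\mathcal{W}^1_w$ and $\vartheta_v^{(1)}=\Lambda(\Gamma')\cdot\vartheta_w^{(1)}$. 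This immediately yields (b): the ideal is $(\pi_v)$ if $\Psi_v\neq 0$ and $(1)$ otherwise. For (a), good reduction at $v$ makes the component group $\Pi_v$ trivial, so $\pi_v=(1)$, and even the nontrivial case of Step~2 collapses to $\mathcal{W}^1_w=0$; Lemma~\ref{l:gf1} then gives $\mathcal{W}^1_v=0$.

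For (c) I invoke Lemma~\ref{l:gf2}. It suffices to check that $\coh^1(\Gal(F''_w/F_w),A(F''_w))$ is finite for every finite intermediate $F/K$ of $L'/K$ and the associated $\Z_p$-extension $F''/F$. Since $v\notin S$, every such $F''_w/F_w$ is unramified, so the argument of Lemma~\ref{l:varunramified} (Milne I.3.8 identifying $\coh^1$ of the unramified tower with cohomology of the finite component group $\Pi_v^H$, plus Corollary~\ref{c:cofito}) shows this group is finite. Lemma~\ref{l:gf2} then delivers $\mathcal{W}^2_v=0$.

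The proof is essentially bookkeeping; the one place that requires care is the first step, namely confirming that in a $\Z_p^d$-extension the decomposition subgroup at an unramified place really is at most procyclic, so that $\Psi_v\neq 0$ automatically forces $L_w=K_v^{up}$ and $L'_{w'}=K_v$. Once that is in hand, (a), (b), (c) all follow without further effort from Lemmas~\ref{l:gf1}, \ref{l:gf2}, and~\ref{l:varunramified}.
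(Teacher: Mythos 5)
Your proposal is correct and follows essentially the same route as the paper: both establish that unramifiedness forces $\Gamma_v$ (hence $\Psi_v$) to be procyclic, split into the cases $\Psi_v=0$ and $\Psi_v\cong\Z_p$ (with $\Gamma'_v=0$), compute $\mathcal{W}^1_w$ from Lemma~\ref{l:varunramified}, and then transport to $\mathcal{W}^1_v$ via Lemma~\ref{l:gf1} for (a), (b) and to $\mathcal{W}^2_v$ via Lemma~\ref{l:gf2} for (c). The only difference is that you spell out the structural dichotomy on $\Gamma_v,\Psi_v,\Gamma'_v$ more explicitly than the paper's terse proof, which is a reasonable clarification rather than a different argument.
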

\begin{proof} The previous lemma implies (a), it also implies
(c), in view of Lemma \ref{l:gf2}.
Now $\mathcal{W}_w^1$ is trivial, if $\Psi_w=0$. On the other hand, if $\Psi_w\not=0$, then $\Gamma_w'=0$ and $\Psi_w\simeq \Z_p$, as
$L_w/K_v$ is unramified.
Then, (b) follows from  Lemma \ref{l:gf1} and Lemma \ref{l:varunramified}.
\end{proof}

\end{subsection}

\begin{subsection}{The good ordinary case}\label{sub:go}
In this section, we assume that $A$ has good ordinary reduction at $v$.
Let $\textsf{f}_{L',v}\in \Lambda(\Gamma')$ be as in Definition \ref{d:goodord}.
Our aim is the following:
\begin{proposition}\label{p:go}
Suppose $A$ has good ordinary reduction at $v\in S$. Then the following holds:
\begin{enumerate}
\item[(a)] If $L'/K$ is ramified at $v$, then $\W_v^1$ is pseudo-null over $\Lambda(\Gamma)$.
\item[(b)] If $L'/K$ is unramified at $v$, then $\vartheta_v^{(1)}=(\textsf{f}_{L',v})$.
\item[(c)] $\W_v^2=0$.
\end{enumerate}
\end{proposition}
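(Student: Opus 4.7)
The backbone of the proof is the formal-\'etale decomposition available at the good ordinary place $v$: there is a $\Psi_w$-equivariant short exact sequence
$$0 \to \hat{A}(\mathfrak{m}_{L_w}) \to A(L_w) \to \bar{A}(\bar{\F}_{L_w}) \to 0,$$
whose associated $p$-adic Tate modules carry the Frobenius-at-$v$ eigenvalues $\{q_v/\alpha_i\}$ (of positive valuation) on the formal side and the units $\{\alpha_i\}$ on the reduction side. The plan is to first apply Lemma \ref{l:gf1} to reduce each $\vartheta_v^{(i)}$ to the computation of $\chi_{\Lambda(\Gamma'_w)}(\mathcal{W}_w^i)$ for a single place $w\mid v$ of $L'$, and then use the long exact sequence of $\Psi_w$-cohomology of the display above to split each such computation into a formal-group contribution and a reduction contribution.

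Part (c) will follow from Lemma \ref{l:gf2}: one needs to show that $\coh^1(\Gal(F''_w/F_w), A(F''_w))$ is finite for each finite intermediate extension $F$ of $L'/K$ and a fixed $\Z_p$-subextension $L''$. By Lemma \ref{l:tatenorm} this group is Pontryagin dual to the cokernel of the norm on the $p$-completion $A^t(F_w)^{\wedge}$, a finitely generated $\Z_p$-module. Standard good ordinary formal-group Iwasawa theory (multiplication by $p$ is an isogeny on $\hat{A}^t$, while the reduction $\bar{A}^t(\F_{F_w})$ is finite) gives that the norm has finite-index image, yielding the required finiteness. For part (b), the unramified hypothesis makes $\Gamma'_w$ topologically generated by a power of $[v]_{L'/K}$. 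The reduction piece of the long exact sequence contributes, via Proposition \ref{p:torcomp2} applied to $\bar{A}^t$ over $\F_v$, a factor with characteristic ideal $\prod_i(1-\alpha_i^{-1}[v]_{L'/K})$. The formal piece is handled dually via Tate's local duality, which exchanges $A$ and $A^t$; the Weil pairing identifies $\{q_v/\alpha_i\}$ with the Frobenius eigenvalues of the \'etale part on $A^t$, producing the complementary factor $\prod_i(1-\alpha_i^{-1}[v]_{L'/K}^{-1})$, and multiplying recovers $\textsf{f}_{L',v}$. One must also verify that the boundary terms in the long exact sequence are pseudo-null so that they do not affect the characteristic ideal.

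Part (a) is the main obstacle. When $L'/K$ is ramified at $v$, $\Gamma'_w$ contains a nontrivial inertia subgroup $I'_w$, and the plan is to produce two coprime annihilators of $\mathcal{W}_v^1$ in $\Lambda(\Gamma)$. Applying the argument of (b) to the maximal unramified subextension of $L'_w/K_v$ gives a nonzero annihilator in the Frobenius direction, while the nontrivial action of $I'_w$ on the formal group $\hat{A}$ yields a second annihilator coming from the inertia direction. The delicate point is verifying the coprimality of these two ideals in $\Lambda(\Gamma)$ and handling the case where the inertia acts wildly on $\hat{A}$; this is precisely why the conclusion is only pseudo-nullity rather than an explicit characteristic formula.
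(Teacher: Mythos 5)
Your plan is in the right spirit (formal--\'etale decomposition, Frobenius eigenvalues, duality between $A$ and $A^t$), but at several key places you substitute a gesture for the actual mechanism, and the mechanism is precisely the generalized Mazur theorem (\cite[Theorem 2]{tan10}, cf.\ \cite{cog96}), which the paper uses as the backbone. That theorem directly exhibits $\coh^1(\Psi_w,A(F''(n)_w))$ as an extension
$$\Hom(\bar{A}^t(\F_{F(n)_w})_p,\Q_p/\Z_p) \hookrightarrow \coh^1(\Psi_w, A(F''(n)_w)) \twoheadrightarrow \Hom(\Psi_w^1,\bar{A}(\F_{F(n)_w})),$$
which is finite, and taking limits yields $\W^1_w$ as an extension of $\varprojlim_m\bar A^t(\F_{F(m)_w})_p$ by $\bar A(\F_{L'_w})_p^\vee$ --- a finitely generated $\Z_p$-module. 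Everything in (a), (b), (c) falls out of this exact sequence.

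The concrete gaps:
(i) For (c), you assert that $A^t(F_w)^\wedge$ is a finitely generated $\Z_p$-module. That is true by Mattuck's theorem when $\operatorname{char}(K)=0$, but it is \emph{false} when $\operatorname{char}(K)=p$ --- see Theorem \ref{t:ss}, where this failure is exactly what causes infinite corank at supersingular places. What actually saves you in the good ordinary case is not finite generation of the source but finiteness of the norm cokernel, which is the content of the Mazur--Tan theorem; you've invoked this ("standard good ordinary formal-group Iwasawa theory") without proving it, so your finiteness argument is circular at the crucial point.
(ii) For (b), you never actually compute $\coh^1(\Psi_w,\hat A(\mathfrak m_{L_w}))$. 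Your appeal to Tate duality plus Weil pairing is the right heuristic for \emph{why} the answer is $\prod_i(1-\alpha_i^{-1}[v]_{L'/K}^{-1})$, but it is not a computation: you'd still need the deeply-ramified-extension input (Coates--Greenberg, adapted to characteristic $p$ via Lemma \ref{l:deep}) to identify the formal-group cohomology, and you'd need to check the boundary terms in your long exact sequence are negligible. The Mazur--Tan sequence packages all of this. Note also that the paper handles the $\Gamma'_v=0$ sub-case of (b) by a separate explicit order count $|\W^1_w|=|\bar A(\F_v)_p|^2$ combined with \cite[Cor.~4.3.7]{maz}; your sketch does not distinguish this from the $\Gamma'_v\simeq\Z_p$ case.
(iii) For (a), your ``two coprime annihilators'' strategy is a genuinely different route, but you explicitly leave the coprimality unverified and flag wild inertia as unresolved --- so this is not a proof. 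The paper's argument is much more direct and you should use it: once you know $\W^1_w$ is finitely generated over $\Z_p$, it is automatically pseudo-null over $\Lambda(\Gamma')$ when $\operatorname{rank}\Gamma'_v\geq 2$; and when $\Gamma'_v\simeq\Z_p$ with $L'/K$ ramified at $v$, the residue field of $L'_w$ is finite, so both terms $\mathfrak A$ and $\mathfrak B$ in the extension are finite, hence $\W^1_w$ is finite and therefore pseudo-null over the one-variable algebra $\Lambda(\Gamma'_w)$, and Lemma \ref{l:gf1} lifts this to $\Lambda(\Gamma')$.
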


\begin{proof}
Let the notation be as in Lemma \ref{l:gf2} and let $\Psi_w^1$ denote the inertia subgroup of $\Psi_w$.
If $L/L'$ is unramified at $v$, then by following the proof of Proposition \ref{p:urwi},
one can show that the proposition holds trivially with $\mathcal{W}_v^1=0$.

Thus, we assume that $L/L'$ is ramified at $v$. Let $F(n)$ denote the $n$th layer of $L'/K$ and let $L''$, $F(n)''$ be as in \S\ref{su:gf}.
Then the generalized Mazur's Theorem \cite[Theorem 2]{tan10} (see also \cite{cog96} for the number field case)
asserts that, for $m\geq n$,
we have the commutative diagram of exact sequences:
\begin{equation}\label{e:th2}
\xymatrix{
\Hom({\bar A}^t(\F_{{F(n)}_w})_p,\Q_p/\Z_p) \ar@{^{(}->}[r] \ar[d] & \coh^1(\Psi_w, A(F''(n)_w))\ar@{->>}[r] \ar[d]  & \Hom(\Psi_w^1,{\bar A}(\F_{F(n)_w}))\ar[d]\\
\Hom({\bar A}^t(\F_{{F(m)}_w})_p,\Q_p/\Z_p) \ar@{^{(}->}[r]  & \coh^1(\Psi_w, A(F''(m)_w))\ar@{->>}[r]   & \Hom(\Psi_w^1,{\bar A}(\F_{F(m)_w}))
}
\end{equation}
where the first down-arrow is induced by the norm map
$$\Nm_{F(m)_w/F(n)_w}:{\bar A}^t(\F_{F(m)_w})\longrightarrow {\bar A}^t(\F_{F(n)_w}).$$
In particular, $\coh^1(\Psi_w, A(F''(n)_w))$ is finite, for every $n$, and hence the assertion (c) follows from Lemma \ref{l:gf2}.
By taking the direct limit via the diagram \eqref{e:th2}, we deduce the exact sequence
\begin{equation}\label{e:awb}
0\longrightarrow \mathfrak{A}\longrightarrow   \W_w^1\longrightarrow \mathfrak{B}\longrightarrow 0,
\end{equation}
where $\mathfrak{A}$ is the Pontryagin dual of ${\bar A}(\F_{L_w'})_p$ and $\mathfrak{B}$ is the projective limit
$$\varprojlim_{m}{\bar A}^t(\F_{F(m)_w})_p$$
taking over the norm maps $\Nm_{F(m)_w/F(n)_w}$. It follows that $\W_v^1$ is finitely generated over $\Z_p$, and hence is
pseudo-null over $\Lambda(\Gamma')$, unless $\Gamma'_{v}\simeq \Z_p$, or $0$ .
We first consider the $\Gamma_{v}'= 0$ case, in which $L'_w=K_v$ and actually
$$|\W_w^1|=|{\bar A}^t(\F_v)_p|\cdot |{\bar A}(\F_v)_p|=|{\bar A}(\F_v)_p|^2.$$
It follows from Lemma \ref{l:gf1} that the $\Z_p$-ideal $\vartheta_v^{(1)}=(|{\bar A}(\F_v)|^2)$.
On the other hand, since $[v]_{L'/K}=id$ and all $\alpha_i$ and $1-q_v/\alpha_i$, $i=1,...,g$, are units in the maximal order $\O$ of the field $\Q_p(\alpha_1,....,\alpha_g)$, we have
$$(\textsf{f}_{L',v})=\prod_{i=1}^g(1-\alpha_i)^2\cdot \Z_p=\prod_{i=1}^g(1-\alpha_i)^2\cdot \prod_{i=1}^g(1-q_v/\alpha_i)^2\cdot\Z_p,$$
which, according to Mazur \cite[Corollary 4.3.7]{maz} (see also Lemma \ref{p:torcomp} and Remark \ref{r:tor}), equals to the ideal $(|{\bar A}(\F_v)|^2)$. Hence (b) is proved for the $\Gamma_v'=0$ case.

Suppose $\Gamma'_{v}\simeq \Z_p$. If
$L'/K$ is ramified at $v$, then $L'_{w}$ has finite residue field, and hence $\W_w^1$ is finite (by (\ref{e:awb})).
It follows that  $\W_w^1$ is pseudo-null over
$\Lambda(\Gamma'_{w})$, and the assertion (a) is proved, by Lemma \ref{l:gf1}.
Finally, in this case (b) follows from Proposition \ref{p:torcomp2}.
\end{proof}

\end{subsection}
\begin{subsection}{The split multiplicative case}\label{sub:sm}
In this section, we assume that $A$ has split multiplicative
reduction at $v$.
Let $\mathfrak{w}_v$ be the $\Z_p$-ideal in Definition \ref{d:splitmul}.
\begin{proposition}\label{p:spm}
Suppose $v\in S$ and $A$ has split multiplicative at $v$.
Then the following holds:
\begin{enumerate}
\item[(a)] If $\Gamma_v'=0$ and $\mathfrak{w}_v=0$, then
$\vartheta_v^{(2)}=0$; otherwise, $\mathcal{W}_v^2=0$ and
$\vartheta_v^{(2)}=(1)$.
\item[(b)] If $\Gamma_v'=0$, then $\vartheta_v^{(1)}=
\Lambda(\Gamma')\cdot\mathfrak{w}_v$.
\item[(c)] If $\Gamma_v'\simeq \Z_p$ with $\sigma$ a topological generator and $\Psi_v\simeq \Z_p$, then
$\vartheta_v^{(1)}=(\sigma-1)^g$.
\item[(d)] If $\Gamma_v'\simeq\Z_p$ and $\Psi_v=0$ or $\Gamma_v'\simeq
\Z_p^e$, $e>1$, then $\vartheta_v^{(1)}=(1)$.
\end{enumerate}
\end{proposition}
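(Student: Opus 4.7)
The plan is to apply Lemma \ref{l:gf1} to reduce to computing $\vartheta_w^{(i)} := \chi_{\Lambda(\Gamma'_w)}(\mathcal{W}_w^i)$ at a single place $w \mid v$ of $L'$, whence $\vartheta_v^{(i)} = \Lambda(\Gamma') \cdot \vartheta_w^{(i)}$. An analogue of Lemma \ref{l:tatenorm} for the (possibly infinite) local extension $L_w/L'_w$ identifies $\mathcal{W}_w^1$ with $A^t(L'_w)/\Nm_{L_w/L'_w}(A^t(L_w))$. Since $A^t$ also has split multiplicative reduction at $v$, its Tate rigid-analytic uniformization supplies a short exact sequence $0 \to \Omega_v^t \to (L_w^\times)^g \to A^t(L_w) \to 0$ of $\Psi_w$-modules. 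Applying the snake lemma to the norm map between this sequence and its analogue over $L'_w$---using Hilbert 90 and the local class field theory identification $((L'_w)^\times)^g/\Nm_{L_w/L'_w}((L_w^\times)^g) \simeq \Psi_w^g$---produces
\[
\mathcal{W}_w^1 \simeq \Psi_w^g\big/\image\bigl[\mathcal{R}_v^{t,\Psi}\colon \Z_p \otimes_{\Z} \Omega_v^t \to \Psi_w^g\bigr],
\]
where $\mathcal{R}_v^{t,\Psi}$ applies the reciprocity map for $L_w/L'_w$ coordinatewise to $\Omega_v^t \hookrightarrow (L'_w)^{\times g}$.

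For case (b), where $\Gamma'_v = 0$ forces $L'_w = K_v$ and $\Psi_w = \Gamma_v$, the map $\mathcal{R}_v^{t,\Psi}$ is exactly $\mathcal{R}_v^t$, and the duality of the Grothendieck monodromy pairing between $\Omega_v$ and $\Omega_v^t$ yields $\chi_{\Z_p}(\coker \mathcal{R}_v^t) = \chi_{\Z_p}(\coker \mathcal{R}_v) = \mathfrak{w}_v$; hence $\vartheta_v^{(1)} = \Lambda(\Gamma') \cdot \mathfrak{w}_v$. For case (c), the local class field theory compatibility $\mathrm{rec}_{L_w/L'_{w,n}}(x) = p^n \cdot \mathrm{rec}_{L_w/K_v}(x)$ for $x \in K_v^\times$ (with $L'_{w,n}$ the $n$-th layer of $L'_w/K_v$) forces $\mathcal{R}_v^{t,\Psi}$ to vanish in the profinite limit, so $\mathcal{W}_w^1 \simeq \Psi_w^g \simeq \Z_p^g$ with trivial $\Gamma'_w$-action and characteristic ideal $(\sigma-1)^g$ over $\Z_p[[\sigma]]$. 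Case (d) bifurcates: if $\Psi_v = 0$ then $\mathcal{H}_w^1 = 0$ trivially; if $\Psi_v \simeq \Z_p$ while $\Gamma'_w$ has rank $\geq 2$, the same argument yields $\mathcal{W}_w^1 \simeq \Z_p^g$ with trivial action, annihilated by the height-$\geq 2$ augmentation ideal of $\Lambda(\Gamma'_w)$ and therefore pseudo-null, so $\vartheta_w^{(1)} = (1)$.

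For $\mathcal{W}_v^2$, I would invoke Lemma \ref{l:gf2}, reducing to the finiteness of $\coh^1(\Gal(F''_w/F_w), A(F''_w))$ for every finite intermediate $F \subset L'$. The same snake lemma at the finite level identifies this group with the Pontryagin dual of $\Psi_w^g/\image \mathcal{R}_v^{t,F,\Psi}$, and since $[F_w:K_v]$ is finite the compatibility formula presents $\mathcal{R}_v^{t,F,\Psi}$ as $[F_w:K_v]$ times the $\Psi_w$-component of the full reciprocity for $L_w/K_v$; whenever this component is nondegenerate (the generic situation in cases (b) with $\mathfrak{w}_v \neq 0$, (c), and (d)), the cokernel is finite, so $\mathcal{W}_v^2 = 0$ and $\vartheta_v^{(2)} = (1)$. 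In case (a), where $\Gamma'_v = 0$ and $\mathfrak{w}_v = 0$, $\ker \mathcal{R}_v$ has positive $\Z_p$-rank, forcing $\mathcal{W}_w^2$ to have positive $\Z_p$-rank and $\vartheta_v^{(2)} = 0$. The main obstacle I anticipate is rigorously justifying the monodromy-pairing duality $\chi_{\Z_p}(\coker \mathcal{R}_v^t) = \chi_{\Z_p}(\coker \mathcal{R}_v)$ without assuming a principal polarization, and treating any degenerate configurations of the $\Psi_w$-component of $\mathcal{R}_v^t$ in case (c) where generic nondegeneracy may fail.
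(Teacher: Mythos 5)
Your approach is genuinely different from the paper's, and it has two gaps that are not closed. The paper never dualizes: it computes $\coh^i(\Psi_w,A(L_w))$ directly from the long exact cohomology sequence of $\Psi_w$ applied to the Tate uniformization $0\to\Omega_v\to(L_w^\times)^g\to A(L_w)\to 0$ of $A$ itself, using Hilbert 90, the cyclic periodicity $\coh^3(\Psi_w,\Omega_v)\simeq\coh^1(\Psi_w,\Omega_v)=0$, and the local invariant/reciprocity maps (diagrams \eqref{e:inf2}, \eqref{e:omegal}) to identify $\coh^2(\Psi_w,L_w^\times)$. This gives $\coh^1(\Psi_w,A(L_w))\hookrightarrow\coh^2(\Psi_w,\Omega_v)\to\coh^2(\Psi_w,L_w^\times)^g\to\!\!\to\coh^2(\Psi_w,A(L_w))$. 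When $\Gamma'_v\ne0$ the middle map vanishes identically (the invariant map kills the limit), giving $\mathcal{W}_w^1\simeq\Z_p^g$ with trivial action and $\mathcal{W}_w^2=0$ at once. When $\Gamma'_v=0$ a snake-lemma chase identifies $\coh^1(\Psi_w,A(L_w))$ with $\coker[\mathcal{R}_v]$ on the nose, which is exactly what $\mathfrak{w}_v$ measures.

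By contrast you dualize via Tate local duality and pass to the uniformization of $A^t$. Two problems. First, when $\Gamma'_v\neq0$ the field $L'_w$ is an infinite extension of $K_v$; neither Tate duality nor the norm map $\Nm_{L_w/L'_w}$ nor the class-field-theoretic isomorphism $((L'_w)^\times)^g/\Nm(\cdot)\simeq\Psi_w^g$ that you invoke is available over $L'_w$ as written. One would have to build everything as a limit over finite layers and then justify that the limit of Tate duals is the dual you want, which is precisely the kind of bookkeeping the paper's direct route avoids. Second, even in case (b) where $L'_w=K_v$ is local and duality is legitimate, your computation lands on $\chi_{\Z_p}(\coker\,\mathcal{R}_v^t)$, built from the character group $\Omega_v^t$ of $A^t$. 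The quantity $\mathfrak{w}_v$ in Definition \ref{d:splitmul} is defined via $\mathcal{R}_v$ from $\Omega_v$ of $A$. An isogeny between $A$ and $A^t$ only gives a pseudo-isomorphism between the cokernels; it does \emph{not} give an equality of the ideals $(|\coker\,\mathcal{R}_v^t|)$ and $(|\coker\,\mathcal{R}_v|)$ in $\Z_p$. You flag this yourself as the ``main obstacle,'' but it is fatal unless one supplies an integral comparison (for example via the integrality of the monodromy pairing in SGA~7, which would need to be checked against the definition of $\mathcal{R}_v$); the paper sidesteps it entirely by never introducing $A^t$ into this local computation.
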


\begin{proof}
Since $\Omega\simeq \Z^g$ as $\Gamma$-modules,
$$\coh^3(\Psi_w/\Psi_w^{p^n},\Omega)\simeq
\coh^1(\Psi_w/\Psi_w^{p^n},\Omega)\simeq \Hom(\Psi_w/\Psi_w^{p^n},\Z^g)=0,$$
and hence
$\coh^3(\Psi_w,\Omega)=0.$
Also, Hilbert's theorem 90 implies
$\coh^1(\Psi_w,L_w^{\times})=0.$
Therefore, from the exact sequence of Galois-modules
$$0\longrightarrow \Omega\longrightarrow (L_w^{\times})^g\longrightarrow A(L_w)
\longrightarrow 0,$$
we deduce the long exact sequence:
\begin{equation}\label{e:period}
\xymatrix{ \coh^1(\Psi_w,A(L_w)) \ar@{^{(}->}[r] &
\coh^2(\Psi_w,\Omega)\ar[r] & \coh^2(\Psi_w,L_w^{\times})^g
\ar@{->>}[r] & \coh^2(\Psi_w,A(L_w))}.
\end{equation}
Consider the case where $\Gamma_v'\not=0$, and hence $\simeq\Z_p^e$, for some $e\geq 1$.
Let $L''$ be as in \S\ref{su:gf}, let ${L'_w}_n$ denote the $n$th layer of $L'_w/K_v$ and write ${L''_w}_n=L''{L'_w}_n$. Then it follows from the commutative diagram
$$
\xymatrix{\coh^2({L''_w}_n/{L'_w}_n,\le {L''_w}_n^{\times})\ar@{^{(}->}[r]\ar[d] & \coh^2({L'_w}_n, \overline{K}_v^{\times})_p\ar[r]_-{\sim}^-{inv}\ar[d] & \Q_p/\Z_p\ar[d]^{p^{e(m-n)}}\\
\coh^2({L''_w}_m/{L'_w}_m,\le {L''_w}_m^{\times})\ar@{^{(}->}[r] & \coh^2({L'_w}_m, \overline{K}_v^{\times})_p\ar[r]_-{\sim}^-{inv} & \Q_p/\Z_p},
$$
where the isomorphisms are local invariant maps of Brauer groups and the right vertical arrow is the multiplication by $p^{e(m-n)}$,
that $\coh^2(\Psi_w,\le L_w^{\times})=\varinjlim_{n} \coh^2({L''_w}_n/{L'_w}_n,\le {L''_w}_n^{\times})=0$.
Thus, (\ref{e:period}) implies $\W_w^2=0$ as well as the isomorphisms of
$\Lambda(\Gamma_w')$-modules:
$$\coh^1(\Psi_w,A(L_w))\simeq
\coh^2(\Psi_w,\Omega)\simeq \Hom(\Psi_w,\Q/\Z)^g.$$
Consequently, $\W_w^1\simeq\Z_p^g$, if $\Psi_v\simeq\Z_p$;
$\W_w^1=0$, if $\Psi_v=0$. Thus, (c), (d) and a part of (a) are
proved (by Lemma \ref{l:gf1}).

Now consider the $\Gamma'_v=0$ case, in which $L_w'=K_v$ and $\Psi_w=\Gamma_v\simeq\Z_p$, since $v\in S$.
Let ${L_w}_n$ denote the $n$th layer of $L_w/K_v$ with $\Gal({L_w}_n/K_v)=\Psi_w/\Psi_w^{p^n}$
and identify $K_v^{\times}/\Nm_{L_w/K_v}(L_w^{\times})\simeq \Gamma_v$ (via the reciprocity law).
Then, as $n\rightarrow\infty$, the commutative diagram
$$
\xymatrix{\coh^2(\Psi_w/\Psi_w^{p^n},\Omega) \ar[r]\ar[d]^{\simeq} & \coh^2(\Psi_w/\Psi_w^{p^n},({L_w}_n^{\times}))^g\ar[d]^{\simeq}\\
\Omega/p^n\Omega \ar[r] &   (\Gamma_v/\Gamma_v^{p^n})^g},
$$
tends to (by (\ref{e:inf2}))
\begin{equation}\label{e:omegal}
\xymatrix{
\coh^2(\Psi_w,\Omega) \ar[r]\ar[d]^{\simeq}   & \coh^2(\Psi_w,L_w^{\times})^g\ar[d]^{\simeq}\\
\Q_p/\Z_p\otimes \Omega \ar[r]^{\bar{\mathcal{R}}_v} & \Q_p/\Z_p\otimes \Gamma_v^g,}
\end{equation}
where $\bar{\mathcal{R}}_v$ is induced by the map in (\ref{e:mcrv}).
This implies that both
$\coh^2(\Psi_w,\Omega)$ and $\coh^2(\Psi_w,L_w^{\times})^g$ are of corank
$g$ over $\Z_p$. Then by (\ref{e:period}), $\coh^1(\Psi_w,A(L_w))$ and $\coh^2(\Psi_w,A(L_w))$ are of the same $\Z_p$-corank that equals the
$\Z_p$-rank
of both $\ker\be\left[\e {\mathcal{R}}_v\right]$ and $\coker\be\left[\e {\mathcal{R}}_v\right]$.

Suppose $\mathfrak{w}_v=0$. Then both $\W_w^1$
and $\W_w^2$ are infinite. By Lemma \ref{l:gf1},
$\vartheta_v^{(1)}=\vartheta_v^{(2)}=0$.
This proves the first part of (a) and a part of (b).

Suppose $\mathfrak{w}_v\not=0$. Then $\coker\be\left[\e {\mathcal{R}}_v\right]$ is finite and hence
$\coh^2(\Psi_w,\Omega)\longrightarrow \coh^2(\Psi_w,L_w^{\times})^g$ is surjective (as both groups are $p$-divisible)
with finite kernel.
In particular, $\mathcal{W}_w^2=0$ and $\vartheta_v^{(2)}=(1)$. Thus, the proof of (a) is
completed. To finish the proof of (b), we apply the snake lemma to the diagram
$$
\xymatrix{
0\ar[r] & \Omega \ar[r]\ar[d]^{\mathcal{R}_v} & \Q_p\otimes \Omega \ar[r] \ar[d] & \Q_p/\Z_p\otimes \Omega \ar[r]\ar[d]^{\bar{\mathcal{R}}_v} &  0\\
0\ar[r]  & \Gamma_v^g \ar[r] & \Q_p\otimes \Gamma_v^g \ar[r] & \Q_p/\Z_p\otimes \Gamma_v^g\ar[r] &  0}
$$
to show that $\coker\be\left[\e {\mathcal{R}}_v\right]\simeq\ker\be\left[\e\coh^2(\Psi_w,\Omega)\longrightarrow \coh^2(\Psi_w,L_w^{\times})^g\right] =\coh^1(\Psi_w,A(L_w))$.
\end{proof}

\begin{lemma}\label{l:zpsplit}
Suppose $L/K$ is a $\Z_p^d$-extension and $v\in S$ is a split-multiplicative place. Then $\coh^1(\Gamma_v, A(L_v))$ is cofinitely
generated over $\Z_p$ and
$${\corank}_{\Z_p} \coh^1(\Gamma_v, A(L_v))=\rank_{\Z_p}\coker\be\left[\e \mathcal{R}_v\right]\geq g(\rank_{\Z_p}\Gamma_v-1).$$
\end{lemma}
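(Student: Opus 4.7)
The plan is to apply $\Gamma_v$-cohomology to the defining rigid-analytic exact sequence
$$0\longrightarrow\Omega_v\longrightarrow (L_v^{\times})^g\longrightarrow A(L_v)\longrightarrow 0$$
of $\Gamma_v$-modules; this is exact because $\Omega_v\simeq\Z^g$ carries the trivial Galois action, so $\coh^1(\Gal(\overline{K}_v/L_v),\Omega_v)=\Hom_{cont}(\Gal(\overline{K}_v/L_v),\Z^g)=0$. Hilbert's Theorem 90 kills $\coh^1(\Gamma_v,L_v^{\times})$, and I extract the short exact sequence
$$0\longrightarrow\coh^1(\Gamma_v,A(L_v))\longrightarrow\coh^2(\Gamma_v,\Omega_v)\stackrel{\phi}{\longrightarrow}\coh^2(\Gamma_v,L_v^{\times})^g.$$

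Write $e=\rank_{\Z_p}\Gamma_v$ (the $e=0$ case is trivial, so assume $e\geq 1$). The triviality of the $\Gamma_v$-action on $\Omega_v$ together with the shift $\coh^2(\Gamma_v,\Z)\cong\coh^1(\Gamma_v,\Q_p/\Z_p)=\Gamma_v^{\vee}$ gives $\coh^2(\Gamma_v,\Omega_v)\cong\Omega_v\otimes_\Z\Gamma_v^{\vee}\cong(\Q_p/\Z_p)^{eg}$, while a direct-limit Brauer computation (each finite layer $L_{v,n}/K_v$ has $p$-power degree) yields $\coh^2(\Gamma_v,L_v^{\times})=\Q_p/\Z_p\subset\Br(K_v)=\Q/\Z$. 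The resulting embedding $\coh^1(\Gamma_v,A(L_v))\hookrightarrow(\Q_p/\Z_p)^{eg}$ already establishes cofinite generation over $\Z_p$.

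The heart of the proof is to identify $\phi$ explicitly in terms of $\mathcal{R}_v$. For $\omega=(a_1,\dots,a_g)\in\Omega_v\subset(K_v^{\times})^g$ and $\chi\in\Gamma_v^{\vee}=\coh^2(\Gamma_v,\Z)$, the cup-product decomposition of $\phi$ together with the explicit local class field theory formula gives
$$\phi(\omega\otimes\chi)=(a_j\cup\chi)_j=(\chi(\textsf{R}_v(a_j)))_j=(\chi(\mathcal{R}_{v,j}(\omega)))_j.$$
Under the Pontryagin-duality identifications $(\Z_p\Omega_v)^{\vee}\cong(\Q_p/\Z_p)^g$ and $(\Gamma_v^g)^{\vee}\cong\Omega_v\otimes\Gamma_v^{\vee}$ (coming from a $\Z$-basis of $\Omega_v$), this formula exhibits $\phi$ as corresponding to $\mathcal{R}_v^{\vee}$ up to a rank-preserving transposition of matrices. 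Hence the $\Z_p$-rank of $\image\phi$ equals that of $\image\mathcal{R}_v$, which gives
$$\corank_{\Z_p}\coh^1(\Gamma_v,A(L_v))=\corank_{\Z_p}\ker\phi=eg-\rank_{\Z_p}\image\mathcal{R}_v=\rank_{\Z_p}\coker\mathcal{R}_v.$$
The inequality drops out from $\rank_{\Z_p}\image\mathcal{R}_v\leq g$, since $\mathcal{R}_v$ has source $\Z_p\Omega_v$ of rank $g$ and target $\Gamma_v^g$ of rank $eg$. The main obstacle is making the cup-product-and-LCFT identification of $\phi$ precise and checking its compatibility with the transition maps in the direct limits over finite layers of $L_v/K_v$, generalizing the snake-lemma argument given for the $\Gamma_v'=0$ case in Proposition~\ref{p:spm}.
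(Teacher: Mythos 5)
Your approach is genuinely different from the paper's and is worth contrasting. The paper's proof is essentially three lines: isogeny invariance reduces the claim to bounding $\corank_{\Z_p}\coh^1(\Gamma_v,A^t(L_v))$, Tate local duality (Lemma~\ref{l:tatenorm}) identifies this with $\rank_{\Z_p}\bigl(A(K_v)/\Nm_{L_v/K_v}A(L_v)\bigr)$, and class field theory identifies that quotient with $\coker\mathcal{R}_v$. You instead take $\Gamma_v$-cohomology of the rigid-analytic uniformization sequence, as the paper does in Proposition~\ref{p:spm} for the case $\Gamma_v'=0$, and push it to general $\Gamma_v\simeq\Z_p^e$. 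The setup is sound: Hilbert~90, the vanishing of $\coh^1(\Gamma_v,\Omega_v)$, the computation $\coh^2(\Gamma_v,L_v^\times)\cong\Q_p/\Z_p$ (as a union of $\Br(L_{v,n}/K_v)$ over a fixed base field $K_v$, unlike the vanishing Brauer computation over the shifting base $L'_{w,n}$ in Proposition~\ref{p:spm}), the extraction $\coh^1(\Gamma_v,A(L_v))=\ker\phi$, and the cup-product/reciprocity formula for $\phi$ are all correct, and the embedding into $(\Q_p/\Z_p)^{eg}$ does give cofinite generation.

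The gap is the claim that $\phi$ and $\mathcal{R}_v^\vee$ are related by a \emph{rank-preserving transposition}. Set $M_{jk,l}\in\Z_p$ to be the $l$-th coordinate (in a basis of $\Gamma_v\cong\Z_p^e$) of $\textsf{R}_v(\omega^{(k)}_j)$, where $\{\omega^{(k)}\}_{k=1}^g$ is a $\Z$-basis of $\Omega_v$ and $\omega^{(k)}_j\in K_v^\times$ its $j$-th coordinate. Then as a $g\times eg$ matrix over $\Z_p$, $\phi$ has entry $M_{jk,l}$ in row $j$ and column $(k,l)$, whereas $\mathcal{R}_v^\vee$ has entry $M_{jk,l}$ in row $k$ and column $(j,l)$. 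Passing between them swaps the tensor slot $j$ (torus coordinate) with the slot $k$ (lattice basis); this is a partial transpose of a trilinear tensor, not a permutation of rows and columns, and it does \emph{not} preserve rank in general --- for instance with $g=e=2$ and $M_{11,1}=M_{12,2}=1$, all other entries $0$, one matrix has rank $1$ and the other rank $2$. When $e=1$ the tensor is just a $g\times g$ matrix and the swap is the honest transpose, which is why the analogous identification in Proposition~\ref{p:spm}(b) is unproblematic; for $e\geq 2$ something extra is needed. That ``something extra'' is exactly that the $j\leftrightarrow k$ swap of $M$ is the period tensor of $A^t$ (the lattice $\Omega_{A^t}\cong X^*(T_A)$ is indexed by $j$, the torus of $A^t$ by $k$), so your swapped rank is $\rank_{\Z_p}\image\mathcal{R}_v^t$, and the equality $\rank_{\Z_p}\coker\mathcal{R}_v^t=\rank_{\Z_p}\coker\mathcal{R}_v$ uses the isogeny $A\sim A^t$. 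The paper's proof uses that isogeny explicitly; your argument buries it in a transposition claim that, as stated for general matrices, is false.
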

\begin{proof} The inequality follows from the fact that $\Z_p\otimes_{\Z}\Omega_v$ is of rank $g$.
Also, since $A$ and $A^t$ are isogenous, it is enough to show ${\corank}_{\Z_p} \coh^1(\Gamma_v, A^t(L_v))=\rank_{\Z_p}\coker\be\left[\e \mathcal{R}_v\right]$. By Lemma \ref{l:tatenorm}, ${\corank}_{\Z_p} \coh^1(\Gamma_v, A^t(L_v))=\rank_{\Z_p} A(K_v)/\Nm_{L_v/K_v}(A(L_v))$.
By Class Field Theory
$$A(K_v)/\Nm_{L_v/K_v}(A(L_v))=(K_v^{\times})^g/\overline{\Omega}\cdot (\Nm_{L_v/K_v}(L_v))^g\simeq \coker\be\left[\e \mathcal{R}_v\right].$$
Here $\overline{\Omega}$ denotes the topological closure of $\Omega$.
\end{proof}
\end{subsection}

\begin{subsection}{The proofs of Proposition \ref{p:iwasawa}}\label{su:tiw}
For an intermediate extension $N$ of $L/K$, let
$$
\mathcal{L}_N:\coh^1(N,A[p^{\infty}])\longrightarrow
\mathcal{H}^1(A/N)
$$
denote the direct limit of the localization maps $\mathcal{L}_F$ (see \eqref{e:loc}) for $F$ running through
finite intermediate extension of $N/K$, with $\mathcal{H}^1(A/N)=\varinjlim_{F} \mathcal{H}^1(A/F)$. 

Note that a version of Nakayama's lemma (see \cite[p.279]{w82}) implies the following:
\begin{lemma}\label{l:equiv} $X_L$ is finitely generated over $\Lambda(\Gamma)$
if and only if $\Sel_{p^{\infty}}(A/L)^{\Gamma}$ is cofinitely generated over $\Z_p$.
\end{lemma}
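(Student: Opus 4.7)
The plan is to reduce the statement to a standard form of the topological Nakayama lemma for compact $\Lambda(\Gamma)$-modules, combined with the Pontryagin duality relation between invariants and coinvariants.

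First, I would identify $(\Sel_{p^{\infty}}(A/L)^{\Gamma})^{\vee}$ with the coinvariants $X_L/I_{\Gamma}X_L$, where $I_{\Gamma}$ denotes the augmentation ideal of $\Lambda(\Gamma)$, i.e.\ the kernel of $\Lambda(\Gamma)\twoheadrightarrow \Z_p$. This follows from the general principle that for a discrete $\Gamma$-module $D$ the inclusion $D^{\Gamma}\hookrightarrow D$ dualizes to $D^{\vee}\twoheadrightarrow (D^{\Gamma})^{\vee}$, whose kernel is generated by $(\gamma-1)D^{\vee}$ for $\gamma\in\Gamma$; that is $(D^{\Gamma})^{\vee}=D^{\vee}/I_{\Gamma}D^{\vee}$. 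Since being cofinitely generated over $\Z_p$ is by definition equivalent to the Pontryagin dual being finitely generated over $\Z_p$, the right-hand condition translates into ``$X_L/I_{\Gamma}X_L$ is finitely generated over $\Z_p$''.

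Next I would invoke the compact version of Nakayama's lemma (the reference \cite{w82} supplied in the statement) applied to the local ring $\Lambda(\Gamma)$ with maximal ideal $\mathfrak{m}=(p,I_{\Gamma})$: a compact $\Lambda(\Gamma)$-module $X$ is finitely generated over $\Lambda(\Gamma)$ if and only if $X/\mathfrak{m}X$ is finite. The module $X_L$ is compact, being the Pontryagin dual of a discrete $p$-primary group, so Nakayama applies.

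Combining these two steps proves the equivalence. In one direction, if $X_L$ is finitely generated over $\Lambda(\Gamma)$, then $X_L/I_{\Gamma}X_L$ is finitely generated over $\Lambda(\Gamma)/I_{\Gamma}=\Z_p$, so $\Sel_{p^{\infty}}(A/L)^{\Gamma}$ is cofinitely generated over $\Z_p$. Conversely, if $X_L/I_{\Gamma}X_L$ is finitely generated over $\Z_p$, then $X_L/\mathfrak{m}X_L=(X_L/I_{\Gamma}X_L)/p(X_L/I_{\Gamma}X_L)$ is finite, so Nakayama yields that $X_L$ is finitely generated over $\Lambda(\Gamma)$.

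I do not expect a real obstacle here: the only subtlety is verifying the hypotheses of the topological Nakayama lemma (compactness of $X_L$, that $\mathfrak{m}$ is the Jacobson/maximal ideal of $\Lambda(\Gamma)$), both of which are standard. The duality identification $(D^{\Gamma})^{\vee}\simeq D^{\vee}/I_{\Gamma}D^{\vee}$ should be stated with a brief check that the $\Gamma$-action on $D^{\vee}$ defined earlier in \S\ref{su:ns} is the one that makes this work.
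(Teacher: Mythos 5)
Your proposal is correct and follows the same route the paper takes: the paper proves the lemma by a single citation to the topological Nakayama lemma on p.\ 279 of Washington \cite{w82}, and what you have written is precisely the content unpacked from that citation, namely the duality identification $(\Sel_{p^{\infty}}(A/L)^{\Gamma})^{\vee}\simeq X_L/I_{\Gamma}X_L$ followed by the observation that finiteness of $X_L/\mathfrak{m}X_L$ (with $\mathfrak{m}=(p,I_\Gamma)$ the maximal ideal) is what Nakayama requires. No gap.
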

\begin{proof} (of Proposition \ref{p:iwasawa})
Let $\mathcal{P}$ stand for the statement that $X_L$
is finitely generated over $\Lambda(\Gamma)$.
Consider the commutative diagram:
\begin{equation*}\label{e:reslk}
\xymatrix{
{} & \coh^1(K,A[p^{\infty}])\ar[r]^{\res_{L/K}}\ar[d]^{\mathcal{L}_K}  &
\coh^1(L,A[p^{\infty}])^{\Gamma}\ar[d]^{\mathcal{L}_L}\\
\mathbf{B}:=\bigoplus_{\text{all}\;v}\coh^1(\Gamma_v,A(L_v)) \ar@{^{(}->}[r]  & \mathcal{H}^1(A/K) \ar[r]
& \mathcal{H}^1(A/L).}
\end{equation*}
By \eqref{e:kerreslf} and \eqref{e:cokerreslf}
both $\ker \be\left[\e \res_{L/K}\right]$ and $\coker \be\left[\e \res_{L/K}\right]$
are finite.
Since $\Sel_{p^{\infty}}(K)$
is cofinitely generated over $\Z_p$, in view of Lemma \ref{l:equiv},
we see that $\mathcal{P}$ is equivalent to the condition that
the intersection $\image(\mathcal{L}_K)\cap \mathbf{B}$
is cofinitely generated over $\Z_p$.
Now, Theorem \ref{t:gct} implies that $\coker\be\left[\e\mathcal{L}_K\right]$
is cofinitely generated over $\Z_p$. Therefore, $\mathcal{P}$ holds
if and only if $\mathbf{B}$ is also cofinitely generated over $\Z_p$. But, we already know from
Lemma \ref{l:varunramified} that $\bigoplus_{v\not\in S}\coh^1(\Gamma_v,A(L_v))$ is finite.
\end{proof}

\end{subsection}

\begin{subsection}{Deeply ramified extensions}\label{su:deep}
In this section, we assume that $K$ is of characteristic $p$
and $A$ has good reduction at a given place $v$.
Let $\widehat{A}$ denote the associated formal group
obtained via the formal completion of $A$ along the zero section of the
N$\acute{\text{e}}$ron model. Since the N$\acute{\text{e}}$ron model is stable over
$\overline{K}_v$, it makes sense to consider the cohomology group
$\coh^1(K_v,\widehat{A}(\O_{\overline{K}_v}))$.
For simplicity, we write $\coh^1(K_v,\widehat{A})$ for it.

\begin{mytheorem}\label{t:ss}
Suppose $char.(K)=p$ and the reduction ${\bar{A}}$ of $A$ at  a place $v$ ramified over
$L/K$ satisfies ${\bar{A}}[p^{\infty}]({\overline{\F}}_v)=0$. Then both the natural maps
$$\coh^1(\Gamma_v, \widehat{A}(\O_{L_v}))\longrightarrow \coh^1(K_v,\widehat{A})
\longrightarrow \coh^1(K_v,A)_p$$
are isomorphisms. In particular, the cohomology group $\coh^1(\Gamma_v, A(L_v))$
is of {\em{(}}countably{\em{)}} infinite corank over $\Z_p$.
\end{mytheorem}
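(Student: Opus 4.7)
The plan is to compare Galois cohomology in the N\'eron-model short exact sequence of $G_{K_v}$-modules
\begin{equation*}
0 \longrightarrow \widehat{A}(\O_{\overline{K}_v}) \longrightarrow A(\overline{K}_v) \longrightarrow \bar{A}(\overline{\F}_v) \longrightarrow 0,
\end{equation*}
relying on two elementary observations. Because $\mathrm{char}(K_v)=p$, for every prime $\ell\ne p$ the series $[\ell](X)\in \O_{K_v}[[X]]$ has invertible linear term, so $[\ell]$ is an automorphism of the formal group $\widehat A$; hence $\widehat A(\O_{\overline{K}_v})$ is uniquely $\ell$-divisible, and $\coh^i(\cdot,\widehat A)$ is automatically a $p$-primary torsion group. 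On the other hand $\bar A(\overline{\F}_v)=\bigcup_n \bar A(\F_{q^n})$ is torsion, and its $p$-primary part $\bar A[p^\infty](\overline{\F}_v)$ vanishes by hypothesis; so $\bar A(\overline{\F}_v)$ is uniquely $p$-divisible, forcing $\coh^i(\cdot,\bar A)_p=0$ for every $i\ge 0$.

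Feeding these vanishings into the long exact sequence of $G_{K_v}$-cohomology, the fragment $\bar A(\F_v)_p \to \coh^1(K_v,\widehat A) \to \coh^1(K_v,A)_p \to \coh^1(K_v,\bar A)_p$ has trivial outer terms, yielding the second isomorphism $\coh^1(K_v,\widehat A)\cong \coh^1(K_v,A)_p$. For the first isomorphism I would use inflation--restriction,
\begin{equation*}
0\longrightarrow \coh^1(\Gamma_v,\widehat A(\O_{L_v})) \longrightarrow \coh^1(K_v,\widehat A) \longrightarrow \coh^1(L_v,\widehat A)^{\Gamma_v},
\end{equation*}
reducing the claim to the vanishing $\coh^1(L_v,\widehat A)=0$. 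This is the main obstacle of the proof: since $v$ is ramified in the $\Z_p^d$-extension $L/K$, the local ramification index of $L_v/K_v$ is an infinite power of $p$, so $L_v/K_v$ is deeply ramified, and the vanishing should follow from a characteristic-$p$ analogue of the Coates--Greenberg theorem on formal-group cohomology over deeply ramified extensions (presumably the tool invoked from \cite{tan10}). Equivalently, combining the two isomorphisms over $L_v$ one sees that $\coh^1(L_v,\widehat A)=0$ is the same as $p$-divisibility of $A^t(L_v)$, which is the customary form of the deeply ramified statement.

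For the corank conclusion, applying the same long-exact-sequence argument over $L_v$ produces an injection $\coh^1(\Gamma_v,\widehat A(\O_{L_v}))\hookrightarrow \coh^1(\Gamma_v,A(L_v))$: the connecting map $\bar A(\F_{L_v})^{\Gamma_v}\to \coh^1(\Gamma_v,\widehat A(\O_{L_v}))$ vanishes because its target is $p$-primary while its source lies inside the prime-to-$p$ group $\bar A(\overline{\F}_v)$. Combined with the two isomorphisms already established, it suffices to prove that $\coh^1(K_v,A)_p$ has infinite $\Z_p$-corank, and by local Tate duality (\S\ref{sub:tate}) this corank equals the $\Z_p$-rank of the pro-$p$ completion $A^t(K_v)^{\wedge}$. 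That rank is infinite in characteristic $p$: Cartier-dual symmetry of the $p$-divisible group combined with the hypothesis gives $\bar A^t[p^\infty](\overline{\F}_v)=0$, so the snake lemma applied to multiplication by $p$ on $0\to \widehat A^t(m_{K_v})\to A^t(K_v)\to \bar A^t(\F_v)\to 0$ identifies $A^t(K_v)/p\,A^t(K_v)$ with $\widehat A^t(m_{K_v})/[p]\widehat A^t(m_{K_v})$; and in characteristic $p$ the series $[p](X)$ has zero linear term with leading monomial $uX^{p^h}$ ($u$ a unit, $h$ the height), so its image misses an $\F_q$-subspace of $m_{K_v}^g$ of countably infinite dimension. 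This yields the desired infinite $\Z_p$-corank of $\coh^1(\Gamma_v,A(L_v))$.
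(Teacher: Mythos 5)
Your argument traces essentially the same route as the paper: the N\'eron-model exact sequence together with the hypothesis ${\bar A}[p^{\infty}]({\overline{\F}}_v)=0$ supplies the second isomorphism, the first isomorphism is exactly the characteristic-$p$ Coates--Greenberg theorem for deeply ramified extensions (which the paper establishes for itself as Theorem~\ref{t:cogp}, via Lemma~\ref{l:deep}, rather than importing it from \cite{tan10}), and the corank claim falls out of local Tate duality once all three $\coh^1$ groups are identified. The one small departure is at the very end: the paper simply cites Voloch \cite{vol95} for the fact that the $p$-completion of ${\widehat{A^t}}(\O_{K_v})$ has infinite $\Z_p$-rank, whereas you derive the infinitude directly from the shape of the series $[p](X)=uX^{p^h}+\cdots$ in characteristic $p$, which is a tidy self-contained alternative for that last step.
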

The proof, given below, is basically contained in Coates and Greenberg \cite{cog96},
which is written under the assumption
that $K_v$ is a finite extension of $\Q_p$.
In order to apply their result to our situation, we follow the paper step by step
to conclude that all material contained in its \S2 and \S3, which
is sufficient for proving the above theorem,
actually remain valid in the characteristic $p$ case, with only two exceptions:
\vskip5pt
\noindent
(A) Theorem 2.13, \cite{cog96}, implies that every ramified $\Z_p^d$-extension over $K_v$
is \textit{deeply ramified} (see p.143, [op.cit.]).
Its proof is based on Sen \cite{sen72}
using the existence of \textit{non-small} abelian subquotients of the Galois group (see Proposition 3.3, [op.cit.]),
which does not hold in our case. This gap is fixed by Lemma \ref{l:deep}
below.
\vskip5pt
\noindent
(B) Proposition 2.5, \cite{cog96}, is proved by using
cyclotomic extensions to construct a $\Z_p$-extension $\Phi/K_v$ so that
if $\Phi_t$ is the $t$'th layer 
then
$$\Tr_{\Phi_t/K_v}(m_{\Phi_t})\subset m_{K_v}^{n(t)},$$
where  $n(t)$ is an integer valued function of
$t$ so that $n(t)\rightarrow \infty$ as $t\rightarrow\infty$.
In order to have a proof of the above proposition in the characteristic $p$ case, we only
need to find a suitable $\Z_p$-extension $\Phi/K_v$ satisfying the above condition.
In view of Lemma 2.3, [op.cit], we see that it is enough to have $\Phi/K_v$ so that
\begin{equation}\label{e:Phi}
\ord_{\Phi_t}(\delta(\Phi_t/K_v))/e(\Phi_t/K_v)
\rightarrow\infty,\;\text{as}\; t\rightarrow\infty.
\end{equation}
Here, for a finite extension $\mathcal{F}/K_v$, $\delta(\mathcal{F}/K_v)$ and $e(\mathcal{F}/K_v)$ denote, respectively,
the different and the ramification index. Again, our
Lemma \ref{l:deep} below asserts that in the characteristic $p$ case, the condition
(\ref{e:Phi}) is satisfied, as long as $\Phi/K_v$ is ramified.

For a pro-finite abelian extension $L_v/K_v$ with Galois group $\Gamma_v$,
let $\Gamma_v^{(w)}$, for each $w\in [-1,\infty)$, denote the $w$'th ramification subgroup
in the upper numbering. The reciprocity law maps $U_w$ onto $\Gamma_v^{(w)}$
where $\{U_w\}$ is the usual filtration of the units of $K_v$ (see \cite[ XV.2]{ser79}).

\begin{lemma}\label{l:deep}
Suppose $char.(K)=p$ and $L_v/K_v$ is a pro-finite abelian extension with Galois group $\Gamma_v$.
Then the following holds:
\begin{enumerate}
\item[(a)] We have $(\Gamma_v^{(w)})^p\subset \Gamma_v^{(pw)}$ for every $w$.
In particular, if $\Gamma_v$ is finite, then it is small in the sense of \cite{sen72}.
\item[(b)] If $L_v/K_v$ is a ramified $\Z_p$-extension and ${L_v}_n$ is the $n$'th layer, then
$$\ord_{L_{v,n}}(\delta({L_v}_n/K_v))/e({L_v}_n/K_v)\rightarrow\infty,\;\text{as}\;
n\rightarrow\infty.$$
\item[(c)] If $L_v/K_v$ is a ramified $\Z_p^d$-extension, then it is deeply ramified in the
sense of \cite{cog96}.
\end{enumerate}
\end{lemma}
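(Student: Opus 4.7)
The plan is to prove the three parts in order, with (b) reducing to (a) and (c) reducing to (b); the main obstacle will be the growth estimate in (b), where the upper-numbering form of the discriminant integral lets the geometric spacing of ramification breaks from (a) deliver divergence essentially from a single subinterval.

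For part (a), the key computation is that in characteristic $p$, a principal unit $u = 1 + a\varpi^w \in U_w$ (with $\varpi$ a uniformizer of $K_v$ and $a \in \O_v$) satisfies $u^p = 1 + a^p\varpi^{pw}$, because every intermediate binomial coefficient $\binom{p}{k}$ vanishes. Hence $U_w^p \subset U_{pw}$. Local class field theory for $K_v$ (which is available in equal positive characteristic) identifies $\Gamma_v^{(w)}$ with the image of $U_w$ under the reciprocity map, so $(\Gamma_v^{(w)})^p \subset \Gamma_v^{(pw)}$. When $\Gamma_v$ is finite this inclusion forces its upper breaks to be spaced by a factor of at least $p$, which is precisely the abelian form of Sen's smallness condition used in \cite{sen72}.

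For part (b), let $0 < b_1 < b_2 < \cdots$ denote the upper breaks of $\Gamma_v \simeq \Z_p$; part (a) gives $b_{k+1} \geq p\,b_k$, hence $b_n \geq p^{n-1}b_1 \to \infty$. Applied to $L_{v,n}/K_v$ with $G = \Gal(L_{v,n}/K_v)$, the classical upper-numbering formula for the different reads
$$\frac{\ord_{L_{v,n}}\bigl(\delta(L_{v,n}/K_v)\bigr)}{e(L_{v,n}/K_v)} = \int_{-1}^{\infty}\Big(1 - \frac{1}{|G^{(u)}|}\Big)\,du.$$
On the subinterval $(b_{n-1},b_n)$ one has $|G^{(u)}| = p$, and $b_{n-1} \leq b_n/p$ from (a), so the integral is bounded below by $(1 - 1/p)(b_n - b_{n-1}) \geq (1 - 1/p)^2 b_n$, which tends to infinity.

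For part (c), the ramification assumption makes the inertia $I \subset \Gamma_v = \Z_p^d$ a non-trivial closed subgroup, so one may choose a corank-one closed subgroup $H \subset \Gamma_v$ with $I \not\subset H$ (such $H$ exists, since otherwise $I$ would lie in the intersection of all codimension-one closed subgroups, forcing $I = 0$). The fixed field $\Phi := L_v^H$ is then a ramified $\Z_p$-extension of $K_v$, and by (b) it satisfies the criterion \eqref{e:Phi}, so Lemma 2.3 of \cite{cog96} shows $\Phi$ is deeply ramified; since the deeply ramified property is inherited by larger extensions, so is $L_v$.
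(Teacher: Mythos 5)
Your proof is correct, and while parts (a) and (c) track the paper's argument closely, your treatment of (b) is a genuinely different and perhaps cleaner route.

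For (a) you use exactly the same Frobenius-in-characteristic-$p$ computation $(1+x)^p = 1+x^p$ and local reciprocity as the paper. For (c), the paper also reduces to the existence of a ramified intermediate $\Z_p$-extension and invokes inheritance of deep ramification; your argument via a corank-one closed subgroup $H$ with $I \not\subset H$ fills in why such a subextension exists, which the paper leaves implicit. (Note that $\ord_{L_{v,n}}(\delta)/e$ is precisely $\ord(\delta)$ with $\ord$ the valuation on $\overline{K}_v$ normalized at a uniformizer of $K_v$, so (b) directly gives the deeply-ramified condition for $\Phi$.)

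For (b) the paper does not touch the ramification filtration directly: it takes a faithful character $\chi$, derives $p\cdot f(\chi^p) \leq f(\chi)$ from $\chi^p(g) = \chi(g^p)$ and part (a), feeds this into the conductor--discriminant formula $\nu_K(\Delta_n) = \sum_{\chi} f(\chi)$, and gets $\Delta_n \geq C_1 p^{2n} + O(p^{2n-1})$, hence $\ord_{L_{v,n}}(\delta)/e \geq C_2 p^n + O(p^{n-1})$. You instead read (a) as the break inequality $b_{k+1} \geq p\, b_k$ and plug it into the integral expression $\ord_K(\mathfrak{d}_{L_{v,n}/K_v}) = \int_{-1}^{\infty}\bigl(1 - 1/|G^{(u)}|\bigr)\,du$, extracting divergence from the top subinterval alone. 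The two are of course reflections of the same inequality (a conductor is an upper ramification break), but yours works directly with the different, avoids the detour through discriminants of characters, and yields essentially the same $\asymp p^n$ growth. One small point to keep clean: if the $\Z_p$-extension has a nontrivial unramified part, the breaks $b_1 < b_2 < \cdots$ should be indexed starting from the first ramified layer; the geometric growth $b_{k+1}\geq p\,b_k$ and hence divergence of the integral is unaffected.
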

\begin{proof}
For each $x\in m_{K_v}$, we have $(1+x)^p=1+x^p$, and hence $(U_{w})^p\subset U_{pw}$.
This proves (a) by applying the reciprocity law.
Suppose $L_v/K_v$ is a $\Z_p$-extension and let $\pi$ be a prime element of $K_v$.
If $\chi$ is a continuous character of $\Gamma_v$ with conductor
$f(\chi)$, then (as the order of $\chi$ is a power of $p$) from the relation $\chi^p(g)=\chi(g^p)$, we see that the
conductor $f(\chi^p)$ satisfies
\begin{equation}\label{e:pcondp}
p\cdot f(\chi^p) \leq f(\chi).
\end{equation}

Let $\pi^{\Delta_n}\O_{K_v}$ denote the discriminant of the cyclic extension ${L_v}_n/K_v$.
Since the dual group of $\Gal({L_v}_n/K_v)$ is cyclic, the conductor-discriminant formula together with
(\ref{e:pcondp}) imply that as $n\rightarrow\infty$,
$$\Delta_n\geq C_1p^{2n}+O(p^{2n-1}), \;\text{for some positive constant}\; C_1. $$
Consequently, as $n\rightarrow\infty$,
$$\ord_{{L_v}_n}(\delta({L_v}_n/K_v))/e({L_v}_n/K_v)\geq C_2p^{n}+O(p^{n-1}),
\;\text{for some positive constant}\; C_2. $$
Then (b) is proved. Also, if $\ord$ is the normalized valuation on ${\overline K}_v$
with $\ord(\pi)=1$, then
we have
$$\ord(\delta({L_v}_n/K_v))\geq C_3p^{n}+O(p^{n-1}), \;\text{for some positive constant}\; C_3,$$
and hence $\ord(\delta({L_v}_n/K_v))\rightarrow\infty$ as $n\rightarrow\infty$.
This shows that $L_v/K_v$ is deeply ramified in the sense of \cite{cog96} and proves
(c) in the $d=1$ case. In general, we only need to note that there is a
ramified intermediate $\Z_p$-extension $L_v^{0}/K_v$ of $L_v/K_v$, and since
$L_v^{0}/K_v$ is deeply ramified, the multiplicity of the different implies that $L_v/K_v$
is also deeply ramified (see p.143, [op.cit.]).
\end{proof}

Thus, results in \cite{cog96}, \S2 and \S3 can be applied to our situation.

\begin{mytheorem}\label{t:cogp}
Suppose $K$ is a global field of characteristic $p$ and $v$ is a place of $K$.
If $\mathfrak{F}$ is a commutative formal group law
over $\O_{K_v}$ and $L/K$ is a $\Z_p^d$-extension ramified at $v$, then
$$
\coh^1(\Gamma_v, \mathfrak{F}(m_{L_v}))=\coh^1(\Gal({\overline{K}_v}/K_v),\mathfrak{F}(m_{\overline{K}_v})).
$$
\end{mytheorem}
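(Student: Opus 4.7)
The plan is to deduce the theorem from the inflation-restriction sequence. From the short exact sequence of Galois groups
\[
1 \to \Gal(\overline{K}_v/L_v) \to \Gal(\overline{K}_v/K_v) \to \Gamma_v \to 1,
\]
acting on $\mathfrak{F}(m_{\overline{K}_v})$ (whose $\Gal(\overline{K}_v/L_v)$-invariants are $\mathfrak{F}(m_{L_v})$), the five-term exact sequence reads
\[
0 \to \coh^1(\Gamma_v, \mathfrak{F}(m_{L_v})) \to \coh^1(\Gal(\overline{K}_v/K_v), \mathfrak{F}(m_{\overline{K}_v})) \to \coh^1(\Gal(\overline{K}_v/L_v), \mathfrak{F}(m_{\overline{K}_v}))^{\Gamma_v}.
\]
Thus the theorem is reduced to establishing the vanishing
\[
\coh^1(\Gal(\overline{K}_v/L_v), \mathfrak{F}(m_{\overline{K}_v})) = 0,
\]
which forces the inflation map to be an isomorphism.

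To obtain this vanishing I would appeal to the Coates-Greenberg theory, as already indicated in the discussion preceding the theorem. By Lemma \ref{l:deep}(c), the ramified $\Z_p^d$-extension $L_v/K_v$ is deeply ramified in the sense of \cite{cog96}. The main cohomological consequence extracted in \S\S2--3 of \cite{cog96}, valid for any commutative formal group law $\mathfrak{F}$ over $\O_{K_v}$ and any deeply ramified extension $M/K_v$, is precisely that $\coh^i(\Gal(\overline{K}_v/M), \mathfrak{F}(m_{\overline{K}_v})) = 0$ for every $i \geq 1$. Applying this with $M = L_v$ supplies the required vanishing, and the theorem follows.

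The principal obstacle, and the reason the theorem is not a direct quotation of \cite{cog96}, is the transfer of that entire argument from mixed characteristic to equal characteristic $p$. Two specific points of the original proof break under the naive translation: Theorem 2.13 of \cite{cog96} invokes Sen \cite{sen72} through the presence of non-small abelian subquotients of ramification groups (gap (A)), and Proposition 2.5 of \cite{cog96} constructs via cyclotomic towers a $\Z_p$-extension $\Phi/K_v$ with large trace shrinkage (gap (B)). Both are repaired by Lemma \ref{l:deep}: part (a) directly shows that every finite abelian Galois subquotient is small, so the Sen input becomes available without further comment; part (b) guarantees that any ramified $\Z_p$-extension $\Phi/K_v$ satisfies $\ord_{\Phi_t}(\delta(\Phi_t/K_v))/e(\Phi_t/K_v) \to \infty$ as $t \to \infty$, which in view of Lemma 2.3 of \cite{cog96} yields the trace estimate \eqref{e:Phi}. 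With these substitutions installed, every remaining step in \S\S2--3 of \cite{cog96} is characteristic-free, so the chain of deductions culminating in the vanishing of $\coh^1$ goes through verbatim and completes the argument.
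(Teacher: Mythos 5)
Your proof is correct and follows essentially the same route as the paper: the paper's proof is the single line ``Theorem 3.1, \cite{cog96}, together with the inflation-restriction exact sequence,'' and your argument is precisely the unpacking of that line. You reduce via the five-term exact sequence to the vanishing of $\coh^1(\Gal(\overline{K}_v/L_v),\mathfrak{F}(m_{\overline{K}_v}))$, which is \cite[Theorem 3.1]{cog96} applied to the deeply ramified extension $L_v/K_v$, with Lemma \ref{l:deep} furnishing the characteristic-$p$ replacements for the two mixed-characteristic inputs.
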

\begin{proof}
Theorem 3.1, \cite{cog96}, together with the inflation-restriction exact sequence.
\end{proof}

\begin{proof} (of Theorem \ref{t:ss})
Theorem \ref{t:cogp} says $\coh^1(\Gamma_v, \widehat{A}(\O_{L_v}))
\longrightarrow \coh^1(K_v,\widehat{A})$
is an isomorphism. Since the torsion group
${\bar A}({\bar \F}_{K_v})$ contains no element of order $p$,
the long exact cohomology sequence
$$\dots\longrightarrow\coh^0(\F_{K_v},{\bar{A}})\longrightarrow \coh^1(K_v,\widehat{A})\longrightarrow
\coh^1(K_v,A)\longrightarrow \coh^1(\F_{K_v},{\bar{A}})\longrightarrow\dots$$
says the map
$ \coh^1(K_v,\widehat{A})\longrightarrow
\coh^1(K_v,A)_p$ is an isomorphism. Thus, in the commutative diagram
$$\xymatrix{\coh^1(\Gamma_v,\widehat{A}(\O_{L_v}))\ar[r]\ar[d] &  \coh^1(\Gamma_v,A(L_v))\ar[d]\\
\coh^1(K_v,\widehat{A})\ar[r] & \coh^1(K_v,A)_p}
$$
all arrows are isomorphisms.
Then the theorem is clear, since now $\coh^1(\Gamma_v,A(L_v))$ is dual to the $p$-completion of $A^t(K_v)$
which contains (the $p$-completion of) ${\widehat{A^t}}(\O_{K_v})$, a $\Z_p$-module
of infinite rank, \cite{vol95}.

\end{proof}

\end{subsection}

\end{section}



\begin{section}{The restriction and the localization}
Consider the restriction map
$\res_{L/L'}:\coh^1(L',A[p^{\infty}])\longrightarrow
\coh^1(L,A[p^{\infty}])^{\Psi}$ and
let $\mathcal{L}_N$ be the localization defined in \S\ref{su:tiw}.
In this section, we give explicit expression of the related objects, 
especially $\coker \be\left[\e \res_{L/L'}\right]$ 
and $\coker \be\left[\e \mathcal{L}_{L}\right]$, as well as the corresponding characteristic ideals.

\begin{subsection}{The map $\res_{L/L'}$}\label{sub:restriction} Obviously,
\begin{equation}\label{e:kernelres}
\ker \be\left[\e \res_{L/L'}\right]=\coh^1(\Psi, A[p^{\infty}](L)).
\end{equation}

\begin{proposition}\label{p:injection} We have $\coker \be\left[\e \res_{L/L'}\right]=0$.
\end{proposition}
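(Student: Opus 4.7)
The plan is to apply the Hochschild--Serre five-term exact sequence \eqref{e:infres} to the extension $L/L'$ with Galois group $\Psi\simeq\Z_p$ and the flat sheaf $A[p^{\infty}]$. The relevant tail of the sequence reads
$$\coh^1(L',A[p^{\infty}])\xrightarrow{\ \res_{L/L'}\ }\coh^1(L,A[p^{\infty}])^{\Psi}\xrightarrow{\ d^{0,1}\ }\coh^2(\Psi,A[p^{\infty}](L)),$$
so $\coker\bigl[\res_{L/L'}\bigr]$ embeds into $\coh^2(\Psi,A[p^{\infty}](L))$. It therefore suffices to establish the vanishing of this $\coh^2$.

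For the vanishing, I would pass to finite levels. Let $L_n/L'$ denote the $n$-th layer of the $\Z_p$-extension $L/L'$, so that $\Psi/\Psi^{p^n}$ is cyclic of order $p^n$; then
$$\coh^2(\Psi,A[p^{\infty}](L))=\varinjlim_{n}\coh^2(\Psi/\Psi^{p^n},A[p^{\infty}](L_n)).$$
Via cup product with the canonical class $\delta_{\Psi/\Psi^{p^n}}$ (cf.\ diagram \eqref{e:inf2}), each term on the right is identified with $A[p^{\infty}](L')/\Nm_{L_n/L'}A[p^{\infty}](L_n)$, and the same diagram shows the inflation map from level $n$ to level $n+k$ is multiplication by $p^k$ on this quotient. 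Since $A[p^{\infty}](L')$ is $p$-primary, any class is annihilated in the quotient at some sufficiently high level, so the direct limit vanishes.

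There is no real obstacle here: the statement reduces to the fact that $\Z_p$ has $p$-cohomological dimension $1$ on discrete $p$-primary modules. The only technical care required is to justify the use of Hochschild--Serre for the infinite extension $L/L'$, which is handled cleanly by taking the limit over the finite Galois subextensions $L_n/L'$ as above.
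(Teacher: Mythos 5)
Your proof is correct and takes essentially the same route as the paper: both reduce, via Hochschild--Serre for $L/L'$, to the vanishing of $\coh^2(\Psi,A[p^{\infty}](L))$. The paper simply cites the fact that $\Psi\simeq\Z_p$ has $p$-cohomological dimension $1$ on discrete $p$-primary modules, whereas you re-derive this by passing to finite layers and observing that the inflation maps on $\widehat{\coh}^0$ are multiplication by a growing power of $p$.
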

\begin{proof} Since $\Psi\simeq\Z_p$ is of cohomology dimension $1$, $\coh^2(\Psi,A[p^{\infty}](L))=0$, whence
$$\coker \be\left[\e \res_{L/L'}\right]=\ker \be\left[\coh^2(\Psi,A[p^{\infty}](L))\longrightarrow
\coh^2(L',A[p^{\infty}])\right]=0.$$
\end{proof}

\end{subsection}

\begin{subsection}{The map $\mathcal{L}_L$}\label{sub:loc}
Since the map $\coh^1(F,A[p^{\infty}])\longrightarrow \coh^1(F,A)_p$ induced by the Kummer exact sequences
is surjective, Theorem \ref{t:gct} actually says
\begin{equation}\label{e:cokerLF}
\coker\be\left[\e \mathcal{L}_F\right]^{\vee}\simeq \T_p\Sel(A^t/F)
\end{equation}
identified as a topological subgroup of $\mathcal{H}^0(A^t/F)$.
By taking the projective limit over $F$, we deduce
\begin{equation}\label{e:tps}
\coker\be\left[\e \mathcal{L}_L\right]^{\vee}\simeq \varprojlim_F \T_p\Sel(A^t/F).
\end{equation}

\begin{proposition}\label{p:loc} Suppose $L/K$ is a $\Z_p^d$-extension and $X_L$ is torsion over
$\Lambda(\Gamma)$. Then for $d\geq 2$ $\coker\be\left[\e \mathcal{L}_L\right]^{\vee}=0$, and also
$$\coker\be\left[\e \mathcal{L}_L\right]^{\vee}=\begin{cases}
A^t[p^{\infty}](K), & \text{if}\; d=0;\\
\varprojlim_{F}A^t[p^{\infty}](F), & \text{if}\; d=1;\\
\end{cases} $$
where the projective limit is taken over norm maps between finite intermediate extensions of $L/K$.
\end{proposition}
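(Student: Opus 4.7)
The plan is to start from \eqref{e:tps}, which identifies $\coker[\mathcal{L}_L]^{\vee}$ with $\varprojlim_F T_p\Sel(A^t/F)$, the inverse limit being taken with respect to the corestriction (norm) maps as $F$ ranges over finite intermediate extensions of $L/K$. First I would dissect each $T_p\Sel(A^t/F)$ using the Kummer exact sequence $0\to A^t(F)/p^m\to\Sel_{p^m}(A^t/F)\to\Sha(A^t/F)[p^m]\to 0$; taking $\varprojlim_m$ yields
$$0\to A^t(F)\widehat{\otimes}\Z_p\to T_p\Sel(A^t/F)\to T_p\Sha(A^t/F)\to 0.$$
Passing to $\varprojlim_F$ (Mittag--Leffler applies since all terms are compact Hausdorff) then reduces the problem to computing the projective limits of $A^t(F)\widehat{\otimes}\Z_p$ and of $T_p\Sha(A^t/F)$ separately.

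The $d=0$ case is immediate: ``$X_K$ torsion over $\Z_p$'' means $X_K$ is finite, so $\Sel_{p^\infty}(A/K)$ is finite, and the Cassels--Tate pairing on divisible parts together with the isogeny $A\sim A^t$ force both $A^t(K)$ and $\Sha(A^t/K)_p$ to be finite, giving $A^t(K)\widehat{\otimes}\Z_p=A^t[p^\infty](K)$ and $T_p\Sha(A^t/K)=0$, whence $T_p\Sel(A^t/K)=A^t[p^\infty](K)$. For $d\geq 1$, I would first use that $X_L$ torsion over $\Lambda(\Gamma)$ transfers to $X_L^t$ torsion via a layer-by-layer Cassels--Tate argument (compare \S\ref{su:algfun}). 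To handle the Mordell--Weil piece, split $A^t(F)\widehat{\otimes}\Z_p$ into its free and torsion parts: since $\Nm_{F'/F}$ acts on $A^t(F)\hookrightarrow A^t(F')$ as multiplication by $[F':F]$ (a $p$-power tending to infinity inside the tower), and control-theoretic bounds coming from $X_L^t$ torsion prevent $\rank A^t(F)$ from escaping, the free part is wiped out in the universal norm. For $d\geq 2$, a second independent $\Z_p$-direction allows one to choose $F'$ with $[F':F]$ exceeding the exponent of the finite group $A^t[p^\infty](F)$, killing the torsion piece as well; for $d=1$ only the torsion piece survives, yielding $\varprojlim_F A^t[p^\infty](F)$.

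The main obstacle will be showing the vanishing of $\varprojlim_F T_p\Sha(A^t/F)$ for $d\geq 1$, since unlike the Mordell--Weil piece the norm maps have no obvious contracting action on $T_p\Sha$. My strategy is to invoke the Cassels--Tate pairing $\Sha(A^t/F)_p\times\Sha(A/F)_p\to\Q_p/\Z_p$ on each finite layer to translate a putative nonzero universal norm in $\varprojlim_F T_p\Sha(A^t/F)$ into a universally $p$-divisible element of $\Sha(A/F)_p$; combined with the bound on $\corank_{\Z_p}\Sha(A/F)_{\mathrm{div}}$ coming from the torsion hypothesis on $X_L$, this would force the universal norm to be zero.
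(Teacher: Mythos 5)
The $d=0$ case and the analysis of the Mordell--Weil free part for $d=1$ (stable rank plus the restriction--corestriction relation $\Nm\circ\res=[F':F]$) are sound and essentially match the paper's treatment. But the $d\geq 2$ argument has a genuine gap. You write that ``a second independent $\Z_p$-direction allows one to choose $F'$ with $[F':F]$ exceeding the exponent of the finite group $A^t[p^{\infty}](F)$, killing the torsion piece.'' This does not work as stated: the norm map $\Nm_{F'/F}\colon A^t[p^{\infty}](F')\to A^t[p^{\infty}](F)$ is \emph{not} multiplication by $[F':F]$ unless $\Gal(F'/F)$ acts trivially on the torsion subgroup; only the composite $\Nm_{F'/F}\circ\res_{F/F'}$ is. In a direction where $\Gal(F'/F)$ acts nontrivially the norm can be a near-unit on the infinite torsion (compare Proposition \ref{p:torcomp1}: $\varprojlim$ over norm maps is pseudo-isomorphic to the Tate module, which need not vanish). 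The correct mechanism, which the paper supplies, is to find an intermediate $\Z_p^{d-1}$-extension $L_1$ over which $A^t[p^{\infty}](L_1)$ is \emph{finite}, and to check in the same breath that one can arrange $X^t_{L_1}$ to remain torsion, so that an induction on $d$ applies and gives $\coker[\mathcal{L}_{L_1}]^\vee=0$; the vanishing of $\coker[\mathcal{L}_L]^\vee$ then follows from the subsystem identification. That construction (via characters of $\Gamma$ on $\overline{\Q}_p\otimes(A^t[p^{\infty}](L)_{div})^\vee$ having infinite order, together with choosing $\Gal(L/L_1)$ to avoid the finitely many problematic kernels and to keep $p_{L/L_1}(\xi)\neq 0$ for an annihilator $\xi$ of $\Sel_{p^\infty}(A^t/L)$) is missing from your sketch; without it the torsion part is not controlled, and there is also no induction framework to feed the conclusion for $L_1$ back into the conclusion for $L$.

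Separately, the Kummer decomposition of $T_p\Sel(A^t/F)$ into the Mordell--Weil and Tate--Shafarevich pieces is a genuine deviation from the paper (which works with $T_p\Sel$ as a single $\Z_p$-module whose torsion part is exactly $A^t[p^{\infty}](F)$). It is a legitimate alternative, but your proposed treatment of $\varprojlim_F T_p\Sha(A^t/F)$ via the Cassels--Tate pairing is a detour that you have not made precise and that I do not see succeeding: the Cassels--Tate pairing annihilates the divisible part of $\Sha$, whereas $T_p\Sha(A^t/F)$ is built precisely from $\Sha(A^t/F)_{div}$, so the pairing has no obvious grip on it. In fact no separate argument is needed here: the stable-rank/restriction--corestriction argument you already invoke for $A^t(F)\widehat{\otimes}\Z_p$ applies verbatim to the full $\Z_p$-free quotient of $T_p\Sel(A^t/F)$ (which includes $T_p\Sha(A^t/F)$), and the $d\geq 2$ reduction to $L_1$ handles both pieces at once through the induction.
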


\begin{proof} For simplicity, write $X^t_L=\Sel_{p^{\infty}}(A^t/L)^{\vee}$. Then, as $A$ and $A^t$ are isogenous,
$X^t_L$ is also torsion.
Note that in general, $A^t[p^{\infty}](F)$ is contained in $\T_p\Sel(A^t/F)$ as its torsion part and
$\rank_{\Z_p}\T_p\Sel(A^t/F)=\corank_{\Z_p}\Sel_{p^{\infty}}(A^t/F)$.
Thus, if $d=0$, then, since $X^t_K$ is torsion, $\Sel_{p^{\infty}}(A^t/K)$ is finite,
whence the proposition is a direct consequence of Theorem \ref{t:gct}.

Let $K_n$ denote the $n$th layer of $L/K$. Suppose $d=1$. Then the ${\Z_p}$-rank of $\T_p\Sel(A^t/K_n)$ is stable as
$n\rightarrow\infty$. Thus, for $m\geq n\gg 0$,
the map $\T_p\Sel(A^t/K_n)\longrightarrow \T_p\Sel(A^t/K_m)$ (induced by the restriction map) gives rise to an isomorphism between their
$\Z_p$-free parts. Since the projective limit in \eqref{e:tps}
is taken over the maps $\T_p\Sel(A^t/K_m)\longrightarrow \T_p\Sel(A^t/K_n)$ induced by the corestriction maps on Selmer groups,
the restriction-corestriction formula implies $\varprojlim_n \T_p\Sel(A^t/K_n)/A^t[p^{\infty}](K_n)$ vanishes.
Therefore, $\varprojlim_n \T_p\Sel(A^t/K_n)=\varprojlim_n A^t[p^{\infty}](K_n)$ as desired.

Then we prove by induction on $d$. It is sufficient to show that for each $n$, there exists some intermediate $\Z_p^{d-1}$-extension $L_1$ of
$L/K_n$ such that $\mathcal{L}_{L_1}$ is surjective. 
For this purpose, we first observe that as $\Lambda(\Gamma)$ is a finite $\Lambda(\Gal(L/K_n))$-module,
$X^t_L$ is also torsion over $\Lambda(\Gal(L/K_n)$. Thus, without loss of the generality, we may assume that $K_n=K$.

Since $\Gamma$ is commutative, a basis of the vector space $V=\overline{\Q}_p\otimes_{\Z_p} (A^t[p^{\infty}](L)_{div})^{\vee}$
can be found so that the action of every $\gamma\in\Gamma$ on $V$ is represented by
an upper triangular matrix in which the diagonal entries are characters contained in $\Hom_{cont} (\Gamma, \overline{\Q}_p^\times)$.
Since $A^t[p^{\infty}](F)$ is finite for every finite extension $F/K$,
every such character is of infinite order, and hence its kernel is isomorphic to $\Z_p^j$ for some $j\leq d-1$. Thus, the union of the kernels of these characters
is a proper subset $\Upsilon$ of $\Gamma$. Choose $L_1$ so that the intersection
$\Gal(L/L_1)\cap\Upsilon$ contains only the identity element of $\Gamma$. Then, for
every non-trivial $\gamma\in \Gal(L/L_1)$, $1-\gamma$ induces an invertible linear operator on $V$. It follows that
$A^t[p^{\infty}](L_1)$ is finite, and hence
$$\varprojlim_{F\subset L_1} A^t[p^{\infty}](F)=0.$$
Thus, if $X^t_{L_1}$ is torsion over
$\Lambda(\Gal(L_1/K))$, then $\coker\be\left[\e \mathcal{L}_{L_1}\right]=0$, by the induction hypothesis.
Let $\xi\in \Lambda(\Gamma)$ be a non-zero element such that $\xi\cdot \Sel_{p^{\infty}}(A^t/L)=0$. Since $\xi$ can only be divisible by
finitely many non-associated elements of the form $\gamma-1$, $\gamma\in\Gamma$, we can choose
$L_1$ so that ${\bar{\xi}}:=p_{L/L_1}(\xi)\not=0$. Consequently, ${\bar{\xi}}\cdot \Sel_{p^{\infty}}(A^t/L_1)$ is in the kernel
of $\res_{L/L_1}$, and is actually finite by \eqref{e:kernelres} and Lemma \ref{l:cofito}. This implies $X^t_{L_1}$ is torsion.
\end{proof}

\end{subsection}

\begin{subsection}{The operator
$\psi-1$}\label{sub:mpsi}
Fix an intermediate $\Z_p$-extension $L''/K$
so that $L=L'L''$, $K=L'\cap L''$. The restriction of Galois action
induces the isomorphism
$\Psi\stackrel{\sim}{\longrightarrow}
\Gal(L''/K)$.
Let $L''_n$ denote the $n$th layer of $L''/K$,
and, for each finite intermediate extension $F/K$ of $L/K$,
denote $F_{n}''=FL''_n$. For positive integers $m$, $n$, let
$$\ker_{n,m}(F):=\ker\be\left[\e
\xymatrix{\coh^2(F,A[p^m])\ar[r]^-{res} &
\coh^2(F_{n}'',A[p^m])}\right].$$
\begin{lemma}\label{l:m1m2}
The natural map
$$ \xymatrix{ \varinjlim_{m} \coh^2(F,A[p^m]) \ar[r] & \coh^2(F,A)_p }$$ is injective, and consequently,
$$\varinjlim_{m} \le \ker_{n,m}(F)=0.$$
\end{lemma}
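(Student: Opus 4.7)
The plan is to establish the first (injectivity) statement from the Kummer sequence together with the torsion-ness of $\coh^1(F,A)$, and then deduce the second by a diagram chase that reduces it to injectivity of the restriction $\coh^2(F,A)_p \to \coh^2(F_n'',A)_p$, which I will verify using Proposition \ref{p:coh2st}.

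For the first statement, I would start from the Kummer short exact sequence $0 \to A[p^m] \to A \xrightarrow{p^m} A \to 0$ of fppf sheaves on $F$, whose long exact cohomology sequence gives
\begin{equation*}
0 \to \coh^1(F,A)/p^m\coh^1(F,A) \xrightarrow{\delta_m} \coh^2(F,A[p^m]) \to \coh^2(F,A)[p^m] \to 0.
\end{equation*}
The morphism of Kummer sequences with left vertical the inclusion $A[p^m]\hookrightarrow A[p^{m+1}]$, middle vertical $\mathrm{id}_A$, and right vertical multiplication by $p$ on $A$ is commutative, so by functoriality of the connecting homomorphism, the induced transitions on the direct system are: multiplication by $p$ on $\coh^1(F,A)/p^m\coh^1(F,A)$, the inclusion-induced map on $\coh^2(F,A[p^m])$, and the natural $\coh^2(F,A)[p^m]\hookrightarrow\coh^2(F,A)[p^{m+1}]$ on the last term. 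Applying the exact functor $\varinjlim_m$ yields
\begin{equation*}
0 \to \coh^1(F,A)\otimes_\Z \Q_p/\Z_p \to \varinjlim_m \coh^2(F,A[p^m]) \to \coh^2(F,A)_p \to 0,
\end{equation*}
using the standard identification $\varinjlim_m M/p^m M \cong M\otimes_\Z \Q_p/\Z_p$ when the transitions are multiplication by $p$. Then I invoke that $\coh^1(F,A)$ is torsion (any class inflates from $\coh^1(\Gal(L/F),A(L))$ for a finite Galois $L/F$, and that group is killed by $[L:F]$), while $N\otimes_\Z\Q_p/\Z_p=0$ for any torsion abelian group $N$ because $\Q_p/\Z_p$ is divisible; hence the first term vanishes and the asserted injectivity (in fact isomorphism) follows.

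For the second statement, since $\varinjlim$ is exact,
\begin{equation*}
\varinjlim_m \ker_{n,m}(F)=\ker\bigl[\varinjlim_m \coh^2(F,A[p^m])\to\varinjlim_m \coh^2(F_n'',A[p^m])\bigr].
\end{equation*}
Applying the first statement to both $F$ and $F_n''$, a commutative square with injective vertical arrows into $\coh^2(F,A)_p$ and $\coh^2(F_n'',A)_p$ reduces the claim to injectivity of the restriction $\coh^2(F,A)_p\to\coh^2(F_n'',A)_p$. For this I invoke Proposition \ref{p:coh2st}: in characteristic $p$ the proposition gives $\mathcal{H}^2(A/F)=0$, so $\coh^2(F,A)=0$; in characteristic zero it embeds $\coh^2(F,A)$ into $\bigoplus_{v\text{ real}}\coh^2(F_v,A)$. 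The key observation is that every real place of $K$ splits completely in $L$, since its decomposition subgroup is simultaneously a subgroup of the torsion-free $\Gamma\cong\Z_p^d$ and a quotient of $\Gal(\C/\R)\cong\Z/2\Z$, hence trivial. Therefore each real $v$ of $F$ has exactly $[F_n'':F]$ real places above it in $F_n''$, the local restriction is the identity $\coh^2(\R,A)\to\coh^2(\R,A)$ at each, and $\mathcal{H}^2(A/F)\to\mathcal{H}^2(A/F_n'')$ is a diagonal embedding; compatibility with the localization injection then forces $\coh^2(F,A)\to\coh^2(F_n'',A)$ to be injective on the $p$-primary part, completing the argument.

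The main obstacle is really only the bookkeeping for the $p=2$ number-field case, where $\coh^2(F,A)_2$ is genuinely nontrivial and injectivity must be extracted from the behavior at real places; outside this case the conclusion is essentially formal, since $\coh^2(F,A)_p$ already vanishes outright (in characteristic $p$ by Proposition \ref{p:coh2st}, and in characteristic zero for odd $p$ because $\coh^2(\R,A)$ is $2$-torsion).
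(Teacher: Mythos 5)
Your proof is correct and takes essentially the same approach as the paper: both parts rest on the Kummer sequence (you compute the direct limit exactly and identify the kernel with $\coh^1(F,A)\otimes_\Z\Q_p/\Z_p=0$, whereas the paper runs a hands-on diagram chase showing each kernel class eventually dies; this is the same mathematical content), and both deduce the second assertion from Proposition~\ref{p:coh2st} together with the fact that real places split completely in $L$. The only imprecision is the claim ``$\coh^2(F,A)=0$'' in characteristic $p$ --- Proposition~\ref{p:coh2st} only yields $\coh^2(F,A)_p=0$ via injectivity of $loc^2_F$ --- but that is all your argument actually uses.
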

\begin{proof} Consider the commutative diagram of exact sequences
\begin{equation*}
\xymatrix{
\coh^1(F,A)/p^{m_2}\coh^1(F,A) \ar@{^{(}->}[r] \ar[d]^{p^{m_1-m_2}}  & \coh^2(F,A[p^{m_2}]) \ar[r]^-{i_{m_2}}\ar[d] &
\coh^2(F,A)_p \ar@{=}[d]\\
\coh^1(F,A)/p^{m_1}\coh^1(F,A) \ar@{^{(}->}[r]   & \coh^2(F,A[p^{m_1}]) \ar[r]^-{i_{m_1}} &
\coh^2(F,A)_p, }
\end{equation*}
 where the left vertical arrow is induced by the multiplication by
$p^{m_1-m_2}$. If an element $x\in \ker \be\left[\e i_{m_2} \right]$ is represented by some $y\in \coh^1(F,A)_p$ of
order $p^r$ and $m_1\geq m_2+r$, then $x$ vanishes under $ \coh^2(F,A[p^{m_2}]) \longrightarrow
\coh^2(F,A[p^{m_1}])$. This proves the first assertion. Then we observe that $\varinjlim_{m} \le \ker_{n,m}(F)=\ker \left[\e   \varinjlim_{m}\coh^2(F,A[p^m]) \longrightarrow  \varinjlim_{m}\coh^2(F_n'',A[p^m])  \right]$,
and by Proposition \ref{p:coh2st} we have the commutative diagram
$$\xymatrix{ \varinjlim_{m} \coh^2(F,A[p^m]) \ar@{^{(}->}[r]\ar[d] &   \coh^2(F,A)_p \ar@{^{(}->}[r]\ar[d] &
\mathcal{H}^2(A/F)\ar[d] \\
\varinjlim_{m} \coh^2(F_n'',A[p^m]) \ar@{^{(}->}[r] &  \coh^2(F_n'' ,A)_p \ar@{^{(}->}[r] &
\mathcal{H}^2(A/F_n'').}
$$
The right vertical arrow is also an injection, as every real place of $K$ splits completely over $L$.
\end{proof}

\begin{proposition}\label{p:psi-1}
We have
$$\coh^1(L,A[p^{\infty}])=
(\psi-1)\coh^1(L,A[p^{\infty}]).$$
\end{proposition}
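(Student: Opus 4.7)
The plan is to identify the cokernel of $(\psi-1)$ acting on $M := \coh^1(L, A[p^\infty])$ with the kernel of a degree-two restriction map via inflation-restriction, and then kill that kernel using Lemma~\ref{l:m1m2}.

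First, since $\Psi \simeq \Z_p$ is pro-$p$ of $p$-cohomological dimension $1$, any discrete $p$-primary $\Psi$-module with continuous action has $\coh^i(\Psi, -) = 0$ for $i \ge 2$, while $\coh^1(\Psi, M) = M/(\psi-1)M$ for a topological generator $\psi$. So the proposition is equivalent to $\coh^1(\Psi, M) = 0$. Applying \eqref{e:infres} to $L/L'$ with $G = \Psi$ and $\mathfrak{S} = A[p^\infty]$ (passing through the finite quotients $\Psi/\Psi^{p^n}$ in the usual way), the vanishing of $\coh^2(\Psi, A[p^\infty](L))$ and $\coh^3(\Psi, A[p^\infty](L))$ collapses the sequence to an isomorphism
$$\coh^1(\Psi, M) \;\simeq\; \ker\bigl[\res^2_{L/L'}\colon \coh^2(L', A[p^\infty]) \to \coh^2(L, A[p^\infty])\bigr],$$
reducing the task to showing that this kernel vanishes.

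To that end, given $x$ in the kernel, I would lift $x$ to some $x_F \in \coh^2(F, A[p^\infty])$ for a finite intermediate extension $F$ of $L'/K$, and use that $L = \bigcup_{F', n} F' L''_n$ to enlarge $F$ inside $L'$ until $\res^2_{F''_n/F}(x_F) = 0$ for some $n$, where $F''_n := FL''_n$ as in \S\ref{sub:mpsi}. Writing $\coh^2(F, A[p^\infty]) = \varinjlim_m \coh^2(F, A[p^m])$, I would lift $x_F$ to some $y_m \in \coh^2(F, A[p^m])$, and after enlarging $m$ I may assume $y_m \in \ker_{n,m}(F)$. Lemma~\ref{l:m1m2}, which asserts $\varinjlim_m \ker_{n,m}(F) = 0$, then forces $y_m$ to vanish in $\coh^2(F, A[p^{m'}])$ for some $m' \ge m$, giving $x_F = 0$ and hence $x = 0$.

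The hard part is purely bookkeeping: chasing one element through the three filtered colimits (in $F$, in $n$, and in $m$) and making sure the lifts are compatible. All the substantive inputs—the cohomological dimension bound for $\Psi$ and Lemma~\ref{l:m1m2}—are already in place, so the argument is essentially formal once the direct-limit passages are organized correctly.
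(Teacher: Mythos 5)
Your proof is correct, and while it rests on the same key input as the paper's—Lemma~\ref{l:m1m2} fed into the Hochschild--Serre sequence for $L/L'$—the organization is genuinely different and worth comparing. The paper works entirely at finite levels: it pushes a representative $x_k\in\coh^1(F''_k,A[p^m])$ up to a level $n\ge 2m+k$ at which the norm condition holds and the differential $d^{1,1}_{F''_n}(\bar x_n)$ can be shown to vanish via the twisted-inflation diagrams \eqref{e:spcoh3} and \eqref{e:inf3}, and only then invokes Lemma~\ref{l:m1m2} after letting $m\to\infty$. You instead pass to the limit at the outset: since $\Psi\simeq\Z_p$ has $p$-cohomological dimension $1$ and $A[p^\infty](L)$ is $p$-primary, both $\coh^2(\Psi,A[p^\infty](L))$ and $\coh^3(\Psi,A[p^\infty](L))$ vanish, so \eqref{e:infres} collapses to the isomorphism $\coh^1(\Psi,\coh^1(L,A[p^{\infty}]))\simeq\ker\be\left[\res^2_{L/L'}\right]$, and the vanishing of this kernel follows from $\varinjlim_m\ker_{n,m}(F)=0$ (Lemma~\ref{l:m1m2}), which is exactly the injectivity of each $\res^2_{F''_n/F}$ on $\coh^2(-,A[p^\infty])$. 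Your route buys cleanliness: you never have to track the quantitative $n\ge 2m+k$ bound nor unwind the multiplication-by-$h$ twist in \eqref{e:inf3}, because the vanishing of $\coh^3(\Psi,-)$ in the limit already encodes those finite-level cancellations; the trade-off is that you invoke the profinite-group low-degree exact sequence rather than chasing an explicit preimage under $\psi-1$. The two points requiring care, which you correctly flag, are the identity $\coh^1(\Psi,D)=D/(\psi-1)D$ for a discrete $p$-primary $\Psi$-module (established exactly as in the proof of Lemma~\ref{l:cofito}) and the interchange of the three filtered colimits in $F$, $n$, $m$ so that a kernel class descends to some $\ker_{n,m}(F)$; both are routine.
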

\begin{proof}
Suppose $x\in\coh^1(L,A[p^{\infty}])$ is obtained from some
$x_k\in \coh^1(F_k'',A[p^{m}])$ for some finite intermediate
field $F/K$ of $L'/K$ and some integers $k$ and $m$.
Choose $n\geq m+k$ and let $x_n$ denote the image of $x_k$ under the
restriction map
$\coh^1(F_k'',A[p^{m}])\longrightarrow
\coh^1(F_n'',A[p^{m}])$. Then $x_n$ is annihilated by $p^m$ and fixed by $\Gal(F_n''/F_k'')$, whence contained
in the kernel of the norm map $\Nm_{\Gal(F_n''/F_k'')} $.
This implies $x_n\in\ker \be \left[ \le \Nm_{\Gal(F_n''/F)}\right]$.  If we choose the restriction of
$\psi$ to be the generator of $\Gal(F_n''/F)$, then $x_n$ determines a class ${\bar{x}}_n\in
\coh^1(F_n''/F,\coh^1(F_n'',A[p^m]))$.

In view of (\ref{e:spcoh3}) and (\ref{e:inf3}),
we see that if $n\geq 2m+k$,
then $d_{F_n''}^{1,1}(\bar{x}_n)=0$, and hence by (\ref{e:spcoh3}) again
$\bar{x}_n$ is contained in the image of
$\ker_{n,m}(F)$. Let $m\rightarrow\infty$.
Then Lemma \ref{l:m1m2}
says that, under $\coh^1(F_n'',A[p^{m}])\longrightarrow
\coh^1(F_n'',A[p^{\infty}])$, the image of $x_n$
is indeed contained in
$(\psi-1)\coh^1(F_n'',A[p^{\infty}])$.
\end{proof}

An application is in order. Let $ \mathcal{L}_L^{\Psi}: \coh^1(L,A[p^{\infty}])^{\Psi} \longrightarrow
 \mathcal{H}^1(A/L)^{\Psi}$ denote the restriction of $\mathcal{L}_L$ to $\coh^1(L,A[p^{\infty}])^{\Psi}$.
Consider the commutative diagram of exact sequences
$$
\xymatrix{
0 \ar[r] &  \Sel_{p^{\infty}}(A/L) \ar[r]\ar[d]^{\psi-1} &
\coh^1(L,A[p^{\infty}]) \ar[r]^{\mathcal{L}_L} \ar@{->>}[d]^{\psi-1}
& \mathcal{H}^1(A/L) \ar[r] \ar[d]^{\psi-1} & \coker\le\left[ \mathcal{L}_L \right] \ar[r]  \ar[d]^{\psi-1} & 0 \\
0 \ar[r] &  \Sel_{p^{\infty}}(A/L) \ar[r] &
\coh^1(L,A[p^{\infty}]) \ar[r]^{\mathcal{L}_L}
& \mathcal{H}^1(A/L) \ar[r]  & \coker\le\left[ \mathcal{L}_L \right] \ar[r]   & 0 .}
$$
By diagram chasing (snake lemma), we deduce the exact sequence
\begin{equation}\label{e:psipart}
\xymatrix{0 \ar[r] & \Sel_{p^{\infty}}(A/L)/(\psi-1)\Sel_{p^{\infty}}(A/L) \ar[r] & \coker\le\left[ \mathcal{L}_L^{\Psi}\right]
\ar[r] & (\coker\le\left[ \mathcal{L}_L \right])^{\Psi} \ar[r] & 0.}
\end{equation}

\end{subsection}

\begin{subsection}{A derived equality}\label{su:derived}
Recall the group $\mathcal{H}_v^i(A,L/L')$ defined in \S\ref{s:local}.
Write
$$\mathcal{H}^i(A,L/L')=\bigoplus_{v}\mathcal{H}_v^i(A,L/L')
\subset \mathcal{H}^i(A/L'),$$
where $v$ runs through all places of $K$, 
and set
$$\mathcal{W}^i(A,L/L')=\mathcal{H}^i(A,L/L')^{\vee}=\prod_{v}\mathcal{W}_v^i.$$
Consider the commutative diagram of exact sequences
\begin{equation}\label{e:diagramlong}
\xymatrix{\coh^1(\Psi,A[p^{\infty}]) \ar@{^{(}->}[r]\ar[d]^{\mathcal{L}_{L/L'}} & \coh^1(L',A[p^{\infty}]) \ar[r]^-{\res_{L/L'}}
\ar[d]^{\mathcal{L}_{L'}} & \coh^1(L,A[p^{\infty}])^{\Psi} \ar[r]\ar[d]^{\mathcal{L}_{L}^{\Psi}} & 0\ar[d]\\
\mathcal{H}^1(A,L/L')\ar@{^{(}->}[r] & \mathcal{H}^1(A/L') \ar[r]^{res^1} & \mathcal{H}^1(A/L)^{\Psi} \ar@{->>}[r] & \mathcal{H}^2(A,L/L').}
\end{equation}
In the diagram, the ``$0$'' term is due to Proposition \ref{p:injection}, while the surjection follows from the exact sequence
\eqref{e:infres} and Proposition \ref{p:coh2st}. From it, we derive the equality of alternating products:
\begin{equation}\label{e:derived}
\begin{array}{rl}
{} & \chi_{\Lambda(\Gamma')}(\ker\le\left[\mathcal{L}_{L/L'}\right]^{\vee})\cdot \chi_{\Lambda(\Gamma')}(\coker\le\left[\mathcal{L}_{L'}\right]^{\vee})
\cdot \chi_{\Lambda(\Gamma')}(\ker\le\left[\mathcal{L}_{L}^{\Psi}\right]^{\vee})\cdot \chi_{\Lambda(\Gamma')}(\mathcal{W}^2(A,L/L'))\\
= & \chi_{\Lambda(\Gamma')}(\coker\le\left[\mathcal{L}_{L/L'}\right]^{\vee})\cdot\chi_{\Lambda(\Gamma')}(\ker\le\left[\mathcal{L}_{L'}\right]^{\vee})
\cdot\chi_{\Lambda(\Gamma')}(\coker\le\left[ \mathcal{L}_{L}^{\Psi}\right]^{\vee})
\end{array}
\end{equation}
\begin{lemma}\label{l:derived} The $\Lambda(\Gamma')$-modules $\mathcal{W}^1(A,L/L')$ and $\mathcal{W}^2(A,L/L')$
are either both torsion or both non-torsion. If they are torsion, then
$$
\begin{array}{rl}
{} & \chi_{\Lambda(\Gamma')}( \ker\le\left[
\res_{L/L'}\right]^{\vee})\cdot \chi_{\Lambda(\Gamma')}(\coker\le\left[\mathcal{L}_{L'}\right]^{\vee})
\cdot \chi_{\Lambda(\Gamma')}(X_L/(\psi-1)X_L)\\
= & \prod_{v}\vartheta_v^{(1)}\cdot\Theta_{L'}
\cdot \chi_{\Lambda(\Gamma')}(X_L^{\Psi})\cdot
\chi_{\Lambda(\Gamma')}(\coker\le\left[ \mathcal{L}_L \right]^{\vee}/(\psi-1)\coker\le\left[ \mathcal{L}_L \right]^{\vee})
\end{array}
$$
\end{lemma}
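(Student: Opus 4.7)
My plan is to start from the alternating-product identity \eqref{e:derived} associated, via the snake lemma, with the commutative diagram \eqref{e:diagramlong}, identify each of its factors through Pontryagin duality and the local analysis of Section \ref{s:local}, and then rearrange. For the dichotomy claim I would inspect the factorization $\mathcal{W}^i(A,L/L')=\prod_v\mathcal{W}^i_v$ place by place. Propositions \ref{p:urwi}, \ref{p:go} and \ref{p:spm} show that the only situation in which $\mathcal{W}^2_v$ fails to be torsion over $\Lambda(\Gamma')$ is when $v\in S$ is split-multiplicative with $\Gamma'_v=0$ and $\mathfrak{w}_v=0$; by Proposition \ref{p:spm}(a,b) this is precisely the condition for $\mathcal{W}^1_v$ to be non-torsion, so $\mathcal{W}^1(A,L/L')$ and $\mathcal{W}^2(A,L/L')$ are torsion simultaneously.

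Assume from now on that both are torsion. Then by the same case analysis each $\vartheta_v^{(2)}=(1)$, so $\chi_{\Lambda(\Gamma')}(\mathcal{W}^2(A,L/L'))=(1)$ in \eqref{e:derived}. I next replace the remaining factors as follows. The kernel $\ker[\mathcal{L}_{L'}]=\Sel_{p^{\infty}}(A/L')$ has dual $X_{L'}$ with characteristic ideal $\Theta_{L'}$. The kernel $\ker[\mathcal{L}_L^\Psi]=\Sel_{p^{\infty}}(A/L)^\Psi$ has Pontryagin dual $X_L/(\psi-1)X_L$. Dualizing the exact sequence \eqref{e:psipart} gives
\[\chi_{\Lambda(\Gamma')}(\coker[\mathcal{L}_L^\Psi]^\vee)=\chi_{\Lambda(\Gamma')}(X_L^\Psi)\cdot\chi_{\Lambda(\Gamma')}(\coker[\mathcal{L}_L]^\vee/(\psi-1)\coker[\mathcal{L}_L]^\vee).\]
Splitting $\mathcal{L}_{L/L'}\colon\coh^1(\Psi,A[p^{\infty}](L))\longrightarrow\mathcal{H}^1(A,L/L')$ into its kernel-image and image-cokernel short exact sequences and using \eqref{e:kernelres} to identify $\coh^1(\Psi,A[p^{\infty}](L))^\vee$ with $\ker[\res_{L/L'}]^\vee$ produces
\[\chi_{\Lambda(\Gamma')}(\coker[\mathcal{L}_{L/L'}]^\vee)\cdot\chi_{\Lambda(\Gamma')}(\ker[\res_{L/L'}]^\vee)=\chi_{\Lambda(\Gamma')}(\ker[\mathcal{L}_{L/L'}]^\vee)\cdot\prod_v\vartheta_v^{(1)}.\]
Substituting these into \eqref{e:derived}, the two copies of $\chi_{\Lambda(\Gamma')}(\ker[\mathcal{L}_{L/L'}]^\vee)$ cancel, and multiplying both sides by $\chi_{\Lambda(\Gamma')}(\ker[\res_{L/L'}]^\vee)$ yields exactly the displayed identity.

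The bookkeeping above is mechanical; the one place I expect to need some care is verifying that every module whose characteristic ideal appears is indeed finitely generated and $\Lambda(\Gamma')$-torsion, so that multiplicativity of $\chi_{\Lambda(\Gamma')}$ across short exact sequences may legitimately be invoked. This reduces to the running assumption that $X_L$ is finitely generated over $\Lambda(\Gamma)$, together with the torsion dichotomy for $\mathcal{W}^1(A,L/L')$ and $\mathcal{W}^2(A,L/L')$ just established and, for the term $\coker[\mathcal{L}_L]$ entering via \eqref{e:psipart}, the explicit description provided by Proposition \ref{p:loc}.
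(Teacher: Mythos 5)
Your argument is correct and follows essentially the same route as the paper: establish the torsion dichotomy place by place from Propositions \ref{p:urwi}, \ref{p:go}, \ref{p:spm}, then use the alternating-product relation coming from the left vertical arrow $\mathcal{L}_{L/L'}$ in \eqref{e:diagramlong} (together with \eqref{e:kernelres}) to trade $\chi_{\Lambda(\Gamma')}(\ker\le[\mathcal{L}_{L/L'}]^\vee)$ and $\chi_{\Lambda(\Gamma')}(\coker\le[\mathcal{L}_{L/L'}]^\vee)$ in \eqref{e:derived} for $\chi_{\Lambda(\Gamma')}(\ker\le[\res_{L/L'}]^\vee)$ and $\prod_v\vartheta_v^{(1)}$, and finish with \eqref{e:psipart}. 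Your ``cancel and multiply'' phrasing is the same rearrangement the paper performs by ``replacing'' the two factors, and your concluding caveat about torsionness matches the paper's appeal to Corollary \ref{c:chicoh1} for $\coh^1(\Psi,A[p^\infty])^\vee$.
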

\begin{proof}
By Propositions \ref{p:urwi}, \ref{p:go}, \ref{p:spm}, $\mathcal{W}_v^1(A,L/L')$ (resp. $\mathcal{W}_v^2(A,L/L')$) is non-torsion
if and only if $v\in S$ is a split-multiplicative place of $A$, $\Gamma_v'=0$ and $\mathfrak{w}_v=0$. This proves the first assertion.
Furthermore, if $\mathcal{W}_v^1(A,L/L')$ and $\mathcal{W}_v^2(A,L/L')$ are torsion, the it follows that $\chi_{\Lambda(\Gamma')}(\mathcal{W}^2(A,L/L')=(1)$
and $\chi_{\Lambda(\Gamma')}(\mathcal{W}^1(A.L/L'))=\prod_{v}\vartheta_v^{(1)}$.

Recall that $\coh^1(\Psi,A[p^\infty])^\vee$ is torsion by Corollary \ref{c:chicoh1}. Thus, if $\mathcal{W}^1(A, L/L')=\mathcal{H}^1(A,L/L')^\vee$ is also torsion then the left vertical arrow in \eqref{e:diagramlong}
yields
$$\chi_{\Lambda(\Gamma')}(\ker\le\left[\mathcal{L}_{L/L'}\right]^{\vee})\cdot \chi_{\Lambda(\Gamma')}(\mathcal{W}^1(A,L/L'))
=\chi_{\Lambda(\Gamma')}(\coker\le\left[\mathcal{L}_{L/L'}\right]^{\vee})\cdot \chi_{\Lambda(\Gamma')}( \ker\le\left[
\res_{L/L'}\right]^{\vee}).
$$
Therefor, in \eqref{e:derived}, the terms $\chi_{\Lambda(\Gamma')}(\ker\le\left[\mathcal{L}_{L/L'}\right]^{\vee})$ and $\chi_{\Lambda(\Gamma')}(\coker\le\left[\mathcal{L}_{L/L'}\right]^{\vee})$ can be replaced, respectively, by
$\chi_{\Lambda(\Gamma')}( \ker\le\left[
\res_{L/L'}\right]^{\vee})$ and $\chi_{\Lambda(\Gamma')}(\mathcal{W}^1(A,L/L'))$. Then, since
$\ker\le\left[\mathcal{L}_{L'}\right]^{\vee}=X_{L'}$ and
$\ker\le\left[\mathcal{L}_{L}^{\Psi}\right]^{\vee}=X_L/(\psi-1)X_L$, the lemma follow from the equality
$$\chi_{\Lambda(\Gamma')}(\coker\le\left[ \mathcal{L}_L^{\Psi}\right]^{\vee})
=\chi_{\Lambda(\Gamma')}(X_L^{\Psi})\cdot
\chi_{\Lambda(\Gamma')}(\coker\le\left[ \mathcal{L}_L \right]^{\vee}/(\psi-1)\coker\le\left[ \mathcal{L}_L \right]^{\vee}),
$$
which is a direct consequent of \eqref{e:psipart}.

\end{proof}

\end{subsection}

\end{section}

\begin{section}{Proofs of main theorems}\label{s:appl}
\begin{subsection}{The descent setting}\label{sub:pseudo}
Suppose that $W$ is a finitely generated torsion $\Lambda(\Gamma)$-module.
By multiplying $\psi-1$ to (\ref{e:pseu}), we obtain via the snake lemma
the exact sequence of finitely generated
$\Lambda(\Gamma')$-modules:
\begin{equation}\label{e:pseusnake}
\xymatrix{
0 \ar[r] & [W]^{\Psi} \ar[r] & W^{\Psi} \ar[r] & N^{\Psi}\ar[ld] &  &\\
 & & [W]/(\psi-1)[W] \ar[r] & W/(\psi-1)W \ar[r] &  N/(\psi-1)N \ar[r] & 0 }
\end{equation}

\begin{lemma}{\em{(Greenberg)}} \label{l:greenberg}
There exists a closed subgroup $\Gamma_0$ of $\Gamma$ mapped isomorphically onto $\Gamma'$ under the
projection $\Gamma\longrightarrow \Gamma'$ so that $N$ is
finitely generated and torsion over
$\Lambda(\Gamma_0)$.
\end{lemma}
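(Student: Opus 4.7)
The module $N$ being pseudo-null over $\Lambda(\Gamma)$ provides two relatively prime elements $f_{1},f_{2}\in\Lambda(\Gamma)$ with $f_{1}N=f_{2}N=0$ (cf.\ \S\ref{su:char}). Denote $T:=\psi-1$; for any complement $\Gamma_{0}$ of $\Psi$ in $\Gamma$ we have a canonical isomorphism $\Lambda(\Gamma)\simeq\Lambda(\Gamma_{0})[[T]]$. The plan is to choose $\Gamma_{0}$ so that $f_{1}$ becomes $T$-distinguished in $\Lambda(\Gamma_{0})[[T]]$ (i.e., its image in $\F_{p}[[T]]$ under reduction modulo the maximal ideal of $\Lambda(\Gamma_{0})$ is non-zero); Weierstrass preparation then exhibits $\Lambda(\Gamma)/(f_{1})$ as a finitely generated free $\Lambda(\Gamma_{0})$-module, from which finite generation of $N$ over $\Lambda(\Gamma_{0})$ is automatic, and the torsion assertion follows by a Krull-dimension count using $f_{2}$.

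To produce $\Gamma_{0}$, fix any auxiliary basis $\gamma_{1}^{(0)},\ldots,\gamma_{d-1}^{(0)},\psi$ of $\Gamma$, and for each $\mathbf{a}=(a_{1},\ldots,a_{d-1})\in\Z_{p}^{d-1}$ set $\Gamma_{0}(\mathbf{a}):=\langle\gamma_{1}^{(0)}\psi^{a_{1}},\ldots,\gamma_{d-1}^{(0)}\psi^{a_{d-1}}\rangle$. With $T_{i}:=\gamma_{i}^{(0)}-1$, the distinguishedness condition for $f_{1}$ with respect to $\Gamma_{0}(\mathbf{a})$ translates into the non-vanishing of the image of $f_{1}$ under the $\Z_{p}$-algebra map $\Lambda(\Gamma)\to\F_{p}[[T]]$ defined by $T_{i}\mapsto(1+T)^{-a_{i}}-1$ and $p\mapsto0$. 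Writing $\bar f_{1}\in\F_{p}[[T_{1},\ldots,T_{d-1},T]]$ and letting $n_{0}$ be the minimal total degree of a non-zero monomial of $\bar f_{1}$, a direct computation shows that the coefficient of $T^{n_{0}}$ in the substituted series is the value at $\mathbf{a}$ of a nonzero polynomial in $(a_{1},\ldots,a_{d-1})$ over $\F_{p}$. A standard density argument then yields $\mathbf{a}\in\Z_{p}^{d-1}$ at which this polynomial is non-zero, and I take $\Gamma_{0}:=\Gamma_{0}(\mathbf{a})$.

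With this $\Gamma_{0}$, Weierstrass preparation gives a factorization $f_{1}=U\cdot P$ with $U\in\Lambda(\Gamma)^{\times}$ and $P\in\Lambda(\Gamma_{0})[T]$ a distinguished polynomial, so $\Lambda(\Gamma)/(f_{1})\simeq\Lambda(\Gamma_{0})[T]/(P)$ is finite free over $\Lambda(\Gamma_{0})$. Since $f_{1}N=0$ and $N$ is finitely generated over $\Lambda(\Gamma)$, $N$ is finitely generated over $\Lambda(\Gamma)/(f_{1})$ and hence over $\Lambda(\Gamma_{0})$. For torsion: the annihilator $\mathrm{Ann}_{\Lambda(\Gamma)}(N)$ contains the ideal $(f_{1},f_{2})$, which has height at least two in the UFD $\Lambda(\Gamma)$ by the coprimality of $f_{1}$ and $f_{2}$; hence $\dim\Lambda(\Gamma)/\mathrm{Ann}(N)\leq\dim\Lambda(\Gamma)-2=d-1$. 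The composite ring map $\Lambda(\Gamma_{0})\to\Lambda(\Gamma)/\mathrm{Ann}(N)$ is module-finite because it factors through the finite extension $\Lambda(\Gamma_{0})\to\Lambda(\Gamma)/(f_{1})$, so going-up forces $\dim\Lambda(\Gamma_{0})/\mathrm{Ann}_{\Lambda(\Gamma_{0})}(N)=\dim\Lambda(\Gamma)/\mathrm{Ann}(N)\leq d-1<d=\dim\Lambda(\Gamma_{0})$, which is precisely the assertion that $N$ is torsion over $\Lambda(\Gamma_{0})$.

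The principal obstacle is the change-of-basis step of the second paragraph: verifying that a generic $\mathbf{a}\in\Z_{p}^{d-1}$ renders $f_{1}$ distinguished with respect to $T=\psi-1$. This is the only point at which the explicit power-series structure of $\Lambda(\Gamma)$ is used, and it is the technical core of the proof; the remaining steps follow formally from Weierstrass preparation and elementary commutative algebra.
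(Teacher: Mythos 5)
Your overall architecture — Weierstrass preparation after a change of complement $\Gamma_0$, then a Krull-dimension count for torsion — is the right idea and is essentially what Greenberg does; the dimension count in your third paragraph is fine, and you correctly flag the change-of-basis step as the technical core. But the argument you give for that step has a genuine gap.

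You claim that the coefficient of $T^{n_0}$ in the substituted series is $P(\bar a_1,\ldots,\bar a_{d-1})$ for a non-zero polynomial $P$ over $\F_p$, and that a ``standard density argument'' produces $\mathbf a$ with $P\neq0$. That last step fails: a non-zero polynomial over $\F_p$ can vanish identically on $\F_p^{d-1}$. Concretely, in the computation $P(\mathbf a)=\tilde f_1(-\bar a_1,\ldots,-\bar a_{d-1},1)$, where $\tilde f_1$ is the leading form of $\bar f_1$, take $d=2$ and $f_1=(\gamma_1^{(0)}-1)^p-(\gamma_1^{(0)}-1)(\psi-1)^{p-1}$: then $\tilde f_1=T_1^p-T_1 T^{p-1}$, so $P(a)=-a^p+a\equiv 0$ for every $a\in\F_p$. (In this example the substituted series is nonetheless non-zero for $a=1$ — it begins $-T^{p+1}+\cdots$ — so the \emph{conclusion} survives, but your argument, which only watches the $T^{n_0}$-coefficient, does not establish it.) The density argument you invoke is valid over an infinite field, whereas the quantity $P(\bar{\mathbf a})$ only depends on $\mathbf a$ modulo $p$.

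To close the gap one must argue differently. One route: over $\F_p[[\Gamma]]$ the augmentation ideals $I_{\Gamma_0(\mathbf a)}$ attached to distinct $\mathbf a\in\Z_p^{d-1}$ are \emph{distinct} primes of height $d-1$ (they are the kernels of distinct surjections onto $\F_p[[T]]$, because $(1+T)^{-a}=(1+T)^{-a'}$ forces $a=a'$). For $d=2$ these are height-one primes in a UFD, so the non-zero element $\bar f_1$ lies in at most finitely many of them while $\Z_p$ is uncountable; for general $d$ one iterates this by peeling off one variable at a time. This is the sort of bookkeeping carried out in Greenberg's Lemma~2 of \cite{grn78}, which the paper's own (sketch) proof defers to. A minor additional point: you should remark that since $f_1,f_2$ are coprime, one of them is not divisible by $p$, and you take that one to be $f_1$; otherwise $\bar f_1$ could be $0$ and the whole Weierstrass step collapses.
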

\begin{proof} The is basically given in \cite{grn78}. We just sketch it. Since $N$ is pseudo-null, there exists an annihilator
$\phi\in\Lambda(\Gamma)$
not divided by $p$. Let ${\bar \Gamma}=\Gamma/\Gamma^p$, ${\bar \Psi}=\Psi/\Psi^p$ and let ${\bar \Gamma}_0$
be a subgroup of ${\bar \Gamma}$ mapped isomorphically onto ${\bar \Gamma}/{\bar \Psi}$ under the projection
${\bar \Gamma}\longrightarrow {\bar \Gamma}/{\bar \Psi}$. The proof of \cite[Lemma 2]{grn78} actually proves that there is a $\Gamma_0\subset \Gamma$ with
${\bar \Gamma}_0=\Gamma_0/\Gamma_0^p$ such that $N$ is finitely generated over
$\Lambda(\Gamma_0)$. And the discussion in \cite{grn78}
after the proof of Lemma 2 shows that $N$ is torsion over
$\Lambda(\Gamma_0)$.
\end{proof}
\begin{corollary}\label{c:gb}
Let $W$ be a finitely generated torsion $\Lambda(\Gamma)$-module and let $[W]$ and $N$ be as in {\em{(\ref{e:pseu})}}.
Then
\begin{enumerate}
\item[(a)] Both $N^{\Psi}$ and $N/(\psi-1)N$ are finitely generated
torsion $\Lambda(\Gamma')$-modules.
\item[(b)] We have $\chi_{\Lambda(\Gamma')}(N^{\Psi}
)=\chi_{\Lambda(\Gamma')}(N/(\psi-1)N)$.
\item[(c)] If any one of the modules $W^{\Psi}$, $[W]^{\Psi}$,
$W/(\psi-1)W$, $[W]/(\psi-1)[W]$ is torsion over $\Lambda(\Gamma')$,
then all of them are torsion. In this case, $[W]^{\Psi}=0$.
\item[(d)] 
We have $\chi_{\Lambda(\Gamma')}(W/(\psi-1)W)
=p_{L/L'}(\chi_{\Lambda(\Gamma)}(W))\cdot
\chi_{\Lambda(\Gamma')}(W^{\Psi})$.
\end{enumerate}
\end{corollary}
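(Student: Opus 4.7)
The plan is to handle the four assertions in order, with the snake-lemma sequence \eqref{e:pseusnake} as the organizing tool.

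For (a), I would invoke Lemma \ref{l:greenberg} to produce a closed subgroup $\Gamma_0\subset\Gamma$ projecting isomorphically onto $\Gamma'$ such that $N$ is finitely generated and torsion over $\Lambda(\Gamma_0)$. Multiplication by $\psi-1$ on $N$ then yields the four-term exact sequence
$$0\longrightarrow N^{\Psi}\longrightarrow N\stackrel{\psi-1}{\longrightarrow} N\longrightarrow N/(\psi-1)N\longrightarrow 0$$
of $\Lambda(\Gamma_0)$-modules, identifying $N^\Psi$ as a submodule and $N/(\psi-1)N$ as a quotient of a finitely generated torsion module; both are therefore finitely generated and torsion over $\Lambda(\Gamma_0)\cong\Lambda(\Gamma')$. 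Part (b) is then immediate from Lemma \ref{l:coco} applied with $M=N$.

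The heart of the argument is (c). I would establish three equivalences whose combination gives the claim. The first two extract directly from \eqref{e:pseusnake} using (a): the initial segment $0\to[W]^{\Psi}\to W^{\Psi}\to N^{\Psi}$ together with $N^\Psi$ torsion gives ``$[W]^\Psi$ torsion iff $W^\Psi$ torsion''; symmetrically, the tail $[W]/(\psi-1)[W]\to W/(\psi-1)W\to N/(\psi-1)N\to 0$ together with $N/(\psi-1)N$ torsion (and the image of $N^\Psi$ being torsion) gives ``$[W]/(\psi-1)[W]$ torsion iff $W/(\psi-1)W$ torsion''. The third equivalence, ``$[W]^\Psi$ torsion iff $[W]/(\psi-1)[W]$ torsion'', is the delicate one; it uses the explicit decomposition $[W]=\bigoplus_i\Lambda(\Gamma)/(\xi_i^{r_i})$ with $\xi_i$ irreducible in the UFD $\Lambda(\Gamma)$. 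I would split on whether some $\xi_i$ is $\Lambda(\Gamma)$-associate to $\psi-1$. If none is, then $\psi-1$ is coprime to each $\xi_i^{r_i}$ and therefore acts injectively on each summand, so $[W]^\Psi=0$, while $[W]/(\psi-1)[W]=\bigoplus_i\Lambda(\Gamma')/(\tilde\xi_i^{r_i})$ with $\tilde\xi_i:=p_{L/L'}(\xi_i)\neq0$ is torsion. If some $\xi_j$ is associate to $\psi-1$, that summand contributes a direct factor $\Lambda(\Gamma')$ to both $[W]^\Psi$ and $[W]/(\psi-1)[W]$, so neither is torsion. The clause $[W]^\Psi=0$ in the torsion case drops out of this same dichotomy.

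For (d), the alternating-product identity on characteristic ideals applied to \eqref{e:pseusnake} reads
$$\chi_{\Lambda(\Gamma')}([W]^{\Psi})\cdot\chi_{\Lambda(\Gamma')}(N^{\Psi})\cdot\chi_{\Lambda(\Gamma')}(W/(\psi-1)W)=\chi_{\Lambda(\Gamma')}(W^{\Psi})\cdot\chi_{\Lambda(\Gamma')}([W]/(\psi-1)[W])\cdot\chi_{\Lambda(\Gamma')}(N/(\psi-1)N).$$
In the torsion case (c) yields $\chi_{\Lambda(\Gamma')}([W]^\Psi)=(1)$ and (b) cancels the two $N$-factors, reducing to $\chi_{\Lambda(\Gamma')}(W/(\psi-1)W)=\chi_{\Lambda(\Gamma')}(W^{\Psi})\cdot\chi_{\Lambda(\Gamma')}([W]/(\psi-1)[W])$; the last factor equals $\prod_i(\tilde\xi_i^{r_i})=(p_{L/L'}(\chi_{\Lambda(\Gamma)}(W)))$ since $p_{L/L'}$ is precisely the $\Z_p$-algebra map $\Lambda(\Gamma)\to\Lambda(\Gamma')$ killing $\psi-1$. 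In the non-torsion case both sides vanish by convention (note that in this case some $\xi_j$ is associate to $\psi-1$, so $p_{L/L'}(\chi_{\Lambda(\Gamma)}(W))=(0)$ as well), so the identity still holds. The principal technical obstacle is the dichotomy in (c), especially verifying injectivity of $\psi-1$ on $\Lambda(\Gamma)/(\xi^r)$ when $\xi$ is coprime to $\psi-1$ and producing the $\Lambda(\Gamma')$-free summand in the exceptional case; the UFD property of $\Lambda(\Gamma)$ and the irreducibility of the $\xi_i$ are both essential.
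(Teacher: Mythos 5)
Your proof is correct and follows the paper's argument closely: Lemma \ref{l:greenberg} plus Lemma \ref{l:coco} for (a) and (b), the snake-lemma sequence \eqref{e:pseusnake} combined with the explicit decomposition $[W]=\bigoplus_i\Lambda(\Gamma)/(\xi_i^{r_i})$ and the dichotomy on whether some $\xi_i$ is associate to $\psi-1$ for (c), and the multiplicativity of characteristic ideals along \eqref{e:pseusnake} together with the identification $p_{L/L'}(\chi_{\Lambda(\Gamma)}(W))=\chi_{\Lambda(\Gamma')}([W]/(\psi-1)[W])$ for (d). You merely make explicit a few steps the paper leaves implicit (the injectivity of $\psi-1$ on $\Lambda(\Gamma)/(\xi_i^{r_i})$ in the coprime case and the free $\Lambda(\Gamma')$-factor in the exceptional case), so this is essentially the paper's proof.
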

\begin{proof}
Identify $\Gamma'$ with the subgroup $\Gamma_0$ in Lemma \ref{l:greenberg}. Then $N$,
and hence $N^{\Psi}$ and $N/(\psi-1)N$, are
finitely generated and torsion over
$\Lambda(\Gamma_0)$. By Lemma \ref{l:coco}, (b) holds, as $\Lambda(\Gamma')=\Lambda(\Gamma_0)$.

Let $\mathfrak{Tor}(C)$ stand for the assertion that $C$ is torsion over $\Lambda(\Gamma')$.
Then the exact sequence (\ref{e:pseusnake}) implies
$$\mathfrak{Tor}([W]^{\Psi})\Longleftrightarrow \mathfrak{Tor}(W^{\Psi}),$$
$$\mathfrak{Tor}([W]/(\psi-1)[W])\Longleftrightarrow
\mathfrak{Tor}(W/(\psi-1)W).$$
Also, if $[W]=\bigoplus_{i=1}^m \Lambda(\Gamma)/\xi_i^{r_i}\Lambda(\Gamma)$ (see \S\ref{su:char}), then
$$\mathfrak{Tor}([W]/(\psi-1)[W])\Longleftrightarrow\;
(\xi_i)\not=(\psi-1),\;\; \text{for every i},\;\; \Longleftrightarrow
\mathfrak{Tor}([W]^{\Psi}),$$
and in this case, we actually have $[W]^{\Psi}=0$. Thus, (c) holds.
The assertion (d) holds trivially, if $W^{\Psi}$ is non-torsion (by (c)); otherwise, it follows from (b), (c) and
the exact sequence (\ref{e:pseusnake}), since
$p_{L/L'}(\chi_{\Lambda(\Gamma)}(W))
=\chi_{\Lambda(\Gamma')}([W]/(\psi-1)[W])$.
\end{proof}
\end{subsection}

\begin{subsection}{The proof of Theorem \ref{t:compatible}}\label{su:pft}
Now we complete the proof of Theorem \ref{t:compatible}.
\begin{proof}
Note that Proposition \ref{p:urwi}, Proposition \ref{p:go}, and Proposition \ref{p:spm} together imply that $\prod_v\vartheta_v^1=\vartheta_{L/L'}$.
Suppose $X_L$ is non-torsion over $\Lambda(\Gamma)$. We claim that $X_{L}/(\psi-1)X_L$ is non-torsion over $\Lambda(\Gamma')$.
Let $x_1,...,x_r$ be a set of generators of $X_L$ over $\Lambda(\Gamma)$. If the claim did not hold, then
there would be some $f\in\Lambda(\Gamma)$ with $p_{L/L'}(f)\not=0$ so that for each $i$ we can write
$$f\cdot x_i=\sum a_{ij}\cdot x_j,\;\; a_{ij}\in (\psi-1).$$
Write $A=\left( a_{ij}\right)$. Then all $x_i$ are annihilated by $g=\det (A-f\cdot \mathrm{I}_{r\times r})$.
As $p_{L/L'}(g)=p_{L/L'}(f)^r\not=0$, $g\not=0$ and $X_L$ is torsion, a contradiction.
Therefore, $(\Sel_{p^{\infty}}(A/L)^{\Psi})^\vee=X_{L}/(\psi-1)X_L$ is non-torsion over $\Lambda(\Gamma')$.
Now the diagram \eqref{e:diagramlong} induces the exact sequence
$$\xymatrix{ \Sel_{p^{\infty}}(A/L') \ar[r]^{\res_{L/L'}} & \Sel_{p^{\infty}}(A/L)^{\Psi} \ar[r] & \mathcal{H}^1(A,L/L')}$$
which by duality shows 
$\Theta_{L'}\vartheta_{L/L'}=0=p_{L/L'}(\Theta_L)$
as desired.

Suppose $X_L$ is torsion over $\Lambda(\Gamma)$.
If $X_{L'}$ is non-torsion over $\Lambda(\Gamma')$, then by \eqref{e:kernelres}, Corollary \ref{c:chicoh1} and Corollary \ref{c:gb}, both $X_{L}/(\psi-1)X_L$
and $[X]_{L}/(\psi-1)[X]_L$ are also non-torsion. Therefore, $\Theta_{L'}=p_{L/L'}(\Theta_L)=0$, and hence the theorem holds trivially.
If $X_{L'}$ is torsion while $\mathcal{W}^1(A,L/L')$ is non-torsion, then by Lemma \ref{l:derived} as well as its proof,
 $\mathcal{W}^2(A,L/L')$ is non-torsion and there exists some split-multiplicative place $v\in S$ such that $\vartheta_v=0$.
Therefore, $\Theta_{L'}\vartheta_{L/L'}=0$. By Proposition \ref{p:loc} and Proposition \ref{p:torcomp1},
the Pontryagin dual of $(\coker\le\left[ \mathcal{L}_L\right])^{\Psi}$ is torsion over $\Lambda(\Gamma')$. Now the diagram \eqref{e:diagramlong} yields an inclusion
$$\xymatrix{\mathcal{W}^2(A,L/L') \ar@{^{(}->}[r] & \coker\le\left[ \mathcal{L}_L^{\Psi}\right]^{\vee}} $$
that implies $\coker\le\left[ \mathcal{L}_L^{\Psi}\right]^{\vee}$ is non-torsion.
These together with \eqref{e:psipart} imply $X_L^{\Psi}$ is non-torsion. Then Corollary \ref{c:gb} implies $[X_L]/(\psi-1)[X_L]$ is non-torsion, whence $p_{L/L'}(\Theta_L)=0$.

Finally, we consider the case where $X_L$ is torsion over $\Lambda(\Gamma)$ and both $X_{L'}$ and $\mathcal{W}^1(A,L/L')$ are torsion over $\Lambda(\Gamma')$. Then by Lemma \ref{l:derived} and Corollary \ref{c:gb}(d)
$$
\begin{array}{rl}
{} & \chi_{\Lambda(\Gamma')}( \ker\le\left[
\res_{L/L'}\right]^{\vee})\cdot \chi_{\Lambda(\Gamma')}(\coker\le\left[\mathcal{L}_{L'}\right]^{\vee})
\cdot p_{L/L'}(\Theta_L)\\
= & \vartheta_{L/L'}\cdot\Theta_{L'}
\cdot
\chi_{\Lambda(\Gamma')}(\coker\le\left[ \mathcal{L}_L \right]^{\vee}/(\psi-1)\coker\le\left[ \mathcal{L}_L \right]^{\vee}).
\end{array}
$$
To proceed, we write
$$\eta_1=\chi_{\Lambda(\Gamma')}( \ker\le\left[
\res_{L/L'}\right]^{\vee})\cdot \chi_{\Lambda(\Gamma')}(\coker\le\left[\mathcal{L}_{L'}\right]^{\vee}),$$
$$\eta_2=\chi_{\Lambda(\Gamma')}(\coker\le\left[ \mathcal{L}_L \right]^{\vee}/(\psi-1)\coker\le\left[ \mathcal{L}_L \right]^{\vee}).$$
Then we deduce the desired equality $\eta_1=\varrho_{L/L'}\cdot \eta_2$, by
using Equality \eqref{e:kernelres}, Corollary \ref{c:chicoh1}, Lemma \ref{p:torcomp}, Proposition \ref{p:torcomp1} and Proposition \ref{p:loc}.
In fact, if $d\geq 3$, then $\eta_1=\eta_2=(1)$;
if $d=2$, then $\eta_1=\mathrm{w}_{L'/K}$ and $\eta_2=(1)$; if $d=1$, then $\eta_1=(|A[p^{\infty}](K)|^2/|A[p^{\infty}](K)\cap A[p^{\infty}](L)_{div}|)$ and $\eta_2=(|A[p^{\infty}](K)\cap A[p^{\infty}](L)_{div}|)$.

\end{proof}
\end{subsection}

\begin{subsection}{The proof of Theorem \ref{t:root}}\label{su:root}

Write $\Gamma^{\omega}=\ker \be\left[\e \omega\right]$ and $L^\omega=L^{\Gamma^\omega}$ for $\omega\in {\widehat{\Gamma}}$.
Denote
$$c(\omega)=\corank_{\Z_p} \Sel_{p^\infty}(A/L)^{\Gamma^\omega}.$$
\begin{lemma}\label{l:noname}
For every $\omega\in {\widehat{\Gamma}}$,
$$s(\omega)>0\;\; \Longleftrightarrow \;\; c(\omega)>c(\omega^p).$$
\end{lemma}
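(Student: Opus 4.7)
The plan is to prove the finer quantitative identity
\[
c(\omega) - c(\omega^p) = \delta_\omega \cdot s(\omega), \qquad \delta_\omega := [\O_\omega : \Z_p] = \varphi(p^m)
\]
for $\omega$ of order $p^m$. Since $\delta_\omega > 0$, the stated equivalence follows at once. I concentrate on the case $\omega$ of order $p^m \geq p$; the trivial case would require a separate verification that $s(1) = 0$.

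The main tool is equation~\eqref{e:amorphism} applied with $F = L^\omega$ and $\O = \O_\omega$. Since $\Gal(L^\omega/K) \simeq \Gamma/\Gamma^\omega$ is cyclic of order $p^m$, we have $\widehat{\Gamma}(L^\omega) = \{\omega^k : 0 \leq k < p^m\}$, so the cited map reads
\[
\O_\omega \Sel_{p^{\infty}}(A/L)^{\Gamma^\omega} \lr \bigoplus_{k=0}^{p^m-1}(\O_\omega \Sel_{p^{\infty}}(A/L))^{(\omega^k)}
\]
with finite kernel and cokernel. Comparing $\O_\omega$-coranks, and using that the $\O_\omega$-corank of $\O_\omega \otimes_{\Z_p} M$ equals $\corank_{\Z_p}(M)$ for any $\Z_p$-cofinitely generated $M$, produces $c(\omega) = \sum_{k=0}^{p^m-1} s(\omega^k)$. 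Here I invoke the base-change identity $(\O_\omega \Sel_{p^{\infty}}(A/L))^{(\omega^k)} = \O_\omega \otimes_{\O_{\omega^k}}(\O_{\omega^k}\Sel_{p^{\infty}}(A/L))^{(\omega^k)}$, which I verify by expanding elements in an $\O_{\omega^k}$-basis of $\O_\omega$ and using that $\omega^k(\gamma) \in \O_{\omega^k}$; this ensures that each summand has $\O_\omega$-corank equal to $s(\omega^k)$.

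Running the same argument with $\omega^p$ (of order $p^{m-1}$, whose associated characters $\omega^{pj}$ for $0 \leq j < p^{m-1}$ are precisely the multiples of $p$ in $\{0,\ldots,p^m-1\}$) gives $c(\omega^p) = \sum_{j} s(\omega^{pj})$, so
\[
c(\omega) - c(\omega^p) = \sum_{\substack{0 \leq k < p^m \\ p \nmid k}} s(\omega^k).
\]
For each such $k$, the character $\omega^k$ has exact order $p^m$ and is a $\Gal(\Q_p(\mu_{p^m})/\Q_p)$-conjugate of $\omega$; the Galois action permutes the corresponding eigenspaces isomorphically as abelian groups, so $s(\omega^k) = s(\omega)$. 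There are exactly $\varphi(p^m) = \delta_\omega$ such indices, yielding $c(\omega) - c(\omega^p) = \delta_\omega \cdot s(\omega)$ as required.

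The main obstacle is the structural input in the second paragraph: verifying the base-change identity so that the $\O_\omega$-scalar extension delivers precisely the $\omega^k$-eigenspace and nothing more, together with the Galois-equivariance argument giving $s(\omega^k) = s(\omega)$ for conjugate characters. Both points rely on the pairing construction of \S\ref{su:ns}, but once they are in place the counting of indices and the totient identity are routine.
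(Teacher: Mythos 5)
Your proposal is correct and takes essentially the same approach as the paper: both apply \eqref{e:amorphism} to $F=L^\omega$ over $\O_\omega$, pass to $\O_\omega$-coranks, use that $s$ is invariant under $\Gal(\overline{\Q}_p/\Q_p)$-conjugation to regroup the eigenspace coranks, and then subtract the analogous expansion for $\omega^p$. The only cosmetic difference is that the paper groups $\widehat{\Gamma/\Gamma^\omega}$ by orbits $\{{}^\sigma\omega^{p^i}\}$ to write $c(\omega)=\sum_i [\Q_p(\mu_{p^{n-i}}):\Q_p]\,s(\omega^{p^i})$, while you enumerate $\omega^k$ individually and count $k$ with $p\nmid k$; since $[\Q_p(\mu_{p^m}):\Q_p]=\varphi(p^m)$, these are the same computation.
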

\begin{proof} There is an $\O_{\omega}$-homomorphism
$$\bigoplus_{\varepsilon\in {\widehat{\Gamma/\Gamma^\omega}}} (\O_{\omega}\Sel_{p^\infty})^{(\varepsilon)}\longrightarrow \O_{\omega} \Sel_{p^\infty}(A/L)^{\Gamma^\omega}$$
of finite kernel and cokernel. Since $\Gamma/\Gamma^\omega$ is cyclic, every $\varepsilon\in {\widehat{\Gamma/\Gamma^\omega}}$ equals ${}^\sigma \omega^{p^i}$
for some integer $i$ and some $\sigma\in \Gal(\overline{\Q}_p/\Q_p)$. Obviously,  $s({}^\sigma \omega^{p^i})=s(\omega^{p^i})$. Therefore,
if $|\Gamma/\Gamma^\omega|=p^n$, then
$$c(\omega)=\sum_{i=0}^n [\Q_p(\mu_{p^{n-i}}):\Q_p] \cdot s(\omega^{p^i}),$$
whence $c(\omega)=c(\omega^p)+ [\Q_p(\mu_{p^{n}}):\Q_p] \cdot s(\omega)$.
\end{proof}
Lemma \ref{l:varunramified} asserts that $\coh^1(\Gamma^{\omega}_w,A(L_v))$ is finite for every $w$ not sitting over $S$ and is trivial for almost all $w$. Since $X_L$ is finitely generated over $\Lambda(\Gamma^\omega)$, Proposition \ref{p:iwasawa} implies that
the $\Z_p$-module $\mathcal{H}^1(A,L/L^{\omega})=\prod_{w}\coh^1(\Gamma_w^{\omega},A(L_w))$, where $w$ runs over all places of $L^{\omega}$,
is cofinitely generated.
Consider the commutative diagram:
$$\xymatrix{{} &\coh^1(L^{\omega},A[p^{\infty}])_{div} \ar[r]^-{res_{\omega}}_-{\sim} \ar[d]^{\mathcal{L}_{L^{\omega}}} & (\coh^1(L,A[p^{\infty}])^{\Gamma^{\omega}})_{div} \ar[d]^{\mathcal{L}_L^{\Gamma^{\omega}}} \\
\mathcal{H}^1(A,L/L^{\omega})_{div} \ar@{^{(}->}[r] & \mathcal{H}^1(A/{L^{\omega}})_{div} \ar[r] & (\mathcal{H}^1(A/L)^{\Gamma^{\omega}})_{div}. }
$$
Here the isomorphism is due to \eqref{e:kerreslf} and \eqref{e:cokerreslf}.
The diagram and its counterpart for $\omega^p$ together
yield the commutative diagram of exact sequences (by snake lemma)
\begin{equation}\label{e:tempdiagram}
\xymatrix{\Sel_{p^{\infty}}(A/L^{\omega^p})_{div}\ar[r]^{r_{\omega^p}} \ar[d]^s & (\Sel_{p^{\infty}}(A/L)^{\Gamma^{\omega^p}})_{div} \ar[r]^{j_{\omega^p}} \ar[d]^t
& \mathcal{H}^1(A,L/L^{\omega^p})_{div} \ar[r]^{i_{L^{\omega^p}}} \ar[d]^x &
\coker\le\left[\mathcal{L}_{L^{\omega^p}}\right]_{div} \ar[d]^y \\
\Sel_{p^{\infty}}(A/L^{\omega})_{div}\ar[r]^{r_{\omega}} & (\Sel_{p^{\infty}}(A/L)^{\Gamma^{\omega}})_{div} \ar[r]^{j_{\omega}} &
\mathcal{H}^1(A,L/L^{\omega})_{div} \ar[r]^{i_{L^{\omega}}} &
\coker\le\left[\mathcal{L}_{L^{\omega}}\right]_{div},}
\end{equation}
in which all vertical arrows as well as the restriction maps $r_{\omega^p}$ and $r_{\omega}$ have finite kernels and
\begin{equation}\label{e:cap1}
\image r_{\omega}\bigcap \image t=\image r_{\omega}\circ s,
\end{equation}
\begin{equation}\label{e:cap2}
\image x \bigcap \image  j_{\omega}=\image x\circ  j_{\omega^p},
\end{equation}
\begin{equation}\label{e:cap3}
\image y \bigcap \image i_{L^{\omega}} =\image y\circ i_{L^{\omega^p}}.
\end{equation}

\begin{lemma}\label{l:temp1} The following conditions are equivalent:
\begin{enumerate}
\item[(a)] $s(\omega)> 0$.
\item[(b)] The $\Z_p$-corank of $\Sel_{p^{\infty}}(A/L^{\omega})$ is greater than that of $\Sel_{p^{\infty}}(A/L^{\omega^p})$
or the $\Z_p$-corank of $\mathcal{H}^1(A,L/L^{\omega})$ is greater than that of $\mathcal{H}^1(A,L/L^{\omega^p})$.
\end{enumerate}
\end{lemma}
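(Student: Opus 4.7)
The plan is to invoke Lemma \ref{l:noname} and reduce to showing $c(\omega)>c(\omega^p)\Longleftrightarrow$ (b), where I abbreviate $c(\omega):=\corank_{\Z_p}\Sel_{p^{\infty}}(A/L)^{\Gamma^{\omega}}$, $a(\omega):=\corank_{\Z_p}\Sel_{p^{\infty}}(A/L^{\omega})$, and $h(\omega):=\corank_{\Z_p}\mathcal{H}^1(A,L/L^{\omega})$. All of the work then lives in the diagram \eqref{e:tempdiagram} and its intersection identities \eqref{e:cap1}, \eqref{e:cap2}, \eqref{e:cap3}.

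First I extract corank identities. The bottom row of \eqref{e:tempdiagram} is exact and every map appearing in the diagram has finite kernel, so $c(\omega)=a(\omega)+\corank\image j_\omega$ and $h(\omega)=\corank\image j_\omega+\corank\image i_{L^\omega}$, with the analogous identities for $\omega^p$. Moreover, Theorem \ref{t:gct} applied to $F=L^\omega$ together with the isogeny $A\sim A^t$ gives $\corank_{\Z_p}\coker\be\left[\mathcal{L}_{L^\omega}\right]_{div}=a(\omega)$, and in particular $\corank\image i_{L^\omega}\leq a(\omega)$. Now combining \eqref{e:cap1}, \eqref{e:cap2}, \eqref{e:cap3} with the finite kernels of the vertical maps $s,t,x,y$ yields, by direct corank counts, the monotonicity
$a(\omega)\geq a(\omega^p)$, $\corank\image j_\omega\geq \corank\image j_{\omega^p}$ and $\corank\image i_{L^\omega}\geq \corank\image i_{L^{\omega^p}}$. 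Subtracting gives the key decompositions
\[
c(\omega)-c(\omega^p)=[a(\omega)-a(\omega^p)]+[\corank\image j_\omega-\corank\image j_{\omega^p}],
\]
\[
h(\omega)-h(\omega^p)=[\corank\image j_\omega-\corank\image j_{\omega^p}]+[\corank\image i_{L^\omega}-\corank\image i_{L^{\omega^p}}],
\]
each a sum of non-negative integers.

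The direction (a)$\Rightarrow$(b) is then immediate: $c(\omega)>c(\omega^p)$ forces at least one summand of the first decomposition to be strict, giving either $a(\omega)>a(\omega^p)$, or $\corank\image j_\omega>\corank\image j_{\omega^p}$, which via the second decomposition implies $h(\omega)>h(\omega^p)$. For (b)$\Rightarrow$(a), the subcase $a(\omega)>a(\omega^p)$ yields $c(\omega)>c(\omega^p)$ at once. The main obstacle is the remaining subcase $h(\omega)>h(\omega^p)$ with $a(\omega)=a(\omega^p)$: one must rule out that the excess in $h$ is carried entirely by $\corank\image i_{L^\omega}>\corank\image i_{L^{\omega^p}}$. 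I would resolve this by a dual-side argument using Cassels--Tate duality \eqref{e:cokerLF}: under the identification $\coker\be\left[\mathcal{L}_{L^\omega}\right]^{\vee}=\T_p\Sel(A^t/L^\omega)$, the dual of the vertical map $y$ becomes the corestriction $\T_p\Sel(A^t/L^\omega)\to\T_p\Sel(A^t/L^{\omega^p})$. Since corestriction composed with restriction is multiplication by $[L^\omega:L^{\omega^p}]$, on $\Z_p$-free parts this corestriction is surjective with kernel of rank $a(\omega)-a(\omega^p)$; under the assumption $a(\omega)=a(\omega^p)$ it is therefore a rational isomorphism. Tracing this through the commutative square relating it to $(\coker\be\left[\mathcal{L}_L^{\Gamma^{(\cdot)}}\right])^{\vee}$ forces $\corank\image i_{L^\omega}=\corank\image i_{L^{\omega^p}}$, so the strict inequality in $h(\omega)-h(\omega^p)$ must fall on $\corank\image j_\omega-\corank\image j_{\omega^p}$, whence $c(\omega)>c(\omega^p)$ as required.
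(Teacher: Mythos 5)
Your proposal is essentially correct and mirrors the paper's strategy: read off the non-negative column-by-column decompositions of $c(\omega)-c(\omega^p)$ and $h(\omega)-h(\omega^p)$ from the exact rows of \eqref{e:tempdiagram}, note that $(a)\Rightarrow(b)$ is then immediate, and isolate the one non-trivial subcase of $(b)\Rightarrow(a)$ --- when the whole excess in $h$ sits in $\corank\image i_{L^\omega}-\corank\image i_{L^{\omega^p}}$ --- which is handled via the Cassels--Tate identification \eqref{e:cokerLF} and the isogeny between $A$ and $A^t$, giving $\corank\coker\left[\mathcal{L}_{L^\omega}\right]_{div}=a(\omega)$. (Minor slip: the two corank identities you extract come from exactness of the rows and finite kernels of $r_\omega$, $r_{\omega^p}$, not from "every map" having finite kernel --- $j_\omega$ and $i_{L^\omega}$ do not.)

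The final step, however, is under-justified as written. The dual commutative square relating $y$ to the maps into $\coker\left[\mathcal{L}_L^{\Gamma^{(\cdot)}}\right]$ only yields an inclusion of annihilators, hence only $\corank\image i_{L^\omega}\geq\corank\image i_{L^{\omega^p}}$, which you already had from monotonicity. The missing ingredient is \eqref{e:cap3}: once $a(\omega)=a(\omega^p)$ forces $\corank\coker\left[\mathcal{L}_{L^\omega}\right]=\corank\coker\left[\mathcal{L}_{L^{\omega^p}}\right]$, the finite-kernel map $y$ has image of full corank in its target, so
$$\corank\image i_{L^\omega}=\corank\left(\image i_{L^\omega}\cap\image y\right)=\corank y\left(\image i_{L^{\omega^p}}\right)=\corank\image i_{L^{\omega^p}}.$$
Stated in the contrapositive --- that $\corank\image i_{L^\omega}>\corank\image i_{L^{\omega^p}}$ forces $\corank\coker\left[\mathcal{L}_{L^\omega}\right]>\corank\coker\left[\mathcal{L}_{L^{\omega^p}}\right]$ and hence $a(\omega)>a(\omega^p)$ --- this is precisely the paper's reduction; the detour through the dual side and the corestriction on $\T_p\Sel(A^t/\cdot)$ adds nothing once \eqref{e:cap3} is in play.
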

\begin{proof} Lemma \ref{l:noname} says that (a) means the cokernel of $t$ has positive corank.
In view of  the equalities \eqref{e:cap1}, \eqref{e:cap2} and \eqref{e:cap3}, we only need to check that this holds if
$$\corank_{\Z_p}\coker\le\left[\mathcal{L}_{L^{\omega}}\right]> \corank_{\Z_p}\coker\le\left[\mathcal{L}_{L^{\omega^p}}\right].$$
But by \eqref{e:cokerLF}, this means $\rank_{\Z_p}\T_p\Sel(A^t/L^{\omega})>\rank_{\Z_p}\T_p\Sel(A^t/L^{\omega^p})$, or equivalently,
$$\corank_{\Z_p}\Sel_{p^{\infty}}(A/L^{\omega})>\corank_{\Z_p}\Sel_{p^{\infty}}(A/L^{\omega^p}),$$
as $A$ and $A^t$ are isogenous.
Then the desired implication follows from \eqref{e:cap1}.
\end{proof}

\begin{lemma}\label{l:h1alllambda}
The following conditions are equivalent:
\begin{enumerate}
\item[(a)]  $\corank_{\Z_p}\mathcal{H}^1(A,L/L^{\omega})
    >\corank_{\Z_p}\mathcal{H}^1(A,L/L^{\omega^p})$.
\item[(b)] There exists some split-multiplicative $v\in S$, splitting completely over $L^{\omega}/K$, such that
either $\rank_{\Z_p}\Gamma_v=1$ and $\mathfrak{w}_v=0$ or $\rank_{\Z_p}\Gamma_v\geq 2$.
\end{enumerate}
\end{lemma}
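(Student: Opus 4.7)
The plan is to decompose $\mathcal{H}^1(A,L/L^\omega)=\bigoplus_v\mathcal{H}^1_v(A,L/L^\omega)$ and compute the $\Z_p$-corank place-by-place. Using the defining expression $\mathcal{H}^1_v(A,L/L^\omega)=\bigoplus_{w\mid v}\coh^1(\Gamma^\omega_v,A(L_w))$, where $w$ ranges over the $W^\omega_v$ places of $L^\omega$ above $v$ and $\Gamma^\omega_v:=\Gamma_v\cap\Gamma^\omega$, all summands share the same $\Z_p$-corank $c^\omega_v$, so $\corank_{\Z_p}\mathcal{H}^1_v(A,L/L^\omega)=W^\omega_v\cdot c^\omega_v$. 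Setting $p^{n+1}=|\image(\omega)|$ and $p^{k_v}=|\omega(\Gamma_v)|$, one has $W^\omega_v=p^{n+1-k_v}$ and $W^{\omega^p}_v=p^{n-\max(0,k_v-1)}$, while $\Gamma^{\omega^p}_v\supseteq\Gamma^\omega_v$ with index $1$ or $p$ according as $k_v=0$ or $k_v\ge 1$.

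Next I would split into two cases. In \emph{Case~A} ($k_v=0$, i.e.\ $v$ splits completely in $L^\omega/K$), one has $\Gamma^\omega_v=\Gamma^{\omega^p}_v=\Gamma_v$, hence $c^\omega_v=c^{\omega^p}_v=:c_v$, and the contribution of $v$ to the difference of coranks is $(W^\omega_v-W^{\omega^p}_v)c_v=p^n(p-1)c_v$. In \emph{Case~B} ($k_v\ge 1$), $W^\omega_v=W^{\omega^p}_v$ and the key is to show $c^\omega_v=c^{\omega^p}_v$ so that $v$ contributes nothing. For $v\notin S$ this is immediate from Lemma~\ref{l:varunramified}. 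For split-multiplicative $v\in S$, functoriality of local reciprocity in the tower $K_v\subset L^\omega_w\subset L_w$ gives $\mathcal{R}^\omega_v=p^{k_v}\mathcal{R}_v$, so $\image\mathcal{R}^\omega_v$ has the same $\Z_p$-rank as $\image\mathcal{R}_v$; combined with $\rank_{\Z_p}\Gamma^\omega_v=\rank_{\Z_p}\Gamma^{\omega^p}_v=:r_v$ and the identification $\coh^1(\Gamma^\omega_v,A(L_w))\simeq\coker[\mathcal{R}^\omega_v]$ from the proof of Proposition~\ref{p:spm}, one gets $c^\omega_v=gr_v-\rank_{\Z_p}\image\mathcal{R}_v=c^{\omega^p}_v$. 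For good-ordinary $v\in S$, the generalized Mazur exact sequence~\eqref{e:awb} applied over $L^\omega_w$ and $L^{\omega^p}_w$ identifies each $c^\omega_v$ with $\Z_p$-coranks of $\bar A^t$- and $\bar A$-Galois-invariants over the relevant residue field; since these are controlled by the Frobenius eigenvalues on $\T_p\bar A$ and $\T_p\bar A^t$, and the two residue fields differ by at most a degree-$p$ extension, the two coranks agree.

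Reducing the comparison of total coranks to $p^n(p-1)\sum_{k_v=0}c_v$, I would then characterize when $c_v>0$. Lemma~\ref{l:varunramified} gives $c_v=0$ at every unramified place, and for a good-ordinary $v\in S$ the Mazur sequence with base $K_v$ has both outer terms $(\bar A^t(\F_v)_p)^\vee$ and $\Hom(\Gamma_v^1,\bar A(\F_v))$ finite, so again $c_v=0$. For split-multiplicative $v\in S$, Lemma~\ref{l:zpsplit} gives $c_v=gr_v-\rank_{\Z_p}\image\mathcal{R}_v$, which is positive precisely when either $r_v\ge 2$ or $r_v=1$ and $\mathfrak{w}_v=0$ (i.e.\ $\image\mathcal{R}_v$ has $\Z_p$-rank $<g$). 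Combining this with the condition $k_v=0$, which reads ``$v$ splits completely in $L^\omega/K$,'' yields exactly~(b) and completes the equivalence.

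The main obstacle will be Case~B at good-ordinary places: one must verify that the $\Z_p$-corank of $\coh^1(\Gamma^\omega_v,A(L_w))$ is genuinely unchanged when $\Gamma^\omega_v$ is enlarged to the index-$p$ supergroup $\Gamma^{\omega^p}_v$ of the same $\Z_p$-rank. This comes down to comparing, via the Mazur exact sequence, the Frobenius-invariant $p$-divisible points of $\bar A$ and $\bar A^t$ across the at-most-degree-$p$ residue extension $\F_{L^\omega_w}/\F_{L^{\omega^p}_w}$. The remaining ingredients—the decomposition-group bookkeeping, the split-multiplicative reciprocity computation via Lemma~\ref{l:zpsplit}, and the vanishing at unramified and good-ordinary places over the base $K_v$—are already supplied by §2 and §3.
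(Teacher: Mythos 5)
Your proposal is correct in substance and follows the same basic strategy as the paper---decompose $\mathcal{H}^1(A,L/L^\omega)$ place by place and compare $\Z_p$-coranks---but it carries out the bookkeeping much more explicitly and handles the non--completely-split places by a different device. The paper's proof is extremely terse: for split-multiplicative $v$ not splitting completely in $L^\omega/K$ it argues that the divisible part of $\coh^1(\Gamma^\omega_v, A(L_v))$ is fixed by $\Gamma^{\omega^p}$, so restriction from the $\omega^p$-level is surjective on divisible parts and the corank does not increase; for $v$ splitting completely it invokes Proposition~\ref{p:spm} and Lemma~\ref{l:zpsplit} exactly as you do. You instead establish the equality $c^\omega_v=c^{\omega^p}_v$ at non--completely-split places via the Verlagerung identity $\mathcal{R}^\omega_v=p^{k_v}\mathcal{R}_v$, which is a cleaner and arguably sharper observation (it gives equality directly rather than one-sided surjectivity). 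Both routes rely on the same local results. Two remarks on your ``main obstacle.'' First, your worry about good-ordinary $v\in S$ in Case~B is actually benign: since $L^\omega/K$ is a finite extension, the residue field of $L^\omega_w$ is finite, so the two outer terms of the generalized Mazur sequence (built with that residue field as base) are themselves finite, and $c^\omega_v=c^{\omega^p}_v=0$ with no need to compare Frobenius invariants across a degree-$p$ extension; the phrase ``the two residue fields differ by at most a degree-$p$ extension'' suggests you were comparing the wrong pair of fields. Second, be slightly careful with the phrase ``at most a degree-$p$ extension'' in the reciprocity argument: the relevant transfer $\Gamma_v\to\Gamma^\omega_v$ is multiplication by $p^{k_v}$, which lands in $\Gamma^\omega_v$ precisely because $\Gamma_v/\Gamma^\omega_v$ is cyclic, and this is what makes $\rank_{\Z_p}\image\mathcal{R}^\omega_v=\rank_{\Z_p}\image\mathcal{R}_v$; that step deserves to be stated explicitly.
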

\begin{proof}
Suppose $v\in S$ is split-multiplicative.
If $v$ does not split completely over $L^{\omega}$, then the
$\coh^1(\Gamma^{\omega}_v,A(L_v))_{div}$ is fixed by $\Gamma^{\omega^p}$, hence the restriction map $\coh^1(\Gamma^{\omega^p}_v,A(L_v))_{div}\longrightarrow \coh^1(\Gamma^{\omega}_v,A(L_v))_{div}$ is surjective.
Suppose $v$ splits completely over $L^{\omega}$. Then it follows from Proposition \ref{p:spm} and Lemma \ref{l:zpsplit} 
that $v$ satisfies the condition (b) if and only
$\coh^1(\Gamma^{\omega}_v,A(L_v))_{div}$ is of positive corank,
whence $\Q_v\otimes_{\Z_p}\prod_{w\mid v}\coh^1(\Gamma_{w}^{\omega},A(L_w))^{\vee}$ contains a regular representation of $\Gamma/\Gamma^{\omega}$ over $\Q_p$.
\end{proof}

Now we prove Theorem \ref{t:root}.

\begin{proof}(of Theorem \ref{t:root})
Suppose $|\Gamma/\Gamma^{\omega}|=p^n$ for some $n$. We can
choose topological generators $\gamma_1,...,\gamma_d$ of $\Gamma$ so that $\gamma_1,\gamma_2,...,\gamma_d^{p^n}$ become
topological generators of $\Gamma^{\omega}$. For $i=1,...,d-1$, write $\Psi_i$ for the subgroup topologically generated by $\gamma_1,...,\gamma_i$
and denote $L^{(i)}=L^{\Psi_i}$, $\Gamma^{(i)}=\Gal(L^{(i)}/K)=\Gamma/\Psi_{i}$.
Then $\omega$ factors through a continuous character
$\omega^{(i)}:\Gamma^{(i)}\longrightarrow \mu_{p^n}$. As before, we have the specialization
$p_{L^{(i)}/L^{(i+1)}}:\Lambda(\Gamma^{(i)})\longrightarrow \Lambda(\Gamma^{(i+1)}),$
induced by the quotient map, as well as
$p_{\omega^{(i)}}:\O_{\omega}\Lambda(\Gamma^{(i)}) \longrightarrow\O_{\omega},$
induced by $\omega^{(i)}$. Then
$p_{L/L^{(i)}}=p_{L^{(i-1)}/L^{(i)}}\circ\dots\circ p_{L/L^{(1)}},$
and $p_{\omega}$ can be expressed as the composition
\begin{equation}\label{e:composition}
p_{\omega}=p_{\omega^{(i)}}\circ  p_{L/L^{(i)}}.
\end{equation}

Let $SP(A/L)$ denote the set of places $v\in S$ satisfying the condition (b)
of Lemma \ref{l:h1alllambda}.
Suppose $d\geq 3$.
We choose $\gamma_1,...,\gamma_d$ so that $\Psi_{d-2}$
intersects properly with both the inertia subgroup $\Gamma_v^1$ and the decomposition group $\Gamma_v$ for all $v\in S$.
Then, for $i=1,...,d-2$, the extension
$L^{(i)}/K$ remains ramified at every place $v\in S$. Furthermore, we have $$\mathrm{rank}_{\Z_p} \Gamma_v^{(i)}=
\begin{cases}
\mathrm{rank}_{\Z_p} \Gamma_v,\; &\text{if}\; \mathrm{rank}_{\Z_p} \Gamma_v\leq 2;\\
2+N,\; N\geq 0,\;& \text{otherwise.}\\
\end{cases}
$$
Then $SP(A/L)=SP(A/L^{(1)})=\cdots=SP(A/L^{(d-2)})$.
Also, by Theorem \ref{t:compatible}, for $i=1,...,d-2$,
$$\Theta_{L^{(i)}}\cdot (p^{m_i})=p_{L^{(i-1)}/L^{(i)}}(\Theta_{L^{(i-1)}}),$$
where $p^{m_i}=\prod_{v\not\in S}\vartheta_v$. These formulae together imply
$$p_{\omega}(\Theta_L)=0 \;\Longleftrightarrow \; p_{\omega^{(d-2)}}(\Theta_{L^{(d-2)}})=0.$$
It follows from Lemma \ref{l:temp1}, \ref{l:h1alllambda} that
by replacing $X_L$ by $X_{L^{(d-2)}}$, we can reduce the proof to the $d\leq 2$ case.
Suppose $d=2$ and take $L'=L^{(1)}$. If $SP(A/L)$ is non-empty containing a place $v$,
then Theorem \ref{t:compatible}(c) asserts that $p_{\omega^{(1)}}(\vartheta_v)=0$, whence
$$p_{\omega^{(1)}}(\varrho_{L/L'})\cdot p_{\omega}(\Theta_L)=p_{\omega^{(1)}}(\varrho_{L/L'}\cdot p_{L/L^{(1)}}(\Theta_L))=
p_{\omega^{(1)}}(\prod_v \vartheta_v\cdot \Theta_{L'})=0.$$
But by Lemma \ref{p:torcomp}, $p_{\omega^{(1)}}(\varrho_{L/L'})=(|A[p^\infty](L^\omega)\cap A[p^\infty](L')|)^2\not=0$.
Thus, $ p_{\omega}(\Theta_L)=0$ and the theorem holds in this case.
If $SP(A/L)$ is empty, then $SP(A/L^{(1)})$ is also empty and Theorem \ref{t:compatible} implies
$$p_{\omega}(\Theta_L)=0 \;\Longleftrightarrow \; p_{\omega^{(1)}}(\Theta_{L^{(1)}})=0.$$
Therefore, we can reduce the proof to the $d=1$ case. But, obviously, the theorem holds in the $d=1$ case.

\end{proof}

\end{subsection}

\end{section}

\end{document}